\newcommand{\N}{\mathbb{N}}
\newcommand{\Z}{\mathbb{Z}}
\newtheorem{prop}{Proposition}[section]
\newtheorem{defi}[prop]{Definition}
\newtheorem{cor}[prop]{Corollary}
\newtheorem{teo}[prop]{Theorem}
\newtheorem{lema}[prop]{Lemma}
\newtheorem*{quest}{Question}
\newtheorem*{obs}{Remark}
\newtheoremstyle{bloco}
  {\topsep}   % ABOVESPACE
  {\topsep}   % BELOWSPACE
  {\upshape}  % BODYFONT
  {0pt}       % INDENT (empty value is the same as 0pt)
  {\bfseries} % HEADFONT
  {\!\!\!\!}         % HEADPUNCT
  {5pt plus 1pt minus 1pt} % HEADSPACE
\theoremstyle{bloco}
\newtheorem{bloco}{}
\newcommand\restr[2]{{% we make the whole thing an ordinary symbol
  \left.\kern-\nulldelimiterspace % automatically resize the bar with \right
  #1 % the function
  \vphantom{\big|} % pretend it's a little taller at normal size
  \right|_{#2} % this is the delimiter
  }}%comando para 'restrição de funções'
\definecolor{mycolor1}{RGB}{74,144,226}
\definecolor{mycolor2}{RGB}{208,2,27}
\title{Entropy for Generalized Parabolic Dynamics}
\author{Frederico A. C. L. Marinho, Hellen de Paula, and Lucas H. R. de Souza}
\begin{document}

\maketitle

\begin{abstract}
In this paper, we extend the concept of generalized entropy to uniform spaces, allowing computations beyond metrizable settings. We apply this to parabolic dynamics - systems with a unique fixed point uniformly attracting all compact subsets in both time directions - and introduce a broader class, called generalized parabolic dynamics. Within this, we identify a significant subclass and prove its linear entropy, offering several equivalent characterizations linking the generalized entropy of the space, the non-wandering set, and families of mutually singular subsets. We also study homeomorphisms of compact surfaces with a singleton non-wandering set but non-parabolic dynamics. The examples that we present have at least quadratic entropy, bounded above by the supremum of polynomial growth rates. For any growth rate with the linear invariant property within these bounds, we construct a homeomorphism whose generalized entropy realizes the prescribed growth, with the non-wandering set reduced to a point.
\end{abstract}

\

\textbf{Keywords} Generalized entropy,  parabolic dynamics, north-south dynamics, dynamics on surfaces \ 

\

\textbf{Mathematics Subject Classification (2020)} Primary: 37B40, 37B02. Secundary: 20F65.

\

%\textbf{Declarations of interest} \ The first author was supported by CNPq and FAPEMIG (project 29847-001/2022).

%The second author was supported by the joint FAPEMIG/CNPq program ``Programa de Apoio à Fixação de Jovens Doutores no Brasil", Grant Number BPD-00181-22 (FAPEMIG).

%The third author was supported by the joint FAPEMIG/CNPq program ``Programa de Apoio à Fixação de Jovens Doutores no Brasil", Grant Numbers BPD-00721-22 (FAPEMIG), 150784/2023-6 (CNPq).

\pagebreak

 \tableofcontents
 \addcontentsline{toc}{section}{Introduction}

\vspace{8cm}

\textbf{Declarations of interest:} 
\noindent 

The first author was supported by CNPq and FAPEMIG (project 29847-001/2022).

The second author was supported by the joint FAPEMIG/CNPq program ``Programa de Apoio à Fixação de Jovens Doutores no Brasil", Grant Number BPD-00181-22 (FAPEMIG).

The third author was supported by the joint FAPEMIG/CNPq program ``Programa de Apoio à Fixação de Jovens Doutores no Brasil", Grant Numbers BPD-00721-22 (FAPEMIG), 150784/2023-6 (CNPq).

\pagebreak
\section*{Introduction}

Topological entropy is one of the most classical invariants used to quantify the complexity of a dynamical system. Given $f:X \to X$, a continuous map on a compact metric space, the topological entropy, $h(f)$, captures the exponential growth rate of orbit separation over time. Despite its importance, topological entropy fails to distinguish systems whose complexity grows slower than exponentially. In particular, many systems of interest — such as those with parabolic or north-south dynamics — have zero topological entropy despite exhibiting nontrivial dynamical behavior. In order to address this limitation, the notion of polynomial entropy, $h_{pol}(f)$, introduced by J. P. Marco in \cite{Mar}, has been proposed as a refinement that measures polynomial growth rates. Polynomial entropy has proven useful in distinguishing between zero-entropy systems that exhibit different orders of complexity. However, it still reduces the complexity to a single real number and thus lacks sensitivity to more nuanced distinctions. To overcome this, Correa and Pujals, in \cite{CoPu21}, introduced the concept of generalized entropy, $o(f)$, a more refined invariant that associates to each dynamical system an equivalence class in a complete space of orders of growth, $\overline{\mathbb{O}}$. The space of orders of growth, $\mathbb{O}$, consists of equivalence classes of non-decreasing sequences of non-negative real numbers, where two sequences are considered equivalent if they exhibit the same asymptotic behavior. This space is then completed in the Dedekind–MacNeille sense, resulting in a complete lattice, $\overline{\mathbb{O}}$, which allows the comparison and the classification of some different growth rates. As shown in \cite{CoPu21}, both topological and polynomial entropy can be recovered as projections of generalized entropy onto the exponential and polynomial families.

In the first part of this paper, we extend the definition of generalized entropy to the context of uniform spaces, allowing for entropy, computations in settings that are not necessarily metrizable. We define both generator and separated sets versions of the generalized entropy for continuous maps and prove their equivalence (see \textbf{Section \ref{Section Entropy for uniform spaces}}). We also verify invariance under uniform conjugacies and compatibility with restrictions to invariant subspaces. In addition, we check that some basic properties of generalized entropy for metric spaces also work in our context.

The second part of the paper is devoted to the study of generalized parabolic dynamics. A homeomorphism with generalized parabolic dynamics can be defined as a system $f: X \to X$ for which there exists a closed invariant set $F \subseteq X$, which we call the parabolic set of the map $f$, such that every compact subset of $X-F$ converges uniformly to $F$ under both forward and backward iteration. When $F$ is a single fixed point, we refer to $f$ as having parabolic dynamics (note that our definition of parabolic dynamics is different and is not equivalent to the definition that is often used in Complex Dynamics). When $F$ is a pair of points such that one of them is an attractor point and the other one is a repulsor point, then we say that $f$ has north-south dynamics.

Parabolic and north-south dynamics arise naturally in geometry. For instance, in the Hyperbolic Geometry, they arise in the following way: Every isometry of the hyperbolic space $\mathbb{H}^n$ extends to a homeomorphism of its compactification $\overline{\mathbb{H}}^n = \mathbb{H}^n \cup \partial\mathbb{H}^n$, where $\partial\mathbb{H}^n$ denotes its ideal boundary. Such isometries are classified into three types: elliptic, if they have a fixed point in $\mathbb{H}^n$; parabolic, if they have a unique fixed point in $\partial\mathbb{H}^n$; and loxodromic, if they have exactly two fixed points in $\partial\mathbb{H}^n$. The parabolic isometries have parabolic dynamics on $\overline{\mathbb{H}}^{n}$ and loxodromic isometries have north-south dynamics on $\overline{\mathbb{H}}^{n}$. See \cite{Kap} and  \cite{Rat} for details on Hyperbolic Geometry. \textbf{Figure \ref{fig:parabolicnorthsouthhyperbolic}} illustrates both types of dynamical behavior in $\overline{\mathbb{H}}^2$.

%FIGURA-----------------------------------

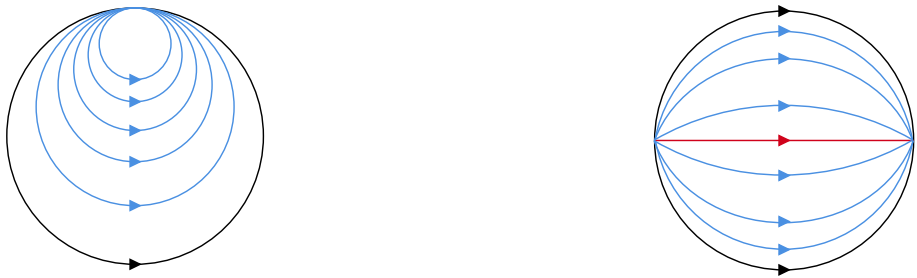
\begin{figure}[H]
\centering
\begin{subfigure}[t]{.48\textwidth}
  \centering
      \resizebox{3.6cm}{!}{%

\tikzset{every picture/.style={line width=0.75pt}} %set default line width to 0.75pt        

\begin{tikzpicture}[x=0.75pt,y=0.75pt,yscale=-1,xscale=1]
%uncomment if require: \path (0,221); %set diagram left start at 0, and has height of 221

%Shape: Circle [id:dp15393552310556835] 
\draw   (216,108.5) .. controls (216,59.14) and (256.02,19.12) .. (305.38,19.12) .. controls (354.74,19.12) and (394.76,59.14) .. (394.76,108.5) .. controls (394.76,157.86) and (354.74,197.88) .. (305.38,197.88) .. controls (256.02,197.88) and (216,157.86) .. (216,108.5) -- cycle ;
%Shape: Circle [id:dp47380986617338006] 
\draw  [color={rgb, 255:red, 74; green, 144; blue, 226 }  ,draw opacity=1 ] (236.44,88.06) .. controls (236.44,49.99) and (267.31,19.12) .. (305.38,19.12) .. controls (343.45,19.12) and (374.32,49.99) .. (374.32,88.06) .. controls (374.32,126.13) and (343.45,157) .. (305.38,157) .. controls (267.31,157) and (236.44,126.13) .. (236.44,88.06) -- cycle ;
%Shape: Circle [id:dp2656294229158134] 
\draw  [color={rgb, 255:red, 74; green, 144; blue, 226 }  ,draw opacity=1 ] (251.75,72.75) .. controls (251.75,43.13) and (275.76,19.12) .. (305.38,19.12) .. controls (335,19.12) and (359.01,43.13) .. (359.01,72.75) .. controls (359.01,102.37) and (335,126.38) .. (305.38,126.38) .. controls (275.76,126.38) and (251.75,102.37) .. (251.75,72.75) -- cycle ;
%Shape: Circle [id:dp6773545221309818] 
\draw  [color={rgb, 255:red, 74; green, 144; blue, 226 }  ,draw opacity=1 ] (262.63,61.87) .. controls (262.63,38.26) and (281.77,19.12) .. (305.38,19.12) .. controls (328.99,19.12) and (348.13,38.26) .. (348.13,61.87) .. controls (348.13,85.48) and (328.99,104.62) .. (305.38,104.62) .. controls (281.77,104.62) and (262.63,85.48) .. (262.63,61.87) -- cycle ;
%Shape: Circle [id:dp5334262111750496] 
\draw  [color={rgb, 255:red, 74; green, 144; blue, 226 }  ,draw opacity=1 ] (272.69,51.81) .. controls (272.69,33.76) and (287.33,19.12) .. (305.38,19.12) .. controls (323.43,19.12) and (338.07,33.76) .. (338.07,51.81) .. controls (338.07,69.86) and (323.43,84.5) .. (305.38,84.5) .. controls (287.33,84.5) and (272.69,69.86) .. (272.69,51.81) -- cycle ;
%Shape: Circle [id:dp6995872962028222] 
\draw  [color={rgb, 255:red, 74; green, 144; blue, 226 }  ,draw opacity=1 ] (280.38,44.12) .. controls (280.38,30.31) and (291.57,19.12) .. (305.38,19.12) .. controls (319.19,19.12) and (330.38,30.31) .. (330.38,44.12) .. controls (330.38,57.93) and (319.19,69.12) .. (305.38,69.12) .. controls (291.57,69.12) and (280.38,57.93) .. (280.38,44.12) -- cycle ;
%Shape: Triangle [id:dp8236452235374674] 
\draw  [fill={rgb, 255:red, 0; green, 0; blue, 0 }  ,fill opacity=1 ] (308.89,197.88) -- (301.87,201.28) -- (301.87,194.48) -- cycle ;
%Shape: Triangle [id:dp3364680073586488] 
\draw  [color={rgb, 255:red, 74; green, 144; blue, 226 }  ,draw opacity=1 ][fill={rgb, 255:red, 74; green, 144; blue, 226 }  ,fill opacity=1 ] (308.89,126.38) -- (301.87,129.78) -- (301.87,122.98) -- cycle ;
%Shape: Triangle [id:dp4517587383135877] 
\draw  [color={rgb, 255:red, 74; green, 144; blue, 226 }  ,draw opacity=1 ][fill={rgb, 255:red, 74; green, 144; blue, 226 }  ,fill opacity=1 ] (308.89,157) -- (301.87,160.4) -- (301.87,153.6) -- cycle ;
%Shape: Triangle [id:dp07652821674850818] 
\draw  [color={rgb, 255:red, 74; green, 144; blue, 226 }  ,draw opacity=1 ][fill={rgb, 255:red, 74; green, 144; blue, 226 }  ,fill opacity=1 ] (308.89,104.62) -- (301.87,108.02) -- (301.87,101.22) -- cycle ;
%Shape: Triangle [id:dp3258733988576521] 
\draw  [color={rgb, 255:red, 74; green, 144; blue, 226 }  ,draw opacity=1 ][fill={rgb, 255:red, 74; green, 144; blue, 226 }  ,fill opacity=1 ] (308.89,84.5) -- (301.87,87.9) -- (301.87,81.1) -- cycle ;
%Shape: Triangle [id:dp4170805412554741] 
\draw  [color={rgb, 255:red, 74; green, 144; blue, 226 }  ,draw opacity=1 ][fill={rgb, 255:red, 74; green, 144; blue, 226 }  ,fill opacity=1 ] (308.89,69.12) -- (301.87,72.52) -- (301.87,65.72) -- cycle ;

\end{tikzpicture}

}
 %\caption{Representation of the dynamics of a parabolic isometry. The blue lines represent horocycles that are invariant by the isometry.}

\end{subfigure}\hfill
\begin{subfigure}[t]{.48\textwidth}
  \centering
\resizebox{3.7cm}{!}{

\tikzset{every picture/.style={line width=0.75pt}} %set default line width to 0.75pt   
    
\begin{tikzpicture}[x=0.75pt,y=0.75pt,yscale=-1,xscale=1, line width=0.75pt]
  % Set tight bounding box to remove whitespace
  \useasboundingbox (214,20) rectangle (392.76,199.04);

  % Circle
  \draw (214,109.66) .. controls (214,60.3) and (254.02,20.28) .. (303.38,20.28)
        .. controls (352.74,20.28) and (392.76,60.3) .. (392.76,109.66)
        .. controls (392.76,159.03) and (352.74,199.04) .. (303.38,199.04)
        .. controls (254.02,199.04) and (214,159.03) .. (214,109.66) -- cycle;

  % Red horizontal line
  \draw [color={rgb,255:red,208; green,2; blue,27}] (214,109.66) -- (392.76,109.66);

  % Blue arcs
  \foreach \p in {
    {(213.84,109.96) .. controls (225.63,77.09) and (261.1,53.22) .. (303.03,53.22) .. controls (344.96,53.22) and (380.44,77.09) .. (392.23,109.96)},
    {(392.59,109.78) .. controls (380.73,142.61) and (345.15,166.44) .. (303.1,166.44) .. controls (261.05,166.44) and (225.47,142.61) .. (213.61,109.78)},
    {(214.35,109.21) .. controls (221.74,66.72) and (258.68,34.41) .. (303.14,34.41) .. controls (347.61,34.41) and (384.55,66.72) .. (391.94,109.21)},
    {(392.23,109.96) .. controls (384.84,152.46) and (347.9,184.76) .. (303.43,184.76) .. controls (258.97,184.76) and (222.03,152.46) .. (214.64,109.96)},
    {(213.72,109.39) .. controls (239.87,94.21) and (270.23,85.48) .. (302.62,85.4) .. controls (335.01,85.31) and (365.38,93.88) .. (391.55,108.92)},
    {(391.66,109.59) .. controls (365.5,124.75) and (335.13,133.46) .. (302.75,133.53) .. controls (270.36,133.6) and (239.99,125.01) .. (213.84,109.96)}
  } {
    \draw[color={rgb,255:red,74; green,144; blue,226}] \p;
  }

  % Triangles (arrowheads)
  % Black arrowheads
  \foreach \y in {20.28, 199.04} {
    \draw[fill=black, draw=black] (306.89,\y) -- (299.87,\y+3.4) -- (299.87,\y-3.4) -- cycle;
  }

  % Blue arrowheads
  \foreach \y in {34.04, 53.04, 85.8, 133.86, 165.61, 185.04} {
    \draw[fill={rgb,255:red,74; green,144; blue,226}, draw={rgb,255:red,74; green,144; blue,226}]
      (306.89,\y) -- (299.87,\y+3.4) -- (299.87,\y-3.4) -- cycle;
  }

  % �� Red arrowhead (correct color!)
  \draw[fill={rgb,255:red,208; green,2; blue,27}, draw={rgb,255:red,208; green,2; blue,27}]
    (306.89,109.66) -- (299.87,113.06) -- (299.87,106.26) -- cycle;

\end{tikzpicture}
 }
 % \caption{Representation of the dynamics of a loxodromic isometry. The red line represents the fixed geodesic and the blue lines are equidistant sets of the geodesic, that are also invariant by the isometry}

\end{subfigure}
\caption{Representation of the dynamics of the parabolic and loxodromic isometries, respectively, of the hyperbolic plane. In the first one, the blue lines represent horocycles that are invariant under the isometry. In the second one, the red line represents the fixed geodesic, and the blue lines are equidistant sets of the geodesic, which are also invariant under the isometry.}
\label{fig:parabolicnorthsouthhyperbolic}
\end{figure}

%FIM DA FIGURA----------------------------

Examples of generalized parabolic dynamics with finite parabolic sets can be constructed easily by gluing a finite set of parabolic and north-south dynamics by their respective parabolic sets.

We show that systems with generalized parabolic dynamics and finite parabolic set have linear generalized entropy:

\begin{bloco}{\textbf{Theorem \ref{parabolicshavelinearentropy}}} \textit{If $(X,\mathcal{U})$ is a Hausdorff compact uniform space and $f: X \rightarrow X$ has generalized parabolic dynamics with finite parabolic set, then $o(f)=[n]$.}
\end{bloco}

This characterizes parabolic dynamics as systems of nonzero dynamical complexity within the generalized entropy framework. We also provide a full characterization of generalized parabolic dynamics with finite parabolic set in terms of generalized entropy:

\begin{bloco}{\textbf{Theorem \ref{characterizationofparabolicsviaentropy}}} \textit{Let $(X,\mathcal{U})$ be a Hausdorff compact space and $f: X \rightarrow X$ a homeomorphism such that the non-wandering set $\Omega(f)$ is finite and each point of $\Omega(f)$ is fixed by $f$. Then, the following statements are equivalent:}

\begin{enumerate}
    \item \textit{$f$ has generalized parabolic dynamics with parabolic set $\Omega(f)$.}
    \item The quotient map to the space of orbits $X-\Omega(f) \rightarrow (X-\Omega(f))/\langle f \rangle$ is a covering map.
    \item \textit{For every compact set $K \subseteq X-\Omega(f)$, $o(f,K) = 0$ and  $o(f^{-1},K) = 0$.}
    \item \textit{$o(f|_{X-\Omega(f)}) = 0$ and $o(f^{-1}|_{X-\Omega(f)}) = 0$, where $X-\Omega(f)$ has the subspace uniform structure from $X$.}
    \item \textit{For every compact set $K \subseteq X-\Omega(f)$, $f$ and $f^{-1}$ are Lyapunov stable in $K$.}
    \item \textit{For every point $x \in X- \Omega(f)$,  $x$ is a regular point of $f$.}
\end{enumerate}

\textit{Moreover, if $X-\Omega(f)$ is connected and locally connected, then there are two more equivalences:}

\begin{enumerate}
    \item[7.] \textit{$o(f) = [n]$}
    \item[8.] \textit{There are no non-trivial families of mutually singular compact sets.}
\end{enumerate}
\end{bloco}

Where the respective definitions of Lyapunov stable maps is given in \textbf{Definition \ref{definitionoflyapunov}}, regular points is given in \textbf{Definition \ref{regularpoints}} and mutually disjoint sets is given in \textbf{Definition \ref{mutuallysingular}}. The implications $1) \Leftrightarrow 5) \Leftrightarrow 6)$ are well known. They are due to Kinoshita (Lemma 1 and Lemma 2 of \cite{Ki}) for the case of $\#\Omega(f) = 1$ and metrizable $X$. Our contribution here is to add the generalized entropy to this equivalence.

In particular, the theorem above shows that a homeomorphism has generalized parabolic dynamics with finite parabolic set if and only if both its generalized entropy and that of its inverse vanish outside the non-wandering set. Moreover, under mild topological conditions, a homeomorphism has generalized parabolic dynamics with finite parabolic set if and only if it has linear entropy. This characterization highlights the strength of generalized entropy in detecting subtle asymptotic behaviors beyond those detected by topological or polynomial entropy. 

There are strong topological obstructions to the existence of generalized parabolic dynamics. For instance, if $M$ is a compact manifold without boundary and dimension bigger than $1$ that admit such dynamics with finite parabolic set, then its parabolic set has at most two points and $M$ is homeomorphic to a $n$-sphere (Theorem A of \cite{HL} and Theorem 6 of \cite{Kau}). In these references, they work in terms of regularity of points instead of generalized parabolic dynamics, but the equivalence is given in \textbf{Theorem \ref{characterizationofparabolicsviaentropy}} for finite non-wandering sets. Such maps and their topological obstructions were extensively studied, and we cite \cite{BK}, \cite{HKi}, and \cite{Ki}.

A bigger class of maps is the class of homeomorphisms such that their non-wandering sets are finite. They behave quite similarly to generalized parabolic dynamics, but in general, they do not need to have uniform convergence on compact sets, as we illustrate using Brouwer’s example on the sphere (\textbf{Section \ref{Brouwer}}). If such maps are not generalized parabolic, then it follows by \textbf{Theorem \ref{characterizationofparabolicsviaentropy}} that their generalized entropy must be strictly bigger than linear. Also, as a consequence of Theorem 2 of \cite{CP2} and Lemma 3.2 of \cite{CP2}, their generalized entropy must be at most the supremum of the polynomial family $\mathbb{P}$. Let $\mathbb{L}$ be the set of elements $[a(n)]$ of $\mathbb{O}$ that satisfies the linearly invariant property (i.e., there exist $m \geqslant 2$ such that $[a(n)] = [a(mn)]$) and $\overline{\mathbb{L}} = \{\sup{\Gamma}: \Gamma \subseteq \mathbb{L}, \# \Gamma \leqslant \aleph_{0}\}$. Then, Theorem 1.2 of \cite{CP} (which generalizes a result of \cite{HR}) says that, given $o \in \overline{\mathbb{L}}$, with $[n^{2}] \leqslant o \leqslant \sup \mathbb{P}$, there is a homeomorphism of the $2$-sphere with a single point as its non-wandering set such that its generalized entropy is $o$.

Despite the obstruction that appeared for parabolic maps on compact surfaces, we show that the same does not happen when we consider homeomorphisms with a single point as its non-wandering set, generalizing the result given in \cite{CP}:

\begin{bloco}{\textbf{Theorem \ref{singlenonwanderingonallsurfaces}}} \textit{Let $S$ be a compact surface without boundary and let $o \in \overline{\mathbb{L}}$, with $[n^{2}] \leqslant o \leqslant \sup \mathbb{P}$. Then there exists a homeomorphism $f: S \rightarrow S$ such that $\#\Omega(f) = 1$ and $o(f) = o$.} 
\end{bloco}

The topological restrictions for generalized parabolic dynamics that we mentioned above make it seem like there are just a few cases of generalized parabolic dynamics with finite parabolic set, but that is not the case. We end this paper presenting several examples of parabolic and north-south dynamics. Some of them are taken in non-metrizable spaces such as the double arrow space, which gives a justification for our choice to extend generalized entropy for uniform spaces. Suspension spaces and one-point compactifications can be sources of examples of north-south dynamics and parabolic dynamics, respectively, and they can be constructed from  metrizable or even non-metrizable spaces.

A source for many interesting examples of parabolic and north-south dynamics is given by convergence actions. These are actions on Hausdorff compact spaces, usually metrizable, with good dynamical properties (see \textbf{Section \ref{convergence}} for the definition). It was defined by Gehring and Martin \cite{GM} and it generalizes the actions of Kleinian groups on its limit sets inside the boundary of the hyperbolic space.  Some examples of such actions are: the action of a finitely generated group on its space of ends (Proposition 9.3.2 of \cite{Ge2}), a hyperbolic group on its Gromov boundary (Proposition 2.13 of \cite{Bo2}) and a relatively hyperbolic group on its Bowditch boundary (see, for example, Lemma 2.11 of \cite{Bo2}). Some examples of topological spaces that admit such actions are the Cantor set, the Sierpi\'nski carpet (page 651 of \cite{KK}), the Menger sponge (Theorem 14 of \cite{KK}), the Apollonian gasket (Lemma 3.4 of \cite{HPW}), and a tree of circles (page 4060 of \cite{Bo5}). So all of such spaces, and many others, are spaces that admit a north-south dynamics or a parabolic dynamics (as a consequence of Lemma 3.1 of \cite{Bo2}). 

The structure of the paper is as follows. In \textbf{Section \ref{2}}, we review the necessary preliminaries on uniform spaces, growth orders, and generalized entropy. In \textbf{Section \ref{Section Entropy for uniform spaces}}, we extend the definition of generalized entropy to uniform spaces, establish its main properties, which are analogous to the metric settings, and review coding techniques for mutually singular sets, extending them also to the context of uniform spaces and adding some new methods which are necessary for our results. \textbf{Section \ref{4}} applies this framework to generalized parabolic dynamics, proving key characterizations and establishing the linearity of entropy in this setting. In \textbf{Section \ref{5}}, we analyze dynamics on compact surfaces, constructing explicit examples of homeomorphisms with prescribed generalized entropy. Finally, in \textbf{Section \ref{6}}, we present several additional examples of parabolic and north-south dynamics, including cases in non-metrizable spaces, to illustrate the breadth of the theory.

\section*{Acknowledgements}
The first author would like to express his deepest gratitude to his advisor, Professor Ezequiel Barbosa, for his guidance, support, and invaluable insights throughout the Masters. He is also sincerely thankful to Professor Csaba Schneider for providing the grant scholarship that made this research possible. Special thanks are also due to Gabriel Chaves for his assistance with the coding involved in this project.

The second and third authors would like to thank Javier Correa, Pablo D. Carrasco, and Rafael da Costa Pereira for the valuable discussions, which were quite helpful to the well understanding of the theory.

\pagebreak

\section{Preliminaries}\label{2}

In this section, we gather the fundamental concepts and definitions needed for the development of the main results of this work. We begin by presenting uniform structures and some of their properties relevant to the definition of generalized entropy in spaces that are not necessarily metrizable. Next, we review the notion of growth orders and the formal definition of generalized entropy as introduced by Correa and Pujals, \cite{CoPu21}, highlighting its compatibility with both topological and polynomial entropy.

\subsection{Uniform spaces}

Here, we present some basic facts about uniform space that are useful throughout this paper. See \cite{Bou} for more details about uniform structures.

Let $X$ be a set. A uniform structure on $X$ is a set $\mathcal{U}$ of subsets of $X \times X$ satisfying:

    \begin{enumerate}
        \item $\mathcal{U} \neq \emptyset$,
        \item If $u \in \mathcal{U}$, then the diagonal $\Delta X = \{(x,x) \in X \times X\}$ is contained in $u$,
        \item If $u \in \mathcal{U}$ and $u \subseteq v \subseteq X \times X$, then $v \in \mathcal{U}$,
        \item If $u,v \in \mathcal{U}$, then $u\cap v \in \mathcal{U}$,
        \item If $u \in \mathcal{U}$, then there exists $v \in \mathcal{U}$ such that $v^{2} = \{(x,y): \exists z \in X: (x,z),(z,y)\in v\} \subseteq u$,
        \item If $u \in \mathcal{U}$, then $u^{-1} = \{(y,x): (x,y) \in u\}\in \mathcal{U}$.   
    \end{enumerate}
    
    In this case, we say that the pair $(X,\mathcal{U})$ is a uniform space.

Let $(X,\mathcal{U})$ and $(Y,\mathcal{V})$ be uniform spaces. A map $f: X \rightarrow Y$ is uniformly continuous if for every $v \in \mathcal{V}$, $f^{-1}(v) \in \mathcal{U}$ (here $f^{-1}$ means $(f\times f)^{-1}$).

If $(X,d)$ is a metric space, then we define the set $\mathcal{U}_{d}$ as the smallest uniform structure containing the set $\{u_{\epsilon}: \epsilon > 0\}$, where $u_{\epsilon} = \{(x,y) \in X\times X: d(x,y) < \epsilon\}$. Between two metric spaces, the notions of uniform continuity from the metrics and from the uniform structures coincide. 

Associated with a uniform space $(X,\mathcal{U})$, we get a topology for $X$ and if a map between uniform spaces is uniformly continuous, then it is continuous with respect to their associated topologies. A uniform space $(X,\mathcal{U})$ is Hausdorff, with respect to its associated topology, if and only if $\bigcap \mathcal{U} = \Delta X$. If $(X,d)$ is a metric space, then the associated topologies from $d$ and $\mathcal{U}_{d}$ coincide. If $X$ is a Hausdorff compact space, then there is a unique uniform structure such that its associated topology coincides with the topology of $X$. Moreover, any continuous map from a Hausdorff compact space to a uniform space is uniformly continuous. Another property for Hausdorff compact uniform $X$ spaces is the Lebesgue covering lemma: if $\{U_{i}\}_{i \in \Gamma}$ is an open covering for $X$, then there exists a Lebesgue entourage $u$ for the covering, i.e., if $S$ is a $u$-small subset of $X$, then there exists $i \in \Gamma$ such that $S \subseteq U_{i}$. 

Let $(X,\mathcal{U})$ be a uniform space and $\mathcal{B}\subseteq \mathcal{U}$. We say that $\mathcal{B}$ is a base of the uniform structure $\mathcal{U}$ (also called a fundamental system of entourages) if for every $u \in \mathcal{U}$, there exists $v \in \mathcal{B}$ such that $v \subseteq u$. If $(X,d)$ is a metric space, then $\{u_{\epsilon}: \epsilon > 0\}$ is a base for the uniform structure of $\mathcal{U}_{d}$.

If $(X,\mathcal{U})$ is a uniform space, $u \in \mathcal{U}$ and $S \subseteq X$, we say that $S$ is $u$-small if $S \times S \subseteq u$. We define $\mathcal{B}(S,u) = \{y \in X: \exists s \in s, (s,y) \in u\}$. Note that, if $u$ is symmetric (i.e., $u = u^{-1}$) and $S = \{x\}$, then $\mathcal{B}(x,u)$ is $u^{2}$-small. We have that $\mathcal{B}(S,u)$ is a neighborhood of $S$, but, in general, it is not an open subset of $X$. However, if $u$ is open in $X \times X$ (in the product topology with respect to the associated topology of $X$), then $\mathcal{B}(S,u)$ is open.

\subsection{Orders of growth and generalized entropy}
Let us start by recalling how the complete set of the orders of growth and the generalized entropy of a map are defined in \cite{CoPu21}. Consider the space of non-decreasing sequences in $[0,\infty)$: $$\mathcal{O}=\{a:\mathbb{N}\rightarrow [0,\infty):a(n)\leqslant a(n+1),\, \forall n\in \mathbb{N}\}.$$
Then, define the equivalence relationship $\approx$ in $\mathcal{O}$ by $a(n)\approx b(n)$ if and only if there exist $c_1,c_2\in (0,\infty)$ such that $c_1 a(n)\leqslant b(n)\leqslant c_2 a(n)$ for all $n\in \mathbb{N}$. The quotient space $\displaystyle \mathbb{O}=\mathcal{O}/_{\approx}$ is called the space of the orders of growth. 

In this space, some of the orders of growth can be compared. We say that $[a(n)] \leqslant [b(n)]$ if there exists $C>0$ such that $a(n) \leqslant Cb(n)$, for all $n\in\mathbb{N}$ (note that this order relation is well defined). Then, we  consider $\overline{\mathbb{O}}$ as the Dedekind-MacNeille completion of $\mathbb{O}$. This object is the smallest complete lattice that contains $\mathbb{O}$; it is uniquely defined, and the space $\overline{\mathbb{O}}$ is named the complete set of the orders of growth. 

Now, let us present how the generalized entropy of a dynamical system is defined in the complete space of the orders of growth. Given $(M,d)$, a compact metric space, and $f: M\rightarrow M$, a continuous map. We consider in $M$ the distance 
\[d^{f}_{n}(x,y)=\max \{d(f^k(x),f^k(y)): 0\leqslant k \leqslant n-1\},\]
 and we denote the dynamical $(n,\epsilon)$-ball centered at the point $x \in M$ as $\mathcal{B}(x,n,\varepsilon)=\{y\in M: d^{f}_{n}(x,y)<\varepsilon\}$. A set $G\subseteq M$ is a $(n,\varepsilon)$-generator if $\displaystyle M=\cup_{x\in G} \mathcal{B}(x,n,\varepsilon)$. Then, we define $g_{f,\varepsilon}(n)$ as the smallest possible cardinality of a finite $(n,\varepsilon)$-generator. If we fix $\varepsilon>0$, then $g_{f,\varepsilon}(n)$ is a non-decreasing sequence of natural numbers, so $g_{f,\varepsilon}(n) \in \mathcal{O}$. For a fixed $n \in \N$, if $\varepsilon_1<\varepsilon_2$, then $g_{f,\varepsilon_1} (n) \geqslant g_{f, \varepsilon_2}(n)$, and $[g_{f,\varepsilon_1}(n)]\geqslant [g_{f,\varepsilon_2}(n)]$ in $\mathbb{O}$. Consider the set $G_f=\{[g_{f,\varepsilon}(n)]\in \mathbb{O}:\varepsilon>0\}$, and the generalized entropy of $f$ is defined as 
$$o(f)=\sup G_f \in \overline{\mathbb{O}}. $$

We can see the generalized entropy as a limit of the orders of growth $[g_{f,\varepsilon}(n)]$ when $\varepsilon\rightarrow 0$ (in the Scott topology of $\overline{\mathbb{O}}$). This object is a dynamical invariant:
\begin{prop}[Correa-Pujals, Theorem 1 of \cite{CoPu21}]
	Let $M$ and $N$ be two compact metric spaces and $f:M\to M$, $g:N\to N$, two continuous maps. Suppose there exists $h: M\to N$, a homeomorphism, such that $h\circ f = g \circ h$. Then, $o(f)=o(g)$.
\end{prop}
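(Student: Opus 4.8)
The plan is to adapt the classical proof that topological entropy is invariant under conjugacy, but to track the behavior of the \emph{entire} family of generator-counting sequences rather than a single exponential rate. The two facts I would use at the outset are: first, since $M$ and $N$ are compact metric spaces and $h$ is a homeomorphism, both $h$ and $h^{-1}$ are uniformly continuous; second, the conjugacy relation $h\circ f = g\circ h$ iterates to $h\circ f^{k} = g^{k}\circ h$ for every $k\geqslant 0$, so that $h$ intertwines the orbits of $f$ and $g$ and, in particular, carries dynamical balls to dynamical balls up to a controlled change of radius.

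The heart of the argument is a comparison of generators. Fix $\delta > 0$. By uniform continuity of $h$, choose $\varepsilon > 0$ such that $d_{M}(a,b) < \varepsilon$ implies $d_{N}(h(a),h(b)) < \delta$. I claim that if $G\subseteq M$ is an $(n,\varepsilon)$-generator for $f$, then $h(G)$ is an $(n,\delta)$-generator for $g$. Indeed, given $y\in N$, pick $x\in G$ with $d^{f}_{n}(x,h^{-1}(y)) < \varepsilon$; using $g^{k}\circ h = h\circ f^{k}$ we compute, for each $0\leqslant k \leqslant n-1$,
\[
d_{N}\big(g^{k}(h(x)),\, g^{k}(y)\big) = d_{N}\big(h(f^{k}(x)),\, h(f^{k}(h^{-1}(y)))\big) < \delta,
\]
since $d_{M}(f^{k}(x),f^{k}(h^{-1}(y))) < \varepsilon$. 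Hence $d^{g}_{n}(h(x),y) < \delta$, which proves the claim. Because $h$ is injective, $\#h(G) = \#G$, so taking $G$ to be a minimal $(n,\varepsilon)$-generator yields the pointwise inequality $g_{g,\delta}(n) \leqslant g_{f,\varepsilon}(n)$ for all $n$, and therefore $[g_{g,\delta}(n)] \leqslant [g_{f,\varepsilon}(n)]$ in $\mathbb{O}$.

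Finally I would pass to suprema in the complete lattice $\overline{\mathbb{O}}$. For each $\delta > 0$ the associated $\varepsilon$ gives $[g_{g,\delta}(n)] \leqslant [g_{f,\varepsilon}(n)] \leqslant \sup G_{f} = o(f)$, so $o(f)$ is an upper bound of the family $G_{g}$; since $o(g) = \sup G_{g}$ is the \emph{least} upper bound, we conclude $o(g)\leqslant o(f)$. Exchanging the roles of $f$ and $g$ and repeating the argument with $h^{-1}$ in place of $h$ gives $o(f)\leqslant o(g)$, whence $o(f)=o(g)$. The step I would be most careful about is this last one: the individual inequalities live in $\mathbb{O}$, while the entropies are defined as suprema in the Dedekind--MacNeille completion $\overline{\mathbb{O}}$, so one must invoke the least-upper-bound property of the completion rather than attempt any explicit computation of the supremum. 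Everything else is a routine transport of generators through the uniformly continuous conjugacy.
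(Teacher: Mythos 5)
Your proof is correct. Note that the paper does not actually prove this metric-space statement itself --- it is quoted from Correa--Pujals --- but it does prove the uniform-space generalization (the proposition on conjugacy by a uniform homeomorphism in Section \ref{Section Entropy for uniform spaces}), and the route taken there is slightly different from yours. The paper works with $(n,u)$-separated sets and uses the fact that a uniform homeomorphism $\rho$ carries the uniformity of $X$ bijectively onto that of $Y$, so it obtains an exact equality $s_{f,u,K}(n) = s_{g,\rho(u),\rho(K)}(n)$ and an exact identification of the two families of orders of growth, after which the suprema coincide trivially. You instead transport generators through the modulus of uniform continuity of $h$, which only yields a one-sided inequality $g_{g,\delta}(n) \leqslant g_{f,\varepsilon(\delta)}(n)$ for each $\delta$, and you then symmetrize using $h^{-1}$. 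Both arguments are sound; the entourage-pushforward version is cleaner in the uniform setting because there is no choice of $\varepsilon(\delta)$ to make, while your version is the natural one when only metrics are available and has the small advantage of working verbatim for a uniformly continuous conjugacy whose inverse is also uniformly continuous, without reference to uniform structures. Your handling of the final passage to suprema is exactly right: each $[g_{g,\delta}(n)]$ is dominated by some element of $G_f$, hence by $o(f)$, and the least-upper-bound property of $\overline{\mathbb{O}}$ gives $o(g) \leqslant o(f)$; no explicit description of the supremum is needed.
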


The generalized entropy can also be defined through the point of view of $(n,\varepsilon)$-separated sets. We say that a set $E\subseteq M$ is $(n,\varepsilon)$-separated if $\mathcal{B}(x,n,\varepsilon)\cap E = \{x\}$, for all $x\in E$. Then we define $s_{f,\varepsilon}(n)$ as the maximal cardinality of a $(n,\varepsilon)$-separated set. Analogously, as with $g_{f,\varepsilon}(n)$, if we fix $\varepsilon>0$, then $s_{f,\varepsilon}(n)$ is a non-decreasing sequence of natural numbers. Again, for a fixed $n \in \N$, if $\varepsilon_1<\varepsilon_2$, then $s_{f,\varepsilon_1} (n) \geqslant s_{f, \varepsilon_2}(n)$, and therefore, $[s_{f,\varepsilon_1}(n)]\geqslant [s_{f,\varepsilon_2}(n)]$. If we consider the set $S_f=\{[s_{f,\varepsilon}(n)]\in \mathbb{O}:\varepsilon>0\}$, then 
$$o(f)=\sup S_f \in \overline{\mathbb{O}}. $$

If $M$ is not compact, then the generalized entropy of a map can be defined in compact subsets. Given $K\subset M$, a compact subset, the definition of $g_{f,\varepsilon, K}(n)$ as the minimal number of $(n,\varepsilon)$-balls (centered at points in $K$) that are needed to cover $K$ also makes sense. With it, we can define $o(f,K)=\sup\{[g_{f,\varepsilon,K}(n)]:\varepsilon>0\}$ as the generalized entropy of $f$ in $K$ and $o(f) = \sup \{o(f,K): K \text{ is a compact subset of } M\}$ as the generalized entropy of $f$.   

Now, let us explain how generalized entropy is related to the classical notion of topological entropy and polynomial entropy. Given a dynamical system $f$, recall that the topological entropy of $f$ is 
\[h(f) = \lim_{\varepsilon\to 0}\limsup_{n\rightarrow \infty} \frac{log(g_{f,\varepsilon}(n))}{n}.\]

And the polynomial entropy, introduced in \cite{Mar}, of $f$ is
\[h_{pol}(f) = \lim_{\varepsilon\to 0}\limsup_{n\rightarrow \infty} \frac{log(g_{f,\varepsilon}(n))}{log(n)}.\]

The family of exponential orders of growth is the set $\mathbb{E}=\{[\exp(tn)]: t \in(0,\infty) \} \subset \mathbb{O}$ and the family of polynomial orders is the set $\mathbb{P}=\{[n^t]: t \in(0,\infty)\}$. Given $o\in \overline{\mathbb{O}}$, in \cite{CoPu21}, the authors define the intervals $I(o,\mathbb{E})=\{t\in (0,\infty):o\leqslant [\exp(tn)]\}$ and $I(o,\mathbb{P})=\{t\in (0,\infty):o\leqslant [n^t]\}$. With these intervals, the projections  $\pi_{\mathbb{E}}:\overline{\mathbb{O}}\rightarrow [0,\infty]$ and $\pi_{\mathbb{P}}:\overline{\mathbb{O}}\rightarrow [0,\infty]$ are defined as

$$
\pi_{\mathbb{E}}(o)=
\begin{cases} \inf(I(o,\mathbb{E})),&\text{ if }I(o,\mathbb{E})\neq \emptyset \\
\infty, &\text{ if } I(o,\mathbb{E})= \emptyset
\end{cases},
$$ 
and 
$$
\pi_{\mathbb{P}}(o)=
\begin{cases} \inf(I(o,\mathbb{P})),&\text{ if }I(o,\mathbb{P})\neq \emptyset \\
\infty, &\text{ if } I(o,\mathbb{P})= \emptyset
\end{cases}.
$$ 

Generalized entropy, polynomial entropy and classical entropy are related by the following result. 

\begin{prop}\label{projectionmaps}[Correa-Pujals, Theorem 2 of \cite{CoPu21}]
	Let $M$ be a compact metric space and $f: M \rightarrow M$ a continuous map. Then, $\pi_\mathbb{E}(o(f))=h(f)$ and $\pi_\mathbb{P}(o(f))=h_{pol}(f)$.
\end{prop}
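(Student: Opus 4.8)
The plan is to compute both projections directly from the definitions, reducing each equality to a statement about the auxiliary quantities $h_\varepsilon(f) = \limsup_{n\to\infty} \frac{\log g_{f,\varepsilon}(n)}{n}$ and $h_{pol,\varepsilon}(f) = \limsup_{n\to\infty} \frac{\log g_{f,\varepsilon}(n)}{\log n}$, and then passing to the limit $\varepsilon \to 0$. I will carry out the exponential case; the polynomial case is identical after replacing $[\exp(tn)]$ by $[n^t]$ and the denominator $n$ by $\log n$, and it uses the same sequence $g_{f,\varepsilon}(n)$, so no appeal to the separated-set version is needed.

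First I would unravel the interval $I(o(f),\mathbb{E})$. Since $o(f) = \sup G_f$ in the complete lattice $\overline{\mathbb{O}}$ and $[\exp(tn)] \in \mathbb{O} \subseteq \overline{\mathbb{O}}$, the defining property of the least upper bound gives that $o(f) \leqslant [\exp(tn)]$ holds precisely when $[\exp(tn)]$ is an upper bound for $G_f$, i.e.\ $[g_{f,\varepsilon}(n)] \leqslant [\exp(tn)]$ for every $\varepsilon > 0$. This reduces the problem to comparing a single order of growth $[g_{f,\varepsilon}(n)]$ with the exponential family.

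Next I would translate this order relation into a growth estimate. By the definition of $\leqslant$ in $\mathbb{O}$, the inequality $[g_{f,\varepsilon}(n)] \leqslant [\exp(tn)]$ means there is $C > 0$ with $g_{f,\varepsilon}(n) \leqslant C\exp(tn)$ for all $n$; taking logarithms and dividing by $n$ forces $h_\varepsilon(f) \leqslant t$. Conversely, if $h_\varepsilon(f) < t$ then $g_{f,\varepsilon}(n) < \exp(tn)$ for all large $n$, and absorbing the finitely many exceptional terms into a constant yields $[g_{f,\varepsilon}(n)] \leqslant [\exp(tn)]$. Hence $t > h_\varepsilon(f)$ implies the comparison holds and $t < h_\varepsilon(f)$ implies it fails; the borderline $t = h_\varepsilon(f)$ is genuinely ambiguous but invisible to the infimum. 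Because $g_{f,\varepsilon}(n)$ is non-increasing in $\varepsilon$, the map $\varepsilon \mapsto h_\varepsilon(f)$ is non-decreasing as $\varepsilon \to 0$, so $h(f) = \sup_{\varepsilon > 0} h_\varepsilon(f)$. Combining these facts, $t > h(f)$ gives $t \in I(o(f),\mathbb{E})$ and $t < h(f)$ gives $t \notin I(o(f),\mathbb{E})$, so $I(o(f),\mathbb{E})$ contains $(h(f),\infty)$ and is disjoint from $(0,h(f))$, whence $\inf I(o(f),\mathbb{E}) = h(f)$ and $\pi_\mathbb{E}(o(f)) = h(f)$.

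The main things to be careful about are bookkeeping rather than conceptual. I must justify the supremum-below-an-element step cleanly inside the Dedekind--MacNeille completion, checking that the embedding $\mathbb{O} \hookrightarrow \overline{\mathbb{O}}$ is order-preserving so that an element of $\mathbb{O}$ dominates $\sup G_f$ exactly when it dominates each member of $G_f$. I also have to treat the degenerate case $h(f) = \infty$ separately: there no finite $t$ bounds all the $[g_{f,\varepsilon}(n)]$, so $I(o(f),\mathbb{E}) = \emptyset$ and by definition $\pi_\mathbb{E}(o(f)) = \infty = h(f)$. The borderline $t = h_\varepsilon(f)$ must be acknowledged but never affects $\inf I$. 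The polynomial identity $\pi_\mathbb{P}(o(f)) = h_{pol}(f)$ then follows by repeating these three steps verbatim with $[n^t]$ and $h_{pol,\varepsilon}(f)$.
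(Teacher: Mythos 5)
Your argument is correct, and it follows essentially the same route the paper takes for the uniform-space generalization (Proposition \ref{projectionmaps2}): you inline the paper's two preceding lemmas, namely that an element of $\mathbb{O}$ dominates $\sup G_f$ iff it dominates every $[g_{f,\varepsilon}(n)]$ (the projections-commute-with-suprema step) and that $\pi_{\mathbb{E}}([a(n)])=\limsup_{n}\frac{\log a(n)}{n}$ (the limsup characterization), then combine them exactly as the paper does. Nothing further is needed.
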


\section{Entropy for uniform spaces}\label{Section Entropy for uniform spaces}

In this section, we define generalized entropy for uniform spaces. It generalizes the definition for metric spaces. In \cite{Ho}, Hood defined topological entropy for uniform spaces, which the following definition generalizes in the same way as it is done for metric spaces. Here, we also present some properties that are useful in this paper, with their respective proofs, for the sake of completeness, even when these proofs are analogous to their respective versions of generalized entropy for metric spaces or even when it's already proved for uniform spaces. We conclude this section by presenting the concept of coding and mutually singular sets, which is a translation of the construction presented by Hauseux and Le Roux, in \cite{HR}.

\subsection{Entropy}

Here, we extend the definition of generalized entropy to the setting of uniform spaces, following the same principles used in the metric case. We introduce both the generator-set and separated-set formulations, and we prove that they are equivalent. This extension allows entropy computations in non-metrizable contexts and provides a broader framework for analyzing dynamical complexity in uniform spaces.

\begin{defi}Let $(X, \mathcal{U})$ be a uniform space and $f: X \rightarrow X$ a continuous map. Let $n \in \N$ and $u \in \mathcal{U}$. A subset $E \subseteq X$ is $(n,u)$-separated if for every $x,y \in E$, with $x \neq y$, there exists $i \in \{0,\dots,n-1\}$ such that $(f^{i}(x),f^{i}(y)) \notin u$.

Let $K$ be a compact subspace of $X$, $n \in \N$ and $u \in \mathcal{U}$. We define $s_{f,u,K}(n) = \max\{\# E: E \subseteq K, E$ is $(n,u)-separated\}$ (we denote $s_{f,u,K}(n)$ by $s_{f,u}(n)$, if $X$ is compact and $K = X$). Since $K$ is compact, it is clear that $s_{f,u,K}(n)$ must be finite.  Observe that $s_{f,u,K}(n)\in \mathcal{O}$, so we can define the generalized entropy of $f$ with respect to $K$ as:

$$o(f,K) = \sup \{ [(s_{f,u,K}(n)]: u \in \mathcal{U}\} \in \overline{\mathbb{O}}$$

Finally, we define the generalized entropy of $f$ as:

$$o(f) = \sup \{o(f,K): K \subseteq X, \ K \ is \ compact\}\in \overline{\mathbb{O}}$$ 
\end{defi}

Similar to the metric case, we can also define the entropy through the point of view of $(n,u)$-generator sets. We denote the dynamical $(n,u)$-neighborhood centered at the point $x \in X$ as $$\mathcal{B}(x,n,u)=\{y\in X: (f^i(x),f^i(y))\in u, \text{ for all } 0\leqslant i \leqslant n-1\}.$$

We use the notation $\mathcal{B}_{f}(x,n,u)$ for $\mathcal{B}(x,n,u)$ if it is necessary to clarify which map we are using. Note that we can also describe the dynamical neighborhood as $$\mathcal{B}(x,n,u)= \bigcap_{i = 0}^{n-1} f^{-i}(\mathcal{B}(f^{i}(x),u)).$$

Then $\mathcal{B}(x,n,u)$ is a neighborhood of $x$ since it is a finite intersection of neighborhoods of $x$. By the same reason, if $u$ is open in $X\times X$, then $\mathcal{B}(x,n,u)$ is open.

Let $K\subseteq X$ be a compact subset. A set $G\subseteq X$ is a $(n,u)$-generator set for $K$ if $\displaystyle K\subseteq\cup_{x\in G} \mathcal{B}(x,n,u)$. Then, we define $g_{f,u, K}(n)$ as the smallest possible cardinality of a finite $(n,u)$-generator set for $K$. If we fix $u \in \mathcal{U}$, then $g_{f,u, K}(n)$ is a non-decreasing sequence of natural numbers, so $g_{f,u,K}(n) \in \mathcal{O}$ and the generalized entropy of $f$ with respect to $K$ is  $o_g(f,K)=\sup\{[g_{f,u,K}(n)]\in \mathbb{O}:u\in \mathcal{U}\}\in \overline{\mathbb{O}} $, and the generalized entropy of $f$ is also defined as $o_g(f) = \sup \{o_g(f,K): K \subseteq X, \ K \ is \ compact\}\in \overline{\mathbb{O}}$. 

As in the metric case, the following lemma allows us to compute entropy using either $(n,u)$-separated sets or $(n,u)$-generator sets.

\begin{lema}[Hood, Lemma 1.i of \cite{Ho}]\label{geradoreseseparaveis}Let $(X,\mathcal{U})$ be a uniform space, $f:X\to X$ a continuous map and $K\subseteq X$ a compact. Then, $s_{f,u^2,K}(n) \leqslant g_{f,u,K}(n)\leqslant s_{f,u,K}(n)$, for every $u\in\mathcal{U}$ symmetric.   
\end{lema}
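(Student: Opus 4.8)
The plan is to establish the two inequalities in the chain $s_{f,u^2,K}(n) \leqslant g_{f,u,K}(n)\leqslant s_{f,u,K}(n)$ separately, mirroring the classical argument for metric entropy but phrased entirely in terms of entourages and the dynamical neighborhoods $\mathcal{B}(x,n,u)$. The only structural facts I will need are that $u$ is symmetric (so $\mathcal{B}(\cdot,n,u)$ behaves symmetrically) and the composition property $v^2 \subseteq u$ which is built into the definition of a uniform structure; combined with symmetry this gives the key pointwise statement that if $(f^i(x),f^i(z))\in u$ and $(f^i(z),f^i(y))\in u$ for all $0\leqslant i\leqslant n-1$, then $(f^i(x),f^i(y))\in u^2$.

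For the right-hand inequality $g_{f,u,K}(n)\leqslant s_{f,u,K}(n)$, I would take a maximal $(n,u)$-separated set $E\subseteq K$, whose cardinality realizes $s_{f,u,K}(n)$ (finite by compactness of $K$, as noted after the definition). Maximality forces that for every $y\in K$ there is some $x\in E$ with $(f^i(x),f^i(y))\in u$ for all $i$, i.e. $y\in\mathcal{B}(x,n,u)$ — otherwise $E\cup\{y\}$ would be a strictly larger separated set, contradicting maximality. Hence $E$ itself is an $(n,u)$-generator set for $K$, giving $g_{f,u,K}(n)\leqslant \# E = s_{f,u,K}(n)$. This direction is routine and uses only the definitions.

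For the left-hand inequality $s_{f,u^2,K}(n)\leqslant g_{f,u,K}(n)$, I would fix a minimal $(n,u)$-generator set $G$ for $K$ with $\#G = g_{f,u,K}(n)$ and an $(n,u^2)$-separated set $E\subseteq K$ of maximal size $s_{f,u^2,K}(n)$. Each $x\in E$ lies in some $\mathcal{B}(g,n,u)$ with $g\in G$, so I can define a map $\varphi\colon E\to G$ sending $x$ to such a $g$. The crux is that $\varphi$ is injective: if $\varphi(x)=\varphi(x')=g$ for distinct $x,x'\in E$, then $(f^i(x),f^i(g))\in u$ and $(f^i(g),f^i(x'))\in u$ for all $i$; using symmetry of $u$ and the composition rule above this yields $(f^i(x),f^i(x'))\in u^2$ for all $0\leqslant i\leqslant n-1$, contradicting that $E$ is $(n,u^2)$-separated. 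Injectivity then gives $s_{f,u^2,K}(n)=\# E\leqslant \#G = g_{f,u,K}(n)$.

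I do not anticipate a genuine obstacle here, since the entire argument is a faithful translation of the metric proof with $\varepsilon$-balls replaced by $u$-neighborhoods and the triangle inequality replaced by $u\cdot u\subseteq u^2$. The one point demanding care is making sure the entourage composition is applied at every time step uniformly in $i$ rather than pointwise in the spatial variable alone; keeping the quantifier ``for all $0\leqslant i\leqslant n-1$'' attached throughout the chain of memberships is what guarantees the conclusion lands in the dynamical $(n,u^2)$-neighborhood rather than merely in a spatial one. Since this lemma is already attributed to Hood, I would present the proof concisely, noting it is included for completeness, and emphasize only the symmetry and composition steps.
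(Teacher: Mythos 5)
Your proposal is correct and follows essentially the same argument as the paper: a maximal $(n,u)$-separated set is a generator by maximality, and an injection from an $(n,u^2)$-separated set into an $(n,u)$-generator set is built exactly as you describe, using symmetry of $u$ and the definition of $u^2$ at every time step $i$. No gaps; this matches the paper's proof.
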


\begin{proof} Consider $E \subseteq K$ a maximal $(n,u)$-separated set. So, given any $y\in K-E$, we have that $E\cup \{y\}$ is not a $(n,u)$-separated set. Then, there exists $x \in E$ such that $(f^i(x), f^i (y))\in u$, for all and $0\leqslant i\leqslant n - 1$, which implies that $E$ is a $(n,u)$-generator set for $K$. Then $g_{f,u,k}(n)\leqslant \#E=s_{f,u,K}(n)$.

For the other inequality, let $E$ be a $(n,u^2)$-separated set for $K$ and $G$ a $(n,u)$-generator set for $K$. For a given $x\in E$, there exists a $y\in G$ such that $(f^i(x),f^i(y))\in u$, for every $0\leqslant i \leqslant n-1$. Define a map $\phi:E\to G$ taking $\phi(x)$ as any $y\in G$ in these conditions. The map $\phi$ is injective. Indeed, suppose that $\phi(x_1)=y=\phi(x_2)$, for $x_1, x_2 \in E$. Then $(f^i(x_1), f^i(y))\in u$,  for every $0\leqslant i \leqslant n-1$ and $(f^i(x_2), f^i(y))\in u$,  for every $0\leqslant i \leqslant n-1$. So $(f^i(x_1),f^i(x_2))\in u^2$, for every $0\leqslant i \leqslant n-1$, since $u$ is symmetric. But $E$ is a $(n,u^2)$-separated, which implies $x_1=x_2$. It follows that $\#E\leqslant \#G$, and since we take $E$ and $G$ arbitrarily, we conclude $s_{f,u^2,K}(n) \leqslant g_{f,u,K}(n)$.
\end{proof}

\begin{lema}[Hood, Lemma 1.ii of \cite{Ho}] Let $(X, \mathcal{U})$ be a uniform space, $\mathcal{B}$ a base for the uniform structure $\mathcal{U}$ and $f: X \rightarrow X$ a continuous map. If $K \subseteq X$, $n \in \N$ and $u,v \in \mathcal{U}$, with $v \subseteq u$, then $s_{f,v,K}(n) \geqslant s_{f,u,K}(n)$.
\end{lema}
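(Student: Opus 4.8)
The plan is to prove this inequality directly by observing that, whenever $v \subseteq u$, being $(n,u)$-separated is a \emph{stronger} condition than being $(n,v)$-separated. Consequently, every $(n,u)$-separated subset of $K$ is automatically $(n,v)$-separated, and the comparison of maximal cardinalities follows at once. The intuition is the standard one: a finer (smaller) entourage $v$ makes it easier to tell points apart, so one can only fit more separated points, not fewer.

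Concretely, I would begin by selecting a maximal $(n,u)$-separated set $E \subseteq K$, so that $\#E = s_{f,u,K}(n)$; such a finite maximal set exists because $K$ is compact, as already noted in the definition. The key step is then to verify that this same $E$ is $(n,v)$-separated. To do so, I would fix distinct $x,y \in E$. By the definition of $(n,u)$-separation there is an index $i \in \{0,\dots,n-1\}$ with $(f^{i}(x),f^{i}(y)) \notin u$. Since $v \subseteq u$, the contrapositive gives $(f^{i}(x),f^{i}(y)) \notin v$ as well, for if this pair belonged to $v$ it would belong to $u$. Hence the very same index $i$ witnesses the $(n,v)$-separation of the pair $x,y$.

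As this argument applies to every distinct pair in $E$, the set $E$ is $(n,v)$-separated, and therefore $s_{f,v,K}(n) \geqslant \#E = s_{f,u,K}(n)$, which is exactly the claimed inequality.

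I do not anticipate any genuine obstacle: the statement is pure monotonicity of the separation condition under inclusion of entourages, and the entire content reduces to a single contrapositive on membership in $u$ versus $v$. I would also point out that the base $\mathcal{B}$ appearing in the hypothesis plays no role whatsoever in this particular inequality; it is presumably recorded only for consistency with the surrounding statements, and I would mention this explicitly so the reader is not led to look for a use of it.
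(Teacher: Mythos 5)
Your proof is correct and follows essentially the same route as the paper: take a (maximal) $(n,u)$-separated set, observe that $v \subseteq u$ forces $(f^{i}(x),f^{i}(y)) \notin u$ to imply $(f^{i}(x),f^{i}(y)) \notin v$, and conclude the inequality of maximal cardinalities. Your side remark that the base $\mathcal{B}$ is never used is also accurate.
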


\begin{proof}Let $E \subseteq K$ be a $(n,u)$-separated set. Then, for every points $x,y \in E$, with $x\neq y$, there exists $i \in \{0,\dots,n-1\}$ such that $(f^{i}(x),f^{i}(y)) \notin u$. Since $v \subseteq u$, then $(f^{i}(x),f^{i}(y)) \notin v$. So $E$ is a $(n,v)$-separated set. Thus $s_{f,v,K}(n) \geqslant s_{f,u,K}(n)$.
\end{proof}

The definition of topological entropy for uniform spaces given in \cite{Ho} does not take into account the separated sets for all entourages of the uniform structure. Instead of this, they consider only specific entourages that are easier to deal with: open and symmetric. We could choose to do the same thing here for generalized entropy, but the next proposition shows that this choice does not matter, since we would get the same definition.

\begin{prop}\label{basenaomudaentropia}Let $(X, \mathcal{U})$ be a uniform space, $\mathcal{B}$ a base for the uniform structure $\mathcal{U}$ and $f: X \rightarrow X$ a continuous map. If $K \subseteq X$, then $o(f,K) = \sup\limits_{u \in \mathcal{B}}  [s_{f,u,K}(n)]$.
\end{prop}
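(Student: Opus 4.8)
The plan is to reduce the equality to two ingredients already available: the monotonicity of $s_{f,u,K}$ in the entourage $u$, and the defining property of a base. First I would dispose of the inequality $\sup_{u\in\mathcal{B}}[s_{f,u,K}(n)] \leqslant o(f,K)$, which is immediate. Since $\mathcal{B}\subseteq\mathcal{U}$, the family $\{[s_{f,u,K}(n)]:u\in\mathcal{B}\}$ appearing on the left is a subfamily of $\{[s_{f,u,K}(n)]:u\in\mathcal{U}\}$, and in the complete lattice $\overline{\mathbb{O}}$ the least upper bound of a subset is dominated by the least upper bound of the whole set.

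For the reverse inequality I would fix an arbitrary $u\in\mathcal{U}$ and show that $[s_{f,u,K}(n)]\leqslant\sup_{v\in\mathcal{B}}[s_{f,v,K}(n)]$. Because $\mathcal{B}$ is a base for $\mathcal{U}$, there exists $v\in\mathcal{B}$ with $v\subseteq u$. The preceding lemma (Hood, Lemma 1.ii of \cite{Ho}) then gives $s_{f,v,K}(n)\geqslant s_{f,u,K}(n)$ for every $n\in\N$, whence $[s_{f,u,K}(n)]\leqslant[s_{f,v,K}(n)]\leqslant\sup_{w\in\mathcal{B}}[s_{f,w,K}(n)]$. As $u\in\mathcal{U}$ was arbitrary, the right-hand side is an upper bound in $\overline{\mathbb{O}}$ for the whole family $\{[s_{f,u,K}(n)]:u\in\mathcal{U}\}$, and passing to the supremum yields $o(f,K)\leqslant\sup_{w\in\mathcal{B}}[s_{f,w,K}(n)]$. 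Combining the two inequalities gives the stated equality.

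The argument is essentially formal, and I do not expect a genuine obstacle. The one point deserving care is that all the suprema live in the Dedekind–MacNeille completion $\overline{\mathbb{O}}$ rather than in $\mathbb{O}$, so I would phrase the comparison purely in terms of the order relation $\leqslant$: the crux is that \emph{every} element indexed by $\mathcal{U}$ lies below some element indexed by $\mathcal{B}$ (by cofinality of the base), which forces the two least upper bounds to coincide. The completeness of $\overline{\mathbb{O}}$ is used only to guarantee that the suprema in sight exist; all the mathematical content is carried by the monotonicity lemma.
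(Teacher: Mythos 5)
Your proposal is correct and follows exactly the paper's argument: fix $u\in\mathcal{U}$, use the base property to find $v\in\mathcal{B}$ with $v\subseteq u$, and apply the monotonicity lemma $s_{f,v,K}(n)\geqslant s_{f,u,K}(n)$ to conclude the two suprema coincide. The only difference is presentational — you spell out the trivial inequality $\sup_{u\in\mathcal{B}}[s_{f,u,K}(n)]\leqslant o(f,K)$ and the lattice-theoretic bookkeeping that the paper leaves implicit.
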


\begin{proof}Let $u \in \mathcal{U}$. Since $\mathcal{B}$ is a base for the uniform structure $\mathcal{U}$, then there exists $v \in \mathcal{B}$ such that $v \subseteq u$. Then, for every $n \in \N$, $s_{f,v,K}(n) \geqslant s_{f,u,K}(n)$, which implies that $[s_{f,v,K}(n)] \geqslant [s_{f,u,K}(n)]$. Thus $o(f,K) =  \sup\limits_{u \in \mathcal{U}} [s_{f,u,K}(n,u)] = \sup\limits_{u \in \mathcal{B}} [s_{f,u,K}(n)]$.
\end{proof}

\begin{prop}Let $(X, \mathcal{U})$ be a uniform space and $f: X \rightarrow X$ a homeomorphism. Then $o(f) = o_{g}(f)$.
\end{prop}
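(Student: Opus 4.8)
The plan is to deduce the equality directly from the sandwich inequality of Lemma \ref{geradoreseseparaveis} combined with the base-independence recorded in Proposition \ref{basenaomudaentropia}. Since both entropies are suprema over compact subsets $K \subseteq X$, it suffices to prove $o(f,K) = o_g(f,K)$ for every compact $K$ and then take the supremum over $K$.

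First I would reduce everything to symmetric entourages. The symmetric entourages form a base of $\mathcal{U}$: given $u \in \mathcal{U}$, the entourage $u \cap u^{-1}$ is symmetric (by axioms 4 and 6), belongs to $\mathcal{U}$, and is contained in $u$. By Proposition \ref{basenaomudaentropia}, $o(f,K) = \sup\{[s_{f,u,K}(n)] : u \in \mathcal{U} \text{ symmetric}\}$. I would also record the generator analog of Proposition \ref{basenaomudaentropia}: if $v \subseteq u$ then $\mathcal{B}(x,n,v) \subseteq \mathcal{B}(x,n,u)$, so any $(n,v)$-generator for $K$ is also a $(n,u)$-generator, whence $g_{f,u,K}(n) \leqslant g_{f,v,K}(n)$. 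The same argument used for Proposition \ref{basenaomudaentropia} then gives $o_g(f,K) = \sup\{[g_{f,u,K}(n)] : u \in \mathcal{U} \text{ symmetric}\}$ as well.

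The key observation is that $\{u^2 : u \text{ symmetric}\}$ is again a base of $\mathcal{U}$: each $u^2$ contains $u$, hence lies in $\mathcal{U}$ by axiom 3, and for any $w \in \mathcal{U}$ the axioms provide a symmetric $u$ with $u^2 \subseteq w$ (take $v$ with $v^2 \subseteq w$ and set $u = v \cap v^{-1}$). Applying Proposition \ref{basenaomudaentropia} to this base gives the alternative description $o(f,K) = \sup\{[s_{f,u^2,K}(n)] : u \text{ symmetric}\}$. Now Lemma \ref{geradoreseseparaveis} yields, for every symmetric $u$,
\[
[s_{f,u^2,K}(n)] \leqslant [g_{f,u,K}(n)] \leqslant [s_{f,u,K}(n)].
\]
Taking suprema over symmetric $u$ and inserting the three descriptions above sandwiches $o_g(f,K)$ between two copies of $o(f,K)$, forcing $o(f,K) = o_g(f,K)$; the supremum over compact $K$ then gives $o(f) = o_g(f)$.

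The only genuinely delicate point is the passage $u \mapsto u^2$: I expect the main (though still mild) obstacle to be checking that squaring symmetric entourages yields a cofinal family, so that Proposition \ref{basenaomudaentropia} legitimately applies to both $\{u\}$ and $\{u^2\}$. Once this cofinality is established, the supremum manipulation in the complete lattice $\overline{\mathbb{O}}$ is immediate from the least-upper-bound property, since each $[s_{f,u^2,K}(n)]$ is bounded above by $o_g(f,K)$ and each $[g_{f,u,K}(n)]$ by $o(f,K)$. I note in passing that the homeomorphism hypothesis is not actually needed here; the argument works verbatim for any continuous $f$, with invertibility becoming relevant only in the later comparison between $o(f)$ and $o(f^{-1})$.
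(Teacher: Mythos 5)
Your proof is correct and follows exactly the route the paper intends: its own proof is the single line ``Immediate from Lemma \ref{geradoreseseparaveis} and Proposition \ref{basenaomudaentropia},'' and your argument simply fills in the details of that sandwich, including the (correct) verification that symmetric entourages and their squares each form a base so that Proposition \ref{basenaomudaentropia} and its generator analogue apply. Your closing remark is also right: continuity suffices, and the homeomorphism hypothesis plays no role here.
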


\begin{proof}Immediate from \textbf{Lemma \ref{geradoreseseparaveis}} an \textbf{Proposition \ref{basenaomudaentropia}}.
\end{proof}

From now on, we call $o(f)$ and $o_{g}(f)$ as $o(f)$, indistinguishably. 

The generalized entropy is, also in the uniform context, an invariant:

\begin{prop}Let $(X,\mathcal{U})$, $(Y, \mathcal{V})$ be uniform spaces and $f: X \rightarrow X$ and $g: Y \rightarrow Y$ be continuous maps. If $f$ and $g$ are conjugated by a uniform homeomorphism $\rho: X \rightarrow Y$, then for every compact subset $K$ of $X$, $o(f,K) = o(g, \rho(K))$. Moreover, $o(f) = o(g)$.
\end{prop}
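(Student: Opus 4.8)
The plan is to show that the uniform homeomorphism $\rho$ sets up a bijection between $(n,u)$-separated sets in $X$ and $(n,v)$-separated sets in $Y$, for suitably matched entourages $u$ and $v$, so that the corresponding sequences $s_{f,u,K}(n)$ and $s_{g,v,\rho(K)}(n)$ coincide. Since $\rho$ conjugates $f$ and $g$, we have $\rho \circ f^{i} = g^{i}\circ \rho$ for all $i$, which is the algebraic identity that will let me transfer the separation condition across $\rho$.

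First I would fix a compact set $K \subseteq X$ and note that $\rho(K)$ is compact in $Y$ because $\rho$ is a homeomorphism. Then, given an entourage $v \in \mathcal{V}$, I would set $u = (\rho \times \rho)^{-1}(v) = \rho^{-1}(v) \in \mathcal{U}$, which lies in $\mathcal{U}$ precisely because $\rho$ is uniformly continuous. The key observation is the equivalence
$$(f^{i}(x),f^{i}(y)) \in u \iff (\rho(f^{i}(x)),\rho(f^{i}(y))) \in v \iff (g^{i}(\rho(x)),g^{i}(\rho(y))) \in v,$$
where the first step is the definition of $u$ as the preimage of $v$ and the second uses the conjugacy $\rho\circ f^{i}=g^{i}\circ\rho$. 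From this I conclude that a set $E \subseteq K$ is $(n,u)$-separated for $f$ if and only if $\rho(E) \subseteq \rho(K)$ is $(n,v)$-separated for $g$. Since $\rho$ is injective, $\#E = \#\rho(E)$, and therefore $s_{f,u,K}(n) = s_{g,v,\rho(K)}(n)$ for every $n$. Running the same argument with $\rho^{-1}$ (which is also uniformly continuous, since $\rho$ is a uniform homeomorphism) gives the reverse matching.

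Taking equivalence classes in $\mathbb{O}$ and then suprema, I would argue that the maps $v \mapsto (\rho\times\rho)^{-1}(v)$ and $u \mapsto (\rho^{-1}\times\rho^{-1})^{-1}(u)$ are mutually inverse bijections between $\mathcal{V}$ and $\mathcal{U}$ under which the sequences agree, so the two families $\{[s_{f,u,K}(n)] : u \in \mathcal{U}\}$ and $\{[s_{g,v,\rho(K)}(n)] : v \in \mathcal{V}\}$ are equal as subsets of $\mathbb{O}$. Their suprema in $\overline{\mathbb{O}}$ therefore coincide, giving $o(f,K) = o(g,\rho(K))$. For the final statement $o(f) = o(g)$, I would use that $\rho$ induces a bijection between compact subsets of $X$ and compact subsets of $Y$ (again because it is a homeomorphism), so the two families $\{o(f,K) : K \text{ compact in } X\}$ and $\{o(g,L) : L \text{ compact in } Y\}$ are identical, and taking suprema yields the equality.

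I do not expect any serious obstacle here, as the argument is the uniform-space analogue of the standard invariance proof; the only point requiring care is the bookkeeping that guarantees $(\rho\times\rho)^{-1}(v)$ genuinely lies in $\mathcal{U}$ and that the entourage correspondence is a bijection, which is exactly where uniform continuity of both $\rho$ and $\rho^{-1}$ is used. The verification that separation is preserved is the conceptual heart, but it reduces immediately to the conjugacy identity applied coordinatewise.
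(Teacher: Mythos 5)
Your proposal is correct and follows essentially the same route as the paper: transfer $(n,u)$-separated sets through $\rho$ via the conjugacy identity $\rho\circ f^{i}=g^{i}\circ\rho$, observe that the uniform homeomorphism induces a bijection between the entourage families (you pull back $v\mapsto\rho^{-1}(v)$ where the paper pushes forward $u\mapsto\rho(u)$, which is the same correspondence), and then match compact sets to get the global statement. No gaps.
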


\begin{proof}Let $K$ be a compact subset of $X$, $u \in \mathcal{U}$ and $n \in \N$. If $E \subseteq K$ is a $(n,u)$-separated set, then there exists $i \in \{0,\dots,n-1\}$ such that $(f^{i}(x),f^{i}(y)) \notin u$. Then $(\rho \circ f^{i}(x), \rho \circ f^{i}(y)) \notin \rho(u)$. Note that $\rho(u) \in  \mathcal{V}$ since $\rho$ is a uniform homeomorphism. We have that $\rho \circ f^{i} = g^{i} \circ \rho$, which implies that $(g^{i} \circ \rho(x), g^{i} \circ \rho(y)) \notin \rho(u)$ and then $\rho(E)$ is a $(n,\rho(u))$-separated set. Analogously, if $\rho(E)$ is a $(n,\rho(u))$-separated set, then we apply the map $\rho^{-1}$ and we get that $E$ is a $(n,u)$-separated set. So $s_{f,u,K}(n) = s_{g,\rho(u),\rho(K)}(n)$.

Then $o(f,K) = \sup \{ [(s_{f,u,K}(n)]: u \in \mathcal{U}\} = \sup \{[s_{g,\rho(u),\rho(K)}(n)]: u \in \mathcal{U}\}$. 
Since $\rho$ is an uniform homeomorphism, then $\mathcal{V} = \{\rho(u): u \in \mathcal{U}\}$, which implies that $$\sup\{[s_{g,\rho(u),\rho(K)}(n)]: u \in \mathcal{U}\} = \sup\{[s_{g,v,\rho(K)}(n)]: v \in \mathcal{V}\} = o(g,\rho(K)).$$ 

So $o(f,K) = o(g, \rho(K))$. Since the map $\rho$ is a homeomorphism, then $\{\rho(K): K \subseteq X \text{ and } K \text{ is compact}\} = \{K' \subseteq Y: K' $ is  compact$\}$. Then 

$\begin{array}{rcl}
     o(f) &=& \sup \{o(f,K): K \subseteq X, \ K\text{ is  compact}\}\\
     & =&\sup \{o(f,\rho(K)): K \subseteq X, \ K\text{ is  compact}\} \\
     &=&\sup \{o(f,K'): K' \subseteq Y, \ K'\text{ is compact}\}\\
     &=&o(g).
\end{array}$  
\end{proof}

If the space is metrizable, then the generalized entropy for the uniform structure induced by the metric coincide with the generalized structure for the metric:

\begin{prop}Let $(X,d)$ be a metric space, $\mathcal{U}_{d}$ the uniform structure induced by the metric $d$, $f: X \rightarrow X$ a continuous map, $o_{d}(f,K)$, $o_{d}(f)$ the entropies of $f$ with respect to the metric $d$ and $o_{u}(f,K)$, $o_{u}(f)$ the entropies of $f$ with respect to the uniform structure $\mathcal{U}_{d}$. Then, for every compact set $K$, $o_{d}(f,K) = o_{u}(f,K)$. Moreover, $o_{d}(f) = o_{u}(f)$.
\end{prop}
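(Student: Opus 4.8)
The plan is to reduce everything to the observation that, under the metric $d$, the uniform notion of separation and the metric notion of separation literally coincide. Recall that $\{u_\varepsilon : \varepsilon > 0\}$, with $u_\varepsilon = \{(x,y) \in X \times X : d(x,y) < \varepsilon\}$, is a base for $\mathcal{U}_d$. For a fixed $\varepsilon > 0$ and an index $i$, a pair $(x,y)$ satisfies $(f^i(x), f^i(y)) \notin u_\varepsilon$ precisely when $d(f^i(x), f^i(y)) \geqslant \varepsilon$. Consequently, a set $E \subseteq K$ is $(n, u_\varepsilon)$-separated in the uniform sense (Definition of the excerpt) if and only if, for all distinct $x,y \in E$, there is some $i \in \{0,\dots,n-1\}$ with $d(f^i(x), f^i(y)) \geqslant \varepsilon$, i.e. $d^f_n(x,y) \geqslant \varepsilon$; and this is exactly the condition $\mathcal{B}(x,n,\varepsilon) \cap E = \{x\}$ defining an $(n,\varepsilon)$-separated set in the metric sense. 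Hence the counting functions agree on the nose: $s_{f,u_\varepsilon,K}(n) = s^d_{f,\varepsilon,K}(n)$ for every $n \in \mathbb{N}$, where the left-hand side is the uniform quantity and the right-hand side the metric one.

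Next I would pass to orders of growth. Since $\{u_\varepsilon : \varepsilon > 0\}$ is a base for $\mathcal{U}_d$, \textbf{Proposition \ref{basenaomudaentropia}} gives $o_u(f,K) = \sup_{\varepsilon > 0}[s_{f,u_\varepsilon,K}(n)]$. Combining this with the equality of separation numbers from the previous step yields
$$o_u(f,K) = \sup_{\varepsilon > 0}[s_{f,u_\varepsilon,K}(n)] = \sup_{\varepsilon > 0}[s^d_{f,\varepsilon,K}(n)] = o_d(f,K),$$
the last equality being the definition of the metric entropy on $K$ (appealing, if the metric entropy is phrased through generators, to the standard metric generator/separated equivalence, which is the metric specialization of \textbf{Lemma \ref{geradoreseseparaveis}}). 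This proves the first assertion.

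For the \emph{moreover} part I would invoke the fact, recalled in the preliminaries, that the topology associated with $\mathcal{U}_d$ coincides with the metric topology of $d$; therefore the family of compact subsets of $X$ is the same in both settings. Both $o_d(f)$ and $o_u(f)$ are, by definition, the supremum of $o(f,K)$ over this common family of compact sets, and since the local entropies agree for every compact $K$, the two suprema coincide, giving $o_d(f) = o_u(f)$. I do not expect a genuine obstacle here: the entire argument is bookkeeping, and the only point that truly requires care is the verbatim identification of the two separation conditions under the correspondence $\varepsilon \leftrightarrow u_\varepsilon$, after which the statement follows formally from \textbf{Proposition \ref{basenaomudaentropia}} and the coincidence of the compact families.
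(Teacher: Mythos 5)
Your proposal is correct and follows essentially the same route as the paper: identify the $(n,u_\varepsilon)$-separated and $(n,\varepsilon)$-separated conditions to get $s_{f,u_\varepsilon,K}(n) = s_{f,\varepsilon,K}(n)$, then apply \textbf{Proposition \ref{basenaomudaentropia}} with the base $\{u_\varepsilon : \varepsilon > 0\}$ and take suprema over compact sets. Your version is in fact slightly more careful than the paper's on two minor points: the complement of $u_\varepsilon$ gives $d \geqslant \varepsilon$ (the paper writes $> \varepsilon$, harmlessly), and you justify explicitly that the two settings share the same family of compact sets.
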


\begin{proof}Let $K$ be a compact subset of $X$, $E \subseteq K$, $n \in \N$ and $\epsilon > 0$. Then $E$ is $(n, u_{\epsilon})$-separated if and only if for every $x,y \in E$, with $x \neq y$, there exists $i \in \{0,\dots,n-1\}$ such that $(f^{i}(x),f^{i}(y)) \notin u_{\epsilon}$. But $(f^{i}(x),f^{i}(y)) \notin u_{\epsilon}$ means that $d(f^{i}(x),f^{i}(y)) > \epsilon$. So  $E$ is $(n, u_{\epsilon})$-separated if and only if it is $(n, \epsilon)$-separated. So $s_{f,u_{\epsilon},K}(n) = s_{f,\epsilon,K}(n)$. Then we get $o_{d}(f,K) =  \sup\limits_{\epsilon > 0} [s_{f,\epsilon,K}(n)] = \sup\limits_{\epsilon > 0} [s_{f,u_{\epsilon},K}(n)]$. Since $\{u_{\epsilon}: \epsilon > 0\}$ is a base for the uniform structure $\mathcal{U}_{d}$, then $\sup\limits_{\epsilon > 0} [s_{f,u_{\epsilon},K}(n)] = o_{u}(f,K)$. So $o_{d}(f,K) = o_{u}(f,K)$ and then $o_{d}(f) = o_{u}(f)$.
\end{proof}

\begin{cor}If two metrics induce the same uniform structure, then the entropy of a map with respect to each of them must coincide.
\end{cor}

\begin{proof}Immediate.
\end{proof}

Similarly as for the metric case, we can compare the generalized entropy for uniform spaces with the topological entropy and the polynomial entropy. Let us give a brief definition of polynomial entropy for uniform spaces. Let $K$ be a compact subspace of $X$, $n \in \N$ and $u \in \mathcal{U}$. We consider $s_{f,u,K}(n)$ the maximal cardinality of a $(n,u)$-separated set as before, and define

$$h(f,K)=\lim_{u\in\mathcal{U}}\limsup_{n\rightarrow \infty}\frac{\log s_{f,u,K}(n) }{n}, \ \ \ \ \ \ \ \ h_{pol}(f,K)=\lim_{u\in\mathcal{U}}\limsup_{n\rightarrow \infty}\frac{\log s_{f,u,K}(n) }{\log n} $$

and the topological entropy and polynomial entropy of the map $f$ are defined, respectively, as $h(f)=\sup\{h(f,K): K\subseteq X, K \text{ is compact}\}$ and $h_{pol}(f)=\sup\{h_{pol}(f,K): K\subseteq X, K \text{ is compact}\}$. As mentioned before, the definition of topological entropy is different from that one given in \cite{Ho}, but, by an argument entirely analogous to \textbf{Proposition \ref{basenaomudaentropia}}, both definitions must coincide. 

\begin{lema}Let $S \subseteq \overline{\mathbb{O}}$. Then $\pi_{\mathbb{E}}(\sup S) = \sup \pi_{\mathbb{E}}(S)$ and $\pi_{\mathbb{P}}(\sup S) = \sup \pi_{\mathbb{P}}(S)$.
\end{lema}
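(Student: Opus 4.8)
The plan is to prove the statement for $\pi_{\mathbb{E}}$, since the argument for $\pi_{\mathbb{P}}$ is identical after replacing $[\exp(tn)]$ by $[n^{t}]$ and using that $s \leqslant s'$ implies $[n^{s}] \leqslant [n^{s'}]$. The proof splits into the two inequalities, and both hinge on two elementary features of the family $\mathbb{E}$: first, that $s \mapsto [\exp(sn)]$ is non-decreasing, so each set $I(o,\mathbb{E})$ is upward closed in $(0,\infty)$; and second, that every $[\exp(tn)]$ already lives in $\mathbb{O} \subseteq \overline{\mathbb{O}}$, so it can be used as a genuine comparison element inside the lattice.

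First I would record that $\pi_{\mathbb{E}}$ is monotone. If $o_{1} \leqslant o_{2}$ in $\overline{\mathbb{O}}$ and $o_{2} \leqslant [\exp(tn)]$, then $o_{1} \leqslant [\exp(tn)]$, so $I(o_{2},\mathbb{E}) \subseteq I(o_{1},\mathbb{E})$ and hence $\pi_{\mathbb{E}}(o_{1}) \leqslant \pi_{\mathbb{E}}(o_{2})$ (with the convention $\inf \emptyset = \infty$ making this hold also when $I(o_{2},\mathbb{E})$ is empty). Since $o \leqslant \sup S$ for every $o \in S$, monotonicity yields $\pi_{\mathbb{E}}(o) \leqslant \pi_{\mathbb{E}}(\sup S)$, and taking the supremum over $o \in S$ gives $\sup \pi_{\mathbb{E}}(S) \leqslant \pi_{\mathbb{E}}(\sup S)$.

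The substantive inequality is the reverse one. Write $t_{0} = \sup \pi_{\mathbb{E}}(S)$; if $t_{0} = \infty$ there is nothing to prove, so assume $t_{0} < \infty$ and fix an arbitrary $t > t_{0}$. For each $o \in S$ we have $\pi_{\mathbb{E}}(o) \leqslant t_{0} < t < \infty$, so $I(o,\mathbb{E})$ is non-empty with $\inf I(o,\mathbb{E}) < t$; hence some $s \in I(o,\mathbb{E})$ satisfies $s < t$, and the upward closedness of $I(o,\mathbb{E})$ forces $t \in I(o,\mathbb{E})$, i.e. $o \leqslant [\exp(tn)]$. Thus $[\exp(tn)]$ is an upper bound for $S$, and because $\sup S$ is the least upper bound in the complete lattice $\overline{\mathbb{O}}$ we obtain $\sup S \leqslant [\exp(tn)]$, that is $t \in I(\sup S,\mathbb{E})$. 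As $t > t_{0}$ was arbitrary, $\pi_{\mathbb{E}}(\sup S) = \inf I(\sup S,\mathbb{E}) \leqslant t_{0}$, which completes the reverse inequality and hence the equality.

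I do not expect a serious obstacle here; the one point requiring care is the bookkeeping with the value $\infty$ and the empty set $I(o,\mathbb{E})$, and the conceptual step of passing from ``each $o \in S$ lies below $[\exp(tn)]$'' to ``$\sup S$ lies below $[\exp(tn)]$'', which is precisely where the least-upper-bound property of the Dedekind--MacNeille completion $\overline{\mathbb{O}}$ is invoked. The same four lines, with $[\exp(tn)]$ replaced throughout by $[n^{t}]$ and $I(\cdot,\mathbb{E})$ by $I(\cdot,\mathbb{P})$, establish the statement for $\pi_{\mathbb{P}}$.
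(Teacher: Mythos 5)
Your proof is correct and follows essentially the same route as the paper: the easy inequality $\sup\pi_{\mathbb{E}}(S)\leqslant\pi_{\mathbb{E}}(\sup S)$ via monotonicity of $\pi_{\mathbb{E}}$ (equivalently, $I(\sup S,\mathbb{E})\subseteq I(o,\mathbb{E})$ for each $o\in S$), and the reverse inequality by showing that any $t>\sup\pi_{\mathbb{E}}(S)$ makes $[\exp(tn)]$ a common upper bound for $S$ and hence for $\sup S$. The paper merely packages this second step as a proof by contradiction with a $t$ strictly between the two quantities, and your explicit appeal to the upward closedness of $I(o,\mathbb{E})$ is a point the paper leaves implicit; otherwise the arguments coincide.
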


\begin{proof}Assume that $\sup \pi_{\mathbb{P}}(S) < \pi_{\mathbb{P}}(\sup S)$. Let $t > 0$, that satisfies $\sup \pi_{\mathbb{P}}(S) < t < \pi_{\mathbb{P}}(\sup S)$. Since  $t < \pi_{\mathbb{P}}(\sup S)$, then $t \notin I(\sup S, \mathbb{P})$, which implies that $\sup S \nleqslant [n^{t}]$. Since $\sup \pi_{\mathbb{P}}(S) < t$, then, for every $s \in S$, we have $\pi_{\mathbb{P}}(s) < t$, which implies that, for every $s \in S$, $s \leqslant [n^{t}]$ and then  $\sup S \leqslant [n^{t}]$, which is a contradiction. Thus $\sup \pi_{\mathbb{P}}(S) \geqslant \pi_{\mathbb{P}}(\sup S)$.

Suppose that $I(\sup S, \mathbb{P}) = \emptyset$. Then $\pi_{\mathbb{P}}(\sup S) = \infty$ and $\pi_{\mathbb{P}}(\sup S) \leqslant \sup \pi_{\mathbb{P}}(S)$, which implies that  $\pi_{\mathbb{P}}(\sup S) = \sup \pi_{\mathbb{P}}(S) = \infty$.

Suppose now that $I((\sup S, \mathbb{P}) \neq \emptyset$ and let $t \in I((\sup S, \mathbb{P})$.
So $\sup S \leqslant [n^{t}]$, which implies that, for every $s \in S$, it holds $s \leqslant [n^{t}]$ and then $t \in I(s, \mathbb{P})$. So $I((\sup S, \mathbb{P}) \subseteq I(s,\mathbb{P})$. Thus, for every $s \in S$, we have $\pi_{\mathbb{P}}(s) = \inf I(s,\mathbb{P}) \leqslant \inf I(\sup S, \mathbb{P}) = \pi_{\mathbb{P}}(\sup S)$ and then $\sup\pi_{\mathbb{P}}(S) \leqslant \pi_{\mathbb{P}}(\sup S)$.

Thus $\pi_{\mathbb{P}}(\sup S) = \sup \pi_{\mathbb{P}}(S)$.

For the projection map with respect to the exponential family, the argument is analogous.
\end{proof}

The next lemma is stated in \cite{CP}. We give a proof of it here, for the sake of completeness.

\begin{lema}(Correa-de Paula, page 5 of \cite{CP}) Let $[a(n)] \in \mathbb{O}$. Then $\pi_\mathbb{E}([a(n)]) = \limsup\limits_{n\rightarrow \infty} \frac{\log (a(n))}{n}$ and $\pi_\mathbb{P}([a(n)]) = \limsup\limits_{n\rightarrow \infty} \frac{\log (a(n))}{\log(n)}$.
\end{lema}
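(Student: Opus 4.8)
The plan is to establish the exponential identity
$\pi_{\mathbb{E}}([a(n)]) = \limsup_{n\to\infty} \frac{\log a(n)}{n}$
by a pair of opposite inequalities, and then to observe that the polynomial identity follows by the identical argument after replacing $\exp(tn)$ by $n^{t}$ and the denominator $n$ by $\log n$. Throughout, write $L := \limsup_{n\to\infty} \frac{\log a(n)}{n}$ (we may assume $a(n) > 0$ for large $n$, since otherwise $a \equiv 0$ by monotonicity and both sides are handled directly). The whole proof rests on the elementary observation that the comparison $[a(n)] \leqslant [\exp(tn)]$ unravels, by the definition of $\leqslant$ in $\mathbb{O}$, into an inequality $a(n) \leqslant C \exp(tn)$ to which we can apply the logarithm.

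For the inequality $\pi_{\mathbb{E}}([a(n)]) \geqslant L$, I would take an arbitrary $t \in I([a(n)], \mathbb{E})$. By definition this means $[a(n)] \leqslant [\exp(tn)]$, so there is a constant $C > 0$ with $a(n) \leqslant C \exp(tn)$ for all $n$. Taking logarithms and dividing by $n$ gives $\frac{\log a(n)}{n} \leqslant \frac{\log C}{n} + t$, and passing to the limit superior yields $L \leqslant t$. Since $t$ was an arbitrary element of $I([a(n)], \mathbb{E})$, this shows $\inf I([a(n)],\mathbb{E}) \geqslant L$, a bound that also holds trivially (as $\infty \geqslant L$) when $I([a(n)],\mathbb{E}) = \emptyset$. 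In particular, if $L = \infty$ this step already forces $\pi_{\mathbb{E}}([a(n)]) = \infty$.

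For the reverse inequality I would fix any real $t > L$ and show $t \in I([a(n)], \mathbb{E})$. By the definition of the limit superior there is an $N$ such that $\frac{\log a(n)}{n} < t$, equivalently $a(n) < \exp(tn)$, for all $n \geqslant N$. The finitely many indices $n < N$ are absorbed by setting $C := \max\{1, \max_{n < N} a(n)\exp(-tn)\}$, so that $a(n) \leqslant C \exp(tn)$ for every $n$; hence $[a(n)] \leqslant [\exp(tn)]$ and $t \in I([a(n)],\mathbb{E})$. Therefore $\inf I([a(n)],\mathbb{E}) \leqslant t$ for all $t > L$, giving $\inf I([a(n)],\mathbb{E}) \leqslant L$. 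Combined with the previous paragraph this proves $\pi_{\mathbb{E}}([a(n)]) = L$.

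The polynomial case runs verbatim with $\exp(tn)$ replaced by $n^{t}$: the bound $a(n) \leqslant C n^{t}$ becomes $\frac{\log a(n)}{\log n} \leqslant \frac{\log C}{\log n} + t$ for $n \geqslant 2$, so that the limit superior now controls $\limsup_{n\to\infty} \frac{\log a(n)}{\log n}$, and the same two-inequality scheme applies. I do not expect a genuinely hard step here; the only mild point requiring care is the routine bookkeeping of the multiplicative constant and the degenerate indices --- the finitely many small $n$, together with $n = 1$ (where $\log n = 0$) in the polynomial estimate --- but since the $\limsup$ depends only on the tail of the sequence, these contribute nothing and are folded into $C$.
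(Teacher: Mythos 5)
Your proof is correct and follows essentially the same route as the paper's: both directions come from unpacking $[a(n)]\leqslant[\exp(tn)]$ (resp. $[a(n)]\leqslant[n^t]$) into $a(n)\leqslant C\exp(tn)$, taking logarithms, and using the tail characterization of the limit superior, with the finitely many initial indices absorbed into the constant. Your handling of the infinite/empty-interval cases is a slight streamlining of the paper's explicit case split, but the argument is the same.
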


\begin{proof}Suppose that $\pi_{\mathbb{P}}([a(n)]) < \infty$. Then $I([a(n)],\mathbb{P}) \neq \emptyset$. Let $t \in I([a(n)],\mathbb{P})$. Then $[a(n)] \leqslant [n^{t}]$. So there exists $c > 0$ such that for every $n \in \N$, $a(n) \leqslant c n^{t}$, which implies that $\log(a(n)) \leqslant c'+t\log(n)$, where $c' = \log c$. So $\frac{\log(a(n))}{\log(n)}-\frac{c'}{\log(n)} \leqslant t$ and then $\limsup\limits_{n\rightarrow\infty}\frac{\log(a(n))}{\log(n)} \leqslant t$. So $\pi_\mathbb{P}([a(n)]) = \inf I([a(n)],\mathbb{P}) \geqslant \limsup\limits_{n\rightarrow \infty} \frac{\log (a(n))}{\log(n)}$.

Suppose that $\limsup\limits_{n\rightarrow \infty} \frac{\log (a(n))}{\log(n)} = t < \infty$. So, for every $\delta > 0$, there exists $N_{\delta} > 0$ such that for every $n \geqslant N_{\delta}$, $\frac{\log (a(n))}{\log(n)} \leqslant t+\delta$ (this is a limit superior property, see Exercise 4 of Chapter 11, $\S$ 9 of \cite{Lan}, for example). So, for every $\delta > 0$ and $n \geqslant N_{\delta}$, $\log(a(n)) \leqslant \log(n^{t+\delta})$ and then, for every $\delta > 0$ and $n \geqslant N_{\delta}$, $a(n) \leqslant n^{t+\delta}$. This gives a relation between the classes of growth functions: for every $\delta > 0$, $[a(n)] \leqslant [n^{t+\delta}]$. This implies that, for every $\delta > 0$, $t+\delta \in I([a(n)],\mathbb{P})$ and then, for every $\delta > 0$, $\inf I([a(n)],\mathbb{P}) \leqslant t+\delta$. So $\inf I([a(n)],\mathbb{P}) \leqslant t$, which implies that $\pi_{\mathbb{P}}([a(n)]) = \inf I([a(n)],\mathbb{P}) \leqslant t = \limsup\limits_{n\rightarrow \infty} \frac{\log (a(n))}{\log(n)}$.

So $\pi_{\mathbb{P}}([a(n)]) < \infty$ if and only if $\limsup\limits_{n\rightarrow \infty} \frac{\log (a(n))}{\log(n)} < \infty$ and in this case $\pi_\mathbb{P}([a(n)]) = \limsup\limits_{n\rightarrow \infty} \frac{\log (a(n))}{\log(n)}$. This implies that $\pi_{\mathbb{P}}([a(n)]) = \infty$ if and only if $\limsup\limits_{n\rightarrow \infty} \frac{\log (a(n))}{\log(n)} = \infty$ and it follows that $\pi_\mathbb{P}([a(n)]) = \limsup\limits_{n\rightarrow \infty} \frac{\log (a(n))}{\log(n)}$.

For the projection map with respect to the exponential family, the argument is analogous.
\end{proof}

Now, we can relate generalized entropy for uniform spaces to topological entropy and polynomial entropy for uniform spaces, generalizing \textbf{Proposition \ref{projectionmaps}}:

\begin{prop}\label{projectionmaps2} (Correa-Pujals, Theorem 2 of \cite{CoPu21}, for metric spaces) Let $(X,\mathcal{U})$ be a uniform space and $f: X \rightarrow X$ a continuous map. If $K$ is a compact subspace of $X$, then $\pi_\mathbb{E}(o(f,K))=h(f,K)$ and $\pi_\mathbb{P}(o(f,K))=h_{pol}(f,K)$. Moreover, $\pi_\mathbb{E}(o(f))=h(f)$ and $\pi_\mathbb{P}(o(f))=h_{pol}(f)$.
\end{prop}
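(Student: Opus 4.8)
The plan is to prove the statement at the level of a single compact set $K$ first, and then lift it to the full entropies by taking suprema, using the two lemmas just established. The key observation is that $\pi_{\mathbb{E}}$ and $\pi_{\mathbb{P}}$ admit a purely asymptotic description on $\mathbb{O}$: by the last lemma, for any $[a(n)] \in \mathbb{O}$ we have $\pi_{\mathbb{P}}([a(n)]) = \limsup_{n\to\infty} \frac{\log a(n)}{\log n}$ and $\pi_{\mathbb{E}}([a(n)]) = \limsup_{n\to\infty} \frac{\log a(n)}{n}$, and similarly the projections commute with arbitrary suprema by the penultimate lemma. These two facts are exactly what is needed to convert the lattice-theoretic definition of $o(f,K)$ into the analytic $\limsup$ definitions of $h(f,K)$ and $h_{pol}(f,K)$.

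First I would fix a compact $K \subseteq X$ and compute $\pi_{\mathbb{P}}(o(f,K))$. By definition $o(f,K) = \sup\{[s_{f,u,K}(n)] : u \in \mathcal{U}\}$, so applying the supremum-commutation lemma gives
\begin{equation*}
\pi_{\mathbb{P}}(o(f,K)) = \sup_{u \in \mathcal{U}} \pi_{\mathbb{P}}\bigl([s_{f,u,K}(n)]\bigr) = \sup_{u \in \mathcal{U}} \limsup_{n\to\infty} \frac{\log s_{f,u,K}(n)}{\log n},
\end{equation*}
where the second equality is the asymptotic formula for $\pi_{\mathbb{P}}$ on elements of $\mathbb{O}$. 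The remaining point is that this supremum over $u \in \mathcal{U}$ coincides with the directed limit $\lim_{u \in \mathcal{U}}$ appearing in the definition of $h_{pol}(f,K)$. This is where one uses the monotonicity established earlier: if $v \subseteq u$ then $s_{f,v,K}(n) \geqslant s_{f,u,K}(n)$, so the quantity $\limsup_n \frac{\log s_{f,u,K}(n)}{\log n}$ is monotone with respect to the refinement order on $\mathcal{U}$ (directed by reverse inclusion), and hence its limit along this net equals its supremum. This yields $\pi_{\mathbb{P}}(o(f,K)) = h_{pol}(f,K)$, and the identical argument with the $\limsup \frac{\log(\cdot)}{n}$ formula gives $\pi_{\mathbb{E}}(o(f,K)) = h(f,K)$.

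For the global statement, I would again invoke the supremum-commutation lemma: since $o(f) = \sup\{o(f,K) : K \text{ compact}\}$,
\begin{equation*}
\pi_{\mathbb{P}}(o(f)) = \sup_{K} \pi_{\mathbb{P}}(o(f,K)) = \sup_{K} h_{pol}(f,K) = h_{pol}(f),
\end{equation*}
and likewise for $\pi_{\mathbb{E}}$ and $h$. The main obstacle I anticipate is the justification that the net-limit $\lim_{u \in \mathcal{U}}$ in the definitions of $h$ and $h_{pol}$ genuinely equals the supremum over all $u \in \mathcal{U}$; this requires checking that $\mathcal{U}$ is directed by reverse inclusion (which follows from axiom (4), closure under finite intersection) and that the relevant $\limsup$ functional is monotone along this net. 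Everything else is a transcription of the analytic lemmas into the uniform-space bookkeeping, and no genuinely new estimate is needed beyond the monotonicity of $s_{f,u,K}(n)$ in $u$.
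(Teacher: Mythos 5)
Your proposal is correct and follows essentially the same route as the paper: commute $\pi_{\mathbb{P}}$ (resp.\ $\pi_{\mathbb{E}}$) with the supremum over $u\in\mathcal{U}$ using the supremum-commutation lemma, apply the asymptotic formula for the projections on $\mathbb{O}$, identify the resulting supremum with the directed limit defining $h_{pol}(f,K)$ (resp.\ $h(f,K)$), and then take the supremum over compact sets. The only difference is that you spell out the identification of the net-limit with the supremum via monotonicity of $s_{f,u,K}(n)$ in $u$, a step the paper leaves implicit.
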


\begin{proof}Let $K$ be a compact subspace of $X$. Then $\pi_{\mathbb{P}}(o(f,K)) = \pi_{\mathbb{P}}(\sup\limits_{u \in \mathcal{U}}[s_{f,u,K}(n)]) = \sup\limits_{u \in \mathcal{U}}\pi_{\mathbb{P}}([s_{f,u,K}(n)]) = \sup\limits_{u \in \mathcal{U}} \limsup\limits_{n\rightarrow \infty} \frac{\log (s_{f,u,K}(n))}{\log(n)} = h_{pol}(f,K)$. 

Also, $\pi_{\mathbb{P}}(o(f)) = \pi_{\mathbb{P}}(\sup\{o(f,K): K \subseteq X, \ K \ \text{is compact} \}) = \sup\{\pi_{\mathbb{P}}(o(f,K)): K \subseteq X, \ K \ \text{is compact} \} = \sup\{ h_{pol}(f,K):K \subseteq X, \ K \ \text{is compact} \} = h_{pol}(f)$.

For the topological entropy, the argument is analogous.
\end{proof}

\subsection{Properties}

Here, we establish a collection of basic properties of generalized entropy in the context of uniform spaces. Many of these results are natural extensions of their counterparts in the metric setting, while others highlight structural features specific to uniform spaces. These properties will be used throughout the paper and include the behavior under restrictions to invariant subspaces, monotonicity under iteration, and vanishing of entropy under Lyapunov stability, among others.

\begin{prop}\label{entropyandsubspace}Let $(X,\mathcal{U})$ be a uniform space, $f: X \rightarrow X$ a continuous map, $(Y, \mathcal{U}')$ an invariant subset of $X$, with its subspace uniform structure, and $K$ a compact set contained in $Y$. Then $$o(f,K) = o(f|_{Y},K),$$ where we consider on the left hand side the uniform structure $\mathcal{U}$ and on the right hand side we consider the uniform structure $\mathcal{U}'$.
\end{prop}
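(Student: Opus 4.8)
The plan is to compute both entropies through separated sets and to exploit the fact that the subspace uniform structure $\mathcal{U}'$ admits the collection $\mathcal{B}' = \{u \cap (Y\times Y): u\in\mathcal{U}\}$ as a base. By \textbf{Proposition \ref{basenaomudaentropia}}, the entropy $o(f|_{Y},K)$ may be computed as $\sup\{[s_{f|_{Y},v,K}(n)]: v\in\mathcal{B}'\}$ rather than ranging over all of $\mathcal{U}'$, so it suffices to match, term by term, the separated-set sequences indexed by $u\in\mathcal{U}$ on the $X$-side with those indexed by $u\cap(Y\times Y)$ on the $Y$-side.

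The key step is to establish that for every $u\in\mathcal{U}$ and every $n\in\N$ one has the exact equality
\[
s_{f,u,K}(n) = s_{f|_{Y},\,u\cap(Y\times Y),\,K}(n).
\]
To see this I would note that any candidate $(n,u)$-separated set $E$ is contained in $K\subseteq Y$, and that since $Y$ is $f$-invariant we have $f^{i}(E)\subseteq Y$ for all $i\geqslant 0$, with $(f|_{Y})^{i}=f^{i}$ on $Y$. Hence for $x,y\in E$ the pairs $(f^{i}(x),f^{i}(y))$ already lie in $Y\times Y$, so the membership test $(f^{i}(x),f^{i}(y))\notin u$ is equivalent to $(f^{i}(x),f^{i}(y))\notin u\cap(Y\times Y)$. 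Therefore $E\subseteq K$ is $(n,u)$-separated for $f$ in $X$ if and only if it is $(n,u\cap(Y\times Y))$-separated for $f|_{Y}$ in $Y$; since the two notions have exactly the same admissible sets, their maximal cardinalities coincide.

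Taking suprema then finishes the argument: applying \textbf{Proposition \ref{basenaomudaentropia}} to the base $\mathcal{B}'$ and substituting the term-by-term equality gives
\[
o(f|_{Y},K)=\sup_{u\in\mathcal{U}}[s_{f|_{Y},\,u\cap(Y\times Y),\,K}(n)]=\sup_{u\in\mathcal{U}}[s_{f,u,K}(n)]=o(f,K).
\]
I do not expect any genuine analytic difficulty here; the whole content is bookkeeping. The one point requiring care is the justification that intersecting each entourage with $Y\times Y$ changes neither the family of separated sets nor their cardinalities — which rests squarely on the invariance of $Y$ and the compatibility of iterates $(f|_{Y})^{i}=f^{i}$ — together with the legitimacy of computing $o(f|_{Y},K)$ over the base $\mathcal{B}'$, guaranteed by \textbf{Proposition \ref{basenaomudaentropia}}. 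The statement $o(f,K)=o(f|_{Y},K)$ for compact $K\subseteq Y$ then implies the equality of global entropies whenever one restricts attention to compact sets inside $Y$.
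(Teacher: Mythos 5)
Your proof is correct and follows essentially the same route as the paper: both arguments reduce to the observation that, by invariance of $Y$ and $(f|_{Y})^{i}=f^{i}$, a subset of $K$ is $(n,u)$-separated for $f$ if and only if it is $(n,u\cap(Y\times Y))$-separated for $f|_{Y}$, and then match the suprema. The only cosmetic difference is that you pass through \textbf{Proposition \ref{basenaomudaentropia}} by treating the traces $u\cap(Y\times Y)$ as a base of $\mathcal{U}'$, whereas the paper uses directly that every entourage of the subspace structure is such a trace; both are valid.
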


\begin{proof}Let $u' \in \mathcal{U}'$. Then, there exists $u \in \mathcal{U}$, such that $u' = u \cap (Y\times Y)$. Let $S \subseteq K$. So $S$ is $(n,u')$-separated for the map $f|_{Y}$ if and only if $S$ is $(n,u)$-separated for the map $f$. Then $s_{f|_{Y},u',K}(n) = s_{f,u,K}(n)$. If we start with $u \in \mathcal{U}$, we define  $u' = u \cap (Y\times Y)$ and we also get that $s_{f|_{Y},u',K}(n) = s_{f,u,K}(n)$. Thus  $o(f|_{Y},K) = \sup\{[s_{f|_{Y},u',K}(n)]: u' \in \mathcal{U}'\} =  \sup\{[s_{f,u,K}(n)]: u \in \mathcal{U}\} = o(f,K)$.  
\end{proof}

\begin{prop}[Correa-Pujals, Proposition B.3 of \cite{CoPu21} for  metric spaces]\label{subspaces} If $(X, \mathcal{U})$ is a Hausdorff uniform  space, $f: X \rightarrow X$ is a continuous map, $K_{1},\dots,K_{m}$ compact subspaces of $X$ and $K = \bigcup_{i = 1}^{m} K_{i}$. Then $o(f,K) = \sup\{o(f, K_{i}): i \in \{1,\dots,m\}\}$.
\end{prop}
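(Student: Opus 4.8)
The plan is to prove the two inequalities $\sup_i o(f,K_i) \leqslant o(f,K)$ and $o(f,K) \leqslant \sup_i o(f,K_i)$ separately, reducing everything to a single combinatorial estimate on $(n,u)$-separated sets and then carefully controlling the behaviour of the suprema in the complete lattice $\overline{\mathbb{O}}$.

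For the first (easy) inequality I would first record the monotonicity of $o(f,\cdot)$ under inclusion: if $K'\subseteq K$ are compact and $E\subseteq K'$ is $(n,u)$-separated, then $E$ is also an $(n,u)$-separated subset of $K$, so $s_{f,u,K'}(n)\leqslant s_{f,u,K}(n)$ for every $u\in\mathcal{U}$ and every $n\in\N$; passing to classes and taking $\sup$ over $u$ gives $o(f,K')\leqslant o(f,K)$. Applying this to each inclusion $K_i\subseteq K$ and then taking the finite supremum yields $\sup_i o(f,K_i)\leqslant o(f,K)$.

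For the reverse inequality, fix $u\in\mathcal{U}$ and $n\in\N$ and let $E\subseteq K$ be a maximal $(n,u)$-separated set. Since $K=\bigcup_{i=1}^m K_i$, we may write $E=\bigcup_{i=1}^m (E\cap K_i)$, and each $E\cap K_i$ is an $(n,u)$-separated subset of $K_i$, hence $\#(E\cap K_i)\leqslant s_{f,u,K_i}(n)$. Therefore
$$s_{f,u,K}(n)=\#E\leqslant \sum_{i=1}^m \#(E\cap K_i)\leqslant \sum_{i=1}^m s_{f,u,K_i}(n)\leqslant m\max_{1\leqslant i\leqslant m} s_{f,u,K_i}(n).$$
To pass this pointwise estimate to $\overline{\mathbb{O}}$ I would use two facts about the order of growth: the constant factor $m$ is absorbed by the equivalence $\approx$, so $[m\,a(n)]=[a(n)]$; and for finitely many sequences the pointwise maximum realises the lattice supremum, i.e. $[\max_i a_i(n)]=\sup_i [a_i(n)]$ (the class $[\max_i a_i(n)]$ dominates each $[a_i(n)]$, while any upper bound $[b]$ with $a_i(n)\leqslant C_i b(n)$ satisfies $\max_i a_i(n)\leqslant (\max_i C_i)\,b(n)$, so $[\max_i a_i(n)]$ is the least upper bound). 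Combining these gives $[s_{f,u,K}(n)]\leqslant \sup_i [s_{f,u,K_i}(n)]$ for every $u\in\mathcal{U}$. Taking $\sup$ over $u$ and interchanging the two suprema (legitimate for iterated suprema in the complete lattice $\overline{\mathbb{O}}$) yields
$$o(f,K)=\sup_{u\in\mathcal{U}}[s_{f,u,K}(n)]\leqslant \sup_{u\in\mathcal{U}}\sup_i [s_{f,u,K_i}(n)]=\sup_i \sup_{u\in\mathcal{U}}[s_{f,u,K_i}(n)]=\sup_i o(f,K_i),$$
which closes the argument.

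I expect the main obstacle to be purely the lattice bookkeeping: verifying that the finite supremum in $\overline{\mathbb{O}}$ genuinely coincides with the class of the pointwise maximum, that the multiplicative constant $m$ is irrelevant, and that swapping the two suprema is justified. The dynamical content—partitioning a separated set according to which $K_i$ contains each point—is elementary, and the hypothesis that $X$ is Hausdorff is needed only insofar as it guarantees the uniform-space machinery and the finiteness of $s_{f,u,K_i}(n)$ on the compact sets $K_i$.
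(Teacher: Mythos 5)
Your proof is correct and follows essentially the same route as the paper: monotonicity for the easy inequality, and for the converse the same decomposition of a maximal $(n,u)$-separated set into its intersections with the $K_i$, the bound $s_{f,u,K}(n)\leqslant m\max_i s_{f,u,K_i}(n)$, and the identification of the class of the pointwise maximum with the finite lattice supremum (which the paper delegates to Lemma 3.3 of \cite{CP} and you verify directly). The only cosmetic difference is that you make the interchange of the two suprema and the least-upper-bound verification explicit.
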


\begin{proof}Since $K_{i} \subseteq K$, then, for every $u \in \mathcal{U}$ and $n \in \N$, if a subset in $K_{i}$ is $(n,u)$-separated, then it is $(n,u)$-separated as a subset of $K$. So $s_{f,u,K_{i}}(n) \leqslant s_{f,u,K}(n)$, which implies that $o(f,K_{i}) \leqslant o(f,K)$ and then $\sup\{o(f, K_{i}): i \in \{1,\dots,m\}\} \leqslant o(f,K)$.

Let $u \in \mathcal{U}$ and $n \in \N$. Let $S \subseteq K$ that is $(n,u)$-separated and $\# S = s_{f,u,K}(n)$. Then $S\cap K_{i}$ is  $(n,u)$-separated and it is contained in $K_{i}$, which implies that $\# (S\cap K_{i}) \leqslant s_{f,u,K_{i}}(n)$. Then $s_{f,u,K}(n) = \# S \leqslant \sum_{i = 1}^{m} \# (S\cap K_{i}) \leqslant \sum_{i = 1}^{m} s_{f,u,K_{i}}(n) \leqslant m \sup \{s_{f,u,K_{i}}(n): i \in \{1,\dots,m\}\}$, which implies that $[s_{f,u,K}(n)] \leqslant [\sup \{s_{f,u,K_{i}}(n): i \in \{1,\dots,m\}\}]$. But, by Lemma 3.3 of \cite{CP}, $[\sup \{s_{f,u,K_{i}}(n): i \in \{1,\dots,m\}\}] = \sup \{[s_{f,u,K_{i}}(n)]: i \in \{1,\dots,m\}\}$, which implies that $[s_{f,u,K}(n)] \leqslant  \sup \{[s_{f,u,K_{i}}(n)]: i \in \{1,\dots,m\}\}$. So $o(f,K) = \sup\{[s_{f,u,K}(n)]: u \in \mathcal{U}\} \leqslant  \sup \{[s_{f,u,K_{i}}(n)]: i \in \{1,\dots,m\}, u \in \mathcal{U}\} = \sup\{o(f, K_{i}): i \in \{1,\dots,m\}\}$.

Thus $o(f,K) = \sup\{o(f, K_{i}): i \in \{1,\dots,m\}\}$.
\end{proof}

\begin{prop}[Correa-Pujals, Theorem 3 of \cite{CoPu21} for compact metric spaces]\label{alphalimitentropy} Let $(X,\mathcal{U})$ be a Hausdorff compact uniform space and $f: X \rightarrow X$ a homeomorphism. If there exists $x \in X$ that is not contained on its own $\alpha$-limit, then $o(f) \geqslant [n]$.  
\end{prop}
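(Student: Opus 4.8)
The plan is to exhibit, for a single well-chosen entourage $u$, a family of $(n,u)$-separated subsets of $X$ whose cardinalities grow at least linearly in $n$. Since $X$ is compact, $o(f) \geqslant o(f,X) \geqslant [s_{f,u,X}(n)]$, so producing such a family forces $o(f) \geqslant [n]$.

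First I would translate the hypothesis $x \notin \alpha(x)$ into a uniform escape statement. The $\alpha$-limit set $\alpha(x) = \bigcap_{N} \overline{\{f^{-k}(x): k \geqslant N\}}$ is closed, so $x \notin \alpha(x)$ provides an index $N$ with $x \notin \overline{\{f^{-k}(x): k \geqslant N\}}$. The complement of this closed set is an open neighborhood of $x$, inside which one can fit a ball $\mathcal{B}(x,u)$ for some symmetric entourage $u$. This yields the key property: $(x, f^{-k}(x)) \notin u$ for every $k \geqslant N$. I would also record that $x \notin \alpha(x)$ forces $x$ to be non-periodic (a period-$p$ point satisfies $f^{-np}(x)=x$, hence lies in its own $\alpha$-limit), so the entire backward orbit $\{f^{-k}(x)\}_{k \geqslant 0}$ consists of distinct points.

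The core of the argument is the separated set. For each $n$, consider the $N$-spaced backward orbit $E_n = \{f^{-jN}(x): 0 \leqslant j \leqslant m-1\}$, where $m$ is the largest integer with $(m-1)N \leqslant n-1$, so that $m \sim n/N$. To check that $E_n$ is $(n,u)$-separated, take two distinct elements $f^{-jN}(x)$ and $f^{-kN}(x)$ with $j < k$, and evaluate at time $i = jN$, which lies in $\{0,\dots,n-1\}$ by the choice of $m$: one gets $f^{i}(f^{-jN}(x)) = x$ and $f^{i}(f^{-kN}(x)) = f^{-(k-j)N}(x)$. Since $(k-j)N \geqslant N$, the escape property gives $(x, f^{-(k-j)N}(x)) \notin u$, so these two points are separated at time $i$. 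Hence $s_{f,u,X}(n) \geqslant m$, and because $m \sim n/N$ we conclude $[s_{f,u,X}(n)] \geqslant [n]$ and therefore $o(f) \geqslant [n]$.

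The main obstacle is precisely the reason the points must be spaced by $N$ rather than taken consecutively: consecutive backward iterates need not leave $\mathcal{B}(x,u)$, so the naive set $\{x, f^{-1}(x), f^{-2}(x),\dots\}$ may fail to be $(n,u)$-separated. Spacing by $N$ guarantees that every pair, when pushed forward so that the smaller-index point returns to $x$, sees the other point at a backward time of at least $N$, where the escape property applies uniformly. The remaining bookkeeping is only to confirm that the witnessing time $i = jN$ stays within the window $\{0,\dots,n-1\}$, which is exactly how $m$ was defined; the rest (choosing $u$ symmetric so that $(x,\cdot)\notin u$ and $(\cdot,x)\notin u$ agree, and $o(f)\geqslant o(f,X)$ from compactness) is routine.
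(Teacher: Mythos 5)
Your proof is correct and follows essentially the same route as the paper's: both extract from $x \notin \alpha(x)$ an entourage and a time $N$ (the paper's $m_0$) past which the backward orbit stays out of $\mathcal{B}(x,u)$, and then use a segment of the backward orbit as an $(n,\cdot)$-separated set of linearly growing cardinality. The only difference is cosmetic bookkeeping: where you thin the orbit to multiples of $N$ to keep the single entourage $u$, the paper instead shrinks $u$ to a smaller symmetric $v$ that also excludes the finitely many points $f^{-1}(x),\dots,f^{-m_0}(x)$ (legitimate precisely because, as you observe, $x$ is non-periodic) and then uses all $n$ consecutive backward iterates; both yield $[s(n)]\geqslant[n]$.
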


\begin{proof}Let $x \in X$ be a point that is not contained in its own $\alpha$-limit and $n \in \N$. Then, there exists $u \in \mathcal{U}$ and $m_{0} \in \N$ such that for every $m > m_{0}$, $f^{-m}(x) \notin \mathcal{B}(x,u)$. Let $v \subseteq u$ be a symmetric entourage such that $f^{-m_{0}}(x),\dots,f^{-2}(x),f^{-1}(x) \notin \mathcal{B}(x,v)$. Then, for every $m > 1$, $f^{-m}(x) \notin \mathcal{B}(x,v)$. Consider the set $A_{n} = \{f^{-i}(x): i \in \{0,..,n-1\}\}$. Let $i,j \in \{0,\dots,n-1\}$, with $i < j$. Hence, $i \in \{0,\dots,n-1\}$ and $(f^{i}(f^{-i}(x)),f^{i}(f^{-j}(x))) = (x,f^{i-j}(x)) \notin v$, which implies that $f^{-i}(x)$ and $f^{-j}(x)$ are $(n,v)$-separated and then $A_{n}$ is $(n,v)$-separated. So $s_{f,v}(n) \geqslant \# A_{n} = n$, which implies that $[s_{f,v}(n)] \geqslant [n]$ and then $o(f) \geqslant [s_{f,v}(n)] \geqslant [n]$.
\end{proof}

\begin{prop}[Correa-Pujals, Proposition B.1 of \cite{CoPu21} for compact metric spaces]\label{periodichaszeroentropy} If $(X, \mathcal{U})$ is a Hausdorff uniform space and $f: X \rightarrow X$ is a homeomorphism, then $o(f) \leqslant o(f^2) \leqslant o(f^3) \leqslant\dots$. Moreover, if $f$ is a periodic map, then $o(f) = 0$. 
\end{prop}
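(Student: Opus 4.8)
The plan is to derive both assertions from one comparison between the dynamical neighbourhoods of $f$ and of $f^m$, the monotonicity being the substantive part. Fix a compact $K\subseteq X$ and $m\geqslant 1$; I would prove $o(f,K)\leqslant o(f^m,K)$. The engine is the inclusion
\[\mathcal{B}_{f^m}(x,n,v)\subseteq \mathcal{B}_f(x,mn,u),\]
valid for every $u\in\mathcal{U}$ once we set $v:=u\cap\bigcap_{r=1}^{m-1}(f^r)^{-1}(u)\in\mathcal{U}$. The iterates $f^r$ are uniformly continuous — automatic when $X$ is compact Hausdorff, as recalled in the preliminaries, and assumed otherwise — so each $(f^r)^{-1}(u)$ is an entourage and $v$ is a genuine element of $\mathcal{U}$. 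To check the inclusion, take $y\in\mathcal{B}_{f^m}(x,n,v)$ and an arbitrary time $j\in\{0,\dots,mn-1\}$, write $j=mi+r$ with $0\leqslant r<m$ and $0\leqslant i<n$; then $(f^{mi}(x),f^{mi}(y))\in v\subseteq (f^r)^{-1}(u)$ forces $(f^j(x),f^j(y))=(f^r(f^{mi}(x)),f^r(f^{mi}(y)))\in u$.

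The inclusion says exactly that an $(n,v)$-generator for $f^m$ is an $(mn,u)$-generator for $f$, so $g_{f,u,K}(mn)\leqslant g_{f^m,v,K}(n)$. Since $g_{f,u,K}$ is non-decreasing in the window length, in $\mathbb{O}$ we get $[g_{f,u,K}(n)]\leqslant[g_{f,u,K}(mn)]\leqslant[g_{f^m,v,K}(n)]\leqslant o(f^m,K)$. Taking the supremum over $u\in\mathcal{U}$ and using $o=o_g$ (Lemma \ref{geradoreseseparaveis} with Proposition \ref{basenaomudaentropia}) gives $o(f,K)\leqslant o(f^m,K)$, and a supremum over compact $K$ yields $o(f)\leqslant o(f^m)$ for every $m\geqslant 1$. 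Running the same comparison with $f^k$ in place of $f$ gives $o(f^k)\leqslant o(f^{km})$, i.e. the monotone behaviour recorded in the statement; in particular $o(f)\leqslant o(f^p)$ for every $p$, which is all that the periodicity claim will need.

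For the periodicity statement I would give a direct argument that sidesteps uniform continuity. If $f^p=\mathrm{id}$, then for $n\geqslant p$ the iterates $f^0,\dots,f^{n-1}$ realize only the finitely many maps $f^0,\dots,f^{p-1}$ (as $f^i=f^{i\bmod p}$), so a subset is $(n,u)$-separated if and only if it is $(p,u)$-separated. Hence $s_{f,u,K}(n)=s_{f,u,K}(p)$ for all $n\geqslant p$, so $[s_{f,u,K}(n)]$ is the class of an eventually constant sequence, namely $0$, for every $u$ and every compact $K$. Therefore $o(f)=0$. Equivalently, one may invoke $o(\mathrm{id})=0$ together with the monotonicity, via $o(f)\leqslant o(f^p)=o(\mathrm{id})$.

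The main obstacle is the reverse direction inside the monotonicity: turning an efficient $f^m$-cover into a comparably efficient $f$-cover over the $m$ times longer window. This is precisely where uniform continuity of the iterates is used, to keep the intermediate times $mi+r$ under control by shrinking $u$ to $v$; without it $v$ need not be an entourage and the comparison can fail. On compact Hausdorff uniform spaces this cost is free, so there the statement holds unconditionally.
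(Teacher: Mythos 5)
Your key inclusion $\mathcal{B}_{f^m}(x,n,v)\subseteq \mathcal{B}_f(x,mn,u)$ and the resulting bound $g_{f,u,K}(mn)\leqslant g_{f^m,v,K}(n)$ are correct, and they do give $o(f^k,K)\leqslant o(f^{km},K)$ for all $k,m$. But that is strictly weaker than what the proposition asserts. The statement is the chain over \emph{consecutive} exponents, $o(f^2)\leqslant o(f^3)$ included, and $3$ is not a multiple of $2$: from the family of inequalities $o(f^k)\leqslant o(f^{km})$ alone you cannot compare $o(f^2)$ with $o(f^3)$. So the sentence ``running the same comparison with $f^k$ in place of $f$ gives \dots\ the monotone behaviour recorded in the statement'' overclaims; this is a genuine gap, not a presentational one.

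What is missing is the cheap reverse inequality $g_{f^k,u,K}(n)\leqslant g_{f,u,K}(kn)$ (an $(kn,u)$-generator for $f$ is automatically an $(n,u)$-generator for $f^k$, since the constraint at times $0,k,2k,\dots,(n-1)k$ is a sub-collection of the constraints at times $0,\dots,kn-1$). Combined with your inequality and a supremum over $u$, the two bounds sandwich $\sup_u[g_{f,u,K}(kn)]$ between two expressions both equal to $o(f^k,K)$, yielding the identity $o(f^k,K)=\sup_u\{[g_{f,u,K}(kn)]\}$. The consecutive chain then falls out of the monotonicity of $n\mapsto g_{f,u,K}(n)$, since $g_{f,u,K}(kn)\leqslant g_{f,u,K}((k+1)n)$ termwise. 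This is exactly how the paper argues. Two smaller remarks: your entourage $v=u\cap\bigcap_{r=1}^{m-1}(f^r)^{-1}(u)$ requires global uniform continuity of the iterates, which is not automatic on a non-compact Hausdorff uniform space; the paper avoids this by choosing $v_u$ only to control pairs lying in the compact set $\bigcup_i f^{ik}(K)$, where uniform continuity is free. Your direct treatment of the periodic case (separation data for $f$ repeats with period $p$, so $s_{f,u,K}(n)$ is eventually constant) is correct and is in fact more self-contained than the paper, which leaves that clause to the reader; note that for this clause your weaker inequality $o(f)\leqslant o(f^p)=o(\mathrm{id})=0$ already suffices.
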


\begin{obs}We denote by $0$ as the class of eventually constant sequences in $\overline{\mathbb{O}}$.
\end{obs}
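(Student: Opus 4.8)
The final statement is a notational remark rather than a proposition, so there is no theorem to establish; what must be checked is merely that the phrase ``the class of eventually constant sequences'' specifies a single, well-defined element of $\overline{\mathbb{O}}$, and that writing it as $0$ is justified by its position in the lattice. The plan is thus to verify two things: that all the relevant eventually constant sequences collapse to one $\approx$-equivalence class in $\mathbb{O}$, and that this class sits at the bottom of the order in which the generalized entropy takes its values, so that the symbol $0$ records the entropy of the trivial (e.g.\ periodic) dynamics.

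For the well-definedness, I would argue straight from the definition of $\approx$. Every sequence arising as $s_{f,u,K}(n)$ with $K \neq \emptyset$ satisfies $s_{f,u,K}(n) \geqslant 1$ for all $n$, since a single point of $K$ is vacuously $(n,u)$-separated; hence an eventually constant such sequence $a$ obeys $1 \leqslant a(n) \leqslant M_a$ for some $M_a$ and all $n$. Given two such sequences $a,b$ with $1 \leqslant a(n) \leqslant M_a$ and $1 \leqslant b(n) \leqslant M_b$, the inequalities $(1/M_a)\,a(n) \leqslant 1 \leqslant b(n)$ and $b(n) \leqslant M_b \leqslant M_b\,a(n)$ give $a \approx b$ with $c_1 = 1/M_a$ and $c_2 = M_b$. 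Thus all eventually constant sequences bounded away from $0$ form a single point of $\mathbb{O}$, represented by the constant sequence $1$.

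To justify the name $0$, note that for any nonempty compact $K$ and any $u$ one has $s_{f,u,K}(n) \geqslant 1$, so the constant sequence $1$ is $\leqslant$ (in the order of $\mathbb{O}$) every sequence that occurs in the definition of $o(f,K)$; consequently this class is the least value that $o(f,\cdot)$, and hence $o(f)$, can attain on a nonempty space. It is therefore the minimum of the sub-poset of $\mathbb{O}$ through which generalized entropy is computed, and, being the least element there, it plays the role of the bottom element $0$ of the complete lattice $\overline{\mathbb{O}}$ from the point of view of the theory; this is precisely the content the remark is fixing, and it is consistent with the preceding proposition's conclusion $o(f)=0$ for periodic maps.

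The one delicate point, worth flagging, is the literal zero sequence and the role of initial terms. In full generality the constant sequence $0$ is \emph{not} $\approx$-equivalent to the constant sequence $1$ (the defining inequalities force any partner of $0$ to vanish identically), and even an eventually constant sequence whose first terms are $0$, such as $(0,1,1,\dots)$, fails to be equivalent to $(1,1,\dots)$. Hence ``eventually constant'' must be read as ``eventually constant at a positive value and bounded away from $0$,'' which is automatic in the entropy setting because of the bound $s_{f,u,K}(n)\geqslant 1$. With that reading the class is unambiguous, and denoting it by $0$ is both well-defined and coherent with its role as the minimal attainable complexity.
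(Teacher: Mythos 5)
This statement is a notational remark with no accompanying proof in the paper, so there is nothing to diverge from: your verification that the eventually constant sequences stabilizing at a positive value (equivalently, the non-decreasing sequences bounded above and bounded away from zero) form a single class of $\mathbb{O}$, and that this class is the least value the counting sequences can realize since $s_{f,u,K}(n)\geqslant 1$ for nonempty $K$, is exactly the content the convention presupposes, and it is consistent with the paper's use of $0$ in \textbf{Proposition \ref{periodichaszeroentropy}} and \textbf{Proposition \ref{lyapunovstability}}. Your caveat about the literal zero sequence and about initial zero terms is also correct under the paper's definition of $\approx$, which quantifies over all $n$; since separated-set and generator counts on a nonempty space are always at least $1$, the ambiguity never arises in the entropy computations, which is presumably why the paper leaves it implicit.
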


\begin{proof}Let $K$ be a compact subset of $X$ and $u \in \mathcal{U}$. If a set $S \subseteq K$ is a $(kn,u)$-generator for $K$, with respect to $f$, then, for every $y \in K$, there exists $x \in S$ such that for every $i \in \{0,..,kn-1\}$, $(f^{i}(x),f^{i}(y)) \in u$. In particular, for every $i \in \{0,\dots,n-1\}$, $(f^{ik}(x),f^{ik}(y)) \in u$, which implies that $S$ is a $(n,u)$-generator for $K$, with respect to $f^{k}$. Thus $g_{f^{k},u,K}(n) \leqslant g_{f,u,K}(kn)$.

Let $v_{u} \in \mathcal{U}$ such that for every $x,y \in \bigcup\limits_{i = 0}^{n-1} f^{ik}(K)$, with $(x,y) \in v_{u}$, we have that for every $i \in \{0,\dots,k-1\}$, $(f^{i}(x),f^{i}(y)) \in u$ (it is possible to assure the existence of $v_{u}$ since $K$ is compact and $f|_{f^{ik}(K)}$ is uniformly continuous, for each $i \in \{0,\dots,n+k-2\}$). Then, for every $x \in \bigcup\limits_{i = 0}^{n-1} f^{ik}(K)$, $\mathcal{B}(x,v_{u}) \subseteq \mathcal{B}_{f}(x,k,u)$. So, for every $x \in K$, we get

\begin{align*}
    \mathcal{B}_{f^{k}}(x,n,v_{u}) &= \bigcap_{i = 0}^{n-1}f^{-ik}(\mathcal{B}(f^{ik}(x),v_{u})) \subseteq  \bigcap_{i = 0}^{n-1}f^{-ik}(\mathcal{B}_{f}(f^{ik}(x),k,u)) = \bigcap_{i = 0}^{n-1}f^{-ik}\left(\bigcap_{j = 0}^{k-1} f^{-j}(\mathcal{B}(f^{ik+j}(x),u))\right)\\ &=\bigcap_{i = 0}^{n-1} \bigcap_{j = 0}^{k-1} f^{-(ik+j)}( \mathcal{B}(f^{ik+j}(x),u))  = \bigcap_{i = 0}^{kn-1} f^{-i}( \mathcal{B}(f^{i}(x),u)) = \mathcal{B}_{f}(x,kn,u).
\end{align*}

So $g_{f,u,K}(kn) \leqslant g_{f^{k},v_{u},K}(n)$. By the inequalities $g_{f^{k},u,K}(n) \leqslant g_{f,u,K}(kn) \leqslant g_{f^{k},v_{u},K}(n)$ and varying $u$, we have that

\begin{align*}
    o(f^{k},K) &= \sup \{[g_{f^{k},u,K}(n)]: u \in \mathcal{U}\} \leqslant \sup\{[g_{f,u,K}(kn)]: u \in \mathcal{U}\} \leqslant \sup\{[g_{f^{k},v_{u},K}(n)]: u \in \mathcal{U}\} \\ & \leqslant \sup \{[g_{f^{k},u,K}(n)]: u \in \mathcal{U}\} = o(f^{k},K).
\end{align*}

Then $o(f^{k},K) = \sup\{[g_{f,u,K}(kn)]: u \in \mathcal{U}\}$. But $g_{f,u,K}(n)$ is a non decreasing map, which implies that

$$[g_{f,u,K}(n)] \leqslant [g_{f,u,K}(2n)] \leqslant \dots \leqslant [g_{f,u,K}(kn)] \leqslant \dots$$

Varying $u \in \mathcal{U}$, we obtain

$$\sup\{[g_{f,u,K}(n)]: u \in \mathcal{U}\} \leqslant \sup\{[g_{f,u,K}(2n)]: u \in \mathcal{U}\} \leqslant \dots \leqslant \sup\{[g_{f,u,K}(kn)]: u \in \mathcal{U}\} \leqslant \dots$$

Since $o(f^{k},K) = \sup\{[g_{f,u,K}(kn)]: u \in \mathcal{U}\}$, then we get

$$o(f,K) \leqslant o(f^{2},K) \leqslant \dots \leqslant o(f^{k},K) \leqslant \dots$$

Varying the compact $K$ in $X$, we get, finally

$$o(f) \leqslant o(f^{2}) \leqslant \dots \leqslant o(f^{k}) \leqslant \dots$$
\end{proof}

\begin{defi}\label{definitionoflyapunov} Let $(X,\mathcal{U})$ be a Hausdorff uniform space, $K$ a compact subset of $X$ and $f: X \rightarrow X$ a continuous map. We say that $f$ is Lyapunov stable in $K$ if, for every $u \in \mathcal{U}$, there exists $v \in \mathcal{U}$, such that for every $v$-small set $S\subseteq K$, $f^{n}(S)$ is $u$-small, for every $n \in \N$. 
\end{defi}

\begin{prop}\label{lyapunovstability}[Correa-Pujals, Theorem 3 of \cite{CoPu21} for compact metric spaces] Let $(X,\mathcal{U})$ be a Hausdorff uniform space, $K$ a compact subset of $X$ and $f: X \rightarrow X$ a continuous map. Then $f$ is Lyapunov stable in $K$ if and only if $o(f,K) = 0$.
\end{prop}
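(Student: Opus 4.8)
The plan is to pass through the reformulation that, since $0$ is the least element of $\overline{\mathbb{O}}$ and $o(f,K)=\sup_{u\in\mathcal U}[s_{f,u,K}(n)]$, one has $o(f,K)=0$ if and only if $[s_{f,u,K}(n)]=0$ for every $u\in\mathcal U$, i.e. if and only if the non-decreasing integer sequence $n\mapsto s_{f,u,K}(n)$ is bounded for each $u\in\mathcal U$. The implication ``Lyapunov stable $\Rightarrow o(f,K)=0$'' I would then prove directly. Fix $u\in\mathcal U$ and take $v\in\mathcal U$ as in Definition \ref{definitionoflyapunov}; choosing an open symmetric $w$ with $w^{2}\subseteq v$, the sets $\mathcal B(x,w)$ are open, $v$-small (as $\mathcal B(x,w)$ is $w^{2}$-small), and cover $K$, so by compactness $K\subseteq V_{1}\cup\dots\cup V_{r}$ with each $V_{j}$ $v$-small. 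If $x,y$ lie in a common $V_{j}$ then $\{x,y\}$ is $v$-small, hence $f^{n}(\{x,y\})$ is $u$-small for all $n$, so $(f^{n}(x),f^{n}(y))\in u$ for every $n$ and $x,y$ are never $(n,u)$-separated. Thus any $(n,u)$-separated subset of $K$ meets each $V_{j}$ in at most one point, whence $s_{f,u,K}(n)\le r$ for all $n$; as $u$ was arbitrary, $o(f,K)=0$.

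For the converse the key idea is to encode the dynamics by the orbit map $\Phi\colon K\to X^{\mathbb N}$, $\Phi(x)=(f^{i}(x))_{i\ge 0}$, where $X^{\mathbb N}$ carries the uniformity $\mathcal W$ of uniform convergence, generated by the entourages $\widetilde u=\{(\xi,\zeta):(\xi_{i},\zeta_{i})\in u\ \forall i\}$, $u\in\mathcal U$, and to set $Z=\Phi(K)$. Unwinding the definitions, a finite subset of $K$ has $\widetilde u$-separated image in $Z$ exactly when it is $(n,u)$-separated for some $n$; hence $\sup_{n}s_{f,u,K}(n)$ equals the supremum of cardinalities of $\widetilde u$-separated subsets of $Z$, and by the previous paragraph $o(f,K)=0$ if and only if $(Z,\mathcal W)$ is totally bounded. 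On the other hand, unwinding Definition \ref{definitionoflyapunov} (using symmetric entourages to pass between $u$-small sets and pairs) shows that $f$ is Lyapunov stable in $K$ if and only if the family $\{f^{n}|_{K}\}_{n}$ is uniformly equicontinuous, which is precisely the assertion that $\Phi$ is uniformly continuous into $(X^{\mathbb N},\mathcal W)$. So the whole statement reduces to: for this particular $Z$, total boundedness of $(Z,\mathcal W)$ is equivalent to uniform continuity of $\Phi$.

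This last equivalence is an Ascoli-type argument, and it is the part I expect to require the most care. Since $X$ is compact Hausdorff it is complete, and from this one checks that $(X^{\mathbb N},\mathcal W)$ is complete and Hausdorff; because $Z$ is compact, hence closed, in the product topology $\tau_{\mathrm{prod}}$, and $\tau_{\mathrm{prod}}$ is coarser than the $\mathcal W$-topology, $Z$ is $\mathcal W$-closed and therefore $\mathcal W$-complete. Now if $(Z,\mathcal W)$ is totally bounded then, being complete, it is $\mathcal W$-compact; the identity map $(Z,\mathcal W)\to(Z,\tau_{\mathrm{prod}})$ is then a continuous bijection from a compact space onto a Hausdorff one, hence a homeomorphism, so $\mathcal W$ and $\tau_{\mathrm{prod}}$ agree on $Z$. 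This makes $\Phi\colon K\to(Z,\mathcal W)$ continuous, and a continuous map from the compact Hausdorff space $K$ into a uniform space is automatically uniformly continuous, yielding equicontinuity and thus Lyapunov stability. Conversely, uniform continuity of $\Phi$ forces $Z=\Phi(K)$ to be $\mathcal W$-compact, hence totally bounded, i.e. $o(f,K)=0$. The main obstacle is exactly this paragraph: verifying completeness of the uniform-convergence uniformity on $X^{\mathbb N}$ and exploiting that a compact topology finer than a compact Hausdorff one must coincide with it, since this is what converts the purely quantitative vanishing of entropy into the qualitative equicontinuity demanded by Definition \ref{definitionoflyapunov}.
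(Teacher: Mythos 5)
Your forward implication is essentially the paper's argument (a finite cover of $K$ by $v$-small sets bounds $s_{f,u,K}(n)$ uniformly in $n$; the paper phrases it with generators rather than separated sets). Your converse, however, takes a genuinely different route. The paper argues by contradiction: assuming $f$ is not Lyapunov stable, it extracts nets $x_v,y_v$ with a common cluster point, uses the fact that $o(f,K)=0$ forces a maximal $(n_0,w)$-separated set to be an $(n,w)$-generator for every $n\geqslant n_0$, and then pigeonholes over this finite set to contradict $\overline{w}^4\subseteq u$. You instead package the dynamics into the orbit map $\Phi\colon K\to X^{\mathbb N}$ and reduce everything to ``totally bounded $+$ complete $=$ compact'' plus the fact that a compact topology coinciding on points with a coarser Hausdorff one must equal it. Your two reduction steps (boundedness of separated sets for every entourage $\Leftrightarrow$ total boundedness of $\Phi(K)$; Lyapunov stability on $K$ $\Leftrightarrow$ uniform continuity of $\Phi$) are correct, and the Ascoli-type argument is conceptually cleaner, at the price of invoking the completeness theory of function-space uniformities, which the paper's self-contained net argument avoids.

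There is one concrete flaw to repair: you justify completeness of $(Z,\mathcal{W})$ by ``since $X$ is compact Hausdorff it is complete,'' but the proposition only assumes $X$ is a Hausdorff uniform space; only $K$ is compact, and the orbit $\bigcup_{n}f^{n}(K)$ need not have compact closure, so $(X^{\mathbb N},\mathcal{W})$ need not be complete. The conclusion you need is nevertheless true for a different reason: $Z$ lies in $\prod_{i\geqslant 0}f^{i}(K)$, each factor $f^{i}(K)$ is compact hence complete, and a $\mathcal{W}$-Cauchy net in such a product converges coordinatewise and therefore, by the Cauchy condition, uniformly to its pointwise limit; alternatively, a $\mathcal{W}$-Cauchy net $\Phi(x_\alpha)$ admits, by compactness of $K$ and continuity of each $f^{i}$, a subnet converging pointwise to some $\Phi(x)\in Z$, and Cauchyness upgrades this to uniform convergence to $\Phi(x)$. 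Either repair restores the closed-in-a-complete-space step, and with it your proof is complete.
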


\begin{proof}$(\Rightarrow)$ Let $u \in \mathcal{U}$. Since $f$ is Lyapunov stable in $K$, then there exists $v \in \mathcal{U}$ such that for every $v$-small set $S\subseteq K$, $f^{n}(S)$ is $u$-small, for every $n \in \N$. Let $x \in K$ and $y \in K \cap \mathcal{B}(x,v)$, then, for every $n \in \N$, $(f^{n}(x),f^{n}(y)) \in u$, which implies that for every $n \in \N$, $y \in \mathcal{B}(x,n,u)$. Thus, for every $n \in \N$, $\mathcal{B}(x,v) \subseteq \mathcal{B}(x,n,u)$. Since $K$ is compact, then there exists $x_{1},\dots,x_{m} \in K$ such that $\{\mathcal{B}(x_{i},v): i \in \{1,\dots,m\}\}$ covers $K$, which implies that $\{\mathcal{B}(x_{i},n,u): i \in \{1,\dots,m\}\}$ covers $K$. So, for every $n \in \N$, $g_{f,u,K}(n) \leqslant \#\{\mathcal{B}(x_{i},n,u): i \in \{1,\dots,m\}\} \leqslant \#\{\mathcal{B}(x_{i},v): i \in \{1,\dots,m\}\}$, which is a constant that does not depend on $n$. So $[g_{f,u,K}(n)] = 0$. If we vary $u \in \mathcal{U}$, then we get that $o(f, K) = 0$.

$(\Leftarrow)$ Suppose that $f$ is not Lyapunov stable in $K$. Let $u \in \mathcal{U}$. Then, for every $v \in \mathcal{U}$, there exists $S_{v} \subseteq K$ and $m_{v} \in \N$ such that $S_{v}$ is $v$-small and $f^{m_{v}}(S_{v})$ is not $u$-small. So we can choose $x_{v},y_{v} \in S_{v}$ satisfying $(x_{v},y_{v}) \in v$ and $(f^{m_{v}}(x_{v}),f^{m_{v}}(y_{v})) \notin u$. Hence we get two nets $\{x_{v}\}_{v \in \mathcal{U}}$ and $\{y_{v}\}_{v \in \mathcal{U}}$ in $K$. Since $K$ is a compact space, then $\{x_{v}\}_{v \in \mathcal{U}}$ has a cluster point $z \in K$. Since the points of the two nets are arbitrarily close, then $z$ is also a cluster point of $\{y_{v}\}_{v \in \mathcal{U}}$.

Let $w \in \mathcal{U}$ such that $w$ is symmetric and $\overline{w}^{4} \subseteq u$, where $\overline{w}$ is the closure of $w$ in $X \times X$. Since $o(f,K) = 0$, then $s_{f,w,K}(n)$ is eventually constant. Let $n_{0} \in \N$ such that for every $n \geqslant n_{0}$, $s_{f,w,K}(n) = s_{f,w,K}(n_{0})$. Then we take a maximal $(n_{0},w)$-separated set $E$ in $K$. For any $n \geqslant n_{0}$, $E$ is also a $(n,w)$-separated set, which is maximal since $s_{f,w,K}(n) = s_{f,w,K}(n_{0})$. By the maximality of $E$, we have that $E$ is also a $(n,w)$-generator, for every $n \geqslant n_{0}$. So, for every $a \in K$, there exists $x_{n} \in E$ such that for every $i \in \{0,\dots,n-1\}$, $(f^{i}(a),f^{i}(x_{n})) \in w$. Since $E$ is finite, then  for every $a \in K$, there exists $x \in E$ such that for every $n \in \N$, $(f^{n}(a),f^{n}(x)) \in w$.

Since $z$ is a cluster point of $\{x_{v}\}_{v \in \mathcal{U}}$ and $\{y_{v}\}_{v \in \mathcal{U}}$, then there exists a cofinal subset $\Gamma \subseteq \mathcal{U}$, such that $z$ is a cluster point of $\{x_{v}\}_{v \in \Gamma}$ and $\{y_{v}\}_{v \in \Gamma}$ and, for every $v \in \Gamma$, $(z,x_{v}) \in w$ and $(z,y_{v}) \in w$. For every $x \in E$, take $\Gamma_{x} = \{v \in \Gamma: \forall n \in \N, (f^{n}(x),f^{n}(x_{v})) \in w\}$. We have that $\Gamma = \bigcup_{x \in E} \Gamma_{x}$. Since $E$ is finite, then there exists $x \in E$ such that $\Gamma_{x}$ is cofinal in $\Gamma$ and $z$ is a cluster point of the subnet $\{x_{v}\}_{v \in \Gamma_{x}}$. Then $z$ is also a cluster point of $\{y_{v}\}_{v \in \Gamma_{x}}$. Analogously, there exists $y \in E$ and a cofinal subset $\Gamma_{x,y}$ of $\Gamma_{x}$ such that  $z$ is a cluster point of $\{y_{v}\}_{v \in \Gamma_{x,y}}$ and for every $v \in \Gamma_{x,y}$ and for every $n \in \N$, $(f^{n}(y),f^{n}(y_{v})) \in w$. We have that  $z$ is also a cluster point of $\{x_{v}\}_{v \in \Gamma_{x,y}}$. 

Let $n \in \N$. Since $z$ is a cluster point of the nets $\{x_{v}\}_{v \in \Gamma_{x,y}}$ and $\{y_{v}\}_{v \in \Gamma_{x,y}}$ and for every $v \in \Gamma_{x,y}$, $(f^{n}(x),f^{n}(x_{v})) \in w$ and $(f^{n}(y),f^{n}(y_{v})) \in w$, then $(f^{n}(x),f^{n}(z)) \in \overline{w}$ and $(f^{n}(y),f^{n}(z)) \in \overline{w}$, which implies that $(f^{n}(x),f^{n}(y)) \in \overline{w}^{2}$ (note that $\overline{w}$ is symmetric since $w$ is symmetric). We have that, for every $n \in \N$ and every $v \in \Gamma_{x,y}$,  $(f^{n}(x),f^{n}(x_{v})) \in w$, $(f^{n}(x),f^{n}(y)) \in \overline{w}^{2}$ and $(f^{n}(y),f^{n}(y_{v})) \in w$, which implies that $(f^{n}(x_{v}),f^{n}(y_{v})) \in \overline{w}^{4} \subseteq u$, contradicting the fact that $(f^{m_{v}}(x_{v}),f^{m_{v}}(y_{v})) \notin u$. So $f$ is Lyapunov stable.
\end{proof}

\begin{defi}Let $X$, $Y$ be topological spaces and $f: X \rightarrow Y$ a continuous map. We say that $f$ is a compact-covering if for every compact $K \subseteq Y$, there exists a compact $K' \subseteq X$ such that $f(K') = K$.
\end{defi}

\begin{obs}Note that compact-covering maps are surjective. Examples of such maps are surjective proper maps (i.e. preimage of every compact set is compact) and, as a special case, surjective continuous maps between Hausdorff compact spaces. If $X$ is locally compact, then surjective open continuous maps also have this property. If $X$ is a complete metric space and $f$ is an open surjective continuous map, then $f$ is a compact-covering (Proposition 18 of Chapter IX, $\S$ 2.10 of \cite{Bou2}).
\end{obs}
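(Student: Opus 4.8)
The remark bundles together four separate implications (the fifth being a direct citation of Bourbaki), so the plan is to verify each in turn, roughly in increasing order of difficulty, always recalling that a compact-covering map must produce a compact preimage mapping \emph{exactly} onto the given compact set. The surjectivity of a compact-covering $f:X\to Y$ is immediate: for any $y\in Y$ the singleton $\{y\}$ is compact, so by definition there is a compact $K'\subseteq X$ with $f(K')=\{y\}$; since $f(\emptyset)=\emptyset\neq\{y\}$, such a $K'$ is nonempty, and any of its points is a preimage of $y$.

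For the proper case, given a surjective proper $f$ and a compact $K\subseteq Y$, I would simply take $K'=f^{-1}(K)$: properness makes $K'$ compact, and surjectivity gives $f(K')=f(f^{-1}(K))=K\cap f(X)=K$. The case of a surjective continuous map between Hausdorff compact spaces then falls out as a special instance, since a continuous map from a compact space into a Hausdorff space is automatically proper: for compact (hence closed) $K\subseteq Y$, the set $f^{-1}(K)$ is a closed subset of the compact space $X$, so it is compact.

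The only step requiring genuine work is the locally compact, open, surjective case, and this is where I expect the main difficulty, because openness must be used to transport compact neighborhoods forward while still cutting the result down to a preimage that maps exactly onto $K$. First I would fix a compact $K\subseteq Y$, with $Y$ Hausdorff so that $K$ is closed. Using local compactness of $X$ together with surjectivity, for each $y\in K$ I choose $x_y\in f^{-1}(y)$ and a compact neighborhood $C_y$ of $x_y$. Since $f$ is open, each $f(\operatorname{int}C_y)$ is open and contains $y$, so $\{f(\operatorname{int}C_y)\}_{y\in K}$ is an open cover of $K$; extracting a finite subcover indexed by $y_1,\dots,y_n$ and setting $C=\bigcup_{i=1}^{n}C_{y_i}$ yields a compact set. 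The candidate is then $K'=C\cap f^{-1}(K)$, which is compact because $f^{-1}(K)$ is closed (this is precisely where $K$ being closed is used) and $C$ is compact. Clearly $f(K')\subseteq K$, and for the reverse inclusion any $z\in K$ lies in some $f(\operatorname{int}C_{y_i})$, hence has a preimage in $\operatorname{int}C_{y_i}\subseteq C$ which, mapping into $K$, lies in $K'$; therefore $f(K')=K$.

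Finally, the assertion for open surjective continuous maps on a complete metric space is exactly Proposition 18 of Chapter IX, $\S$ 2.10 of \cite{Bou2}, so nothing beyond the citation is needed there. The delicate point throughout is matching the preimage exactly to $K$ rather than merely covering it; in the locally compact case this is resolved precisely by intersecting the compact set $C$ with $f^{-1}(K)$ and invoking openness to recover surjectivity onto $K$.
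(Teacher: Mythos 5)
The paper offers no proof of this remark: it is stated as a list of standard facts, with only the final (complete metric) case referred to Bourbaki, so there is nothing to compare your argument against except the literature. Your four arguments are all correct and are the standard ones. Surjectivity from singletons, $K'=f^{-1}(K)$ in the proper case, and the observation that a continuous map from a compact space to a Hausdorff space is proper are exactly right. Your treatment of the locally compact case is also the expected one: push interiors of compact neighborhoods forward by openness, take a finite subcover of $K$, and cut the resulting compact union $C$ down to $K'=C\cap f^{-1}(K)$; you correctly identify that the whole point is achieving $f(K')=K$ exactly rather than $f(C)\supseteq K$.

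One small caveat worth flagging: in the locally compact case you silently add the hypothesis that $Y$ is Hausdorff, which the remark and the paper's definition of compact-covering do not state. You genuinely need $K$ to be closed in $Y$ (equivalently, $f^{-1}(K)$ closed in $X$) for $C\cap f^{-1}(K)$ to be a closed, hence compact, subset of $C$; without it the argument as written does not close. Since the paper applies this remark only in Hausdorff settings, and since Bourbaki's convention builds Hausdorff into compactness, this is a harmless implicit assumption rather than an error, but it would be cleaner to state it.
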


\begin{prop}\label{semiconjporrecobrimentocompacto}Let $(X, \mathcal{U})$, $(Y,\mathcal{V})$ be uniform spaces, $f: X \rightarrow X$, $g: Y \rightarrow Y$ two continuous maps that are semi-conjugated by a uniformly continuous map $h: X \rightarrow Y$. If $C$ is a compact subset of $X$, then $o(g,h(C)) \leqslant o(f,C)$. Moreover, if $h$ is a compact-covering, then $o(g) \leqslant o(f)$.
\end{prop}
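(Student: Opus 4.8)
The plan is to exploit the semi-conjugacy relation $h \circ f = g \circ h$, which iterates to $h \circ f^{i} = g^{i} \circ h$ for every $i \in \N$, together with the uniform continuity of $h$, in order to transfer separated sets from $h(C)$ back to $C$. First I would record the two structural facts that make the statement well posed and set up the comparison: since $h$ is uniformly continuous (hence continuous), $h(C)$ is a compact subset of $Y$, so $o(g,h(C))$ is defined; and for every $v \in \mathcal{V}$ the entourage $u = (h \times h)^{-1}(v)$ belongs to $\mathcal{U}$, precisely by uniform continuity of $h$.

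The core step is a lifting argument. Given $v \in \mathcal{V}$ and a $(n,v)$-separated set $E \subseteq h(C)$ for $g$, I would choose for each $e \in E$ a preimage $x_{e} \in C$ with $h(x_{e}) = e$, and put $\tilde{E} = \{x_{e} : e \in E\}$. Since distinct points of $E$ admit distinct preimages, $\#\tilde{E} = \#E$. To see that $\tilde{E}$ is $(n,u)$-separated for $f$ with $u = (h\times h)^{-1}(v)$: take $x_{e_1} \neq x_{e_2}$ in $\tilde{E}$; as $E$ is $(n,v)$-separated there is $i \in \{0,\dots,n-1\}$ with $(g^{i}(e_1), g^{i}(e_2)) \notin v$, and using $h \circ f^{i} = g^{i} \circ h$ this reads $(h(f^{i}(x_{e_1})), h(f^{i}(x_{e_2}))) \notin v$, i.e. $(f^{i}(x_{e_1}), f^{i}(x_{e_2})) \notin u$. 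Hence $\tilde{E}$ is $(n,u)$-separated and $s_{g,v,h(C)}(n) \leqslant s_{f,u,C}(n)$.

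Passing to growth classes and then to suprema, $[s_{g,v,h(C)}(n)] \leqslant [s_{f,u,C}(n)] \leqslant o(f,C)$ for every $v \in \mathcal{V}$, so that $o(g,h(C)) = \sup_{v \in \mathcal{V}} [s_{g,v,h(C)}(n)] \leqslant o(f,C)$, which is the first assertion. For the ``moreover'' part, if $h$ is compact-covering then every compact $K' \subseteq Y$ is of the form $h(C)$ for some compact $C \subseteq X$; combining $o(g,K') = o(g,h(C)) \leqslant o(f,C) \leqslant o(f)$ and taking the supremum over all compact $K' \subseteq Y$ yields $o(g) \leqslant o(f)$.

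I expect the only delicate point to be the preservation of separation under the lift — specifically the bookkeeping that the index $i$ witnessing separation of $e_1,e_2$ in $Y$ is exactly the index witnessing separation of $x_{e_1},x_{e_2}$ in $X$, which is forced by the iterated relation $h \circ f^{i} = g^{i} \circ h$. The cardinality preservation of the lift and the choice of preimages are routine once one notes the $x_e$ are distinct; importantly, no surjectivity of $h$ is needed for the first claim, and only the compact-covering hypothesis (which guarantees that each compact of $Y$ is realized as $h(C)$) is required for the global inequality.
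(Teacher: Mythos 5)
Your proposal is correct and follows essentially the same route as the paper's proof: choose preimages in $C$ of a $(n,v)$-separated set in $h(C)$, use the iterated relation $h\circ f^{i}=g^{i}\circ h$ to show the lifted set is $(n,h^{-1}(v))$-separated, deduce $s_{g,v,h(C)}(n)\leqslant s_{f,h^{-1}(v),C}(n)$, and pass to suprema, with the compact-covering hypothesis handling the global inequality exactly as you describe.
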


\begin{obs}This proposition generalizes Theorem 5 of \cite{Ho}, where it is done for topological entropy of uniform spaces.
\end{obs}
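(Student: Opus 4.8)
The plan is to compare $(n,v)$-separated sets for $g$ inside $h(C)$ with $(n,u)$-separated sets for $f$ inside $C$, by \emph{lifting} the former to the latter through a choice of $h$-preimages. First I would observe that $h(C)$ is compact (uniform continuity gives continuity, and continuous images of compacts are compact), so that $o(g,h(C))$ is well defined. Fix $v\in\mathcal{V}$. Since $h$ is uniformly continuous, the entourage $u:=(h\times h)^{-1}(v)$ belongs to $\mathcal{U}$, and this $u$ is the one I will pair with $v$. The heart of the argument will be the single inequality $s_{g,v,h(C)}(n)\leqslant s_{f,u,C}(n)$ for every $n\in\N$.

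To prove that inequality I would take a maximal $(n,v)$-separated set $E\subseteq h(C)$ for $g$, and for each $y\in E$ select some $x_{y}\in C$ with $h(x_{y})=y$ (possible exactly because $E\subseteq h(C)$), forming $E'=\{x_{y}:y\in E\}\subseteq C$. Distinct points of $E$ force distinct preimages, so $\#E'=\#E$. The claim is that $E'$ is $(n,u)$-separated for $f$: given $x_{y_1}\neq x_{y_2}$ there is $i\in\{0,\dots,n-1\}$ with $(g^{i}(y_1),g^{i}(y_2))\notin v$, and iterating the semi-conjugacy $h\circ f=g\circ h$ to get $g^{i}\circ h=h\circ f^{i}$ yields $g^{i}(y_j)=h(f^{i}(x_{y_j}))$; hence $(h(f^{i}(x_{y_1})),h(f^{i}(x_{y_2})))\notin v$, which is precisely $(f^{i}(x_{y_1}),f^{i}(x_{y_2}))\notin(h\times h)^{-1}(v)=u$. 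Passing to growth classes, $[s_{g,v,h(C)}(n)]\leqslant[s_{f,u,C}(n)]\leqslant o(f,C)$, and since the right-hand side is independent of $v$, taking the supremum over $v\in\mathcal{V}$ gives $o(g,h(C))\leqslant o(f,C)$.

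For the ``moreover'' part I would use that $h$ is compact-covering: for any compact $K\subseteq Y$ there is a compact $C\subseteq X$ with $h(C)=K$, so the first part gives $o(g,K)=o(g,h(C))\leqslant o(f,C)\leqslant o(f)$, and taking the supremum over all compact $K\subseteq Y$ yields $o(g)\leqslant o(f)$. I expect no serious obstacle here; the one point demanding care is the \emph{direction} of the transport. The separated sets of the downstairs system $g$ must be lifted to the upstairs system $f$, not pushed forward, and this lift is exactly what uniform continuity enables: because $u=(h\times h)^{-1}(v)\in\mathcal{U}$, a failure of $v$-closeness downstairs is converted into a failure of $u$-closeness upstairs, which is the wrong direction for pushing forward but the right one for pulling back.
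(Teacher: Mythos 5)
Your proposal is correct and follows essentially the same route as the paper: lift a maximal $(n,v)$-separated set in $h(C)$ to a $(n,h^{-1}(v))$-separated set in $C$ via a choice of preimages, use the semi-conjugacy to transfer separation, take suprema over entourages, and invoke the compact-covering hypothesis for the global inequality. The only cosmetic difference is that you explicitly record the compactness of $h(C)$, which the paper leaves implicit.
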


\begin{proof}Let $v \in \mathcal{V}$, $n \in \N$ and $E \subseteq f(C)$ a $(n,v)$-separated set with $\# E = s_{g,v,h(C)}(n)$. For every $y \in E$, take $x_{y} \in C$ such that $h(x_{y}) = y$ and take $F = \{x_{y}: y \in E\}$. We have that $F \subseteq C$ and $\# F = \# E$. Let $y,y' \in E$, with $y \neq y'$. Since $E \subseteq h(C)$ is $(n,v)$-separated, then there exists $i \in \{0,\dots,n-1\}$ such that $(g^{i}(y),g^{i}(y')) \notin v$. If $(f^{i}(x_{y}),f^{i}(x_{y'})) \in h^{-1}(v)$, then $(h \circ f^{i}(x_{y}),h \circ f^{i}(x_{y'})) \in v$. However,  $(h \circ f^{i}(x_{y}),h \circ f^{i}(x_{y'})) = (g^{i} \circ h(x_{y}), g^{i} \circ h(x_{y'})) = (g^{i}(y),g^{i}(y'))$, which implies that $(g^{i}(y),g^{i}(y')) \in v$, a contradiction. Then  $(f^{i}(x_{y}),f^{i}(x_{y'})) \notin h^{-1}(v)$, which implies that $F$ is $(n,h^{-1}(v))$-separated. So, for every $n \in \N$, $s_{g,v,h(C)}(n) \leqslant s_{f,h^{-1}(v),C}(n)$, which implies that $[s_{g,v,h(C)}(n)] \leqslant [s_{f,h^{-1}(v),C}(n)]$.

If we vary $v \in \mathcal{V}$, we get that $o(g,h(C)) = \sup\limits_{v \in \mathcal{V}} [s_{g,v,h(C)}(n)] \leqslant \sup\limits_{v \in \mathcal{V}} [s_{f,h^{-1}(v),C}(n)] \leqslant \sup\limits_{u \in \mathcal{U}} [s_{f,u,C}(n)] = o(f, C)$.

Suppose that the map $h$ is a compact-covering. Let $K$ be a compact subset of $Y$. Then there exists $C_{K}$, a compact subset of $X$, such that $h(C_{K}) = K$. We showed that $o(g,K) \leqslant o(f,C_{K})$. If we vary the compact set $K$ in $Y$, we get that 
$o(g) = \sup \{o(g,K): K \subseteq Y$ is compact$\} \leqslant \sup \{o(f,C_{K}): K \subseteq Y$ is compact$\} \leqslant \sup \{o(f,K'): K' \subseteq X$ is compact$\} = o(f)$.
\end{proof}

\subsection{Coding and mutually singular sets}

Let $(X,\mathcal{U})$ be a Hausdorff compact uniform space, let $f:X \to X$ be a homeomorphism, and let $\Omega(f)$ be the non-wandering set. Let $\mathcal{F}$ be a finite family of non-empty subsets of $X - \Omega(f)$. We denote by $\cup \mathcal{F}$ the union of all the elements of $\mathcal{F}$ and by $\infty_{\mathcal{F}}$ the complement of $\cup \mathcal{F}$. Let us fix $n\in \N$ and consider $\underline{x}=(x_0,\dots,x_{n-1})$ a finite sequence of points in $X$ and $\underline{w}=(w_0,\dots,w_{n-1})$ a finite sequence of elements of $\mathcal{F}\cup \{\infty_{\mathcal{F}}\}$. We say that $\underline{w}$ is a coding of $\underline{x}$, relative to $\mathcal{F}$, if for every $i=0,\dots, n-1$, we have $x_i\in w_i$. Each $w_i$ is called a letter of the coding $\underline{w}$. Whenever the family $\mathcal{F}$ is fixed, we simplify the notation by using $\infty$ instead of $\infty_{\mathcal{F}}$. Note that $\mathcal{F}$ is a disjoint family (i.e., the sets in the family $\mathcal{F}$ are pairwise disjoint) if and only if we can have only one coding for a given sequence. We denote the set of all the codings of all orbits $(x,f(x),\dots, f^{n-1}(x))$ of length $n$ by $\mathcal{A}_{n}(f,\mathcal{F})$. We define the sequence $c_{f,\mathcal{F}}(n)=\# \mathcal{A}_{n}(f,\mathcal{F})$, and it is easy to see that $c_{f,\mathcal{F}}(n)\in \mathcal{O}$.

We say that a set $Y$ is wandering if $f^n(Y)\cap Y =\emptyset$ for every $n\geq 1$.

\begin{lema}(Correa-de Paula, Lemma 4.1 of \cite{CP2} for compact metric spaces) Let $(X,\mathcal{U})$ be a Hausdorff compact uniform space and $f: X \rightarrow X$ a homeomorphism. For every finite disjoint family $\mathcal{F}$ of compact subsets that are wandering, there exists $u \in \mathcal{U}$ such that for every $n \in \N$, $c_{f,\mathcal{F}}(n) \leqslant 2^{\# \mathcal{F}} s_{f,u}(n)$. 
\end{lema}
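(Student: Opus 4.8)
The plan is to bound the number of realized codings by covering $X$ with dynamical balls coming from a maximal separated set, and to show that each such ball can host at most $2^{\#\mathcal{F}}$ distinct codings. First I would record the two structural facts that make the family $\mathcal{F}$ tractable. Since $X$ is a Hausdorff compact uniform space and the sets in $\mathcal{F}$ are compact and pairwise disjoint, there is a symmetric $u \in \mathcal{U}$ for which the neighborhoods $\mathcal{B}(F,u)$, $F \in \mathcal{F}$, are pairwise disjoint. Moreover, because each $F \in \mathcal{F}$ is wandering, any orbit $(x,f(x),\dots,f^{n-1}(x))$ meets $F$ at most once: if $f^{i}(x),f^{i'}(x) \in F$ with $i<i'$, then $f^{i'-i}(F)\cap F \neq \emptyset$, contradicting wandering. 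Hence every coding in $\mathcal{A}_n(f,\mathcal{F})$ has at most $\#\mathcal{F}$ letters different from $\infty$, and is completely determined by the (distinct) times at which each set of $\mathcal{F}$ is visited.

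Next I would take a maximal $(n,u)$-separated set $E \subseteq X$, so that $\#E = s_{f,u}(n)$, and use the argument of \textbf{Lemma \ref{geradoreseseparaveis}} to conclude that $E$ is an $(n,u)$-generator: every $x \in X$ lies in some $\mathcal{B}(e,n,u)$ with $e \in E$. For each coding I choose a representative orbit and let $e$ be a generating point of its ball. I claim the assignment sending a coding to the pair $(e,T)$, where $T \subseteq \mathcal{F}$ is the subfamily that the orbit visits, is injective; this immediately yields $c_{f,\mathcal{F}}(n) \leqslant \#E \cdot 2^{\#\mathcal{F}} = 2^{\#\mathcal{F}} s_{f,u}(n)$, since there are $2^{\#\mathcal{F}}$ possible subfamilies $T$. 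To see injectivity, observe that if $f^{t}(x) \in F$ for a point $x \in \mathcal{B}(e,n,u)$, then $f^{t}(e) \in \mathcal{B}(F,u)$; because the neighborhoods $\mathcal{B}(F,u)$ are disjoint, the time $t$ at which any orbit in the ball visits a given $F \in T$ is forced to be one of the times $i$ with $f^{i}(e) \in \mathcal{B}(F,u)$. If those times were unique for each $F$, then $e$ together with the visited subfamily $T$ would pin down every visit time, hence the whole coding.

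The hard part, which I expect to be the genuine obstacle, is exactly the uniqueness just invoked: I must choose $u$ so fine that the forward orbit of each generating point meets $\mathcal{B}(F,u)$ at most once for every $F \in \mathcal{F}$, equivalently that each neighborhood $\mathcal{B}(F,u)$ is itself wandering. This is where the wandering hypothesis and the compactness of the sets in $\mathcal{F}$ must be combined, and it is the step most sensitive to the ambient dynamics, since controlling the recurrence of orbits to a shrinking neighborhood of a wandering compact set is the delicate point; it is also the reason the factor is exactly $2^{\#\mathcal{F}}$, a single binary on/off choice per set, rather than a quantity growing with $n$. The remaining bookkeeping — passing between $u$ and $u^{2}$ when comparing two orbits sharing a ball, and arranging symmetry of $u$ — is routine, and I would dispose of it by shrinking $u$ a controlled number of times at the outset.
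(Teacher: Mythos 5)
Your reduction to the claim that each $\mathcal{B}(F,u)$ can be made wandering is exactly where the argument breaks, and the difficulty you flag is not merely technical: that claim is false in general. A compact wandering set need not admit a wandering neighborhood. In the Brouwer example of \textbf{Section \ref{Brouwer}}, the two-point set $Y=\{(0,0),(0,1)\}$ is compact and wandering (its iterates are $\{(-n,0),(n,1)\}$), but any neighborhood $U$ of $Y$ contains, for every sufficiently large $c$, a point of the upper branch of $\mathcal{C}_c$ near $(0,1)$ whose forward orbit travels around the far side of $\mathcal{C}_c$ and re-enters $U$ near $(0,0)$ after roughly $2c$ steps; hence $f^{n}(U)\cap U\neq\emptyset$ for arbitrarily large $n$. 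In the paper's own language, the singletons $\{(0,0)\}$ and $\{(0,1)\}$ are almost mutually singular in the sense of \textbf{Definition \ref{mutuallysingular}}, which is precisely the statement that no choice of $u$ gives the neighborhoods the recurrence-free behavior you need. Without that, the time at which an orbit in a ball $\mathcal{B}(e,n,u)$ visits a given $F$ is not pinned down by $e$, so your map sending a coding to $(e,T)$ need not be injective, and the factor $2^{\#\mathcal{F}}$ is not justified.

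The paper's proof avoids this obstruction in two ways, and your dual, generator-based formulation can be repaired along the same lines. First, it fixes $n$ \emph{before} choosing neighborhoods: it takes compact disjoint $U_r\supseteq Y_r$ with $f^{i}(U_r)\cap U_r=\emptyset$ only for $1\leqslant |i|\leqslant n-1$, a finite list of open conditions, each of which follows from compactness of $Y_r$ together with $f^{i}(Y_r)\cap Y_r=\emptyset$; no neighborhood is ever required to be wandering for all times. Second, rather than covering by dynamical balls, it partitions $\mathcal{A}_n(f,\mathcal{F})$ according to the exact set of letters $\mathcal{G}$ appearing (this is where $2^{\#\mathcal{F}}$ enters) and shows that two orbits with distinct codings in the same class are $(n,u)$-separated. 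The delicate case, $w_i=\infty$ versus $z_i=Y_r$, is resolved by noting that since both codings use the letter $Y_r$, the first orbit also visits $Y_r$ at some time $j\neq i$ with $|i-j|\leqslant n-1$; then $f^{j}(x)\in U_r$ and $f^{i-j}(U_r)\cap U_r=\emptyset$ force $f^{i}(x)\notin U_r\supseteq\mathcal{B}(Y_r,u)$, separating the orbits at time $i$. If you wish to keep your covering argument, you need the same two ingredients: demand disjointness of $f^{i}(U_r)$ from $U_r$ only for the finitely many lags $|i|<n$, and compare the two visit times of two orbits sharing a ball through the membership of $F$ in their common visited family $T$, rather than asking that $\mathcal{B}(F,u)$ itself be wandering.
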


\begin{proof}
Let $\mathcal{F}=\{Y_1,\dots, Y_k\}$ be a finite disjoint family of compact subsets that are wandering and fix $n \in \N$. Choose some compact disjoint respective neighborhoods $U_1,\dots, U_k$ of the sets $Y_1,\dots, Y_k$ such that for every $i \in \{-n+1,\dots,n-1\}$ and every $r \in \{1,\dots,k\}$, $f^{i}(U_{r}) \cap U_{r} = \emptyset$. Let $u\in\mathcal{U}$ such that $\mathcal{B}(Y_i,u)\cap (X-U_{i}) = \emptyset$, for every $i=1,\dots, k$. 
	
For every $\mathcal{G}\subseteq \mathcal{F}$, let $\mathcal{A}_n(\mathcal{F},\mathcal{G})$ denote the set of elements of $\mathcal{A}_n(f,\mathcal{F})$ whose set of the letters is exactly $\mathcal{G}\cup \{\infty_{\mathcal{F}}\}$. We fix some $\mathcal{G}\subseteq \mathcal{F}$ and we consider two points $x,y$ in $X$ and two words $\underline{w}=(w_0,\dots, w_{n-1})$, $\underline{z}=(z_0,\dots, z_{n-1})$ in $\mathcal{A}_n(\mathcal{F},\mathcal{G})$ which are codings for the orbits $(x,\dots, f^{n-1}(x))$ and $(y,\dots, f^{n-1}(y))$, respectively. Then, we claim that if the symbols $\underline{w}$ and $\underline{z}$ are distinct, then the points $x$ and $y$ are $(n,u)$-separated. Indeed, let $i\in\{0,\dots, n-1\}$ be such that $w_i\neq z_i$. If both $w_i\neq \infty$, $z_i\neq \infty$, then $f^i(x)$ and $f^i(y)$ belongs to distinct sets $Y_i$'s, and $(f^i(x),f^i(y))\notin u$. If, say, $w_i=\infty$, then $f^i(y)\in z_{i} = Y_{r}$, for some $r \in \{1,\dots,k\}$, and $f^i(x)\notin Y_{r}$. By definition of $\mathcal{A}_n(\mathcal{F},\mathcal{G})$ there exists some $j\neq i$ in $\{0,\dots, n-1\}$ such that $f^j(x)\in Y_{r}\subseteq U_{r}$. Since $f^{i-j}(U_{r})\cap U_{r} = \emptyset$, we see that $f^i(x)\notin U_{r}$, thus again $(f^i(x),f^i(y))\notin u$, and the claim is proved. 
	
As an immediate consequence we have that $\#\mathcal{A}_n(\mathcal{F},\mathcal{G})\leqslant s_{f,u}(n)$. Since the $\mathcal{A}_n(\mathcal{F},\mathcal{G})$'s form a partition of $\mathcal{A}_n(f,\mathcal{F})$ into $2^{\# \mathcal{F}}$ elements we have	
	$$c_{f,\mathcal{F}}(n)\leqslant 2^{\# \mathcal{F}}\cdot s_{f,u}(n),$$
as we wanted.	    
\end{proof}

\begin{prop}\label{prop2.21}(Correa-de Paula, Theorem 2 of \cite{CP2} for compact metric spaces) Let $(X,\mathcal{U})$ be a Hausdorff compact uniform space and $f: X \rightarrow X$ a homeomorphism. If $\mathcal{F}$ is a finite disjoint family of compact subsets that are wandering of $f$, then $[c_{f,\mathcal{F}}(n)] \leqslant o(f)$.
\end{prop}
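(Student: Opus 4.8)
The plan is to derive the statement as an essentially immediate consequence of the preceding lemma; the only genuine content is the passage from a pointwise inequality of sequences to an inequality in the lattice $\overline{\mathbb{O}}$. First I would apply that lemma to the finite disjoint family $\mathcal{F}$ of compact wandering sets to obtain an entourage $u \in \mathcal{U}$ such that $c_{f,\mathcal{F}}(n) \leqslant 2^{\# \mathcal{F}} s_{f,u}(n)$ for every $n \in \N$.

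Next I would pass to the space of orders of growth. Since $2^{\# \mathcal{F}}$ is a positive constant independent of $n$, the very definition of the order relation on $\mathbb{O}$ (namely $[a(n)] \leqslant [b(n)]$ whenever $a(n) \leqslant C b(n)$ for some $C > 0$ and all $n$) yields $[c_{f,\mathcal{F}}(n)] \leqslant [s_{f,u}(n)]$, taking $C = 2^{\# \mathcal{F}}$. This is the step where the constant factor coming from the $2^{\#\mathcal{F}}$ subfamilies $\mathcal{A}_n(\mathcal{F},\mathcal{G})$ is harmlessly absorbed.

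Finally I would use compactness of $X$ to conclude. Since $X$ is itself compact we may take $K = X$, and by definition $o(f,X) = \sup\{[s_{f,v}(n)] : v \in \mathcal{U}\}$, so in particular $[s_{f,u}(n)] \leqslant o(f,X) \leqslant o(f)$. Chaining the two inequalities gives $[c_{f,\mathcal{F}}(n)] \leqslant o(f)$, as desired.

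I do not expect any serious obstacle here: the combinatorial heart of the argument — bounding the number of codings by a separated-set count up to a multiplicative constant — has already been handled in the lemma, and what remains is only the formal bookkeeping of translating a constant-factor inequality into $\overline{\mathbb{O}}$ and invoking the supremum defining $o(f)$. The one point to state carefully is precisely that a bounded multiplicative factor is invisible in the order-of-growth classes, which is exactly what the definition of $\leqslant$ on $\mathbb{O}$ provides.
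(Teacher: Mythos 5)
Your proposal is correct and follows exactly the same route as the paper's proof: invoke the preceding coding lemma to get the entourage $u$ with $c_{f,\mathcal{F}}(n) \leqslant 2^{\#\mathcal{F}} s_{f,u}(n)$, absorb the constant into the order relation on $\mathbb{O}$, and bound $[s_{f,u}(n)]$ by $o(f)$ via the supremum over entourages. No gaps.
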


\begin{proof}By the last lemma, there exists $u \in \mathcal{U}$ such that for every $n \in \N$, $c_{f,\mathcal{F}}(n) \leqslant 2^{\# \mathcal{F}} s_{f,u}(n)$, which implies that $[c_{f,\mathcal{F}}(n)] \leqslant [s_{f,u}(n)]$. Since  $[s_{f,u}(n)] \leqslant o(f)$, then we get that $[c_{f,\mathcal{F}}(n)] \leqslant o(f)$.
\end{proof}

Now, we want to discuss some properties of $[c_{f,\mathcal{F}}(n)]$. For every subset $Y$ of $X- \Omega(f)$, let $M(Y)$ denote the maximum number of terms of an orbit that belongs to $Y$, $M(Y)=\sup_{x\in X} \#\{n\in \Z: f^n(x)\in Y\}$. Observe that if $Y\subseteq (X-\Omega(f))$ is compact, it may be covered by a finite number of wandering open sets, and every orbit intersects a wandering set at most once; thus, in this case $M(Y)$ is finite. If $\mathcal{F}$ is a finite family of subsets of $X$ such that $M(\cup \mathcal{F})<\infty$. Then it satisfies:
	\begin{itemize}
		\item[1.] (Monotonicity) Let $\mathcal{F}'$ be another finite family of subsets of $X$. If each element of $\mathcal{F}'$ is included in an element of $\mathcal{F}$, that we will denote $\mathcal{F}'\subseteq \mathcal{F}$, then $$[c_{f,\mathcal{F}'}(n)]\leqslant [c_{f,\mathcal{F}}(n)].$$
		\item[2.] (Additivity) $$[c_{f,\cup \mathcal{F}}(n)]=[c_{f,\mathcal{F}}(n)].$$
		\item[3.] (Disjoint representatives) Let $\mathcal{F}=\{Y_1,\dots, Y_L\}$ be a family of compact subsets of $X-\Omega(f)$ such that for every $i,j \in \{1,\dots,L\}$, $Y_{i} \cap Y_{j} = \emptyset$ or $Y_{i} \cup Y_{j}$ is wandering. Then $[c_{f,\mathcal{F}}(n)] = \sup\{[c_{f,\mathcal{F}'}(n)]: \mathcal{F}'\subseteq \mathcal{F} \,\text{is disjoint}\}$.
	\end{itemize} 

The monotonicity and the additivity properties work in the same way as the metric case, for the polynomial entropy, as shown by Hauseux and Le Roux (Lemme 2.2, from \cite{HR}). The disjoint representatives property is shown in \cite{CP2}, Lemma 4.3, and also works in the same way as the metric case. So, we omit the proofs here.

The following lemma says that we can estimate the order of the growth of the cardinality of the set of codings relative to a compact subset $Y$ of $ X-\Omega (f)$ in terms of a family of subsets of $Y$ that are $u$-small. It is a generalization of the construction made for the polynomial entropy, by Hauseux and Le Roux (Sous-lemme 2.5 from \cite{HR}).

\begin{lema}\label{lema222}
   Let $(X,\mathcal{U})$ be a Hausdorff compact uniform space and $f: X\rightarrow X$ be a homeomorphism. For every compact subset $Y$ of $X-\Omega(f)$, and every $u\in \mathcal{U}$ there exists a finite family $\mathcal{F}= \{Y_1,\dots,Y_L\}$ of wandering compact subsets of $Y$ that are $u$-small, such that
	$$[c_{f,Y}(n)]\leqslant \sup\{[c_{f,\mathcal{F}'}(n)]: \mathcal{F}'\subseteq \mathcal{F} \,\text{is disjoint}\}.$$
\end{lema}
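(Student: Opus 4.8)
The plan is to cover $Y$ by a finite family $\mathcal{F}$ of $u$-small wandering compact subsets that moreover satisfies the hypothesis of the Disjoint representatives property, and then to combine Additivity with that property. The whole difficulty is concentrated in producing a single entourage that simultaneously controls smallness and wandering behaviour on $Y$; once this is available, the rest is bookkeeping.

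First I would extract such an entourage from the Lebesgue covering lemma. Since $Y\subseteq X-\Omega(f)$, every $y\in Y$ admits a wandering open neighborhood $W_y$ (i.e. $f^n(W_y)\cap W_y=\emptyset$ for all $n\geqslant 1$). As $Y$ is compact, hence closed, the set $X-Y$ is open, and $\{W_y:y\in Y\}\cup\{X-Y\}$ is an open covering of the compact space $X$. Let $u'\in\mathcal{U}$ be a Lebesgue entourage for this covering and set $v_0=u'\cap u\in\mathcal{U}$. Then any nonempty $v_0$-small subset $S\subseteq Y$ is contained in a single member of the covering; being nonempty and contained in $Y$, it cannot lie in $X-Y$, so $S\subseteq W_y$ for some $y$, whence $S$ is wandering, and it is $u$-small because $v_0\subseteq u$. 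Finally pick a symmetric $v\in\mathcal{U}$ with $v^2\subseteq v_0$. I should note that if one prefers to avoid the Lebesgue covering lemma, the existence of such a $v_0$ can be obtained by a direct net argument: if no such entourage existed one would produce, via a cluster point in $Y$, a non-wandering point of $Y$, contradicting $Y\subseteq X-\Omega(f)$.

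Next I would cover $Y$ by finitely many compact $v$-small pieces. Choosing a symmetric open $w$ with $w^4\subseteq v$, the open balls $\mathcal{B}(y,w)$, $y\in Y$, cover $Y$; extract a finite subcover centered at $y_1,\dots,y_L$ and set $Y_j=\overline{\mathcal{B}(y_j,w)}\cap Y$. Using the standard inclusion $\overline{\mathcal{B}(y_j,w)}\subseteq \mathcal{B}(y_j,w^2)$ for symmetric $w$, each $\overline{\mathcal{B}(y_j,w)}$ is $w^4$-small, so every $Y_j$ is compact, $v$-small, contained in $Y$, and $\bigcup_{j}Y_j=Y$. Put $\mathcal{F}=\{Y_1,\dots,Y_L\}$. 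Since $v\subseteq v_0$, each $Y_j$ is $v_0$-small, hence wandering and $u$-small. Moreover, if $Y_i\cap Y_j\neq\emptyset$, then, picking $a\in Y_i\cap Y_j$, any two points of $Y_i\cup Y_j$ are related through $a$ by $v\circ v$, so $Y_i\cup Y_j$ is $v^2$-small, thus $v_0$-small, thus wandering. Therefore $\mathcal{F}$ is a finite family of $u$-small wandering compact subsets of $Y$ that satisfies the hypothesis of the Disjoint representatives property (every pair is either disjoint or has wandering union).

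It remains to assemble the pieces. Because $\bigcup\mathcal{F}=Y$ is a compact subset of $X-\Omega(f)$, we have $M(\bigcup\mathcal{F})=M(Y)<\infty$, so the Additivity property applies and gives $[c_{f,Y}(n)]=[c_{f,\bigcup\mathcal{F}}(n)]=[c_{f,\mathcal{F}}(n)]$. Applying the Disjoint representatives property to $\mathcal{F}$ then yields $[c_{f,\mathcal{F}}(n)]=\sup\{[c_{f,\mathcal{F}'}(n)]:\mathcal{F}'\subseteq\mathcal{F}\text{ is disjoint}\}$. Combining the two displays gives the desired inequality (in fact with equality). The main obstacle is exactly the first step, namely guaranteeing an entourage under which small subsets of $Y$ are automatically wandering; the compactness of $Y$ together with $Y\cap\Omega(f)=\emptyset$ is what makes this possible, and everything afterward is routine manipulation of entourages and the already-established properties of $[c_{f,\mathcal{F}}(n)]$.
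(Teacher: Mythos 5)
Your proof is correct and follows essentially the same route as the paper's: cover $Y$ by $u$-small wandering compact pieces, use a Lebesgue-type entourage so that any two intersecting pieces have wandering union, and then conclude by the additivity and disjoint-representatives properties. The only differences are cosmetic — you augment the cover by $X-Y$ so the Lebesgue lemma applies to a cover of all of $X$, and you build the compact pieces explicitly as $\overline{\mathcal{B}(y_j,w)}\cap Y$ — which if anything makes the argument slightly more self-contained than the paper's version.
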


\begin{proof} Since $Y\subseteq X-\Omega(f)$, every point of $Y$ admits a wandering compact neighborhood, and by compactness there exists an open cover $\{U_{1},...,U_{m}\}$ of $Y$ by $u$-small wandering sets. Take a Lebesgue entourage $v$ for this cover and take $w\in \mathcal{U}$ that satisfies $w \subseteq u$ and $w^{2} \subseteq v$. So, for every $w$-small sets $W,W' \subseteq X-\Omega(f)$, with $W\cap W' \neq \emptyset$, $W\cup W'$ is $w^{2}$-small, which implies that it is $v$-small and then it is wandering, since it is contained in some $U_{i}$.  Again by compactness, there is a finite cover $\mathcal{F}=\{Y_1,\dots, Y_k\}$ of $Y$ by wandering compact subsets of $Y$ that are $w$-small. Then, for every $i,j \in\{1,\dots,k\}$, if $Y_i$ meets $Y_j$, then $Y_i\cup Y_j$ is a wandering set. So, by the disjoint representatives property, $[c_{f,Y}(n)]\leqslant [c_{f,\mathcal{F}}(n)] = \sup\{[c_{f,\mathcal{F}'}(n)]: \mathcal{F}'\subseteq \mathcal{F} \,\text{is disjoint}\}$.
\end{proof}

The following lemma says that we can estimate the generalized entropy from below by the codings relative to a compact subset $Y$ of $ X-\Omega (f)$. It is also a generalization of the construction made for the polynomial entropy, by Hauseux and Le Roux (Lemme 2.4 from \cite{HR}).

 \begin{lema}\label{lema223}
Let $(X,\mathcal{U})$ be a Hausdorff compact uniform space and $f: X\rightarrow X$ be a homeomorphism.	For every compact subset $Y$ of $X-\Omega(f)$ we have $$[c_{f,Y}(n)]\leqslant o(f).$$
\end{lema}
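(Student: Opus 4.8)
The plan is to combine the two preceding results directly: \textbf{Lemma \ref{lema222}} already reduces the coding count of a compact $Y$ to a supremum over disjoint subfamilies of wandering, $u$-small sets, while \textbf{Proposition \ref{prop2.21}} already bounds the coding count of any finite disjoint wandering family by $o(f)$. The lemma will follow from chaining these two facts together, so the argument is essentially a two-line deduction once those preparatory results are in hand.

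First I would fix an arbitrary entourage $u \in \mathcal{U}$, which exists because $\mathcal{U} \neq \emptyset$ by the first axiom of a uniform structure. Applying \textbf{Lemma \ref{lema222}} to $Y$ and this $u$ produces a finite family $\mathcal{F} = \{Y_1,\dots,Y_L\}$ of wandering compact subsets of $Y$ that are $u$-small and satisfy
$$[c_{f,Y}(n)] \leqslant \sup\{[c_{f,\mathcal{F}'}(n)]: \mathcal{F}' \subseteq \mathcal{F} \text{ is disjoint}\}.$$
Next I would bound each term appearing on the right. Fix any disjoint subfamily $\mathcal{F}' \subseteq \mathcal{F}$. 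Its members are among the $Y_i$, hence each is a wandering compact subset of $Y \subseteq X - \Omega(f)$, and $\mathcal{F}'$ is finite and disjoint by choice; thus $\mathcal{F}'$ satisfies the hypotheses of \textbf{Proposition \ref{prop2.21}}, which gives $[c_{f,\mathcal{F}'}(n)] \leqslant o(f)$.

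Finally, since $\mathcal{F}$ is finite there are only finitely many disjoint subfamilies $\mathcal{F}' \subseteq \mathcal{F}$, so the supremum on the right-hand side above is a finite join in the complete lattice $\overline{\mathbb{O}}$ of elements each bounded above by $o(f)$; such a join is itself $\leqslant o(f)$. Combining this with the displayed inequality yields $[c_{f,Y}(n)] \leqslant o(f)$, as claimed. I do not expect a genuine obstacle in this proof: the only point requiring care is checking that the family furnished by \textbf{Lemma \ref{lema222}} really consists of wandering compact sets so that \textbf{Proposition \ref{prop2.21}} applies to each of its disjoint subfamilies, but this is precisely how that lemma is stated, so the deduction is immediate.
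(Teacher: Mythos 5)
Your proposal is correct and matches the paper's own proof essentially verbatim: both apply \textbf{Lemma \ref{lema222}} to obtain the finite family of wandering compact sets and then bound each disjoint subfamily's coding count by $o(f)$ via \textbf{Proposition \ref{prop2.21}}. The only cosmetic difference is that you explicitly fix an entourage $u$ before invoking the lemma, which the paper leaves implicit.
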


\begin{proof} By the {\bf Lemma \ref{lema222}} there exists a finite family $\mathcal{F}$ of compact subsets of $Y$ whose elements are wandering and such that $[c_{f,Y}(n)]\leqslant\sup\{[c_{f,\mathcal{F}'}(n)]: \mathcal{F}'\subseteq \mathcal{F} \text{ is disjoint}\}$. And, by {\bf Proposition \ref{prop2.21}}, if $\mathcal{F}'$ is a finite disjoint family of compact subsets that are wandering by $f$, then $[c_{f,\mathcal{F}'}(n)] \leqslant o(f)$, which implies $\sup\{[c_{f,\mathcal{F}'}(n)]: \mathcal{F}'\subseteq \mathcal{F} \text{ is disjoint}\}\leqslant o(f)$, and $[c_{f, Y}(n)]\leqslant o(f)$, as we wanted.
\end{proof}

\begin{defi} \label{mutuallysingular} Let $(X,\mathcal{U})$ be a Hausdorff compact uniform space and $f: X \rightarrow X$ a homeomorphism. A disjoint family $Y_{1},\dots,Y_{k}$ of subsets of $X-\Omega(f)$ is mutually singular if for every $n_{0} \in \N$, there exists $x \in X$ and $n_{1},\dots,n_{k} \in \N$, such that $f^{n_{i}}(x) \in Y_{i}$, for every $i\in\{1,\dots,k\}$ and if $i \neq j$, then $|n_{i}-n_{j}| > n_{0}$. A disjoint family $Y_{1},\dots,Y_{k}$ of subsets of $X-\Omega(f)$ is almost mutually singular if for every choice of pairwise disjoint open neighborhoods $U_{i}$ of $Y_{i}$ that are contained in $\Omega(f)$, the sets  $U_{1},\dots,U_{k}$ are mutually singular.    
\end{defi}

\begin{lema}\label{morethanlinear} Let $(X,\mathcal{U})$ be a Hausdorff compact uniform space and $f: X \rightarrow X$ a homeomorphism. If $\mathcal{F} = \{Y_{1},Y_{2}\}$ is a mutually singular family of compact wandering sets, then $[c_{f,\mathcal{F}}(n)] > [n]$.   
\end{lema}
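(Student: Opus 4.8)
The plan is to bound $c_{f,\mathcal{F}}(n)$ from below by exhibiting a super-linear number of distinct codings, exploiting the fact that mutual singularity produces orbits meeting $Y_1$ and $Y_2$ at pairs of times whose separation is arbitrarily large. First I would record the structural consequence of the hypothesis that $Y_1$ and $Y_2$ are wandering: every orbit meets each of them at most once. Hence every coding $\underline{w}\in\mathcal{A}_n(f,\mathcal{F})$ uses the letter $Y_1$ at most once and the letter $Y_2$ at most once, and is completely determined by the position (if any) of $Y_1$ and of $Y_2$ in it, every remaining letter being $\infty_{\mathcal{F}}$.

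Next I would translate mutual singularity into combinatorial data. Call an integer $d\geqslant 1$ a realizable gap if there exist $x\in X$ and $n_1,n_2\in\N$ with $f^{n_1}(x)\in Y_1$, $f^{n_2}(x)\in Y_2$ and $|n_1-n_2|=d$. By the definition of mutual singularity applied with $k=2$, for every $n_0$ there is a realizable gap larger than $n_0$; thus the set of realizable gaps is an unbounded, hence infinite, subset of $\N$. Writing $R(m)$ for the number of realizable gaps not exceeding $m$, we then have $R(m)\to\infty$.

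The core step is to manufacture codings from a single realizable gap by sliding a window along the (bi-infinite, since $f$ is a homeomorphism) orbit. Fix a realizable gap $d\leqslant n-1$, witnessed by $x,n_1,n_2$ with, say, $n_1<n_2$ and $d=n_2-n_1$. For each $p\in\{0,\dots,n-1-d\}$, the orbit segment $(f^{s}(x),\dots,f^{s+n-1}(x))$ with $s=n_1-p$ has a coding with $Y_1$ in position $p$, $Y_2$ in position $p+d$, and — by the wandering property, which forbids any further visit to $Y_1$ or $Y_2$ — the letter $\infty_{\mathcal{F}}$ everywhere else; in particular it lies in $\mathcal{A}_n(f,\mathcal{F})$. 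This produces $n-d$ codings for the gap $d$, and codings coming from distinct $p$, or from distinct realizable gaps $d$, are pairwise distinct because the positions of $Y_1$ and $Y_2$ differ. Summing over realizable gaps gives
\[
c_{f,\mathcal{F}}(n)\ \geqslant\ \sum_{\substack{d\text{ realizable}\\ d\leqslant n-1}}(n-d)\ \geqslant\ \sum_{\substack{d\text{ realizable}\\ d\leqslant n/2}}\frac{n}{2}\ =\ \frac{n}{2}\,R\!\left(\lfloor n/2\rfloor\right).
\]

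Since $R(\lfloor n/2\rfloor)\to\infty$, the quotient $c_{f,\mathcal{F}}(n)/n$ is unbounded, which is precisely $[c_{f,\mathcal{F}}(n)]\not\leqslant[n]$. For the reverse comparison, letting $d_0$ be the smallest realizable gap, the same construction already gives $c_{f,\mathcal{F}}(n)\geqslant n-d_0$ for $n>d_0$, while $c_{f,\mathcal{F}}(n)\geqslant 1$ for every $n$; so a single constant $C$ yields $n\leqslant C\,c_{f,\mathcal{F}}(n)$ for all $n$, i.e. $[n]\leqslant[c_{f,\mathcal{F}}(n)]$. Combining the two inequalities gives $[c_{f,\mathcal{F}}(n)]>[n]$. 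The delicate point is the core step: I expect the main obstacle to be checking that the sliding windows produce honest elements of $\mathcal{A}_n(f,\mathcal{F})$ with no spurious $Y_1$ or $Y_2$ letters — this is exactly where the wandering hypothesis is used — and that the resulting codings are genuinely distinct, so that the displayed bound is a true count and not an overcount.
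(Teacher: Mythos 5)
Your proposal is correct and follows essentially the same route as the paper: there, too, codings are counted by the pair of positions occupied by $Y_1$ and $Y_2$, the same lower bound $\sum_{d}(n-d)\geqslant \tfrac{n}{2}\,R(\lfloor n/2\rfloor)$ is obtained (organized as a sum over the position of the first special letter and phrased as a contradiction rather than a direct unboundedness argument), and mutual singularity is used in the same way to make the set of realizable separations infinite. The only point to tidy is your ``say $n_1<n_2$'': a given gap $d$ may be realizable only with $Y_2$ preceding $Y_1$, but the sliding-window count and the pairwise distinctness of the resulting codings are unaffected by the order of the two letters, so the bound stands.
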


\begin{proof}By the monotonicity property, $[c_{f,\mathcal{F}}(n)] \geqslant [c_{f,Y_{1}}(n)]$ and $[c_{f,Y_{1}}(n)] = [n]$, since $Y_{1}$ is wandering (so all codings with size $n$ for the family $\{Y_{1}\}$ have the form $(\infty, \dots, \infty, Y_{1}, \infty \dots, \infty)$, which are $n$ possibilities). Thus $[c_{f,\mathcal{F}}(n)] \geqslant [n]$. 

Consider the sets $R(Y_{i},Y_{j}) = \{m \in \N: f^{m}(Y_{i})\cap Y_{j} \neq \emptyset\}$. Since $\mathcal{F}$ is mutually singular, then $R(Y_{1},Y_{2}) \cup R(Y_{2},Y_{1})$ is an infinite set, which implies that one of the sets $R(Y_{1},Y_{2})$ or $R(Y_{2},Y_{1})$ must be infinite. Suppose, without loss of generality, that $R(Y_{1},Y_{2})$ is infinite. We define also, for $n \in \N$, $R_{n}(Y_{1},Y_{2}) = \{m \in R(Y_{i},Y_{j}): m \leqslant n\}$.

Consider $\mathcal{D}_{n}(f,\mathcal{F}) = \{\underline{w} \in \mathcal{A}_{n}(f,\mathcal{F}): \underline{w} = (\infty, \dots, \infty, Y_{1}, \infty \dots, \infty,Y_{2},\infty \dots, \infty) \}$. In other words, $\mathcal{D}_{n}(f,\mathcal{F})$ have all codings of $\mathcal{A}_{n}(f,\mathcal{F})$ such that the first letter different from $\infty$ is $Y_{1}$ and $Y_{2}$ appears as a letter of the coding. Note that since $Y_{1}$ and $Y_{2}$ are wandering, both letters cannot appear more than once in an element of $\mathcal{A}_{n}(f,\mathcal{F})$.

Let $d_{f,\mathcal{F}}(n) = \# \mathcal{D}_{n}(f,\mathcal{F})$. Since $\mathcal{D}_{n}(f,\mathcal{F}) \subseteq \mathcal{A}_{n}(f,\mathcal{F})$, then, for every $n \in \N$,  $d_{f,\mathcal{F}}(n) \leqslant c_{f,\mathcal{F}}(n)$.

Let $\underline{w} \in \mathcal{D}_{n}(f,\mathcal{F})$. Then 

$$\underline{w} = \underbrace{(\infty,\dots,\infty, Y_{1}}_{n'},\underbrace{\overbrace{\infty, \dots, \infty,Y_{2}}^{m},\infty, \dots, \infty}_{n-n'}).$$

For some $n' < n$ and $m \leqslant n-n'$. These choices of $n'$ and $m$ are possible if and only if $m \leqslant n-n'$ and $f^{m}(Y_{1}) \cap Y_{2} \neq \emptyset$ (or equivalently, if $m \in R_{n-n'}(Y_{1},Y_{2})$). So the set of elements of $\mathcal{D}_{n}(f,\mathcal{F})$ where the $Y_{1}$ is in the $n'$-th position has cardinality $\# R_{n-n'}(Y_{1},Y_{2})$ and then $d_{f,\mathcal{F}}(n) = \sum_{n' = 1}^{n-1} \# R_{n-n'}(Y_{1},Y_{2})$.

Suppose that $[d_{f,\mathcal{F}}(n)] \leqslant [n]$. Then there exists $c > 0$ such that, for every $n \in \N$, $d_{f,\mathcal{F}}(n) \leqslant cn$. Since $R(Y_{1},Y_{2})$ is infinite and $R(Y_{1},Y_{2}) = \bigcup_{i \in \N} R_{i}(Y_{1},Y_{2})$, then there exists $n_{0} \in 2\N$ such that $\#R_{\frac{n_{0}}{2}}(Y_{1},Y_{2}) > 2c$. We have that, for every $n' \leqslant \frac{n_{0}}{2}$, $R_{n_{0}-n'}(Y_{1},Y_{2}) \supseteq R_{\frac{n_{0}}{2}}(Y_{1},Y_{2})$, which implies that $\#R_{n_{0}-n'}(Y_{1},Y_{2}) > 2c$. So  
$$d_{f,\mathcal{F}}(n_{0}) = \sum_{n' = 1}^{n_{0}-1} \# R_{n_{0}-n'}(Y_{1},Y_{2}) \geqslant \sum_{n' = 1}^{\frac{n_{0}}{2}} \# R_{n_{0}-n'}(Y_{1},Y_{2}) > \sum_{n' = 1}^{\frac{n_{0}}{2}}2c = \frac{2cn_{0}}{2} = cn_{0}.$$

This contradicts the fact that, for every $n \in \N$, $d_{f,\mathcal{F}}(n) \leqslant cn$. Then $[d_{f,\mathcal{F}}(n)] \nleqslant [n]$. Since  $[d_{f,\mathcal{F}}(n)] \leqslant [c_{f,\mathcal{F}}(n)]$, then $[c_{f,\mathcal{F}}(n)] \nleqslant [n]$. Thus $[c_{f,\mathcal{F}}(n)] > [n]$.
\end{proof}

The lemma above is enough to state a condition to have a lower bound of the generalized entropy. However, with the next few lemmas, we are able to remove the hypothesis that the mutually singular family is wandering.

\begin{lema}\label{almostmutualsingsubdivisions}Let $(X,\mathcal{U})$ be a Hausdorff compact uniform space and $f: X \rightarrow X$ a homeomorphism. Consider the sets $Y_{1},\dots,Y_{k}$ that are almost mutually singular. Suppose that for each $i \in \{1,\dots k\}$, $Y_{i} = \bigcup_{j = 1}^{r_{i}} Y_{i,j}$. Then there exists $s_{i} \in \{1,\dots,r_{i}\}$ such that the sets $Y_{1,s_{1}},...,Y_{k,s_{k}}$ are almost mutually singular.
\end{lema}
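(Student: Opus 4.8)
The plan is to argue by contradiction, exploiting the finiteness of the number of ways to select one piece from each $Y_i$. Suppose no tuple works: for every $\vec{s}=(s_1,\dots,s_k)\in\prod_{i}\{1,\dots,r_i\}$ the family $Y_{1,s_1},\dots,Y_{k,s_k}$ fails to be almost mutually singular. Unwinding \textbf{Definition \ref{mutuallysingular}}, this failure produces, for each such $\vec s$, a choice of pairwise disjoint open neighborhoods $U_i^{\vec s}\supseteq Y_{i,s_i}$ (contained in $X-\Omega(f)$) together with a threshold $N(\vec s)\in\N$ such that there is no point $x$ and no times $n_1,\dots,n_k$ with $f^{n_i}(x)\in U_i^{\vec s}$ for all $i$ and $|n_i-n_j|>N(\vec s)$ whenever $i\neq j$. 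Empty pieces $Y_{i,j}$ are discarded at the outset.

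The heart of the argument is to assemble these finitely many neighborhoods into a single admissible test family for the almost mutual singularity of $Y_1,\dots,Y_k$. For each $i$ and each piece index $j$, set $W_{i,j}=\bigcap_{\vec s:\,s_i=j}U_i^{\vec s}$; this is a finite intersection of open neighborhoods of $Y_{i,j}$, hence an open neighborhood of $Y_{i,j}$. Then put $V_i=\bigcup_{j=1}^{r_i}W_{i,j}$, an open neighborhood of $Y_i=\bigcup_j Y_{i,j}$ contained in $X-\Omega(f)$. The key observation is that the $V_i$ are pairwise disjoint: for $i\neq i'$ and any $j,j'$, I choose a tuple $\vec s$ with $s_i=j$ and $s_{i'}=j'$; then $W_{i,j}\subseteq U_i^{\vec s}$, $W_{i',j'}\subseteq U_{i'}^{\vec s}$, and $U_i^{\vec s}\cap U_{i'}^{\vec s}=\emptyset$, so $W_{i,j}\cap W_{i',j'}=\emptyset$, whence $V_i\cap V_{i'}=\emptyset$.

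With the $V_i$ in hand, I invoke the almost mutual singularity of $Y_1,\dots,Y_k$: the family $\{V_i\}$ is mutually singular. Applying this with $n_0=\max_{\vec s}N(\vec s)$ (a finite maximum) yields a point $x$ and times $n_1,\dots,n_k$ with $f^{n_i}(x)\in V_i$ and $|n_i-n_j|>n_0$ for $i\neq j$. Since $f^{n_i}(x)\in V_i=\bigcup_j W_{i,j}$, I pick for each $i$ an index $s_i$ with $f^{n_i}(x)\in W_{i,s_i}$, defining a tuple $\vec s$. Because $\vec s$ itself occurs in the intersection defining $W_{i,s_i}$, one has $W_{i,s_i}\subseteq U_i^{\vec s}$, so $f^{n_i}(x)\in U_i^{\vec s}$ for all $i$ with gaps exceeding $n_0\geqslant N(\vec s)$ --- exactly the configuration that the failure of $\vec s$ forbade. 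This contradiction proves the lemma.

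The step I expect to be the main obstacle, and the point that makes the bookkeeping delicate, is the dependence of each witnessing neighborhood $U_i^{\vec s}$ on the entire tuple $\vec s$ rather than on the single pair $(i,s_i)$; the intersect-over-completions, union-over-pieces construction of the $W_{i,j}$ and $V_i$ is precisely what reconciles ``disjointness holds for a common tuple'' with ``the index $s_i$ is read off only after a witness is produced''. A minor technical point to keep in mind is that disjoint open neighborhoods and the finite cover structure are available because the sets in play are compact subsets of $X-\Omega(f)$ inside a Hausdorff compact space.
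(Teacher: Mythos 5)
Your proof is correct and follows essentially the same route as the paper's: the same intersect-over-tuples construction $W_{i,j}=\bigcap_{\vec s:\,s_i=j}U_i^{\vec s}$, the same union $V_i=\bigcup_j W_{i,j}$, and the same disjointness argument via a common tuple. The only (cosmetic) difference is at the end: the paper produces witnesses for every gap $m$ and pigeonholes an infinite subsequence into a single tuple's neighborhoods, whereas you take the maximum of the finitely many thresholds $N(\vec s)$ and use a single witness — both finish the contradiction equally well.
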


\begin{proof}Suppose that is not the case. Then, for every $s = (s_{1},\dots,s_{k})$ with $s_{i} \in \{1,\dots, r_{i}\}$, the sets $Y_{1,s_{1}},..., Y_{k,s_{k}}$ are not almost mutually singular. So 
there exists an open neighborhood $U_{i,s}$ of $Y_{i,s_{i}}$ contained in $X-\Omega(f)$ and such that $U_{1,s},..., U_{k,s}$ are pairwise disjoint but they are not mutually singular. Let $\Pi = \{1,\dots, r_{1}\}\times \dots \times \{1,\dots, r_{k}\}$,  $\Pi_{i,j} = \{(s_{1},\dots,s_{k}) \in \Pi: s_{i} = j\}$ and take $U_{i,j} = \bigcap_{s \in \Pi_{i,j}} U_{i,s}$. We have that $U_{i,j}$ is an open neighborhood of $Y_{i,j}$. Let $(s_{1},\dots,s_{k}) \in \Pi$. Since, for every $s \in \Pi_{i,s_{i}}$, $U_{i,s_{i}} \subseteq U_{i,s}$, then, $U_{1,s_{1}},..., U_{k,s_{k}}$ are pairwise disjoint and they are not mutually singular.

Let $U_{i} = \bigcup_{j = 1}^{r_{i}} U_{i,j}$. We have that $U_{i}$ is an open neighborhood of $Y_{i}$. Since $Y_{1},\dots,Y_{k}$ are almost mutually singular, then $U_{1},\dots,U_{k}$ are mutually singular (they are pairwise disjoint since each sets $U_{1,s_{1}},..., U_{k,s_{k}}$ are pairwise disjoint). Then, for every $m \in \N$, there exists $x_{m} \in X$ and $n_{1,m},\dots,n_{k,m} \in \N$, such that $f^{n_{i,m}}(x_{m}) \in U_{i}$, for every $i\in\{1,\dots,k\}$ and if $i \neq j$, then $|n_{i,m}-n_{j,m}| > m$. Since $\Pi$ is finite, then there exists an infinite set $\{m_{l}\}_{l \in \N}$ and $(s_{1},...,s_{k})\in \Pi$ such that for every $l \in \N$, $f^{n_{i,m_{l}}}(x_{m_{l}}) \in U_{i,s_{i}}$. So $U_{1,s_{1}},..., U_{k,s_{k}}$ are mutually singular, which is a contradiction.

Thus there exists $s_{i} \in \{1,\dots,r_{i}\}$ such that the sets $Y_{1,s_{1}},...,Y_{k,s_{k}}$ are almost mutually singular.
\end{proof}

\begin{lema}Let $(X,\mathcal{U})$ be a Hausdorff compact uniform space and $f: X \rightarrow X$ a homeomorphism. Consider $\Gamma$ a directed set and a collections of compact sets $\{Y_{i,\alpha}\}_{\alpha \in \Gamma}$, with $i \in \{1,\dots,k\}$, such that for every $\beta < \alpha$, $Y_{i,\alpha} \subseteq Y_{i,\beta}$ and, for every $\alpha \in \Gamma$, the sets $Y_{1,\alpha},\dots,Y_{k,\alpha}$ are almost mutually singular. If $Y_{i} = \bigcap_{\alpha \in \Gamma} Y_{i,\alpha}$, then the sets $Y_{1},\dots,Y_{k}$ are almost mutually singular.
\end{lema}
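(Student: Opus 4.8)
The plan is to verify the defining property of almost mutual singularity directly for the family $Y_1,\dots,Y_k$. First I observe that these sets do form a disjoint family of subsets of $X-\Omega(f)$: fixing any $\alpha\in\Gamma$, the sets $Y_{1,\alpha},\dots,Y_{k,\alpha}$ are by hypothesis a disjoint family contained in $X-\Omega(f)$, and since $Y_i\subseteq Y_{i,\alpha}$ for each $i$, the same holds for the $Y_i$. Thus it remains to take an arbitrary choice of pairwise disjoint open neighborhoods $U_1,\dots,U_k$ of $Y_1,\dots,Y_k$, each contained in $X-\Omega(f)$, and prove that $U_1,\dots,U_k$ are mutually singular.

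The heart of the argument is a compactness step that pushes the sets $Y_{i,\alpha}$ into the neighborhoods $U_i$ for large enough $\alpha$. Fix $i$. Since $\beta<\alpha$ implies $Y_{i,\alpha}\subseteq Y_{i,\beta}$, the family $\{Y_{i,\alpha}\cap(X-U_i)\}_{\alpha\in\Gamma}$ consists of compact sets (closed subsets of the compact $Y_{i,\alpha}$) and is downward directed: given $\alpha_1,\dots,\alpha_m$, directedness of $\Gamma$ yields $\alpha^*\geqslant\alpha_1,\dots,\alpha_m$, and then $Y_{i,\alpha^*}\cap(X-U_i)$ is contained in each $Y_{i,\alpha_j}\cap(X-U_i)$. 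Hence if every such set were nonempty, the whole family would have the finite intersection property, forcing, by compactness of $X$, that $\bigcap_{\alpha}\bigl(Y_{i,\alpha}\cap(X-U_i)\bigr)=Y_i\cap(X-U_i)\neq\emptyset$; but $Y_i\subseteq U_i$, a contradiction. Therefore there is an index $\alpha_i\in\Gamma$ with $Y_{i,\alpha_i}\subseteq U_i$.

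Finally I assemble the pieces. Using directedness of $\Gamma$ once more, I choose $\alpha\in\Gamma$ with $\alpha\geqslant\alpha_i$ for every $i\in\{1,\dots,k\}$; then $Y_{i,\alpha}\subseteq Y_{i,\alpha_i}\subseteq U_i$ for all $i$. Consequently $U_1,\dots,U_k$ are pairwise disjoint open neighborhoods of $Y_{1,\alpha},\dots,Y_{k,\alpha}$, all contained in $X-\Omega(f)$. Since, by hypothesis, the family $Y_{1,\alpha},\dots,Y_{k,\alpha}$ is almost mutually singular, the very definition applied to this particular choice of neighborhoods gives that $U_1,\dots,U_k$ are mutually singular, which is exactly what we needed.

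The only genuinely delicate point is the compactness step, and within it the correct handling of the net indexed by the directed set $\Gamma$: one must make sure the family $\{Y_{i,\alpha}\cap(X-U_i)\}_{\alpha}$ really is downward directed (so that the finite intersection property is available) and that its total intersection collapses to $Y_i\cap(X-U_i)$. Everything else is a direct unwinding of the definitions of almost mutually singular and mutually singular families.
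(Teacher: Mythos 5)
Your proof is correct and follows essentially the same route as the paper: find, for each $i$, an index $\alpha_i$ with $Y_{i,\alpha_i}\subseteq U_i$, pass to a common upper bound $\alpha$ by directedness, and then invoke almost mutual singularity of $Y_{1,\alpha},\dots,Y_{k,\alpha}$ for the given neighborhoods. The only difference is that you spell out the compactness/finite-intersection argument that the paper delegates to a citation (Corollary 3.1.5 of Engelking), and you do so correctly.
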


\begin{proof}Since  $\{Y_{i,\alpha}\}_{\alpha \in \Gamma}$, with $i \in \{1,\dots,k\}$, satisfies the finite intersection property, then $Y_{i} \neq \emptyset$. Let $U_{1},...,U_{k}$ be pairwise disjoint open neighborhoods of $Y_{1},...,Y_{k}$, respectively. Since $U_{i}$ is an open neighborhood of $Y_{i}$ and $Y_{i} = \bigcap_{\alpha \in \Gamma} Y_{i,\alpha}$, with compact $Y_{i,\alpha}$, then there exists $\alpha_{i} \in \Gamma$ such that $Y_{i,\alpha_{i}} \subseteq U_{i}$ (Corollary 3.1.5 of \cite{Eng}). Take $\alpha_{0} > \alpha_{i}$, for every $i\in\{1,\dots,k\}$. Then $Y_{i,\alpha_{0}} \subseteq Y_{i,\alpha_{i}} \subseteq U_{i}$. Therefore, $U_{1},...,U_{k}$ are pairwise disjoint open neighborhoods of $Y_{1,\alpha_{0}},...,Y_{k,\alpha_{0}}$, respectively, and $Y_{1,\alpha_{0}},...,Y_{k,\alpha_{0}}$ are almost mutually singular, which implies that $U_{1},...,U_{k}$ are mutually singular. Thus $Y_{1},\dots,Y_{k}$ are almost mutually singular.
\end{proof}

\begin{lema}Let $(X,\mathcal{U})$ be a Hausdorff compact uniform space and $f: X \rightarrow X$ a homeomorphism. Consider the compact sets $Y_{1},\dots,Y_{k}$ that are almost mutually singular. Then there exists $y_{i} \in Y_{i}$ such that the points $y_{1},...,y_{k}$ are almost mutually singular.
\end{lema}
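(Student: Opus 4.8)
The plan is to obtain the points via a maximality argument that couples the two preceding lemmas. Consider the collection $\mathcal{P}$ of all $k$-tuples $(Z_{1},\dots,Z_{k})$ of non-empty compact sets with $Z_{i}\subseteq Y_{i}$ for each $i$ and such that $Z_{1},\dots,Z_{k}$ are almost mutually singular; note that since $Z_{i}\subseteq Y_{i}$, pairwise disjointness is inherited from $Y_{1},\dots,Y_{k}$. Order $\mathcal{P}$ by componentwise reverse inclusion, declaring $(Z_{1},\dots,Z_{k})\preceq (Z_{1}',\dots,Z_{k}')$ when $Z_{i}'\subseteq Z_{i}$ for all $i$; the tuple $(Y_{1},\dots,Y_{k})$ itself lies in $\mathcal{P}$, so $\mathcal{P}\neq\emptyset$. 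A tuple whose every coordinate is a singleton yields exactly the desired points, so it suffices to produce a $\preceq$-maximal element and to show that it has this form.

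First I would verify the chain condition for Zorn's lemma. Let $C\subseteq \mathcal{P}$ be a chain and put $Z_{i}=\bigcap_{(W_{1},\dots,W_{k})\in C} W_{i}$. Viewing $C$ as a directed set under $\preceq$ (along which the sets $W_{i}$ decrease), each $Z_{i}$ is the intersection of a decreasing net of non-empty compact sets, hence non-empty and compact by the finite intersection property. The preceding lemma on nested intersections then applies verbatim and guarantees that $Z_{1},\dots,Z_{k}$ remain almost mutually singular, so $(Z_{1},\dots,Z_{k})\in\mathcal{P}$ is an upper bound for $C$. By Zorn's lemma, $\mathcal{P}$ has a maximal element $(Z_{1}^{\ast},\dots,Z_{k}^{\ast})$.

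It remains to show each $Z_{i}^{\ast}$ is a singleton, and here the subdivision lemma (\textbf{Lemma \ref{almostmutualsingsubdivisions}}) does the work. Suppose, for contradiction, that some $Z_{i_{0}}^{\ast}$ contains two distinct points $p\neq q$. Since $X$ is Hausdorff, choose disjoint open sets $A\ni p$ and $B\ni q$; then $Z_{i_{0}}^{\ast}=(Z_{i_{0}}^{\ast}\setminus A)\cup(Z_{i_{0}}^{\ast}\setminus B)$ expresses $Z_{i_{0}}^{\ast}$ as a union of two compact sets, each a proper subset (the first omits $p$, the second omits $q$). Applying \textbf{Lemma \ref{almostmutualsingsubdivisions}} to the decomposition that is trivial in every coordinate $i\neq i_{0}$ and equal to this two-piece cover in coordinate $i_{0}$, we obtain one of these proper pieces, say $Z'\subsetneq Z_{i_{0}}^{\ast}$, such that replacing $Z_{i_{0}}^{\ast}$ by $Z'$ keeps the family almost mutually singular. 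The resulting tuple lies in $\mathcal{P}$ and is strictly above $(Z_{1}^{\ast},\dots,Z_{k}^{\ast})$ in $\preceq$, contradicting maximality. Hence every $Z_{i}^{\ast}=\{y_{i}\}$, and the points $y_{1},\dots,y_{k}$ are almost mutually singular.

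The main obstacle is the chain step, which hinges entirely on the two preceding lemmas: the finite intersection property keeps the componentwise intersections non-empty and compact, while the nested-intersection lemma guarantees that almost mutual singularity survives passage to the limit. Once these are secured, the subdivision lemma reduces the whole statement to a routine maximality contradiction, with Hausdorffness merely supplying the proper two-piece decomposition of any non-degenerate component.
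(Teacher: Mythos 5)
Your proof is correct, but it takes a genuinely different route from the paper. The paper well-orders the uniform structure $\mathcal{U}$ (Zermelo) and runs a transfinite recursion, at each entourage $u$ shrinking the sets to compact $u$-small pieces via \textbf{Lemma \ref{almostmutualsingsubdivisions}} and passing through limit stages with the nested-intersection lemma; the final intersections are then $u$-small for every $u$, hence singletons because $\bigcap\mathcal{U}=\Delta X$. You instead apply Zorn's lemma to the poset of almost mutually singular compact tuples under componentwise reverse inclusion: the chain condition is exactly the nested-intersection lemma (a chain is a directed set along which the components decrease, and the finite intersection property keeps them non-empty and compact), and maximality forces singletons because a two-point component can be split by Hausdorff separation into two proper compact pieces, to which the subdivision lemma applies, contradicting maximality. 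Both arguments use the same two auxiliary lemmas and comparable amounts of choice, but yours dispenses with the well-ordering of $\mathcal{U}$ and all the $u$-smallness bookkeeping, extracting the singleton conclusion from maximality rather than from $\bigcap\mathcal{U}=\Delta X$; the paper's version, in exchange, exhibits the shrinking family explicitly. One cosmetic remark: it is worth noting (as you implicitly use) that an empty set can never belong to an almost mutually singular family, so the piece selected by the subdivision lemma in your contradiction step is automatically the non-empty proper one; since both of your pieces $Z_{i_0}^{\ast}\setminus A$ and $Z_{i_0}^{\ast}\setminus B$ are non-empty anyway, nothing is at stake.
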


\begin{obs}This is stated by Hauseux and Le Roux for compact mutually singular sets in compact metric spaces (page 9 of \cite{HR}).
\end{obs}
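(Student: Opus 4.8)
The plan is to reduce the statement about almost mutually singular \emph{sets} to the known statement about almost mutually singular \emph{points} by extracting, from each compact $Y_i$, a single well-chosen point $y_i$. The natural mechanism is a Zorn's lemma / compactness argument combined with the previous two lemmas: \textbf{Lemma \ref{almostmutualsingsubdivisions}} lets me pass from a finite subdivision of each $Y_i$ to a single piece of each while preserving almost mutual singularity, and the intersection lemma lets me pass to the limit along a decreasing net. So the idea is to shrink the $Y_i$ simultaneously down to points, always keeping the almost-mutual-singularity property alive.

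Concretely, first I would fix, for each $u \in \mathcal{U}$ (taken symmetric), a finite cover of each compact $Y_i$ by $u$-small compact subsets; intersecting $Y_i$ with a closed $u$-small set gives a finite subdivision $Y_i = \bigcup_{j} Y_{i,j}$ with each $Y_{i,j}$ compact and $u$-small. Applying \textbf{Lemma \ref{almostmutualsingsubdivisions}} simultaneously (or one index at a time, iterating $k$ times) yields a choice $Y_{1,s_1},\dots,Y_{k,s_k}$ of $u$-small compact pieces that remain almost mutually singular. Denote these refined sets by $Y_i^{u} \subseteq Y_i$. This produces, indexed by the directed set $\mathcal{U}$ (directed by reverse inclusion, using that $\mathcal{U}$ is a filter so finite intersections exist), a family of compact almost-mutually-singular tuples. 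The key technical point to arrange is nesting: I want $Y_i^{v} \subseteq Y_i^{u}$ whenever $v \subseteq u$, so that the intersection lemma applies. To guarantee this, I would build the refinement recursively over a cofinal chain, or more cleanly, I would run a Zorn/transfinite selection so that the chosen pieces form a decreasing net as the entourages shrink; at each step I subdivide the \emph{current} piece $Y_i^{u}$ rather than the original $Y_i$.

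Once I have a decreasing net $\{(Y_1^{u},\dots,Y_k^{u})\}_{u}$ of compact almost-mutually-singular tuples with $\mathrm{diam}(Y_i^{u}) \to 0$ (i.e.\ each $Y_i^{u}$ is $u$-small), I set $y_i \in Y_i := \bigcap_{u} Y_i^{u}$. By the preceding intersection lemma each $Y_i$ is almost mutually singular and nonempty (finite intersection property plus compactness). Because the space is Hausdorff and the $Y_i^{u}$ are $u$-small with $u$ ranging over a base of $\mathcal{U}$, the intersection $\bigcap_u Y_i^{u}$ is a single point: any two of its points lie in every $u$, hence in $\bigcap \mathcal{U} = \Delta X$, forcing them equal. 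So $Y_i = \{y_i\}$ with $y_i \in Y_i^{u} \subseteq Y_i$ for the original set, and the singletons $\{y_1\},\dots,\{y_k\}$ are almost mutually singular, which is exactly the assertion for points.

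The main obstacle I anticipate is organizing the \emph{simultaneous} and \emph{coherent} shrinking: \textbf{Lemma \ref{almostmutualsingsubdivisions}} only hands back one surviving piece per subdivision, and I must chain these choices consistently across the whole directed set $\mathcal{U}$ so that the resulting pieces genuinely decrease (to invoke the intersection lemma) rather than merely being almost mutually singular at each scale in isolation. Directing $\mathcal{U}$ by reverse inclusion and refining the current tuple at each step — rather than re-subdividing the originals — resolves this, but one must check that a cofinal decreasing chain suffices, which follows because $\mathcal{U}$ has a base and almost mutual singularity is preserved under passing to cofinal subnets. A secondary, more routine point is verifying that intersecting $Y_i$ with a closed $u$-small set produces genuinely $u$-small compact pieces and that finitely many suffice by compactness; this is where I use that $\mathcal{B}$ can be taken symmetric and open, so that $\mathcal{B}(x,u)\cap Y_i$ is relatively open and $u^2$-small.
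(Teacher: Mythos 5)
Your proposal is correct and follows essentially the same route as the paper: transfinitely shrink each $Y_i$ to $u$-small, almost-mutually-singular compact pieces, using \textbf{Lemma \ref{almostmutualsingsubdivisions}} at successor steps and the intersection lemma at limit steps, then conclude from Hausdorffness ($\bigcap \mathcal{U} = \Delta X$) that the total intersection of each decreasing family is a singleton. The only caveat is that your first suggested implementation via a cofinal \emph{chain} in $(\mathcal{U},\supseteq)$ need not exist in a general uniformity; the paper instead well-orders all of $\mathcal{U}$ (Zermelo) and nests the choices along that well-order, which is precisely your ``transfinite selection'' alternative.
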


\begin{proof}By Zermelo's Theorem, we can give to the set $\mathcal{U}$ a well-order $\leqslant$. We can suppose that $X\times X = \min \mathcal{U}$.

We want to construct a collection of families of closed sets $\{Y_{i,u}\}_{u \in \mathcal{U}}$, for each $i \in \{1,\dots,k\}$, satisfying the following properties:

\begin{enumerate}
    \item For every $i\in \{1,\dots,k\}$, $Y_{i,X\times X} = Y_{i}$.
    \item For every $i\in \{1,\dots,k\}$ and $u,v \in \mathcal{U}$, with $v < u$, $Y_{i,v} \supseteq Y_{i,u}$.  
    \item For every $i\in \{1,\dots,k\}$ and $u \in \mathcal{U}$, $Y_{i,u}$ is $u$-small.
    \item For every $u \in \mathcal{U}$, the family $Y_{1,u},\dots, Y_{k,u}$ is almost mutually singular.
\end{enumerate}

We do this construction by transfinite induction on $(\mathcal{U}, \leqslant)$:

Let $u \in \mathcal{U}$. We suppose that, for every $v < u$, there exists the families of closed sets $\{Y_{i,v}\}_{v < u}$ satisfying the conditions above. We need to consider two cases:

Suppose that $u = X\times X$. Clearly $Y_{1,X\times X},...,Y_{k,X\times X}$ satisfies the conditions above.

Suppose now that $u \neq X\times X$.

Let $Z_{i,u} = \bigcap_{v < u}Y_{i,v}$. Since each $Y_{i,v}$ is compact, then, by the finite intersection property, we get that $Z_{i,u} \neq \emptyset$. By the last lemma, we get that the family $Z_{1,u},\dots,Z_{k,u}$ is almost mutually singular. For each $i \in \{1,\dots,k\}$, we can take compact $u$-small sets $Z_{i,u,1},..., Z_{i,u,r_{i}}$ such that $Z_{i,u} = \bigcup_{j = 1}^{r_{i}} Z_{i,u,j}$. By \textbf{Lemma \ref{almostmutualsingsubdivisions}}, there exists $s_{i} \in \{1,\dots,r_{i}\}$ such that $Z_{1,u,s_{1}},..., Z_{k,u,s_{k}}$ are almost mutually singular. Then we can take $Y_{i,u}$ as $Z_{i,u,s_{i}}$.

So this concludes the construction.

Consider the sets $Z_{i} = \bigcap_{u \in \mathcal{U}} Y_{i,u}$. Since each $Y_{i,u}$ is compact, then, by the finite intersection property, we get that $Z_{i} \neq \emptyset$.  Since $Y_{i,u}$ is $u$-small, then $Z_{i}$ is $u$-small, for each $u \in \mathcal{U}$. Since $X$ is a Hausdorff space, then $Z_{i}$ is a singleton. So we take $y_{i}$ as the unique point in $Z_{i}$. Then, by the last lemma, the points $y_{1},...,y_{k}$ are almost mutually singular.
\end{proof}

\begin{cor}\label{wedontneedwandering}Let $(X,\mathcal{U})$ be a Hausdorff compact uniform space and $f: X \rightarrow X$ a homeomorphism. Consider the compact sets $Y_{1},\dots,Y_{k}$ that are almost mutually singular. Then there exists compact sets $Y'_{i} \subseteq X-\Omega(f)$ such that the  $Y'_{1},...,Y'_{k}$ are a collection of wandering mutually singular sets.
\end{cor}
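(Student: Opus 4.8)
The plan is to reduce immediately to the case of singletons via the lemma that directly precedes this corollary, and then to thicken the resulting points into wandering compact neighborhoods while preserving mutual singularity. Concretely, I would first apply that preceding lemma to the compact almost mutually singular family $Y_{1},\dots,Y_{k}$ to obtain points $y_{i} \in Y_{i}$ such that $y_{1},\dots,y_{k}$ are almost mutually singular. Since each $Y_{i} \subseteq X-\Omega(f)$, every $y_{i}$ lies in $X-\Omega(f)$, hence is a wandering point; in particular $y_{i}$ admits a wandering compact neighborhood, exactly as used in the proof of \textbf{Lemma \ref{lema222}}. Moreover, the family being disjoint forces the $y_{i}$ to be pairwise distinct.

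Next, using that $X$ is compact Hausdorff (hence regular) and that $X-\Omega(f)$ is open, I would choose for each $i$ a compact neighborhood $Y'_{i}$ of $y_{i}$ that is simultaneously contained in $X-\Omega(f)$, wandering, and pairwise disjoint from the others. This is possible by intersecting a pairwise-disjoint separating open set around $y_{i}$, a wandering open neighborhood of $y_{i}$, and the open set $X-\Omega(f)$, and then extracting a compact neighborhood inside the resulting open set by regularity. Since a subset of a wandering set is wandering, each $Y'_{i}$ is wandering.

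Finally, I would set $U_{i} = \mathrm{int}(Y'_{i})$. These are pairwise disjoint open neighborhoods of the $y_{i}$ contained in $X-\Omega(f)$, so by the almost mutual singularity of the points $y_{1},\dots,y_{k}$ the family $U_{1},\dots,U_{k}$ is mutually singular. The observation that closes the argument is that mutual singularity is monotone under enlargement of a still pairwise disjoint family: since $U_{i} \subseteq Y'_{i}$, any $x$ and indices $n_{1},\dots,n_{k}$ witnessing the mutual singularity of the $U_{i}$ also witness it for the $Y'_{i}$, because $f^{n_{i}}(x) \in U_{i} \subseteq Y'_{i}$ while the $Y'_{i}$ remain disjoint. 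Hence $Y'_{1},\dots,Y'_{k}$ is a family of wandering, mutually singular compact sets in $X-\Omega(f)$, as required.

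The main obstacle is essentially bookkeeping rather than substance: one must arrange the three neighborhood constraints (inside $X-\Omega(f)$, wandering, pairwise disjoint) simultaneously, and then invoke the correct direction of monotonicity of mutual singularity. All the genuinely hard work—passing from almost mutually singular compact sets to almost mutually singular points—has been carried out in the preceding transfinite-induction lemma, so this corollary is a short finishing step.
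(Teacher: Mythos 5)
Your proposal is correct and follows essentially the same route as the paper: apply the preceding lemma to extract almost mutually singular points $y_{i}\in Y_{i}$, thicken each into a wandering compact neighborhood inside pairwise disjoint wandering open sets in $X-\Omega(f)$, and conclude mutual singularity of the compact sets from that of the open neighborhoods by monotonicity. The only cosmetic difference is that the paper takes $Y'_{i}=\overline{V}_{i}$ for an open $V_{i}$ with $\overline{V}_{i}\subseteq U_{i}$ rather than passing through $\mathrm{int}(Y'_{i})$, which is the same argument.
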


\begin{proof}By the last lemma, there exists  $y_{i} \in Y_{i}$ such that the points $y_{1},...,y_{k}$ are almost mutually singular. Since $y_{1},...,y_{k} \in X-\Omega(f)$, there exists $U_{1},...,U_{k}$ pairwise disjoint open sets such that $y_{i} \in U_{i}$ and $U_{i}$ is wandering. Take an open set $V_{i}$ such that $y_{i} \in V_{i} \subseteq \overline{V}_{i} \subseteq U_{i}$. Then, take $Y'_{i} =  \overline{V}_{i}$. So the sets $Y'_{i}$ are wandering sets and $Y'_{1},...,Y'_{k}$ are a collection of wandering mutually singular sets, since $V_{1},...,V_{k}$ are a collection of wandering mutually singular sets.  
\end{proof}

Now we are able to conclude a condition to have a lower bound for the generalized entropy.

\begin{prop}\label{Entropyviamutuallydisjointsets} Let $(X,\mathcal{U})$ be a Hausdorff compact uniform space and $f: X \rightarrow X$ a homeomorphism. If there exists a finite disjoint family $\mathcal{F}$ of subsets of $X-\Omega(f)$ that is mutually singular, with $\# \mathcal{F} \geqslant 2$, then $o(f) > [n]$.
\end{prop}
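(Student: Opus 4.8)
The plan is to reduce the statement to the case of exactly two compact, wandering, mutually singular sets — for which the lower bound $[n]$ is already packaged in \textbf{Lemma \ref{morethanlinear}} — and then transport that bound up to $o(f)$ through the coding machinery. First I would pass to a two-element subfamily: if $\mathcal{F}$ is disjoint and mutually singular with $\#\mathcal{F}\geqslant 2$, then for any two of its members $Y_1,Y_2$ the witnesses provided by \textbf{Definition \ref{mutuallysingular}} for the whole family restrict verbatim to the pair, so $\{Y_1,Y_2\}$ is again a disjoint mutually singular family. The very same witnesses show that $\{Y_1,Y_2\}$ is \emph{almost} mutually singular, since any pairwise disjoint neighborhoods $U_i\supseteq Y_i$ inside $X-\Omega(f)$ inherit the orbit visits $f^{n_i}(x)\in Y_i\subseteq U_i$.

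Next I would manufacture a genuinely compact, wandering pair. This is precisely the output of \textbf{Corollary \ref{wedontneedwandering}}: from compact almost mutually singular sets it extracts wandering compact mutually singular sets $Y_1',Y_2'\subseteq X-\Omega(f)$. The delicate point — and the step I expect to be the main obstacle — is that the given $Y_1,Y_2$ need not be compact, whereas the extraction lemma underlying the Corollary operates on compact data; moreover one must guarantee that the witnessing points produced by the extraction stay in $X-\Omega(f)$ rather than drifting onto $\Omega(f)$, where almost-mutual-singularity degenerates into a vacuous condition (no neighborhood lies in $X-\Omega(f)$). I would handle this by running the extraction on compact subsets of $X-\Omega(f)$ built around the witness points $f^{n_i}(x)\in Y_i$: each such point lies in $X-\Omega(f)$ and hence admits a wandering compact neighborhood, and almost-mutual-singularity of the enclosing neighborhoods is preserved along the transfinite refinement, so that the output points — and the closed wandering neighborhoods $Y_1',Y_2'$ chosen around them — remain in $X-\Omega(f)$ and mutually singular.

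Finally I would sandwich the coding growth. Writing $\mathcal{F}'=\{Y_1',Y_2'\}$, since its members are compact, wandering and mutually singular, \textbf{Lemma \ref{morethanlinear}} gives $[c_{f,\mathcal{F}'}(n)]>[n]$, while \textbf{Proposition \ref{prop2.21}}, applied to the finite disjoint family of compact wandering sets $\mathcal{F}'$, gives $[c_{f,\mathcal{F}'}(n)]\leqslant o(f)$. Combining these two inequalities yields $o(f)\geqslant [c_{f,\mathcal{F}'}(n)]>[n]$, which is the desired conclusion. I expect the only genuine work to lie in the compact-reduction step of the middle paragraph; the first and last steps amount to bookkeeping with the definitions and the already-established coding estimates.
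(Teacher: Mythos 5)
Your route is the same as the paper's: reduce to a pair of compact wandering mutually singular sets via \textbf{Corollary \ref{wedontneedwandering}}, apply \textbf{Lemma \ref{morethanlinear}} to get $[c_{f,\mathcal{F}'}(n)]>[n]$, and push this bound up to $o(f)$ through the coding estimates. The only cosmetic differences are that you pass to a two-element subfamily before rather than after the extraction step, and that you close with \textbf{Proposition \ref{prop2.21}} where the paper invokes additivity together with \textbf{Lemma \ref{lema223}}; both closings are valid once $\mathcal{F}'$ consists of compact wandering sets.

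The one point where you go beyond the paper is the compactness worry, and there your proposed repair does not hold up. \textbf{Corollary \ref{wedontneedwandering}} (and the transfinite shrinking argument behind it) genuinely needs the $Y_{i}$ to be compact, and ``compact sets built around the witness points'' cannot substitute for this: the witness points $f^{n_{i}}(x)$ vary with $n_{0}$ and may accumulate only on $\Omega(f)$, so no single compact subset of $X-\Omega(f)$ captures infinitely many of them with the required spacing. In fact the statement is false without compactness: for the translation $f(t)=t+1$ extended to the one-point compactification of $\mathbb{R}$ (a parabolic dynamics, hence $o(f)=[n]$ by \textbf{Theorem \ref{parabolicshavelinearentropy}}), the sets $Y_{1}=\{4^{k}\}_{k\geqslant 1}$ and $Y_{2}=\{4^{k}+k\}_{k\geqslant 1}$ are disjoint, lie in $X-\Omega(f)$, and are mutually singular. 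So the correct reading of the hypothesis is that the members of $\mathcal{F}$ are compact --- this is how the proposition is actually used in \textbf{Theorem \ref{characterizationofparabolicsviaentropy}}(8) --- and the paper's own proof silently assumes this when it feeds $\mathcal{F}$ to \textbf{Corollary \ref{wedontneedwandering}}. With that hypothesis restored, your argument coincides with the paper's and is complete; without it, no repair along the lines you sketch is available.
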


\begin{proof}The family $\mathcal{F}$ is mutually singular. By \textbf{Corollary \ref{wedontneedwandering}}, there exists a wandering mutually singular family $\mathcal{F}'$ such that $\#\mathcal{F}' = \# \mathcal{F}$. Let $\mathcal{F}''$ be any subfamily of $\mathcal{F}'$ such that $\#\mathcal{F}'' = 2$. Then, by \textbf{Lemma \ref{morethanlinear}}, $[c_{f,\mathcal{F}}(n)] > [n]$. Since $[c_{f,\mathcal{F}}] = [c_{f,\cup\mathcal{F}}]$, then it follows from \textbf{Lemma \ref{lema223}} that $o(f) \geqslant [c_{f,\mathcal{F}}]$. Thus $o(f) > [n]$.
\end{proof}

\section{Entropy for Parabolic Dynamics}\label{4}

In this section, we apply the framework of generalized entropy to the study of parabolic and generalized parabolic dynamics. These systems, characterized by convergence of orbits toward a fixed set both forward and backward in time, often exhibit zero topological entropy despite having nontrivial dynamical behavior. 

\subsection{Parabolic Dynamics}

Here, we introduce the definition of generalized parabolic dynamics and discuss their equivalent formulations and dynamical properties. 

\begin{defi}Let $X$ be a Hausdorff compact space and $f: X \rightarrow X$ a homeomorphism. We say that $f$ has generalized parabolic dynamics if there exists a non empty invariant closed set $F \subseteq X$ such that for every open neighborhood $U$ of $F$, and every compact set $K \subseteq X - F$, there exists $n_{0} \in \N$ such that for every $n \geqslant n_{0}$, $f^{n}(K) \subseteq U$ and $f^{-n}(K) \subseteq U$ (we say in this case that $\{f^{n}(K)\}_{n \in \N}$ and $\{f^{n}(K)\}_{n \in \N}$ converge uniformly to a subset of $F$). We also say that $F$ is a parabolic set associated to $f$. If $F$ is a single point, then we say that $f$ has parabolic dynamics.

We say that $f$ has North-South dynamics if there exists fixed points $p$ and $q$ in $X$ satisfying:

\begin{enumerate}
    \item For every neighborhood $V$ of $q$ and every compact set $K \subseteq X - \{p\}$, there exists $n_{0} \in \N$ such that for every $n \geqslant n_{0}$, $f^{n}(K) \subseteq V$.
    \item For every neighborhood $U$ of $p$ and every compact set $K \subseteq X - \{q\}$, there exists $n_{0} \in \N$ such that for every $n \geqslant n_{0}$, $f^{-n}(K) \subseteq U$.
\end{enumerate}

In this case, we say that $p$ is the repulsor point and $q$ is the attractor point for $f$. 
\end{defi}

\begin{obs}Note that the north-south dynamics are particular cases of generalized parabolic dynamics. If $f$ has generalized parabolic dynamics with parabolic set $F$, then the non-wandering set $\Omega(f)$ is contained in $F$. In the special case of parabolic dynamics with parabolic point $p$, then  $\Omega(f) = \{p\}$. If $f$ has north-south dynamics with fixed points $p$ and $q$, then $\Omega(f) = \{p,q\}$.
\end{obs}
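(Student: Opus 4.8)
The plan is to verify each assertion of the remark in turn, the substantive one being the inclusion $\Omega(f) \subseteq F$. For the first claim, that north-south dynamics is a special case of generalized parabolic dynamics with parabolic set $F = \{p,q\}$, I would simply unwind the definitions. Given an open neighborhood $U$ of $\{p,q\}$ and a compact set $K \subseteq X - \{p,q\}$, note that $U$ is in particular a neighborhood of $q$ and that $K \subseteq X - \{p\}$, so condition (1) of the north-south definition gives $n_1$ with $f^{n}(K) \subseteq U$ for $n \geqslant n_1$; symmetrically, $U$ is a neighborhood of $p$ and $K \subseteq X - \{q\}$, so condition (2) gives $n_2$ with $f^{-n}(K) \subseteq U$ for $n \geqslant n_2$. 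Taking $n_0 = \max\{n_1, n_2\}$ yields the generalized parabolic condition, and the fixed points $p,q$ make $F$ invariant and closed.

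For the inclusion $\Omega(f) \subseteq F$, I would show that every $x \notin F$ is wandering by producing a wandering neighborhood. Since $X$ is Hausdorff compact, hence normal, and $F$ is closed with $x \notin F$, I can choose an open neighborhood $U$ of $F$ and an open neighborhood $V$ of $x$ with $\overline{V} \cap U = \emptyset$; then $K := \overline{V}$ is a compact subset of $X - F$ (as $F \subseteq U$) that is disjoint from $U$. Applying the generalized parabolic property to $K$ and $U$ gives $n_0$ such that $f^{n}(K) \subseteq U$ for all $n \geqslant n_0$; since $K \cap U = \emptyset$ and $V \subseteq K$, this forces $f^{n}(V) \cap V = \emptyset$ for every $n \geqslant n_0$.

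The main obstacle is the remaining finite range $1 \leqslant n < n_0$, which the asymptotic convergence does not control. Here I would first observe that $x$ cannot be periodic: if $f^{P}(x) = x$ for some $P \geqslant 1$, then $\{x\}$ is a compact subset of $X - F$ whose forward orbit returns to $x \notin U$ infinitely often, contradicting the uniform convergence $f^{nP}(x) \in U$ for large $n$. Hence $f^{n}(x) \neq x$ for every $n \geqslant 1$; using the Hausdorff property together with continuity of $f^{n}$, for each $n \in \{1, \dots, n_0 - 1\}$ I can pick a neighborhood $W_n$ of $x$ with $f^{n}(W_n) \cap W_n = \emptyset$ (separate $x$ from $f^{n}(x)$ by disjoint opens and pull back). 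The set $W := V \cap \bigcap_{n=1}^{n_0 - 1} W_n$ is then an open neighborhood of $x$ satisfying $f^{n}(W) \cap W = \emptyset$ for all $n \geqslant 1$, so $x \notin \Omega(f)$, giving $\Omega(f) \subseteq F$.

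Finally, for the two point cases I would combine this inclusion with the fact that fixed points are always non-wandering. For parabolic dynamics the invariant singleton $F = \{p\}$ consists of a fixed point, so $p \in U \cap f(U)$ for every neighborhood $U$ of $p$, giving $p \in \Omega(f)$; together with $\Omega(f) \subseteq \{p\}$ this yields $\Omega(f) = \{p\}$. The north-south case follows identically: by the first claim $F = \{p,q\}$ is a parabolic set, so $\Omega(f) \subseteq \{p,q\}$, and since $p$ and $q$ are both fixed they lie in $\Omega(f)$, whence $\Omega(f) = \{p,q\}$.
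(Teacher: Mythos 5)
Your proof is correct. The paper states this only as an unproved remark, so there is no argument to compare against; your verification — deducing the generalized parabolic condition for $F=\{p,q\}$ from the two halves of the north-south definition, producing a wandering neighborhood of any $x\notin F$ by combining the tail estimate $f^{n}(\overline V)\subseteq U$ for $n\geqslant n_0$ with Hausdorff separation of $x$ from the finitely many points $f(x),\dots,f^{n_0-1}(x)$ (legitimate since $x$ cannot be periodic), and then adding back the fixed points, which are always non-wandering — is exactly the routine argument the authors are implicitly relying on, and it is complete.
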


A homeomorphism $f: X \rightarrow X$ has generalized parabolic dynamics with parabolic set $F$ if and only if the action of the cyclic group $\langle f \rangle$ on $X-F$ is properly discontinuous (i.e., for every compact sets $K,K' \subseteq X-F$, the set $\{g \in \langle f \rangle: gK\cap K' \neq \emptyset\}$ is finite).

\begin{prop}\label{properlydiscontinuousisparabolic}Let $X$ be a Hausdorff compact space, $f: X \rightarrow X$ a homeomorphism and $F$ a closed invariant set of $X$ such that $\Omega(f) \subseteq F$ and $f' = f|_{X-F}$. Then $\langle f' \rangle$ acts properly discontinuously on $X-F$ if and only if $f$ has generalized parabolic dynamics with parabolic set $F$.
\end{prop}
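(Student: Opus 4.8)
The plan is to reduce both conditions to one combinatorial statement: for compact $K, K' \subseteq X - F$, the index set $N(K,K') = \{n \in \mathbb{Z} : f^n(K) \cap K' \neq \emptyset\}$ is finite. First I would dispose of the degenerate case $X = F$, where $X-F = \emptyset$ and both sides hold vacuously, so from now on assume $X - F \neq \emptyset$. The role of the hypothesis $\Omega(f) \subseteq F$ is to guarantee that $f' = f|_{X-F}$ has infinite order: a periodic point of $X-F$ would be non-wandering and hence lie in $\Omega(f) \subseteq F$, a contradiction. Consequently $\langle f' \rangle \cong \mathbb{Z}$, the map $n \mapsto (f')^n$ is a bijection, and so proper discontinuity — the finiteness of $\{g \in \langle f' \rangle : gK \cap K' \neq \emptyset\}$ — is literally equivalent to the finiteness of $N(K,K')$. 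I would also record that $F \neq \emptyset$, since $\Omega(f)$ is non-empty for a homeomorphism of a non-empty compact Hausdorff space (take a minimal closed invariant set), so $F$ is a legitimate candidate for a parabolic set.

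For the implication that generalized parabolic dynamics yields proper discontinuity, I would fix compact $K, K' \subseteq X - F$ and use that $X$ is compact Hausdorff, hence normal, to separate the disjoint closed sets $F$ and $K'$ by an open neighborhood $U \supseteq F$ with $U \cap K' = \emptyset$. The defining uniform convergence then provides $n_0$ with $f^n(K) \subseteq U$ and $f^{-n}(K) \subseteq U$ for all $n \geqslant n_0$; since $U$ misses $K'$, no such iterate meets $K'$, and hence $N(K,K') \subseteq \{-(n_0-1),\dots,n_0-1\}$ is finite, which is exactly proper discontinuity.

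For the converse, the key technical move is to feed the complement of a neighborhood of $F$ back into proper discontinuity. Given an open $U \supseteq F$ and a compact $K \subseteq X - F$, I would set $K' = X \setminus U$; this is closed in the compact space $X$, hence compact, and it is contained in $X - F$ because $F \subseteq U$. Proper discontinuity, translated through infinite order into finiteness of $N(K,K')$, then gives a bound $n_0$ beyond which $f^n(K)$ misses $X \setminus U$, i.e.\ $f^n(K) \subseteq U$ for all $|n| \geqslant n_0$. This is precisely forward and backward uniform convergence of $K$ to $F$; combined with $F$ being non-empty, closed, and invariant, it shows that $f$ has generalized parabolic dynamics with parabolic set $F$.

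The main obstacle is not any single estimate — all the steps are routine manipulations of neighborhoods in a compact Hausdorff space — but rather the conceptual bridge between the group-theoretic finiteness of $\{g : gK \cap K' \neq \emptyset\}$ and the $\mathbb{Z}$-indexed finiteness of $N(K,K')$. These two coincide only when $f'$ has infinite order, and it is exactly the hypothesis $\Omega(f) \subseteq F$ that secures this; without it, a periodic $f'$ would make $\langle f' \rangle$ finite, rendering the action properly discontinuous for free while $f$ fails to be parabolic. Identifying and using this point correctly is where the care is required.
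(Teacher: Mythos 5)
Your proof is correct and follows essentially the same route as the paper's: the forward implication takes $K' = X - U$ for a given neighborhood $U$ of $F$ and translates finiteness of $\{n : f^n(K)\cap K' \neq \emptyset\}$ into eventual containment in $U$, and the converse separates $K'$ from $F$ by a neighborhood and runs the argument backwards. The only difference is that you explicitly justify the identification of $\{g \in \langle f'\rangle : gK\cap K' \neq \emptyset\}$ with the $\mathbb{Z}$-indexed set via the infinite order of $f'$ (secured by $\Omega(f)\subseteq F$), and record that $F\neq\emptyset$ — points the paper leaves implicit — which is a welcome tightening rather than a new approach.
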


\begin{proof}Let $U$ be an open neighborhood of $F$ and $K$ a compact subset of $X-F$. Then the set $K' = X-U$ is closed in $X$, and therefore, it is compact. If the group $\langle f'\rangle$ acts properly discontinuously on $X-F$, the set $\{n \in \Z: f'^{n}(K)\cap K' \neq \emptyset\}$ is finite. But $\{n \in \Z: f'^{n}(K)\cap K' \neq \emptyset\} = \{n \in \Z: f'^{n}(K) \subseteq U\} = \{n \in \Z: f^{n}(K) \subseteq U\}$, which implies that there exists $n_{0} \in \N$ such that for every $n \geqslant n_{0}$, $f^{n}(K) \subseteq U$ and $f^{-n}(K) \subseteq U$. Thus, $f$ has generalized parabolic dynamics with parabolic set $F$. The converse is analogous.
\end{proof}

Since $\langle f \rangle$ is torsion-free and $X-F$ is locally compact, then a properly discontinuous action is equivalent to be a covering action (i.e. every point has a neighborhood $U$ such that for every $n \neq 0$, $f^{n}(U) \cap U = \emptyset$), which is also equivalent to ask that the quotient map to the space of orbits of $X-F$ to be a covering map.

\subsection{Entropy}

Here, we analyze the generalized entropy of systems with parabolic or generalized parabolic dynamics.

\begin{lema}\label{compactsaredecomposedingeneralizedentropy}Let $(X,\mathcal{U})$ be a Hausdorff compact uniform space, a homeomorphism $f: X \rightarrow X$ which has generalized parabolic dynamics with parabolic set $F=\{y_{1},\dots,y_{m}\}$, whose points are all fixed by $f$, and $K$ a compact set of $X-F$. Then there exists compact sets $K^{1,1},\dots,K^{m,m}$, some of these possibly empty, such that $K = \bigcup_{i,j = 1}^{m} K^{i,j}$ and for every $V_{1},\dots,V_{m}$ neighborhoods of $y_{1},\dots,y_{m}$, respectively, there exists $n_{0} \in \N$ such that for every $n \in \N$, with $n \geqslant  n_{0}$, $f^{n}(K^{i,j}) \subseteq V_{i}$ and $f^{-n}(K^{i,j}) \subseteq V_{j}$, for every $i,j \in \{1,\dots,m\}$.
\end{lema}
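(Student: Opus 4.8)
The plan is to split $K$ according to the forward and backward limits of orbits. Fix pairwise disjoint open neighborhoods $W_1,\dots,W_m$ of $y_1,\dots,y_m$ whose closures are still pairwise disjoint, which is possible since a Hausdorff compact space is regular. Since $F$ is the parabolic set and $K$ is compact in $X-F$, applying the definition of generalized parabolic dynamics to the neighborhood $\bigcup_l W_l$ of $F$ gives $N\in\N$ with $f^{n}(K)\subseteq \bigcup_l W_l$ and $f^{-n}(K)\subseteq \bigcup_l W_l$ for all $n\geqslant N$. Because the $W_l$ are disjoint, for each $n\geqslant N$ the sets $K^{(n)}_l:=f^{-n}(W_l)\cap K$ form a partition of $K$ into pieces that are clopen in $K$, and I encode this by the index $\iota(n,x)=l$ whenever $f^n(x)\in W_l$.

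The heart of the argument, and the step I expect to be the main obstacle, is a uniform stabilization statement: there is $M_0\geqslant N$ such that $\iota(n,x)=\iota(n+1,x)$ for all $x\in K$ and all $n\geqslant M_0$; that is, no orbit in $K$ can jump between neighborhoods of distinct fixed points after time $M_0$. I would prove this by contradiction. If it fails, there are times $n_k\to\infty$ and points $x_k\in K$ with $f^{n_k}(x_k)\in W_{a_k}$ and $f^{n_k+1}(x_k)\in W_{b_k}$ where $a_k\neq b_k$; since there are finitely many indices, along a subsequence $a_k\equiv a$ and $b_k\equiv b$ are constant, and by compactness the points $z_k=f^{n_k}(x_k)$ admit a convergent subnet $z_k\to z$. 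The uniform convergence $f^{n}(K)\to F$ forces $z\in F$, since every open neighborhood of $F$ eventually contains all $z_k$; as $z\in\overline{W_a}$ and the closures are disjoint, $z=y_a$. But $y_a$ is fixed, so $f(z)=y_a$, while continuity gives $f(z_k)\to f(z)$ with $f(z_k)\in W_b$, hence $f(z)\in\overline{W_b}$, contradicting $\overline{W_a}\cap\overline{W_b}=\emptyset$. The key point is that uniform convergence to $F$ pins every late transition point to a neighborhood of some fixed point, where a transition is impossible.

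Granting this, for $n\geqslant M_0$ the index $\iota(n,x)$ is independent of $n$, so it defines a locally constant map $x\mapsto i(x)$ on $K$, and $K^+_i:=f^{-M_0}(W_i)\cap K$ is clopen in $K$, hence compact, with $K=\bigcup_i K^+_i$. To upgrade this to uniform forward convergence, given a neighborhood $V_i\subseteq W_i$ of $y_i$ I would apply the parabolic definition to the neighborhood $U=V_i\cup\bigcup_{l\neq i}W_l$ of $F$: there is $n_1\geqslant M_0$ with $f^{n}(K)\subseteq U$ for $n\geqslant n_1$, and for $x\in K^+_i$ the membership $f^n(x)\in W_i\cap U=V_i$ then yields $f^{n}(K^+_i)\subseteq V_i$. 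Running the same construction for $f^{-1}$ produces compact sets $K^-_j$ with $K=\bigcup_j K^-_j$ and uniform backward convergence $f^{-n}(K^-_j)\to y_j$.

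Finally I would set $K^{i,j}=K^+_i\cap K^-_j$, which is compact and satisfies $K=\bigcup_{i,j}K^{i,j}$. Given neighborhoods $V_1,\dots,V_m$, choosing $n_0$ as the maximum over all $i,j$ of the thresholds produced above gives $f^{n}(K^{i,j})\subseteq f^{n}(K^+_i)\subseteq V_i$ and $f^{-n}(K^{i,j})\subseteq f^{-n}(K^-_j)\subseteq V_j$ for all $n\geqslant n_0$, which is exactly the assertion.
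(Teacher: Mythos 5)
Your proof is correct, and its top-level structure matches the paper's: split $K$ according to the forward target of orbits, show the pieces are compact and converge uniformly, repeat for $f^{-1}$, and combine (the paper recursively subdivides each forward piece $K^{i}$ with respect to $f^{-1}$, you intersect the forward and backward decompositions --- the same thing). Where you genuinely diverge is in the key step that orbits cannot switch between neighborhoods of distinct fixed points after a uniform time. The paper proves this constructively with entourages: it chooses a closed $v$ small enough that $f$ maps $v$-close pairs to $u'$-close pairs, so that once $f^{n_{0}}(k)\in\mathcal{B}(y_{i},v)$ the orbit is trapped in $\mathcal{B}(y_{i},v)$ forever by induction, and compactness of $K^{i}$ comes from the closedness of $v$. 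You instead argue by contradiction with a convergent subnet of late transition points, using that uniform convergence to $F$ forces any cluster point of such points to be some fixed $y_{a}$, at which a transition is impossible; compactness of your pieces then comes from clopenness in $K$. Your route trades the paper's explicit entourage bookkeeping for a soft compactness argument, and it remains valid in the non-metrizable setting because you correctly use subnets rather than subsequences. The only cosmetic omission is that your uniform-convergence step assumes $V_{i}\subseteq W_{i}$; for arbitrary neighborhoods one first replaces $V_{i}$ by $V_{i}\cap W_{i}$, which is harmless.
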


\begin{proof}Let $u \in \mathcal{U}$ such that for every $i \neq j$, $(y_{i},y_{j}) \notin u$ and let $u'$ a symmetric element of $\mathcal{U}$ such that $u'^{2} \subseteq u$. Since $f$ is uniformly continuous, then there exists $v \in \mathcal{U}$ such that $v$ is closed in $X \times X$, $v \subseteq u'$ and for every $(a,b) \in v$, $(f(a),f(b)) \in u'$. Since $f$ is generalized parabolic, then there exists $n_{0}$ such that for every $n \geqslant n_{0}$, $f^{n}(K) \subseteq \bigcup_{i = 1}^{m} \mathcal{B}(y_{i},v)$. Let $K^{i} = \{k \in K: f^{n_{0}}(k) \in \mathcal{B}(y_{i},v)\}$. If $k \in K^{i}$, then $(y_{i},f^{n_{0}}(k)) \in v$, which implies that $(f(y_{i}),f^{n_{0}+1}(k)) = (y_{i},f^{n_{0}+1}(k)) \in u'$. If there exists $j \neq i$ such that  $(y_{j},f^{n_{0}+1}(k)) \in u'$, then $(y_{j},y_{i}) \in u'^{2} \subseteq u$, which is a contradiction. Then, for every $j \neq i$, $(y_{j},f^{n_{0}+1}(k)) \notin u'$, which implies that $(y_{j},f^{n_{0}+1}(k)) \notin v$. Since $f^{n_{0}+1}(k) \in \bigcup_{i = 1}^{m} \mathcal{B}(y_{i},v)$, then $(y_{i},f^{n_{0}+1}(k)) \in v$. Inductively, for every $n \geqslant n_{0}$, $(y_{i},f^{n}(k)) \in v$. Let $k' \in \overline{K}^{i}$. Since, for every $k \in  K^{i}$, $(y_{i},f^{n_{0}}(k)) \in v$ and $v$ is closed in $X\times X$, then $(y_{i},f^{n_{0}}(k')) \in v$, which implies that $k' \in K^{i}$. So $K^{i}$ is closed, and hence, it is compact.

Let $V_{1},\dots,V_{m}$ be neighborhoods of $y_{1},\dots,y_{m}$, respectively. Then there exists $w \in \mathcal{U}$ such that $w \subseteq v$ and for every $i \in \{1,\dots,m\}$, $\mathcal{B}(y_{i},w) \subseteq V_{i}$. Since $f$ has generalized parabolic dynamics, then there exists $n_{1} \in \N$ such that for every $n > \max\{n_{0},n_{1}\}$, $f^{n}(K) \subseteq \bigcup_{i = 1}^{m} \mathcal{B}(y_{i},w)$. Since $f^{n}(K^{i}) \cap \mathcal{B}(y_{j},w) = \emptyset$, for $i \neq j$, then $f^{n}(K^{i}) \subseteq \mathcal{B}(y_{i},w) \subseteq V_{i}$.

For each $i \in \{1,\dots,m\}$, we can decompose the compact set $K^{i}$ in compact sets $K^{i,1},\dots,K^{i,m}$ such that for every $V_{1},\dots,V_{m}$ neighborhoods of $y_{1},\dots,y_{m}$, respectively, there exists $n_{2,i} \in \N$ such that for every $n \in \N$, with $n > n_{2,i}$, $f^{-n}(K^{i,j}) \subseteq V_{j}$, for every $j \in \{1,\dots,m\}$.

Then the collection $\{K^{i,j}: i,j \in \{1,\dots,m\}\}$ satisfies the properties that we want.
\end{proof}

\begin{teo}\label{parabolicshavelinearentropy}If $(X,\mathcal{U})$ is a Hausdorff compact uniform space and $f: X \rightarrow X$ has generalized parabolic dynamics with finite parabolic set, then $o(f)=[n]$.
\end{teo}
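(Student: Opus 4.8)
The plan is to establish the two inequalities $o(f)\ge [n]$ and $o(f)\le [n]$ separately, tacitly assuming $X\neq F$ (if $X=F$ then $X$ is finite, $f$ is periodic, and $o(f)=0$ by \textbf{Proposition \ref{periodichaszeroentropy}}, so the stated conclusion presupposes $X-F\neq\emptyset$). For the lower bound, I would take any point $x\in X-F$: the singleton $\{x\}$ is a compact subset of $X-F$, so generalized parabolicity forces $f^{-n}(x)\to F$, whence the $\alpha$-limit of $x$ is contained in $F$; since $x\notin F$, the point $x$ is not contained in its own $\alpha$-limit. \textbf{Proposition \ref{alphalimitentropy}} then immediately gives $o(f)\ge [n]$.

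For the upper bound it suffices, by the monotonicity $o(f)\le o(f^{k})$ of \textbf{Proposition \ref{periodichaszeroentropy}}, to bound $o(f^{k})$ for a $k$ chosen so that $f^{k}$ fixes every point of the finite set $F=\{y_{1},\dots,y_{m}\}$; note that $f^{k}$ is again generalized parabolic with the same parabolic set $F$. So I assume from now on that each $y_{i}$ is fixed. Fix a symmetric $u\in\mathcal{U}$; by \textbf{Proposition \ref{basenaomudaentropia}} it is enough to prove $[s_{f,u}(n)]\le [n]$. I would choose $w\in\mathcal{U}$ small enough that the sets $V_{i}=\mathcal{B}(y_{i},w)$ are pairwise disjoint and $u$-small, and small enough for the trapping mechanism underlying \textbf{Lemma \ref{compactsaredecomposedingeneralizedentropy}} to apply: as long as an orbit stays inside $W:=\bigcup_{i}V_{i}$, it cannot switch between distinct $V_{i}$'s. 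Set $L=X-W$, a compact subset of $X-F$, and cover $L$ by finitely many, say $P$, $u$-small sets.

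The orbits are now controlled by two facts. First, since $L\subseteq X-\Omega(f)$ is compact, $N:=M(L)<\infty$, so every orbit meets $L$ at most $N$ times. Second, and crucially, by \textbf{Proposition \ref{properlydiscontinuousisparabolic}} the action of $\langle f\rangle$ on $X-F$ is properly discontinuous, so $\{n:f^{n}(L)\cap L\neq\emptyset\}$ is finite; let $N_{0}$ bound its elements. It follows that whenever $f^{i}(x),f^{i'}(x)\in L$ one has $|i-i'|\le N_{0}$, i.e. all the times at which a given orbit lies in $L$ fall inside a single window of length at most $N_{0}$. For each $x\in X-F$ I record its itinerary on $\{0,\dots,n-1\}$, writing the label $i$ when $f^{t}(x)\in V_{i}$ and the symbol $*$ when $f^{t}(x)\in L$. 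By trapping the label is constant on every maximal run of consecutive times spent in $W$, and by the window bound the entire non-constant portion of the itinerary lies in an interval of length $\le N_{0}$. Hence each occurring itinerary is pinned down by one integer $\sigma\in\{0,\dots,n\}$ (the position of the first $*$, or a flag if there is none) together with boundedly much finite data (the pattern of length $\le N_{0}+1$ on $[\sigma,\sigma+N_{0}]$ and the two constant labels before and after), giving at most $C_{1}\cdot n$ itineraries. Finally, two points sharing an itinerary and, in addition, landing in the same $u$-small piece of $L$ at each of their $\le N$ starred times agree up to $u$ at every time $0\le t\le n-1$, hence cannot be $(n,u)$-separated; so each itinerary carries at most $P^{N}$ points of a separated set. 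Thus $s_{f,u}(n)\le P^{N}C_{1}\,n+m$, so $[s_{f,u}(n)]\le [n]$, and taking the supremum over $u$ gives $o(f^{k})\le [n]$. Combined with the lower bound, $o(f)=[n]$.

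The delicate point, and the one I would spend the most care on, is the \emph{linear} (not merely polynomial) bound on the number of itineraries: a priori the $\le N$ excursions through the transition region $L$ could occur at independent times in $\{0,\dots,n-1\}$, which would only yield a bound of order $n^{N}$ and an estimate $o(f)\le[n^{N}]$. What rescues linearity is exactly proper discontinuity, which confines all excursions to a single window of uniformly bounded length $N_{0}$, so that an itinerary is determined by a single free time-coordinate. I therefore expect the technical core of the argument to be the verification that proper discontinuity produces this uniform window bound and that, together with the trapping property, it leaves only $O(n)$ admissible itineraries and boundedly many separated points per itinerary.
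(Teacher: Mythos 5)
Your proof is correct. The lower bound and the reduction to the case where $f$ fixes $F$ pointwise (via $o(f)\leqslant o(f^{k})$ from \textbf{Proposition \ref{periodichaszeroentropy}}) coincide with the paper's, but your upper bound takes a genuinely different, dual route. The paper constructs an explicit $(n,u)$-generator of cardinality $2np+p+m$: it first invokes \textbf{Lemma \ref{compactsaredecomposedingeneralizedentropy}} to split the compact transition region $W=X-V$ into pieces $W^{k,l}$ that converge uniformly to a prescribed fixed point forward and to another one backward with a common threshold $n_{0}$, covers each piece by finitely many sets small enough that their iterates stay $u$-small for $|j|\leqslant n_{0}$, and then takes as generator the translates $f^{j}(x^{k,l}_{i})$ for $|j|\leqslant n$ together with the fixed points themselves. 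You instead bound separated sets by counting itineraries: trapping makes the label constant on runs inside $\bigcup_{i}V_{i}$, the finiteness of $\{n: f^{n}(L)\cap L\neq\emptyset\}$ confines all visits to $L$ to a window of uniformly bounded length, so an itinerary is determined by a single time coordinate plus boundedly much data, and two points sharing an itinerary and the same $u$-small piece of $L$ at their starred times are never $(n,u)$-separated. Your free coordinate $\sigma$ plays exactly the role of the translation index $j$ in the paper's generator, but because the itinerary already records which $V_{i}$ the orbit occupies at each late time, you bypass \textbf{Lemma \ref{compactsaredecomposedingeneralizedentropy}} entirely, which is a mild simplification; the paper's construction, in exchange, produces a concrete generating set with a clean explicit count. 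Two cosmetic remarks: take $w$ open so that $L=X-\bigcup_{i}\mathcal{B}(y_{i},w)$ is actually closed (hence compact), and note that the window bound needs no appeal to \textbf{Proposition \ref{properlydiscontinuousisparabolic}}, being immediate from the definition of generalized parabolic dynamics applied to the compact set $L$ and the open neighborhood $\bigcup_{i}V_{i}$ of $F$.
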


\begin{proof}Note that, by the \textbf{Proposition \ref{alphalimitentropy}}, $o(f) \geqslant [n]$. Let $F = \{y_{1},\dots,y_{m}\}$ be the parabolic set of $f$. Suppose that each element of $F$ is fixed by $f$. Now, given $u \in \mathcal{U}$ such that for every $k \neq l$, $\mathcal{B}(y_k,u) \cap \mathcal{B}(y_l,u) = \emptyset$, we are going to construct a $(n,u)$-generator. Let $w \in \mathcal{U}$ such that $w \subseteq u$ and if $(x,y) \in w$, then $(f(x),f(y)) \in u$ (it exists since $f$ is uniformly continuous) and let $V_{k}$ be an open neighborhood of $y_{k}$ that is $w$-small. Let $V = \bigcup_{k = 1}^{m} V_{k}$ and $W=X-V$ (note that $W$ is compact). 

By the last lemma, there exists compact sets $W^{1,1},\dots,W^{m,m}$, some of these possibly empty, such that $W = \bigcup_{k,l = 1}^{m} W^{k,l}$ and there exists $n_{0} \in \N$ such that for every $n \in \N$, with $|n| > n_{0}$, $f^{n}(W^{k,l}) \subseteq V_{k}$ and $f^{-n}(W^{k,l}) \subseteq V_{l}$, for every $k,l \in \{1,\dots,m\}$. Since the maps $f^{-n_{0}},\dots, f^{n_0}$ are uniformly continuous, there exists $v\in \mathcal{U}$ such that if $(x,y) \in v$ then, for all $-n_{0}\leqslant n \leqslant n_0$, $(f^{n}(x),f^{n}(y)) \in w$. We take a cover of $W^{k,l}$ by $W^{k,l}_{1},\dots, W^{k,l}_{p_{k,l}}$ $v$-small sets and we get that, for every $|j| \leqslant n_{0}$, $f^{j}(W^{k,l}_{i})$ is $u$-small. Then, if $W^{k,l}_{i} \neq \emptyset$, we pick a point $x^{k,l}_i \in W^{k,l}_{i}$ and we get that $W^{k,l}_{i}\subseteq \mathcal{B}(x^{k,l}_{i},n,u)$ for all $n \leqslant n_0$. For every $|j| \geqslant n_{0}$, $f^{j}(W^{k,l}_{i}) \subseteq V_{k}$ and $f^{-j}(W^{k,l}_{i}) \subseteq V_{l}$, which are $u$-small. So, if $n > n_{0}$, and $n_{0} < j \leqslant n$, then, for every $y \in W^{k,l}_{i}$, $(f^{j}(y),f^{j}(x^{k,l}_{i})) \in u$ and we also get that $W^{k,l}_i\subseteq \mathcal{B}(x^{k,l}_i,n,u)$.

Now, we consider the sets $\{f^{j}(W^{k,l}_i): 1\leqslant i\leqslant p_{k,l}\text{ and } -n\leqslant j \leqslant n\}$. Since, for every $t \in \N$, $f^{j+t}(W^{k,l}_{i})$ is a $u$-small set, then $f^{j}(W^{k,l}_i)\subseteq \mathcal{B}(f^j(x^{k,l}_i),n,u)$, and we have a cover for $$Z = \bigcup \{f^{j}(W^{k,l}_i): 1\leqslant i\leqslant p_{k,l}, \ 1 \leqslant k \leqslant m, \ 1 \leqslant l \leqslant m, \ -n\leqslant j \leqslant n\}$$ by at most $(2n+1)p$ $u$-dynamical neighborhoods, where $p = \sum_{k,l = 1}^{m} p_{k,l}$.

Finally, observe that, for each $s \in \{1,\dots,m\}$, $V_{s}' =V_{s}-Z$ is covered by $\mathcal{B}(y_s,n,u)$, since, $V_{s}$ is $w$-small and for every $i \in \{0,\dots,n-1\}$, $f^{i}(V_{s}') \subseteq V_{s}$. In a fact, if $x \in V_{s}'$, then $(y_{s},f(x)) \in u$, which implies that, for every $s' \neq s$, $(y_{s'},f(x)) \notin u$ and then $f(x) \in V_{s}$ or $f(x) \in W$ (this one is not possible by the definition of $V_{s}'$). So $V' =V-Z$ is covered by $\mathcal{B}(y_1,n,u),\dots,\mathcal{B}(y_m,n,u)$.

Taking all the elements of the previous construction, we end up with a cover of $X$ that have at most $(2n + 1)p+m = 2np+p+m$ $u$-dynamical neighborhoods, which proves that

$$g_{f,u}(n)\leqslant 2np + p + m \Rightarrow [g_{f,u}(n)]\leqslant [n].$$

Since the only restriction for $u \in \mathcal{U}$ is to be small enough such that  for every $k \neq l$, $\mathcal{B}(y_k,u) \cap \mathcal{B}(y_l,u) = \emptyset$, then we can conclude, by \textbf{Proposition \ref{basenaomudaentropia}}, that $o(f)\leqslant [n]$, as we wanted.

If $f$ does not fix each point of $F$, then there exists $r \in \N$ such that $f^{r}$ fixes all points of $F$ (since $F$ is finite). It is easy to see that $f^{r}$ also has generalized parabolic dynamics with parabolic set $F$. So $o(f^{r}) = [n]$. By \textbf{Proposition \ref{periodichaszeroentropy}}, it follows that $o(f) \leqslant o(f^{r})$, which implies that $o(f) \leqslant [n]$.
\end{proof}

As special cases, we have the following:

\begin{cor}\label{northsouthhaslinearentropy}  Let $X$ be a Hausdorff compact space, $f: X \rightarrow X$ a homeomorphism with parabolic dynamics or north-south dynamics. Then $o(f) = [n]$.
\end{cor}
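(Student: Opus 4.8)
The plan is to recognize both cases as special instances of generalized parabolic dynamics with finite parabolic set, and then to invoke \textbf{Theorem \ref{parabolicshavelinearentropy}} directly. First, since $X$ is a Hausdorff compact space, it carries a unique uniform structure $\mathcal{U}$ whose associated topology coincides with that of $X$; thus $o(f)$ is unambiguously defined and the hypotheses of \textbf{Theorem \ref{parabolicshavelinearentropy}} make sense. In the parabolic case, the parabolic set is a single fixed point $F = \{p\}$, which is finite, so the conclusion $o(f) = [n]$ follows at once.

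For north-south dynamics, I would verify that $f$ has generalized parabolic dynamics with the finite set $F = \{p,q\}$. Given any open neighborhood $W$ of $F$ and any compact $K \subseteq X - F$, observe that $W$ is in particular a neighborhood of $q$ and that $K \subseteq X - \{p\}$; hence condition (1) in the definition of north-south dynamics yields $n_0 \in \N$ with $f^{n}(K) \subseteq W$ for all $n \geqslant n_0$. Symmetrically, $W$ is a neighborhood of $p$ and $K \subseteq X - \{q\}$, so condition (2) yields $f^{-n}(K) \subseteq W$ for all sufficiently large $n$. This is precisely the two-sided uniform convergence required by the definition of generalized parabolic dynamics, with parabolic set $F = \{p,q\}$, which is finite. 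Applying \textbf{Theorem \ref{parabolicshavelinearentropy}} then gives $o(f) = [n]$.

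The only point that genuinely requires checking is the unpacking of definitions in the north-south case, and this was already anticipated in the \textbf{Remark} following the definition of north-south dynamics. I do not expect any substantial obstacle: the corollary is a direct specialization of the theorem to parabolic sets of cardinality one and two, the verification amounts to matching the neighborhood $W$ of $F$ against the separate attractor/repulsor neighborhoods in the north-south definition, and the lower bound $o(f) \geqslant [n]$ is in any case already supplied inside the proof of \textbf{Theorem \ref{parabolicshavelinearentropy}} via \textbf{Proposition \ref{alphalimitentropy}}.
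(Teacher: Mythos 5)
Your proposal is correct and matches the paper's intent exactly: the corollary is stated as an immediate specialization of \textbf{Theorem \ref{parabolicshavelinearentropy}}, and the remark following the definition of north-south dynamics already records that it is a particular case of generalized parabolic dynamics with parabolic set $\{p,q\}$. Your explicit verification of the two-sided uniform convergence in the north-south case is the only detail the paper leaves implicit, and you carry it out correctly.
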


\begin{obs} Marco showed that $C^{1}$ north-south dynamics on the interval $[0,1]$ have polynomial entropy equals to $1$ (Proposition 5 of \cite{Mar}), which agrees with our last corollary.
\end{obs}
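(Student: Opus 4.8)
The plan is to reduce both cases directly to \textbf{Theorem \ref{parabolicshavelinearentropy}}. First, since $X$ is a Hausdorff compact space, the preliminaries guarantee that $X$ carries a unique uniform structure $\mathcal{U}$ whose associated topology is the given one; I equip $X$ with this $\mathcal{U}$, so that $(X,\mathcal{U})$ is a Hausdorff compact uniform space and $o(f)$ is well defined. It then suffices to exhibit, in each of the two cases, a finite parabolic set for $f$, after which \textbf{Theorem \ref{parabolicshavelinearentropy}} gives $o(f)=[n]$ immediately.

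For parabolic dynamics this is immediate from the definition: the parabolic set $F$ is by hypothesis a single point, hence finite, so $f$ has generalized parabolic dynamics with finite parabolic set and the theorem applies directly.

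For north-south dynamics I would take $F=\{p,q\}$, where $p$ is the repulsor and $q$ the attractor; this is a finite (hence closed) invariant set, so the only thing to check is that every compact $K\subseteq X-F$ converges uniformly to $F$ under both forward and backward iteration. Given an open neighborhood $U$ of $F$, it contains open neighborhoods $V_p\ni p$ and $V_q\ni q$. Since $K\subseteq X-\{p\}$, defining property (1) of north-south dynamics yields $n_1$ with $f^{n}(K)\subseteq V_q\subseteq U$ for all $n\geqslant n_1$; since $K\subseteq X-\{q\}$, defining property (2) yields $n_2$ with $f^{-n}(K)\subseteq V_p\subseteq U$ for all $n\geqslant n_2$. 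Setting $n_0=\max\{n_1,n_2\}$ gives the required two-sided uniform convergence, so $f$ has generalized parabolic dynamics with finite parabolic set $\{p,q\}$, and the theorem again yields $o(f)=[n]$.

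There is no substantial obstacle: the entire analytic content lives in \textbf{Theorem \ref{parabolicshavelinearentropy}}, and the corollary amounts to recognizing parabolic and north-south dynamics as the finite-parabolic-set instances of generalized parabolic dynamics. The only point requiring any care is the step just above, namely merging the two \emph{one-sided} convergence conditions in the definition of north-south dynamics into the single two-sided uniform convergence condition demanded by generalized parabolic dynamics.
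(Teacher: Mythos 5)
Your reduction of both cases to Theorem \ref{parabolicshavelinearentropy} is correct and is essentially the route the paper takes: the paper already records, in the remark following the definition, that north-south dynamics are particular cases of generalized parabolic dynamics, and your verification (splitting a neighborhood $U$ of $\{p,q\}$ into $V_p$ and $V_q$ and taking the maximum of the two thresholds supplied by the two one-sided conditions) is exactly the check that makes that remark rigorous. So you have correctly established Corollary \ref{northsouthhaslinearentropy}, that $o(f)=[n]$ for parabolic and north-south dynamics.

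However, the statement you were asked to justify is the remark asserting that Marco's computation of polynomial entropy equal to $1$ for $C^{1}$ north-south dynamics on $[0,1]$ \emph{agrees} with that corollary, and your argument stops one step short of the agreement. Knowing $o(f)=[n]$ says nothing about $h_{pol}(f)$ by itself: generalized entropy lives in $\overline{\mathbb{O}}$ while polynomial entropy is a number in $[0,\infty]$, and the bridge between them is the projection $\pi_{\mathbb{P}}$. The missing step is to invoke Proposition \ref{projectionmaps2}, which gives $h_{pol}(f)=\pi_{\mathbb{P}}(o(f))$, together with the computation $\pi_{\mathbb{P}}([n])=\limsup_{n\rightarrow\infty}\log(n)/\log(n)=1$ from the lemma identifying $\pi_{\mathbb{P}}([a(n)])$ with a limit superior. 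Only then does $o(f)=[n]$ translate into $h_{pol}(f)=1$, which is the quantity Marco computes. (It is also worth a sentence that $[0,1]$ is a compact metric space, so the uniform-space entropy used in the corollary coincides with the metric one, making the two results genuinely comparable.)
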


For generalized parabolic dynamics with infinite parabolic set, the entropy depends on the entropy of the non-wandering set. However, we at least  have a criterion to distinguish if a homeomorphism has generalized parabolic dynamics:

\begin{prop}\label{nongeneralizedparabolichashigherentropy} Let $X$ be a Hausdorff compact connected space, $f: X \rightarrow X$ a homeomorphism and $F$ a closed invariant set such that $\Omega(f) \subseteq F$ and $X-F$ is connected and locally connected. If $f$ does not have generalized parabolic dynamics with parabolic set $F$, then there exist at least two compact mutually singular sets.
\end{prop}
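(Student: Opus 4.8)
The plan is to translate the failure of generalized parabolic dynamics into a recurrence phenomenon and then to read off the two mutually singular sets from the accumulation points of a long recurring excursion. First I would apply \textbf{Proposition \ref{properlydiscontinuousisparabolic}}: since $\Omega(f)\subseteq F$, the hypothesis that $f$ does not have generalized parabolic dynamics with parabolic set $F$ is exactly the statement that $\langle f|_{X-F}\rangle$ does not act properly discontinuously on $X-F$. Unwinding the definition of proper discontinuity, this yields compact sets $K,K'\subseteq X-F$ for which $\{n\in\Z : f^n(K)\cap K'\neq\emptyset\}$ is infinite. An infinite subset of $\Z$ is unbounded above or below, so after possibly swapping $K$ and $K'$ (equivalently, replacing $f$ by $f^{-1}$, under which mutual singularity and $\Omega$ are preserved) I may assume there are $n_j\to+\infty$ and points $x_j\in K$ with $f^{n_j}(x_j)\in K'$.

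Next I would pass to limit points. As $K$ and $K'$ are compact, a subsequence gives $x_j\to a\in K$ and $f^{n_j}(x_j)\to b\in K'$. The decisive claim, which I expect to be the main obstacle, is that $a\neq b$. Suppose $a=b$. Since $a\in K\subseteq X-F\subseteq X-\Omega(f)$, the point $a$ is wandering, so it has a neighborhood $U$ with $f^n(U)\cap U=\emptyset$ for all $n\geqslant 1$. But for all large $j$ we would have $x_j\in U$ and $f^{n_j}(x_j)\in U$ with $n_j\geqslant 1$, whence $f^{n_j}(U)\cap U\neq\emptyset$, a contradiction. Hence $a\neq b$. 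This is the mechanism that converts ``arbitrarily long returns'' into two genuinely distinct accumulation points, and it is precisely where the assumption $\Omega(f)\subseteq F$ is used.

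Finally I would construct the sets. Because $X$ is compact Hausdorff it is locally compact and regular, and $a,b$ are distinct points of the open set $X-F$, so I can pick disjoint compact neighborhoods $Y_1\ni a$ and $Y_2\ni b$ with $Y_1,Y_2\subseteq X-F\subseteq X-\Omega(f)$. For all large $j$ we then have $x_j\in Y_1$ and $f^{n_j}(x_j)\in Y_2$ simultaneously (both convergences run along the same subsequence). Given any $n_0\in\N$, choosing $j$ with $n_j>n_0$ and setting $x=f^{-1}(x_j)$ gives $f^{1}(x)\in Y_1$ and $f^{n_j+1}(x)\in Y_2$ with $(n_j+1)-1=n_j>n_0$, the harmless shift by $f^{-1}$ only serving to keep both times in $\N$. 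Thus $\{Y_1,Y_2\}$ is a disjoint family of compact subsets of $X-\Omega(f)$ that is mutually singular in the sense of \textbf{Definition \ref{mutuallysingular}}, which is the desired conclusion (and then \textbf{Proposition \ref{Entropyviamutuallydisjointsets}} would upgrade this to $o(f)>[n]$ in \textbf{Theorem \ref{characterizationofparabolicsviaentropy}}). I note that this particular argument draws only on local compactness of $X$; the connectedness and local-connectedness hypotheses on $X$ and $X-F$ do not seem to be required here and appear to be inherited from the ambient setting of \textbf{Theorem \ref{characterizationofparabolicsviaentropy}}.
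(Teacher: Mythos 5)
Your proof is correct in substance, but it follows a genuinely different route from the paper's. The paper does not pass through proper discontinuity at all: it takes a compact set $K$ witnessing the failure of uniform convergence to $F$, enlarges it to a \emph{connected} compact $K'\subseteq X-F$ (this is exactly where connectedness and local connectedness of $X-F$ are used), picks an open neighborhood $U$ of $F$ disjoint from $K'$ that infinitely often fails to absorb $f^{n}(K')$, and observes that since some point of $f^{n}(K')$ lies in $U$ (because $\Omega(f)\subseteq F\subseteq U$) while some point lies outside, the connected set $f^{n}(K')$ must meet $\partial U$; the two mutually singular sets produced are $K'$ and $\partial U$. Your argument instead converts the failure into recurrence via \textbf{Proposition \ref{properlydiscontinuousisparabolic}}, extracts two limit points $a\neq b$ (the inequality coming from the wandering neighborhood of $a$, i.e.\ from $\Omega(f)\subseteq F$), and takes small disjoint compact neighborhoods of them. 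The payoff of your route is that it nowhere uses connectedness or local connectedness of $X-F$; combined with \textbf{Proposition \ref{Entropyviamutuallydisjointsets}}, which also does not need those hypotheses, it would give $o(f)>[n]$ for any non-generalized-parabolic $f$ with $\Omega(f)\subseteq F$, which bears directly on the Question posed after \textbf{Theorem \ref{characterizationofparabolicsviaentropy}}. That is worth writing up carefully rather than treating as a remark.

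The one step that needs repair is the extraction of convergent subsequences. $X$ is only assumed compact Hausdorff, not metrizable, so a sequence in $K$ need not admit a convergent subsequence. The fix is routine: take a cluster point $(a,b)\in K\times K'$ of the sequence $\bigl((x_j,f^{n_j}(x_j))\bigr)_j$, which exists by compactness of $K\times K'$, and replace ``eventually'' by ``frequently''. Both halves of your argument only require that the pair $(x_j,f^{n_j}(x_j))$ lie in a prescribed product neighborhood for arbitrarily large $j$: in the case $a=b$ a single index $j$ with $x_j$ and $f^{n_j}(x_j)$ both in the wandering neighborhood and $n_j\geqslant 1$ already gives the contradiction, and in the case $a\neq b$ one chooses, for each $n_0$, an index $j$ with $n_j>n_0$ and $(x_j,f^{n_j}(x_j))\in Y_1\times Y_2$, which is possible because the $n_j$ increase to infinity. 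With this substitution the proof is complete.
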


\begin{proof}Suppose that $f$ is not generalized parabolic with parabolic set $F$. Then there exists a compact connected subset $K \subseteq X - F$ such that $\{f^{n}(K)\}_{n \in \N}$ does not converge uniformly to a subset of $F$ or $\{f^{-n}(K)\}_{n \in \N}$ does not converge uniformly to a subset of $F$. Since $X-F$ is connected and locally connected, there exists a connected compact set $K' \subseteq X-F$  such that $K \subseteq K'$ (Proposition 1.8 of \cite{So} gives a proof for this fact). Then $\{f^{n}(K')\}_{n \in \N}$ does not converge uniformly to a subset of $F$ or $\{f^{-n}(K')\}_{n \in \N}$ does not converge uniformly to a subset of $F$. Suppose the first possibility. Then there exists an open neighborhood $U \subseteq X - K'$ of $F$ such that, for every $n \in \N$, there exists $n' > n$ such that $f^{n'}(K') \nsubseteq U$. Let $k \in K'$ and $n_{0} \in \N$. Since $\Omega(f) \subseteq F \subseteq U$, then there exists $n_{1} > n_{0}$ such that for every $n > n_{1}$, $f^{n}(k) \in U$. But there exists $n_{2} > n_{1}$ such that  $f^{n_{2}}(K') \nsubseteq U$, which implies, by the connectedness of $K'$ (and then the connectedness of $f^{n_{2}}(K')$), that $f^{n_{2}}(K')\cap \partial U \neq\emptyset$. So $K'$ and $\partial U$ are mutually singular. For the second possibility, the argument is entirely analogous.    
\end{proof}

\begin{cor}Let $X$ be a Hausdorff compact connected space, $f: X \rightarrow X$ a homeomorphism and $F$ a closed invariant set such that $\Omega(f) \subseteq F$ and $X-F$ is connected and locally connected. If $f$ does not have generalized parabolic dynamics with parabolic set $F$, then $o(f) > [n]$.
\end{cor}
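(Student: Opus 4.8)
The plan is to combine the two results that immediately precede this corollary. The statement says that if $f$ does not have generalized parabolic dynamics with parabolic set $F$, then $o(f) > [n]$, under the hypotheses that $X$ is a Hausdorff compact connected space, $\Omega(f) \subseteq F$ is closed invariant, and $X - F$ is connected and locally connected. Since these are exactly the hypotheses of \textbf{Proposition \ref{nongeneralizedparabolichashigherentropy}}, my first step is to invoke that proposition to produce two compact mutually singular sets $Y_1, Y_2 \subseteq X - F$.

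The main obstacle — really the only subtlety — is that \textbf{Proposition \ref{Entropyviamutuallydisjointsets}}, which converts a mutually singular family into the bound $o(f) > [n]$, is stated for families of subsets of $X - \Omega(f)$, whereas \textbf{Proposition \ref{nongeneralizedparabolichashigherentropy}} gives sets inside $X - F$. Because $\Omega(f) \subseteq F$, we have $X - F \subseteq X - \Omega(f)$, so the two compact mutually singular sets produced are automatically compact subsets of $X - \Omega(f)$, and the notion of mutual singularity (Definition \ref{mutuallysingular}) is stated relative to $X - \Omega(f)$ anyway. Thus the family qualifies as a finite disjoint mutually singular family of subsets of $X - \Omega(f)$ with cardinality $2 \geqslant 2$, so no extra work is needed to relocate the sets.

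I would then apply \textbf{Proposition \ref{Entropyviamutuallydisjointsets}} directly: a finite disjoint mutually singular family $\mathcal{F} = \{Y_1, Y_2\}$ of subsets of $X - \Omega(f)$ with $\#\mathcal{F} = 2 \geqslant 2$ yields $o(f) > [n]$, which is precisely the conclusion. The whole argument is therefore a two-line composition: \textbf{Proposition \ref{nongeneralizedparabolichashigherentropy}} gives the mutually singular pair, and \textbf{Proposition \ref{Entropyviamutuallydisjointsets}} turns that pair into the strict lower bound on generalized entropy. No genuinely hard step remains; the conceptual content has already been absorbed into the preceding two propositions (the topological continuation-of-compacta argument in the first, and the coding/$[d_{f,\mathcal{F}}(n)] \nleqslant [n]$ estimate together with the transfinite reduction to wandering sets in the second).
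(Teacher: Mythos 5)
Your argument is exactly the paper's: the paper proves this corollary by declaring it ``immediate from the last proposition and \textbf{Proposition \ref{Entropyviamutuallydisjointsets}},'' which is precisely the two-step composition you describe. Your additional remark that the sets live in $X-F \subseteq X-\Omega(f)$ and hence qualify for \textbf{Proposition \ref{Entropyviamutuallydisjointsets}} is a correct (and worthwhile) filling-in of the detail the paper leaves implicit.
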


\begin{proof}Immediate from the last proposition and \textbf{Corollary \ref{Entropyviamutuallydisjointsets}}.    
\end{proof}

Now, we give a full characterization of generalized parabolic dynamics with finite parabolic set in terms of generalized entropy. 

\begin{lema}\label{halfparabolic}Let $(X,\mathcal{U})$ be a Hausdorff compact uniform space, $x \in X$ and $C$ a closed subset of $X$ that satisfies the following property: for every neighborhood $U$ of $x$, there exists $n_{0} \in \N$ such that for every $n > n_{0}$, $f^{n}(C) \subseteq U$. Then $o(f,C) = 0$.
\end{lema}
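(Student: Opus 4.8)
The plan is to fix an arbitrary entourage $u \in \mathcal{U}$ and prove that $[s_{f,u,C}(n)] = 0$; since $o(f,C) = \sup\{[s_{f,u,C}(n)] : u \in \mathcal{U}\}$, this finishes the proof. Observe first that $C$ is closed in the compact space $X$, hence compact, so each $s_{f,u,C}(n)$ is finite and $(s_{f,u,C}(n))_n$ lies in $\mathcal{O}$.

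The first step is to feed a $u$-small neighborhood of $x$ into the hypothesis. I would choose a symmetric $w \in \mathcal{U}$ with $w^{2} \subseteq u$ and set $U = \mathcal{B}(x,w)$. By the remark on dynamical neighborhoods in the preliminaries, $U$ is $w^{2}$-small, hence $u$-small, and it is a neighborhood of $x$. Applying the hypothesis to $U$ yields $n_{0} \in \N$ such that $f^{n}(C) \subseteq U$ for every $n > n_{0}$; in particular $f^{n}(C)$ is $u$-small for every $n > n_{0}$.

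The key step is to show that every $(n,u)$-separated subset $E \subseteq C$ is already $(n_{0}+1, u)$-separated. Indeed, let $a,b \in E$ with $a \neq b$, and let $i \in \{0,\dots,n-1\}$ be a separating index, so $(f^{i}(a),f^{i}(b)) \notin u$. If $i > n_{0}$, then $f^{i}(a), f^{i}(b) \in f^{i}(C) \subseteq U$, which is $u$-small, forcing $(f^{i}(a),f^{i}(b)) \in u$ — a contradiction. Hence $i \leqslant n_{0}$, so $a$ and $b$ are already separated within the first $n_{0}+1$ coordinates. This yields $s_{f,u,C}(n) \leqslant s_{f,u,C}(n_{0}+1)$ for all $n$.

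Since $(s_{f,u,C}(n))_n$ is a non-decreasing sequence of natural numbers bounded above by the constant $s_{f,u,C}(n_{0}+1)$, it is eventually constant, so $[s_{f,u,C}(n)] = 0$. As $u \in \mathcal{U}$ was arbitrary, we conclude $o(f,C) = 0$. I do not anticipate a genuine obstacle; the only point requiring care is the reduction to a $u$-small neighborhood of $x$, which is precisely what makes separation beyond time $n_{0}$ impossible. As an alternative, one could instead verify directly that $f$ is Lyapunov stable in $C$ — taking $v = \bigcap_{j=0}^{n_{0}} (f^{j}\times f^{j})^{-1}(u) \in \mathcal{U}$ to control the finitely many early iterates, while the hypothesis controls all iterates past $n_{0}$ — and then invoke \textbf{Proposition \ref{lyapunovstability}}.
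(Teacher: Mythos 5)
Your proof is correct. Both arguments rest on the same core observation — once all of $C$ has been pushed into a $u$-small neighborhood $\mathcal{B}(x,w)$ of $x$ (with $w$ symmetric and $w^{2}\subseteq u$), no separation at scale $u$ can occur at times beyond $n_{0}$ — but the implementations differ. The paper works with generator sets: it uses the equicontinuity of $f^{0},\dots,f^{n_{0}}$ to produce an entourage $v$ and a single finite $v$-dense subset $S\subseteq C$, and then checks that $S$ is an $(n,u)$-generator for every $n$, so that $g_{f,u,C}(n)$ is bounded. You instead work directly with separated sets and show that any $(n,u)$-separated subset of $C$ must already be $(n_{0}+1,u)$-separated, giving $s_{f,u,C}(n)\leqslant s_{f,u,C}(n_{0}+1)$; finiteness of the right-hand side comes for free from compactness. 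Your route is slightly more economical, since it avoids invoking the uniform continuity of the first $n_{0}$ iterates and the construction of the auxiliary entourage $v$ altogether. Your suggested alternative via Lyapunov stability (taking $v=\bigcap_{j=0}^{n_{0}}(f^{j}\times f^{j})^{-1}(u)$ and invoking \textbf{Proposition \ref{lyapunovstability}}) is also valid and essentially repackages the paper's equicontinuity step.
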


\begin{proof}Let $u \in \mathcal{U}$. Take $w \in \mathcal{U}$ symmetric such that $w^{2} \subseteq u$. There exists $m_{0} \in \N$ such that for every $m > m_{0}$, $f^{m}(C) \subseteq \mathcal{B}(x,w)$. By the equicontinuity of the maps $f^{0},\dots,f^{m_{0}}$, there exists $v \in \mathcal{U}$ such that if $y,z \in X$, with $(y,z) \in v$, then, for every $i \in \{0,\dots,m_{0}\}$, $(f^{i}(y), f^{i}(z)) \in u$. Let $S$ be a finite subset of $C$ such that $C \subseteq \mathcal{B}(S,v)$. Let $c \in C$. Then there exists $s \in S$ such that $(c,s) \in v$, which implies that, for every $i \in \{0,\dots,m_{0}\}$, $(f^{i}(c), f^{i}(s)) \in u$. Also, for every $m > m_{0}$, $f^{m}(c),f^{m}(s) \in \mathcal{B}(x,w)$, which implies that $(f^{m}(c),x)\in w$ and $(x,f^{m}(s))\in w$, which implies that $(f^{m}(c),f^{m}(s)) \in w^{2} \subseteq u$. So, for every $n \in \N$, the set $S$ is a $(n,u)$-generator for $f$, with respect to $C$. Then, for every $n \in \N$, $g_{f,u,C}(n) \leqslant \# S$, which implies that $[g_{f,u,C}(n)] = 0$ and then $o(f,C) = 0$.
\end{proof}

\begin{defi}\label{regularpoints} Let $(X,\mathcal{U})$ be a uniform space, $f: X \rightarrow X$ a homeomorphism and $x \in X$. We say that $x$ is a regular point of $f$ if, for every $u \in \mathcal{U}$, there exists $v \in \mathcal{U}$ such that for every $y \in X$ with $(x,y) \in v$, then, for every $n \in \Z$, $(f^{n}(x),f^{n}(y)) \in u$.
\end{defi}

\begin{lema}(Kinoshita, Lemma 1 of \cite{Ki} for metrizable compact spaces and a singleton as non-wandering set) Let $(X,\mathcal{U})$ be a Hausdorff compact space, $f: X \rightarrow X$ a homeomorphism. Then, the following statements are equivalent:

\begin{enumerate}
    \item For every compact set $K \subseteq X-\Omega(f)$, $f$ and $f^{-1}$ are Lyapunov stable in $K$. 
    \item For every point $x \in X- \Omega(f)$,  $x$ is a regular point of $f$.
\end{enumerate}
\end{lema}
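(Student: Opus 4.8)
The plan is to prove the two implications separately, in each case translating between the ``set-based'' formulation of Lyapunov stability (\textbf{Definition \ref{definitionoflyapunov}}) and the ``point-based'' formulation of regularity (\textbf{Definition \ref{regularpoints}}) by means of compactness together with the standard fact that the dynamical neighborhoods $\mathcal{B}(x,v)$, $v\in\mathcal{U}$, form a fundamental system of neighborhoods of $x$. Throughout I will use that every entourage contains a symmetric one, and that $\Omega(f)$ is closed so that $X-\Omega(f)$ is open. The key structural observation is that statement $(1)$ bundles forward stability (for $f$) with backward stability (for $f^{-1}$), which matches exactly the quantifier over $n\in\Z$ in the definition of a regular point; hence in each implication the forward and backward parts can be treated simultaneously.

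First I would prove $(2)\Rightarrow(1)$. Fix a compact $K\subseteq X-\Omega(f)$ and $u\in\mathcal{U}$, and choose a symmetric $w$ with $w^{2}\subseteq u$. For each $x\in K$, regularity yields $v_{x}\in\mathcal{U}$ with $(f^{n}(x),f^{n}(y))\in w$ for all $n\in\Z$ whenever $(x,y)\in v_{x}$; pick a symmetric $v_{x}'$ with $(v_{x}')^{2}\subseteq v_{x}$. The neighborhoods $\mathcal{B}(x,v_{x}')$ cover $K$, so by compactness finitely many $\mathcal{B}(x_{1},v_{x_{1}}'),\dots,\mathcal{B}(x_{r},v_{x_{r}}')$ suffice; set $v=\bigcap_{i}v_{x_{i}}'$. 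If $S\subseteq K$ is $v$-small and $a,b\in S$, then $a\in\mathcal{B}(x_{i},v_{x_{i}}')$ for some $i$, and since $(a,b)\in v\subseteq v_{x_{i}}'$ one gets $(x_{i},a),(x_{i},b)\in v_{x_{i}}$. Applying regularity of $x_{i}$ and composing through the symmetric $w$ gives $(f^{n}(a),f^{n}(b))\in w^{2}\subseteq u$ for all $n\in\Z$, so $f^{n}(S)$ is $u$-small for every $n\in\Z$. Since $u$ was arbitrary, this is precisely forward Lyapunov stability of $f$ and backward Lyapunov stability of $f^{-1}$ in $K$.

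For $(1)\Rightarrow(2)$, fix $x\in X-\Omega(f)$ and $u\in\mathcal{U}$. Since $X$ is compact Hausdorff, hence regular, and $X-\Omega(f)$ is an open neighborhood of $x$, I can choose an open $O$ with $x\in O\subseteq\overline{O}\subseteq X-\Omega(f)$ and set $K=\overline{O}$, a compact neighborhood of $x$ contained in $X-\Omega(f)$. By hypothesis $f$ and $f^{-1}$ are Lyapunov stable in $K$, so intersecting the entourages coming from each direction there is a symmetric $v_{0}\in\mathcal{U}$ such that every $v_{0}$-small $S\subseteq K$ has $f^{n}(S)$ $u$-small for all $n\in\Z$. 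Now shrink once more: pick a symmetric $v\subseteq v_{0}$ with $\mathcal{B}(x,v)\subseteq O$, which is possible because the $\mathcal{B}(x,w)$ form a neighborhood base at $x$. If $(x,y)\in v$, then $y\in\mathcal{B}(x,v)\subseteq O\subseteq K$, so the pair $S=\{x,y\}$ is $v_{0}$-small and contained in $K$; therefore $(f^{n}(x),f^{n}(y))\in u$ for all $n\in\Z$, and $x$ is regular.

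The routine verifications — choosing symmetric sub-entourages and composing through $w$ — are standard, so the only genuine obstacle is the passage in $(1)\Rightarrow(2)$ from a condition tested on subsets of a \emph{fixed} compact set to the regularity condition, which quantifies over \emph{arbitrary} nearby points $y\in X$. The device that resolves it is to produce a compact neighborhood $K$ of $x$ inside the open set $X-\Omega(f)$ and then to realize each relevant pair $\{x,y\}$, with $y$ close to $x$, as a $v_{0}$-small subset of $K$; this is exactly where the compact Hausdorff (hence regular) structure of $X$ and the closedness of $\Omega(f)$ enter essentially.
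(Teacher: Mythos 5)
Your proof is correct and follows essentially the same route as the paper's: for $(1)\Rightarrow(2)$ both arguments take a compact neighborhood $K=\overline{O}\subseteq X-\Omega(f)$ of $x$ and realize each pair $\{x,y\}$ with $y$ near $x$ as a small subset of $K$, and for $(2)\Rightarrow(1)$ both cover $K$ by regularity neighborhoods and compose through a symmetric entourage $w$ with $w^{2}\subseteq u$. The only (immaterial) difference is that where the paper uniformizes the finite cover via a Lebesgue entourage for $\{X-K,\mathcal{B}(k_{1},v'_{k_{1}}),\dots\}$, you instead take the finite intersection $v=\bigcap_{i}v_{x_{i}}'$ and use $(v_{x_{i}}')^{2}\subseteq v_{x_{i}}$; both devices work.
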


\begin{proof}$1) \Rightarrow 2)$ Let $x \in X- \Omega(f)$ and $u \in \mathcal{U}$. Take an open neighborhood $U$ of $x$ such that $\overline{U} \cap \Omega(f) = \emptyset$. Since $f$ and $f^{-1}$ are Lyapunov stable in $\overline{U}$, then there exists $v \in \mathcal{U}$ such that $\mathcal{B}(x,v) \subseteq \overline{U}$ and, for every $y \in \overline{U}$ with  $(x,y) \in v$ (i.e., $y \in \mathcal{B}(x,v)$), $(f^{n}(x),f^{n}(y)) \in u$, for every $n \in \Z$. Thus $x$ is a regular point of $f$.

$2) \Rightarrow 1)$  Let $K \subseteq X - \Omega(f)$ be a compact set and $u,u' \in \mathcal{U}$ such that $u'$ is a symmetric entourage and $(u')^{2} \subseteq u$. Every point in $K$ is regular, so, for every $k \in K$, there exists $v_{k} \in \mathcal{U}$ such that if $y \in X$ and $(k,y) \in v_{k}$, then, for every $n \in \Z$, $(f^{n}(k),f^{n}(y)) \in u'$. For each $k \in K$, let $v'_{k} \in \mathcal{U}$ such that $v'_{k}$ is open and $v'_{k} \subseteq v_{k}$. We have that $\{\mathcal{B}(k,v'_{k}): k \in K\}$ is an open cover of $K$. Since $K$ is a compact set, there is a finite subcover $\{\mathcal{B}(k_{1},v'_{k_{1}}),\dots, \mathcal{B}(k_{m},v'_{k_{m}})\}$. Let $w \in \mathcal{U}$ such that $w$ is a Lebesgue entourage for the cover $\{X-K,\mathcal{B}(k_{1},v'_{k_{1}}),\dots, \mathcal{B}(k_{m},v'_{k_{m}})\}$. Let $S \subseteq K$ be a $w$-small set. Since $S \nsubseteq X-K$, then there exists $i \in \{1,\dots,m\}$ such that $S \subseteq \mathcal{B}(k_{i},v'_{k_{i}})$. Let $x,y \in S$. Then $(k_{i},x),(k_{i},y) \in v'_{k_{i}}$, which implies that, for every $n \in \Z$, $(f^{n}(k_{i}),f^{n}(x)),(f^{n}(k_{i}),f^{n}(y)) \in u'$ and then $(f^{n}(x),f^{n}(y)) \in (u')^{2} \subseteq u$. Thus $f$ and $f^{-1}$ are Lyapunov stable in $K$. 
\end{proof}

\begin{teo}\label{characterizationofparabolicsviaentropy}Let $(X,\mathcal{U})$ be a Hausdorff compact space, $f: X \rightarrow X$ a homeomorphism such that the non-wandering set $\Omega(f)$ is finite and each point of $\Omega(f)$ is fixed by $f$. Then, the following statements are equivalent:

\begin{enumerate}
    \item $f$ has generalized parabolic dynamics with parabolic set $\Omega(f)$.
    \item The quotient map to the space of orbits $X-\Omega(f) \rightarrow (X-\Omega(f))/\langle f \rangle$ is a covering map.
    \item For every compact set $K \subseteq X-\Omega(f)$, $o(f,K) = 0$ and  $o(f^{-1},K) = 0$.
    \item $o(f|_{X-\Omega(f)}) = 0$ and $o(f^{-1}|_{X-\Omega(f)}) = 0$, where $X-\Omega(f)$ has the subspace uniform structure from $X$.
    \item For every compact set $K \subseteq X-\Omega(f)$, $f$ and $f^{-1}$ are Lyapunov stable in $K$. 
    \item For every point $x \in X- \Omega(f)$,  $x$ is a regular point of $f$.
\end{enumerate}

Moreover, if $X-\Omega(f)$ is connected and locally connected, then there are two more equivalences:

\begin{enumerate}
    \item[7.] $o(f) = [n]$
    \item[8.] There are no non trivial families of mutually singular compact sets.
\end{enumerate}
\end{teo}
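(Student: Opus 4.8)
The plan is to establish the core block of equivalences $1$--$6$ by a single cycle of implications, and then, under the extra hypotheses, to attach $7$ and $8$ by a short second cycle. Several legs are essentially free. The equivalence $1\Leftrightarrow 2$ is exactly \textbf{Proposition \ref{properlydiscontinuousisparabolic}} applied with $F=\Omega(f)$, together with the remark that, for the torsion-free group $\langle f\rangle$ acting on the locally compact space $X-\Omega(f)$, proper discontinuity coincides with the quotient being a covering map. The equivalence $3\Leftrightarrow 4$ follows from \textbf{Proposition \ref{entropyandsubspace}}: since $\Omega(f)$ is invariant, so is $X-\Omega(f)$, and for every compact $K\subseteq X-\Omega(f)$ one has $o(f|_{X-\Omega(f)},K)=o(f,K)$; taking the supremum over such $K$ (the definition of $o(f|_{X-\Omega(f)})$) shows that $o(f|_{X-\Omega(f)})=0$ iff $o(f,K)=0$ for all such $K$, and likewise for $f^{-1}$. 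The equivalence $3\Leftrightarrow 5$ is just \textbf{Proposition \ref{lyapunovstability}} read for $f$ and for $f^{-1}$, while $5\Leftrightarrow 6$ is the Kinoshita lemma proved immediately above. Thus it remains only to close the cycle with $1\Rightarrow 3$ and $6\Rightarrow 1$.

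For $1\Rightarrow 3$, write $\Omega(f)=\{y_1,\dots,y_m\}$ and fix a compact $K\subseteq X-\Omega(f)$. I would apply \textbf{Lemma \ref{compactsaredecomposedingeneralizedentropy}} to decompose $K=\bigcup_{i,j}K^{i,j}$ so that $f^n(K^{i,j})$ enters any prescribed neighborhood of $y_i$ and $f^{-n}(K^{i,j})$ enters any prescribed neighborhood of $y_j$ for large $n$. Each $K^{i,j}$ then satisfies the hypothesis of \textbf{Lemma \ref{halfparabolic}} relative to $f$ (with $x=y_i$) and relative to $f^{-1}$ (with $x=y_j$), so $o(f,K^{i,j})=o(f^{-1},K^{i,j})=0$; the finite-union formula of \textbf{Proposition \ref{subspaces}} then yields $o(f,K)=o(f^{-1},K)=0$, which is $3$.

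The heart of the theorem --- and the step I expect to be the main obstacle --- is $6\Rightarrow 1$, where the purely topological hypothesis of regularity must be promoted to the uniform-convergence-on-compacta statement of generalized parabolic dynamics. I would split it into two substeps. First, pointwise convergence: for each $x\in X-\Omega(f)$ the limit set $\omega(x)$ is a single fixed point. This uses only that $\omega(x)\subseteq\Omega(f)$ (cluster points of orbits are non-wandering) and that $\Omega(f)$ is finite and fixed: choosing pairwise disjoint neighborhoods $V_i\ni y_i$ small enough that $f(\overline{V_i})\cap V_j=\emptyset$ for $i\neq j$, compactness of $X-\bigcup_i V_i$ forces the forward orbit eventually into $\bigcup_i V_i$, and the no-direct-jump condition then traps it in a single $V_{i_0}$, so $\omega(x)=\{y_{i_0}\}$; applying the same argument to $f^{-1}$ controls $\alpha(x)$. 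Second, the uniform upgrade: given open $U\supseteq\Omega(f)$ and compact $K$, pick $u\in\mathcal{U}$ symmetric with $\mathcal{B}(\Omega(f),u^2)\subseteq U$; for each $x\in K$, regularity yields $v_x$ with $(x,y)\in v_x\Rightarrow (f^n(x),f^n(y))\in u$ for all $n\in\Z$, while the first substep yields $n_x$ with $f^n(x)\in\mathcal{B}(\Omega(f),u)$ for $|n|\geqslant n_x$; combining these gives $f^n(y)\in\mathcal{B}(\Omega(f),u^2)\subseteq U$ for all $y\in\mathcal{B}(x,v_x)\cap K$ and $|n|\geqslant n_x$. A finite subcover $\mathcal{B}(x_1,v_{x_1}),\dots,\mathcal{B}(x_r,v_{x_r})$ of $K$ and $n_0=\max_l n_{x_l}$ then give $f^{\pm n}(K)\subseteq U$ for $n\geqslant n_0$, i.e. generalized parabolic dynamics with parabolic set $\Omega(f)$. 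The delicate point is the first substep: it is exactly where finiteness of $\Omega(f)$ is indispensable, and the trapping argument must be run with nets rather than sequences to stay valid in the non-metrizable uniform setting.

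Finally, under the hypotheses that $X-\Omega(f)$ is connected and locally connected I would close a second cycle $1\Rightarrow 7\Rightarrow 8\Rightarrow 1$. Here $1\Rightarrow 7$ is immediate from \textbf{Theorem \ref{parabolicshavelinearentropy}}; $7\Rightarrow 8$ is the contrapositive of \textbf{Proposition \ref{Entropyviamutuallydisjointsets}}, since a non-trivial mutually singular family would force $o(f)>[n]$; and $8\Rightarrow 1$ is the contrapositive of \textbf{Proposition \ref{nongeneralizedparabolichashigherentropy}} with $F=\Omega(f)$, whose connectedness and local-connectedness hypotheses are precisely those assumed, and which produces two compact mutually singular sets whenever $f$ fails to be generalized parabolic.
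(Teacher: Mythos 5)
Your proposal is correct and follows the paper's architecture almost edge for edge: $1\Leftrightarrow 2$ via Proposition \ref{properlydiscontinuousisparabolic} and the covering-action remark, $3\Leftrightarrow 4$ via Proposition \ref{entropyandsubspace}, $3\Leftrightarrow 5$ via Proposition \ref{lyapunovstability}, $5\Leftrightarrow 6$ via the Kinoshita-type lemma, $1\Rightarrow 3$ via Lemmas \ref{compactsaredecomposedingeneralizedentropy} and \ref{halfparabolic} together with Proposition \ref{subspaces}, and $1\Rightarrow 7\Rightarrow 8\Rightarrow 1$ exactly as in the text. The one genuine difference is the closing leg of the main cycle: the paper proves $5\Rightarrow 1$, covering $K$ by $w$-small sets $S_i$ whose iterates $f^{n}(S_i)$ remain $v$-small by Lyapunov stability and observing that each orbit eventually enters $\mathcal{B}(\Omega(f),v)$ because its cluster points are non-wandering; you instead prove $6\Rightarrow 1$, first showing that each orbit converges to a single fixed point of $\Omega(f)$ (the trapping argument with the neighborhoods $V_i$) and then using regularity at each $x\in K$ to upgrade this to uniform convergence on $K$. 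Either choice closes the cycle $1\Rightarrow 3\Leftrightarrow 5\Leftrightarrow 6\Rightarrow 1$, so the logic is complete. Your version does somewhat more work than necessary — eventual entry into an arbitrary neighborhood of the whole finite set $\Omega(f)$ already suffices, so the single-limit-point refinement and the no-direct-jump condition on the $V_i$ could be dropped — but the argument is sound, including in the non-metrizable setting, since a sequence in a compact Hausdorff space always has a cluster point and any cluster point of an orbit is non-wandering.
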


\begin{obs}The implications $1) \Leftrightarrow 5 \Leftrightarrow 6)$ are due to Kinoshita (Lemma 1 and Lemma 2 of \cite{Ki}) for the case of $\#\Omega(f) = 1$ and metrizable $X$.
\end{obs}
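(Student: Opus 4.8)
The final statement is the attribution remark recording that, in the metrizable case with $\#\Omega(f)=1$, the chain $1)\Leftrightarrow 5)\Leftrightarrow 6)$ reduces to Lemmas 1 and 2 of \cite{Ki}. To substantiate it I would verify two things. First, that $5)\Leftrightarrow 6)$ is exactly the uniform-space lemma proved immediately above (the analogue of Kinoshita's Lemma 1), which requires nothing further. Second, that condition $1)$ can be inserted into this chain, say by proving $1)\Rightarrow 6)$ and $6)\Rightarrow 1)$, the uniform-space analogue of Kinoshita's Lemma 2. The passage to the special case is then immediate, since on a compact metrizable space the unique compatible uniform structure is induced by any compatible metric, and a singleton $\Omega(f)$ is a trivial instance of a finite fixed set.

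For $1)\Rightarrow 6)$, I would fix $x\in X-\Omega(f)$ and $u\in\mathcal{U}$, pick a symmetric $w$ with $w^{2}\subseteq u$, and take $w$-small neighborhoods $V_{k}$ of the fixed points. Choosing a small compact neighborhood $C\subseteq X-\Omega(f)$ of $x$ and applying \textbf{Lemma \ref{compactsaredecomposedingeneralizedentropy}}, I would split $C$ into pieces that, for $|n|$ large, are trapped inside a single $V_{k}$ in forward time and a single $V_{l}$ in backward time; the ``once-in-a-small-ball-stays'' argument from that lemma pins $x$ and every nearby $y\in C$ into the same pieces, so that $(f^{n}(x),f^{n}(y))\in w^{2}\subseteq u$ for $|n|\geqslant N$. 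For the finite range $|n|\leqslant N$, uniform equicontinuity of $f^{-N},\dots,f^{N}$ on the compact space yields an entourage $v_{1}$ with $(x,y)\in v_{1}\Rightarrow (f^{n}(x),f^{n}(y))\in u$; intersecting $v_{1}$ with the trapping neighborhood gives the $v$ required by \textbf{Definition \ref{regularpoints}}.

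For $6)\Rightarrow 1)$, I would first note that the $\omega$- and $\alpha$-limit sets of any point lie in $\Omega(f)$, so each orbit converges to the finite fixed set; thus for a given open $U\supseteq\Omega(f)$ every $x\in X-\Omega(f)$ satisfies $f^{\pm n}(x)\in U$ for $n$ large. Regularity upgrades this to a neighborhood statement: choosing $U$ thick enough to contain a full $u$-neighborhood of each fixed point, a regular $x$ admits an entourage $v$ so that every $y$ with $(x,y)\in v$ stays $u$-close to the orbit of $x$ and hence also enters $U$ eventually. Covering a compact $K\subseteq X-\Omega(f)$ by finitely many such neighborhoods produces a single $N$ with $f^{\pm n}(K)\subseteq U$ for $n\geqslant N$, which is precisely condition $1)$.

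The main obstacle is the uniform-space bookkeeping in $1)\Rightarrow 6)$: unlike the metric setting one cannot argue with sequences and must instead track the trapping of an entire neighborhood of $x$ through entourages, relying on the decomposition and the inductive invariance already furnished by \textbf{Lemma \ref{compactsaredecomposedingeneralizedentropy}}. Once that machinery is in place, both implications follow the scheme of Kinoshita's Lemma 2, and specializing the entourages to metric balls around a single fixed point recovers exactly the statements of \cite{Ki} that the remark cites.
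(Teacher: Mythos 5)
Your proposal is correct, but it takes a genuinely different route from the paper. The paper never proves $1)\Rightarrow 6)$ or $6)\Rightarrow 1)$ directly: it routes everything through the entropy and stability conditions, proving $1)\Rightarrow 3)$ (via \textbf{Lemma \ref{compactsaredecomposedingeneralizedentropy}}, \textbf{Lemma \ref{halfparabolic}} and \textbf{Proposition \ref{subspaces}}), then $3)\Leftrightarrow 5)$ by \textbf{Proposition \ref{lyapunovstability}} (a nontrivial net/cluster-point argument), then $5)\Rightarrow 1)$ directly, and finally $5)\Leftrightarrow 6)$ by the Kinoshita-style lemma preceding the theorem. You instead bypass conditions $3)$ and $5)$ entirely: your direct $1)\Rightarrow 6)$, trapping a whole neighborhood of $x$ in the decomposition pieces and handling the finite time window by equicontinuity, is more elementary than the paper's chain (it avoids \textbf{Proposition \ref{lyapunovstability}} altogether) and is closer in spirit to Kinoshita's original Lemma 2; your $6)\Rightarrow 1)$ is essentially the paper's $5)\Rightarrow 1)$ argument with pointwise regularity substituted for Lyapunov stability on compacts, using the same ``cluster points of orbits lie in $\Omega(f)$'' mechanism. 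What the paper's longer route buys is the entropy characterizations $3)$ and $4)$, which are its main contribution; what your route buys is a shorter, self-contained proof of exactly the Kinoshita equivalences cited in the remark.

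One point to tighten in your $1)\Rightarrow 6)$: the claim that every $y\in C$ near $x$ lands in the \emph{same} piece as $x$ requires the pieces $K^{i,j}$ to be pairwise disjoint, so that compactness and the Hausdorff property separate them and force a neighborhood of $x$ inside its own piece. The statement of \textbf{Lemma \ref{compactsaredecomposedingeneralizedentropy}} only asserts $K=\bigcup K^{i,j}$; disjointness holds because in its proof the pieces are cut out by the pairwise disjoint sets $\mathcal{B}(y_{i},v)$, but you are invoking the lemma's internals rather than its statement, and this should be said explicitly.
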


\begin{proof} $1) \Leftrightarrow 2)$ This is \textbf{Proposition \ref{properlydiscontinuousisparabolic}}.

$3) \Leftrightarrow 4)$ By \textbf{Proposition \ref{entropyandsubspace}}, $o(f|_{X-Y}) = \sup \{o(f,K): K \text{ is a compact subset of } X-Y\}$, which implies that $o(f|_{X-Y}) = 0$ if and only if for every compact set $K \subseteq X-Y$, $o(f,K) = 0$.

$3) \Leftrightarrow 5)$ This is \textbf{Proposition \ref{lyapunovstability}}.

$1) \Rightarrow 3)$ Let $K \subseteq X- \Omega(f)$ be a compact set. By \textbf{Lemma \ref{compactsaredecomposedingeneralizedentropy}} $K = \bigcup_{s \in \Omega(f)} K^{s}$, where $K^{s}$ is a compact set such that for every neighborhood U of $s$, there exists $n_{0}$ such that for every $n > n_{0}$, $f^{n}(K^{s}) \subseteq U$. By \textbf{Lemma \ref{halfparabolic}}, $o(f,K^{s}) = 0$. Since $K = \bigcup_{s \in F} K^{s}$, then, by \textbf{Proposition \ref{subspaces}}, it follows that $o(f,K) = 0$. Analogously, $o(f^{-1},K) = 0$.

$5) \Rightarrow 1)$ Let $K$ be a compact set in $X-\Omega(f)$. Then $f$ and $f^{-1}$ are Lyapunov stable in $K$. Let $u,v \in \mathcal{U}$, with $v$ open in $X\times X$ and $v^{2} \subseteq u$. Then there exists $w \in \mathcal{U}$ such that for every $S \subseteq K$ that is $w$-small, then, for every $n \in \Z$, $f^{n}(S)$ is $v$-small. Since $K$ is compact, there exists $S_{1},\dots, S_{k}$ subsets of $K$ that are $w$-small and $K = \bigcup_{i = 1}^{k}S_{i}$. Let $s_{i} \in S_{i}$. Since $\Omega(f)$ contains all cluster points of $\{f^{n}(s_{i})\}_{n \in \N}$ and $\{f^{-n}(s_{i})\}_{n \in \N}$, then there exists $n_{i} \in \N$ such that for every $|n| > n_{i}$, $f^{n}(s_{i}) \in \mathcal{B}(\Omega(f),v)$ (otherwise these sequences would have cluster points in the compact set $X - \mathcal{B}(\Omega(f),v)$). Since $f^{n}(S_{i})$ is $v$-small and $f^{n}(s_{i}) \in \mathcal{B}(\Omega(f),v)$, then $f^{n}(S_{i}) \subseteq \mathcal{B}(\Omega(f),v^{2})$. Since  $\mathcal{B}(\Omega(f),v^{2}) \subseteq \mathcal{B}(\Omega(f),u)$, then  for every $|n| > n_{i}$, $f^{n}(S_{i}) \subseteq \mathcal{B}(\Omega(f),u)$. If we take $n_{0} = \max\{n_{1},\dots,n_{k}\}$, then, for every $|n| > n_{0}$, $f^{n}(K) \subseteq \mathcal{B}(\Omega(f),u)$. So $f$ has generalized parabolic dynamics.

$5) \Leftrightarrow 6)$ This is the last lemma.

$1) \Rightarrow 7)$ This is \textbf{Theorem \ref{parabolicshavelinearentropy}}.

$7) \Rightarrow 8)$ \textbf{Corollary \ref{Entropyviamutuallydisjointsets}} gives the contrapositive of this.

$8) \Rightarrow 1)$ A special case of \textbf{Proposition \ref{nongeneralizedparabolichashigherentropy}} gives the contrapositive of this.
\end{proof}

The following question leads us to consider the necessity of the theorem's hypotheses:

\begin{quest}Let $f: X \rightarrow X$ be a homeomorphism of a compact Hausdorff space such that the non-wandering set is a finite set of fixed points. Without assuming the connectedness and local connectedness hypothesis for $X-\Omega(f)$, is it possible that $o(f) = [n]$ but $f$ is not generalized parabolic?
\end{quest}

In \cite{CP2}, it is shown that, if $X$ is the $2$-sphere and $\#\Omega(f) = 1$, then there are homeomorphisms whose generalized entropy is between $[n^{2}]$ and $\sup \mathbb{P}$. Also, in \cite{HL} it is shown that there is a gap for homeomorphisms of the $2$-sphere: there is no homeomorphism with $\#\Omega(f) = 1$ such that the polynomial entropy is in the open interval $(1,2)$. The following question is derived from a question given in \cite{CP}:

\begin{quest}Is there any homeomorphism $f: X \rightarrow X$, with $X$ a Hausdorff compact space, such that $\Omega(f)$ is finite, $[n] < o(f)$ and $ o(f) \ngeqslant [n^{2}]$?
\end{quest}

\subsection{Non-wandering set}

If $f: X\rightarrow X$ is a homeomorphism, then some information about the entropy is lost when we decompose our space as $\Omega(f)$ and $X-\Omega(f)$, as the parabolic dynamics shows us.
If $f$ has parabolic dynamics, with $\Omega(f) =  \{p\}$, then, by the last theorem, $o(f|_{X-\{p\}}) = 0$ and $o(f|_{\{p\}}) = 0$. But $o(f) = [n]$. However, this information about the entropy is contained in every neighborhood of $p$:

\begin{prop}Let $(X,\mathcal{U})$ be a Hausdorff compact uniform space and $f: X \rightarrow X$ a homeomorphism with parabolic dynamics and parabolic point $p$. Then, for every open neighborhood $U$ of $p$, $o(f, \overline{U}) = [n]$.
\end{prop}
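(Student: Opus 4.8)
The plan is to prove the two inequalities $o(f,\overline{U}) \leqslant [n]$ and $o(f,\overline{U}) \geqslant [n]$ separately. The upper bound is immediate: $\overline{U}$ is a closed subset of the compact space $X$, hence compact, so by the definition of $o(f)$ as a supremum over compact sets we have $o(f,\overline{U}) \leqslant o(f)$, and by \textbf{Corollary \ref{northsouthhaslinearentropy}} the right-hand side equals $[n]$.

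For the lower bound, assume $X \neq \{p\}$ (the degenerate case is excluded, consistently with \textbf{Corollary \ref{northsouthhaslinearentropy}}), and fix any point $x \in X - \{p\}$. The idea is to localize the construction used in \textbf{Proposition \ref{alphalimitentropy}} so that the resulting separated set lies inside $\overline{U}$. Since $\{x\}$ is a compact subset of $X - \{p\}$, the parabolic dynamics hypothesis gives $f^{-n}(x) \to p$, so there exists $N \in \N$ with $f^{-n}(x) \in U$ for all $n \geqslant N$. Set $x' = f^{-N}(x)$; then the entire backward orbit $\{f^{-i}(x') : i \geqslant 0\}$ is contained in $U \subseteq \overline{U}$, and $x' \neq p$, so $x'$ is not contained in its own $\alpha$-limit (which is $\{p\}$).

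Next I would reproduce the separation estimate of \textbf{Proposition \ref{alphalimitentropy}} verbatim, but now applied to $x'$. Choose $u \in \mathcal{U}$ and $m_{0}$ with $f^{-m}(x') \notin \mathcal{B}(x',u)$ for all $m > m_{0}$, then a symmetric $v \subseteq u$ with $f^{-1}(x'),\dots,f^{-m_{0}}(x') \notin \mathcal{B}(x',v)$, so that $(x',f^{-m}(x')) \notin v$ for every $m \geqslant 1$. For fixed $n$, the set $A_{n} = \{f^{-i}(x') : 0 \leqslant i \leqslant n-1\}$ is then $(n,v)$-separated: for $i < j$, applying $f^{i}$ sends the pair $(f^{-i}(x'),f^{-j}(x'))$ to $(x',f^{-(j-i)}(x')) \notin v$ with $i \in \{0,\dots,n-1\}$. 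Crucially, every element of $A_{n}$ lies in $\overline{U}$ by the choice of $x'$, so $A_{n}$ witnesses $s_{f,v,\overline{U}}(n) \geqslant \# A_{n} = n$. Hence $[s_{f,v,\overline{U}}(n)] \geqslant [n]$, and therefore $o(f,\overline{U}) \geqslant [n]$.

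Combining the two bounds yields $o(f,\overline{U}) = [n]$. The only genuine subtlety is forcing the separated set to lie inside $\overline{U}$; this is exactly what is handled by replacing $x$ with a sufficiently deep backward iterate $x'$, using the uniform convergence of orbits to $p$ guaranteed by the parabolic hypothesis. Everything else is a localized rerun of \textbf{Proposition \ref{alphalimitentropy}}, so I expect no further obstacle.
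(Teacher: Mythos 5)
Your proof is correct, but it takes a different route from the paper's. The paper decomposes $X = (X-U) \cup \overline{U}$, observes that $K = X-U$ is a compact subset of $X-\{p\}$ whose forward iterates converge uniformly to $p$, so $o(f,K)=0$ by \textbf{Lemma \ref{halfparabolic}}, and then reads off $[n] = o(f) = \sup\{o(f,K),o(f,\overline{U})\} = o(f,\overline{U})$ from the finite-union formula of \textbf{Proposition \ref{subspaces}}. You instead prove the lower bound by hand: you push a point $x \neq p$ backward until its entire backward orbit lies in $U$, then rerun the separation argument of \textbf{Proposition \ref{alphalimitentropy}} on $x'=f^{-N}(x)$ to exhibit an $n$-element $(n,v)$-separated subset of $\overline{U}$, which gives $s_{f,v,\overline{U}}(n)\geqslant n$ directly. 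Both arguments are sound (your points $f^{-i}(x')$ are pairwise distinct since $x'$ is non-periodic, being outside its own $\alpha$-limit, and they all lie in $U$ by construction). The paper's version is shorter given the lemmas already in place and localizes the entire entropy computation at once; yours makes the lower bound self-contained, needing neither \textbf{Lemma \ref{halfparabolic}} nor \textbf{Proposition \ref{subspaces}}, at the cost of repeating the separated-set construction. Your remark about excluding the degenerate case $X=\{p\}$ is a point the paper leaves implicit.
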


\begin{proof}Consider the compact set $K = X - U$. By \textbf{Lemma \ref{halfparabolic}}, $o(f,K) = 0$. Since $K \cup \overline{U} = X$, it follows by \textbf{Proposition \ref{subspaces}} that $[n] = o(f) = \sup \{o(f,K), o(f,\overline{U})\} = \sup\{0,o(f,\overline{U})\} = o(f,\overline{U})$.
\end{proof}

\section{Dynamics on compact surfaces}\label{5}

While generalized parabolic dynamics are tightly constrained on compact surfaces — often forcing the space to be a sphere — systems with a finite non-wandering set can still exhibit more complex behavior. In particular, homeomorphisms with a single non-wandering point but non-parabolic dynamics may have generalized entropy that is strictly greater than linear. In this section, we construct such examples on compact surfaces.

\subsection{Brouwer's example}\label{Brouwer}

Proposition 2.3 of \cite{BCGGS} says that a homeomorphism $f: X \rightarrow X$ of a Hausdorff compact space has north-south dynamics with repulsor point $x_{0}$ and attractor point $x_{1}$ if and only if for every point $x \in X-\{x_{0},x_{1}\}$, $\{f^{n}(x)\}_{n \in \N}$ converges to $x_{1}$ and $\{f^{-n}(x)\}_{n \in \N}$ converges to $x_{0}$. 

Maybe we could expect that a homeomorphism $f: X \rightarrow X$ of a Hausdorff compact space has parabolic dynamics with parabolic point $x_{0}$ if and only if for every point $x \in X-\{x_{0}\}$, the sequences $\{f^{n}(x)\}_{n \in \N}$ and $\{f^{-n}(x)\}_{n \in \N}$ converge to $x_{0}$. It is easy to see that if $f$ has parabolic dynamics, then we get the convergence of both sequences. The next example shows that the converse is not true. 
 
Consider the curves $\mathcal{C}_c$, defined by the equation $x = \frac{1}{y(y - 1)} + c$, where $c$ is a real constant, which are pairwise disjoint and fill the strip $0 < y < 1$ in $\mathbb{R}^2$. Let $f : \mathbb{R}^2 \to \mathbb{R}^2 $ be the map defined as:  
$$f(x, y) = \begin{cases} 
(x + 1, y), & \text{if } y \geq 1, \\
(x - 1, y), & \text{if } y \leq 0,
\end{cases}$$  
and by $f(x, y) = (x', y') $ if $ 0 < y < 1 $ and $ (x, y) \in \mathcal{C}_c $, such that $y > y'$ and the arc length of $ \mathcal{C}_c $ from $ (x, y)$ to $ (x', y') $ is equals $1$, \cite{Guil}. We can compactify $\mathbb{R}^2$ to the sphere $S^2$ and extend $f$ to the homeomorphism $\overline{f}: S^2 \to S^2$ whose wandering set is only the fixed point at infinity (let's name it $\infty$).

%FIGURA-----------------------------------

\begin{figure}[H]
\centering
\begin{subfigure}{.5\textwidth}
  \centering
      \resizebox{6.7cm}{!}{%
      
\tikzset{every picture/.style={line width=0.75pt}} %set default line width to 0.75pt
     \begin{tikzpicture}[>=Latex, xscale=1,yscale=3]

  \draw[<->] (-8.2,0) -- (8.2,0) node[right, font=\Huge] {$x$};
  \draw[<->] (0,-1.5) -- (0,2.5) node[above,font=\Huge] {$y$};

  \begin{scope}[very thick,decoration={
    markings,
    mark=at position 0.5 with {\arrow[scale=2]{>}},
    mark=at position 0.25 with {\arrow[scale=2]{>}},
    mark=at position 0.75 with {\arrow[scale=2]{>}},
    },
    ] 
    \draw[mycolor2, postaction={decorate}] (-7.2,1)--(7.2,1);
    
\end{scope}

\begin{scope}[very thick,decoration={
    markings,
    mark=at position 0.5 with {\arrow[scale=2]{<}},
    mark=at position 0.25 with {\arrow[scale=2]{<}},
    mark=at position 0.75 with {\arrow[scale=2]{<}},
    },
    ] 
    \draw[mycolor2, postaction={decorate}] (-7.2,0)--(7.2,0);
    
\end{scope}

\begin{scope}[very thick,decoration={
    markings,
    mark=at position 0.5 with {\arrow[scale=2]{>}},
    mark=at position 0.25 with {\arrow[scale=2]{>}},
    mark=at position 0.75 with {\arrow[scale=2]{>}},
    },
    ] 
    \draw[mycolor1, postaction={decorate}] (-7.2,1.33)--(7.2,1.33);
    
\end{scope}
\begin{scope}[very thick,decoration={
    markings,
    mark=at position 0.5 with {\arrow[scale=2]{>}},
    mark=at position 0.25 with {\arrow[scale=2]{>}},
    mark=at position 0.75 with {\arrow[scale=2]{>}},
    },
    ] 
    \draw[mycolor1,postaction={decorate}] (-7.2,1.66)--(7.2,1.66);
    
\end{scope}

\begin{scope}[very thick,decoration={
    markings,
    mark=at position 0.5 with {\arrow[scale=2]{>}},
    mark=at position 0.25 with {\arrow[scale=2]{>}},
    mark=at position 0.75 with {\arrow[scale=2]{>}},
    },
    ] 
    \draw[mycolor1, postaction={decorate}] (-7.2,1.99)--(7.2,1.99);
    
\end{scope}

\begin{scope}[very thick,decoration={
    markings,
    mark=at position 0.5 with {\arrow[scale=2]{<}},
    mark=at position 0.25 with {\arrow[scale=2]{<}},
    mark=at position 0.75 with {\arrow[scale=2]{<}},
    },
    ] 
    \draw[mycolor1, postaction={decorate}] (-7.2,-0.33)--(7.2,-0.33);
    
\end{scope}

\begin{scope}[very thick,decoration={
    markings,
    mark=at position 0.5 with {\arrow[scale=2]{<}},
    mark=at position 0.25 with {\arrow[scale=2]{<}},
    mark=at position 0.75 with {\arrow[scale=2]{<}},
    },
    ] 
    \draw[mycolor1, postaction={decorate}] (-7.2,-0.66)--(7.2,-0.66);
    
\end{scope}

\begin{scope}[very thick,decoration={
    markings,
    mark=at position 0.5 with {\arrow[scale=2]{<}},
    mark=at position 0.25 with {\arrow[scale=2]{<}},
    mark=at position 0.75 with {\arrow[scale=2]{<}},
    },
    ] 
    \draw[mycolor1, postaction={decorate}] (-7.2,-0.99)--(7.2,-0.99);
    
\end{scope}

  %\draw[color=red]    plot (\x,\x)             node[right] {$f(x) =x$};
  % \x r means to convert '\x' from degrees to _r_adians:
  %\draw[color=blue]   plot ({sin(\x r)},\x)    node[right] {$f(x) = \sin x$};
  \begin{scope}[very thick,decoration={
    markings,
    mark=at position 0.55 with {\arrow[scale=2]{<}},
    },
    ] 
    \draw[postaction={decorate}, domain=0.1:0.9] plot ({1/(\x ) * 1/(\x-1) +4},{\x});
    
\end{scope}

\begin{scope}[very thick,decoration={
    markings,
    mark=at position 0.55 with {\arrow[scale=2]{<}},
    },
    ] 
    \draw[postaction={decorate}, domain=0.076:0.924] plot ({1/(\x ) * 1/(\x-1) +7},{\x});
    
\end{scope}

  \begin{scope}[very thick,decoration={
    markings,
    mark=at position 0.55 with {\arrow[scale=2]{<}},
    },
    ] 
    \draw[postaction={decorate}, domain=0.062:0.938] plot ({1/(\x ) * 1/(\x-1) +10},{\x});
    
\end{scope}

  %\addplot+[no marks] {2/(x-0.8)} node[above left] {$f(x)=\frac{10}{5x-4}$};
  %\draw[color=orange] plot (\x,{0.05*exp(\x)}) node[right] {$f(x) = \frac{1}{20} \mathrm e^x$};
\end{tikzpicture}

}

\end{subfigure}%
\begin{subfigure}{.5\textwidth}
  \centering
\resizebox{5.2cm}{!}
{\tikzset{every picture/.style={line width=0.75pt}} %set default line width to 0.75pt
\def\centerarc[#1] (#2) (#3:#4:#5)% [draw options] (center) (initial angle:final angle:radius)
{ \draw[#1] (#2) ++(#3:#5) arc (#3:#4:#5);
}
    \begin{tikzpicture}[x=0.75pt,y=0.75pt,yscale=-1,xscale=1]
%uncomment if require: \path (0,300); %set diagram left start at 0, and has height of 300
%Shape: Circle [id:dp09746755233889659] 
%\draw[mycolor1]   (46,135.73) .. controls (46,77.89) and (92.89,31) .. (150.73,31) .. controls (208.58,31) and (255.47,77.89) .. (255.47,135.73) .. controls (255.47,193.58) and (208.58,240.47) .. (150.73,240.47) .. controls (92.89,240.47) and (46,193.58) .. (46,135.73) -- cycle ;%azul
%TESTE---------------------------------
% Blue full circle
\draw[darkgray] (150.73,135.73) circle (105.73);
%\centerarc[black] (150.73,135.73) (106.8:74.2:105.73)
% Red points on the circumference
\fill[color=mycolor2] (119.47,236.75) circle (1pt);  % bottom-left
\fill[color=mycolor2] (179.47,236.75) circle (1pt);  % bottom-right
%-------------------------------------
%Curve Lines [id:da4102757475780714] 
\draw[mycolor1]    (51.47,169.75) .. controls (64.47,104.75) and (102.47,52.75) .. (150.73,31) ;%azul
\draw [shift={(87.87,84.16)}, rotate = 124.82] [fill={rgb, 255:red, 74; green, 144; blue, 226 }  ][line width=0.08]  [draw opacity=0] (8.93,-4.29) -- (0,0) -- (8.93,4.29) -- cycle    ;%azul
%Curve Lines [id:da5287572266179623] 
\draw [color={rgb, 255:red, 208; green, 2; blue, 27 }  ,draw opacity=1 ]   (119.47,236.75) .. controls (87.47,182.75) and (95.47,72.75) .. (150.73,31) ;%vermelho
\draw [shift={(103.19,123.37)}, rotate = 97.89] [fill={rgb, 255:red, 208; green, 2; blue, 27 }  ,fill opacity=1 ][line width=0.08]  [draw opacity=0] (8.93,-4.29) -- (0,0) -- (8.93,4.29) -- cycle    ;%vermelho
%Curve Lines [id:da48211358526943227] 
\draw [color={rgb, 255:red, 208; green, 2; blue, 27 }  ,draw opacity=1 ]   (150.73,31) .. controls (195.47,63.75) and (208.47,166.75) .. (179.47,236.75) ;%vermelho
\draw [shift={(194.23,133.98)}, rotate = 264.99] [fill={rgb, 255:red, 208; green, 2; blue, 27 }  ,fill opacity=1 ][line width=0.08]  [draw opacity=0] (8.93,-4.29) -- (0,0) -- (8.93,4.29) -- cycle    ;%vermelho
%Curve Lines [id:da7228788149838064] 
\draw[mycolor1]    (79.47,211.75) .. controls (68.47,149.75) and (93.47,64.75) .. (150.73,31) ;%é azul
\draw [shift={(91.37,105.83)}, rotate = 110.66] [fill={rgb, 255:red, 74; green, 144; blue,  226}  ][line width=0.08]  [draw opacity=0] (8.93,-4.29) -- (0,0) -- (8.93,4.29) -- cycle    ;%é azul
%Curve Lines [id:da8337543244651839] 
\draw[mycolor1]    (150.73,31) .. controls (207.47,63.75) and (230.47,103.75) .. (248.47,169.75) ;%é azul
\draw [shift={(218.08,94.04)}, rotate = 237.3] [fill={rgb, 255:red, 74; green, 144; blue, 226 }  ][line width=0.08]  [draw opacity=0] (8.93,-4.29) -- (0,0) -- (8.93,4.29) -- cycle    ;%é azul
%Curve Lines [id:da46797086495216633] 
\draw[mycolor1]    (150.73,31) .. controls (218.47,72.75) and (224.47,191.75) .. (223.47,210.75) ;%é azul
\draw [shift={(209.31,117.01)}, rotate = 251.95] [fill={rgb, 255:red, 74; green, 144; blue, 226 }  ][line width=0.08]  [draw opacity=0] (8.93,-4.29) -- (0,0) -- (8.93,4.29) -- cycle    ;%é azul
%Curve Lines [id:da7836818970234267] 
\draw [color={rgb, 255:red, 208; green, 2; blue, 27 }  ,draw opacity=1 ] [dash pattern={on 0.84pt off 2.51pt}]  (119.47,236.75) .. controls (136.47,230.75) and (150.47,82.75) .. (150.73,31) ;%é vermelho
\draw [shift={(142.53,141.67)}, rotate = 276.68] [color={rgb, 255:red, 208; green, 2; blue, 27 }  ,draw opacity=1 ][line width=0.75]    (10.93,-3.29) .. controls (6.95,-1.4) and (3.31,-0.3) .. (0,0) .. controls (3.31,0.3) and (6.95,1.4) .. (10.93,3.29)   ;%vermelho
%Curve Lines [id:da1856318411414848] 
\draw [color={rgb, 255:red, 208; green, 2; blue, 27 }  ,draw opacity=1 ] [dash pattern={on 0.84pt off 2.51pt}]  (150.73,31) .. controls (148.47,96.75) and (147.47,216.75) .. (179.47,236.75) ;%vermelho
\draw [shift={(150.9,131.13)}, rotate = 86.8] [color={rgb, 255:red, 208; green, 2; blue, 27 }  ,draw opacity=1 ][line width=0.75]    (10.93,-3.29) .. controls (6.95,-1.4) and (3.31,-0.3) .. (0,0) .. controls (3.31,0.3) and (6.95,1.4) .. (10.93,3.29)   ;%vermelho
%Curve Lines [id:da8137038772112721] 
\draw[mycolor1]  [dash pattern={on 0.84pt off 2.51pt}]  (79.47,211.75) .. controls (119.47,181.75) and (139.47,107.75) .. (150.73,31) ;%azul
%Curve Lines [id:da825952890725782] 
\draw[mycolor1] [dash pattern={on 0.84pt off 2.51pt}]  (150.73,31) .. controls (160.47,126.75) and (170.47,173.75) .. (223.47,210.75) ;%azul
%Curve Lines [id:da8804051481206534] 
\draw[mycolor1]  [dash pattern={on 0.84pt off 2.51pt}]  (51.47,169.75) .. controls (91.47,139.75) and (133.47,88.75) .. (150.73,31) ;%azul
%Curve Lines [id:da3115957073631035] 
\draw[mycolor1]  [dash pattern={on 0.84pt off 2.51pt}]  (150.73,31) .. controls (177.47,103.75) and (198.47,139.75) .. (248.47,169.75) ;%azul
%Curve Lines [id:da45042806240193256] 
\draw     ;
%Shape: Polygon Curved [id:ds06492012415284343] 
\draw  [color={rgb, 255:red, 0; green, 0; blue, 0 }  ,draw opacity=1 ] (150.73,31) .. controls (150.73,31) and (90.47,222.75) .. (146.47,222.75) .. controls (202.47,222.75) and (150.73,31) .. (150.73,31) -- cycle ;%é preto
%Shape: Polygon Curved [id:ds37505648345323794] 
\draw  [color={rgb, 255:red, 0; green, 0; blue, 0 }  ,draw opacity=1 ] (150.73,31) .. controls (150.73,31) and (109.47,190.75) .. (145.47,190.75) .. controls (181.47,190.75) and (150.73,31) .. (150.73,31) -- cycle ;%é preto
%Shape: Polygon Curved [id:ds2100967849356341] 
\draw  [color={rgb, 255:red, 0; green, 0; blue, 0 }  ,draw opacity=1 ] (150.73,31) .. controls (150.73,31) and (121.47,157.75) .. (147.47,157.75) .. controls (173.47,157.75) and (150.73,31) .. (150.73,31) -- cycle ;%é preto
%Shape: Triangle [id:dp3954928464509879] 

%\draw  [color={rgb, 0:red, 0; green, 0; blue, 0 }  ,draw opacity=1 ][fill={rgb, 255:red, 0; green, 0; blue, 0 }  ,fill opacity=1 ] (143.09,222.73) -- (149.86,219.01) -- (149.82,226.53) -- cycle ;%é preto %Shape: Triangle [id:dp9924427092588799] %%%%%%ERRO AQUI

\draw[draw=black, fill=black, fill opacity=1] 
(143.09,222.73) -- (149.86,219.01) -- (149.82,226.53) -- cycle;

\draw  [color={rgb, 255:red, 0; green, 0; blue, 0 }  ,draw opacity=1 ][fill={rgb, 255:red, 0; green, 0; blue, 0 }  ,fill opacity=1 ] (142.09,190.73) -- (148.86,187.01) -- (148.82,194.53) -- cycle ;%é preto %Shape: Triangle [id:dp635817649382163] 
\draw  [color={rgb, 255:red, 0; green, 0; blue, 0 }  ,draw opacity=1 ][fill={rgb, 255:red, 0; green, 0; blue, 0 }  ,fill opacity=1 ] (144.09,157.73) -- (150.86,154.01) -- (150.82,161.53) -- cycle ;%é preto % Text Node

%\draw (146,26.4) node [anchor=north west][inner sep=0.75pt]{$\degree $};% ERRO AQUI 

\fill[color=black] (150.73,30) circle (2pt);  %bottom-left %ERRO
\end{tikzpicture}
}
\end{subfigure}
\caption{Representation of the maps $f$ and $\overline{f}$.}
\label{fig:test}
\end{figure}
%FIM DA FIGURA----------------------------

%++++++++++++++++++++++TESTE

%+++++++++++++++++++++++++

Initially, Brouwer believed that any orientation preserving homeomorphism of $\mathbb{R}^2$ without fixed points was always conjugated to the translation, \cite{bro9}. However, he quickly corrected this assumption, \cite{bro13}, \cite{bro14}, \cite{bro16}, and provided the example above, demonstrating that this is not the case.

By the construction of the map $f$, for every $x \in S^{2}-\{x_{0}\}$, the sequences $\{f^{n}(x)\}_{n \in \N}$ and $\{f^{-n}(x)\}_{n \in \N}$ converge to $\infty$. On the other hand, consider the square $[0,1]^{2} \subseteq \mathbb{R}^{2}$. For every $n \in \Z$, $f^{n}((0,0)) = (-n,0)$, $f^{n}((0,1)) = (n,1)$ and $f^{n}([0,1]^{2}) \subseteq \mathbb{R} \times [0,1]$. Since the set $f^{n}([0,1]^{2})$ is connected, we get that $(\mathbb{R} \times [0,1])- (\{0\}\times [0,1])$ has two connected components and, for $n \neq 0$, the points $f^{n}((0,0))$ and $f^{n}((0,1))$ are in different connected components of $(\mathbb{R} \times [0,1])- (\{0\}\times [0,1])$, then, for $n \neq 0$, $f^{n}([0,1]^{2})\cap(\{0\}\times [0,1])$, which implies that $\{f^{n}([0,1]^{2})\}_{n > 0} = \{\overline{f}^{n}([0,1]^{2})\}_{n > 0}$ do not converge uniformly to $\infty$.  So $\overline{f}$ does not have parabolic dynamics. See \textbf{Figure \ref{fig:intersectionofcompactsonbrouwerexample}} for a picture of $f([0,1]^{2})$ intersecting $[0,1]^{2}$.

% FIGURA SIMULAÇÃO-----------------
\begin{figure}[H]
    \centering
    \includegraphics[width=0.5\linewidth]{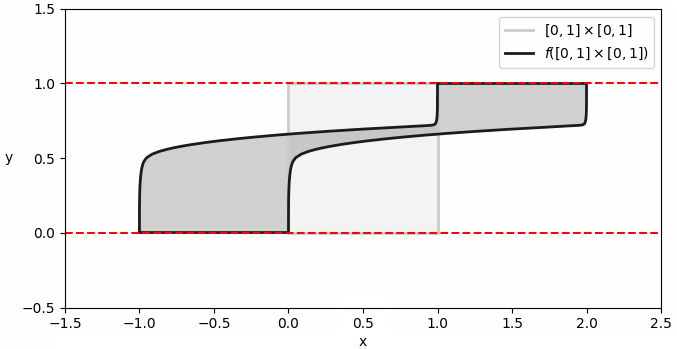}
    \caption{The square $[0,1]^{2}$ and its image $f([0,1]^{2})$.}
    \label{fig:intersectionofcompactsonbrouwerexample}
\end{figure}
%------------------------------

By Example 2.5 and Theorem 2 of \cite{CP2}, $o(f) = [n^{2}]$.

\subsection{Surfaces with higher genus}

Theorem A of \cite{HL} and Theorem 6 of \cite{Kau} say that there are strong topological restrictions for the existence of generalized parabolic dynamics in compact manifolds. In particular, there are no parabolic dynamics on compact surfaces without boundary that are different from the sphere. The next examples show that the same does not hold for homeomorphisms of compact surfaces without boundary, where we ask only that the non-wandering set is a singleton (which we saw in the last section that is a weaker condition than to have parabolic dynamics).

There is a construction of a Brouwer homeomorphism on $S^{2}$ given in \cite{CP}, which is a generalization of a construction of \cite{HR}, which we will need next. We will realize this construction inside the hyperbolic plane $\mathbb{H}^{2}$ and take advantage of its structure of semi-planes and its full visualization in the Poincaré disk model, but this is equivalent to the construction given in \cite{CP}. First we need some definitions: a finite closed polygon is the finite intersection of closed semi-planes and we say that this polygon is ideal if any two geodesics that bound the given semi-planes do not intersect in $\mathbb{H}^{2}$ (they can intersect but only in the ideal boundary $\partial\mathbb{H}^{2}$). The construction is the following: 

Take $L \in \N$, with $L \geqslant 2$. Consider the ideal closed polygon $\Gamma$ with $L$ sides in $\mathbb{H}^{2}$ (since we don't want to extend maps to $\partial \mathbb{H}^{2}$, it doesn't matter if this polygon has finite area or not). The sides of $\Gamma$ are geodesics $\gamma_{1},\dots,\gamma_{L}$, which are pairwise disjoint, and $\mathbb{H}^{2} - \Gamma$ is a set of $L$ pairwise disjoint closed semi-planes $\Gamma_{1},\dots,\Gamma_{L}$, with $\Gamma_{i}$ whose boundary is the geodesic $\gamma_{i}$. Let $\mathbb{L}$ be the set of of elements $[a(n)]$ of $\mathbb{O}$ that satisfies the linearly invariant property (i.e., there exist $m \geqslant 2$ such that $[a(n)] = [a(mn)]$) and let $\overline{\mathbb{L}} = \{\sup{\Gamma}: \Gamma \subseteq \mathbb{L}, \# \Gamma \leqslant \aleph_{0}\}$. Then, for every $o \in \overline{\mathbb{L}}$, satisfying $[n^{2}] \leqslant o \leqslant \sup \mathbb{P}$, there exists a homeomorphism $B: \mathbb{H}^{2} \rightarrow \mathbb{H}^{2}$ satisfying:

\begin{enumerate}
    \item $\Gamma$ and each $\Gamma_{i}$ are invariant sets.
    \item For each $i \in \{1,\dots,L\}$, the map $B|_{\Gamma_{i}}$ is a translation, and all directions of translations are oriented counter-clockwise.
    \item If $\hat{B}: \hat{\mathbb{H}}^{2} \rightarrow \hat{\mathbb{H}}^{2}$ is the extension of $B$ to the one-point compactification of $\mathbb{H}^{2}$, then $o(\hat{B}) = o$.
    \item $\Omega(\hat{B}) = \{\infty\}$, where $\{\infty\} =\hat{\mathbb{H}}^{2} - \mathbb{H}^{2}$.
\end{enumerate}

\vspace{-0.5cm}

%--------------TEST
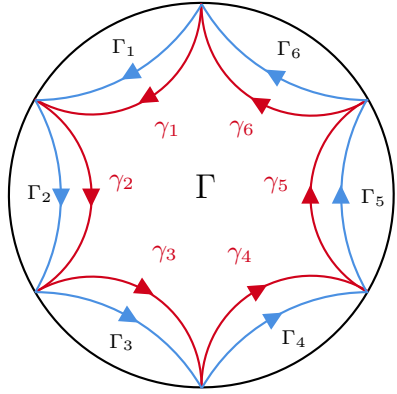
\begin{figure}[H]
    \centering
    \resizebox{9.25cm}{!}{

\tikzset{every picture/.style={line width=0.75pt}} %set default line width to 0.75pt        

\begin{tikzpicture}[x=0.75pt,y=0.75pt,yscale=-1,xscale=1]
    % Tight bounding box to remove whitespace
    \path[use as bounding box] (150,10) rectangle (460,220);

    %Shape: Circle
    \draw   (226.22,164.49) .. controls (201.31,121.87) and (215.66,67.13) .. (258.27,42.22) .. controls (300.89,17.31) and (355.63,31.66) .. (380.54,74.27) .. controls (405.45,116.89) and (391.1,171.63) .. (348.49,196.54) .. controls (305.87,221.46) and (251.13,207.1) .. (226.22,164.49) -- cycle ;

    %Shape: Arc [id:dp4448328031988158]
    \draw  [draw opacity=0] (303.28,30.06) .. controls (303.44,48.04) and (294.28,65.52) .. (277.73,75.07) .. controls (261.18,84.62) and (241.47,83.82) .. (225.97,74.69) -- (251.74,30.06) -- cycle ; 
    \draw  [color={rgb, 255:red, 208; green, 2; blue, 27 },draw opacity=1 ] (303.28,30.06) .. controls (303.44,48.04) and (294.28,65.52) .. (277.73,75.07) .. controls (261.18,84.62) and (241.47,83.82) .. (225.97,74.69) ;  

    % [Add other shapes and lines here]  
%Shape: Arc [id:dp4734098021176463] 
\draw  [draw opacity=0] (226.22,75.23) .. controls (241.87,84.08) and (252.42,100.75) .. (252.42,119.86) .. controls (252.42,138.96) and (241.87,155.64) .. (226.22,164.49) -- (200.45,119.86) -- cycle ; \draw  [color={rgb, 255:red, 208; green, 2; blue, 27 }  ,draw opacity=1 ] (226.22,75.23) .. controls (241.87,84.08) and (252.42,100.75) .. (252.42,119.86) .. controls (252.42,138.96) and (241.87,155.64) .. (226.22,164.49) ;  
%Shape: Arc [id:dp645514717176745] 
\draw  [draw opacity=0] (226.22,164.49) .. controls (241.71,155.36) and (261.43,154.55) .. (277.97,164.11) .. controls (294.52,173.66) and (303.68,191.13) .. (303.52,209.12) -- (251.99,209.12) -- cycle ; \draw  [color={rgb, 255:red, 208; green, 2; blue, 27 }  ,draw opacity=1 ] (226.22,164.49) .. controls (241.71,155.36) and (261.43,154.55) .. (277.97,164.11) .. controls (294.52,173.66) and (303.68,191.13) .. (303.52,209.12) ;  
%Shape: Arc [id:dp11701104782381155] 
\draw  [draw opacity=0] (303.48,208.7) .. controls (303.32,190.72) and (312.48,173.24) .. (329.03,163.69) .. controls (345.58,154.14) and (365.29,154.94) .. (380.79,164.07) -- (355.02,208.7) -- cycle ; \draw  [color={rgb, 255:red, 208; green, 2; blue, 27 }  ,draw opacity=1 ] (303.48,208.7) .. controls (303.32,190.72) and (312.48,173.24) .. (329.03,163.69) .. controls (345.58,154.14) and (365.29,154.94) .. (380.79,164.07) ;  
%Shape: Arc [id:dp42523909762425394] 
\draw  [draw opacity=0] (380.32,164.34) .. controls (364.66,155.49) and (354.11,138.82) .. (354.11,119.71) .. controls (354.11,100.6) and (364.66,83.93) .. (380.32,75.08) -- (406.09,119.71) -- cycle ; \draw  [color={rgb, 255:red, 208; green, 2; blue, 27 }  ,draw opacity=1 ] (380.32,164.34) .. controls (364.66,155.49) and (354.11,138.82) .. (354.11,119.71) .. controls (354.11,100.6) and (364.66,83.93) .. (380.32,75.08) ;  
%Shape: Arc [id:dp005165870649070259] 
\draw  [draw opacity=0] (380.79,75.23) .. controls (365.29,84.36) and (345.58,85.16) .. (329.03,75.61) .. controls (312.48,66.05) and (303.32,48.58) .. (303.48,30.6) -- (355.02,30.6) -- cycle ; \draw  [color={rgb, 255:red, 208; green, 2; blue, 27 }  ,draw opacity=1 ] (380.79,75.23) .. controls (365.29,84.36) and (345.58,85.16) .. (329.03,75.61) .. controls (312.48,66.05) and (303.32,48.58) .. (303.48,30.6) ;  
%Shape: Arc [id:dp6306769773333902] 
\draw  [draw opacity=0] (380.32,164.42) .. controls (372.74,151.28) and (368.41,136.02) .. (368.41,119.75) .. controls (368.41,103.48) and (372.74,88.22) .. (380.32,75.08) -- (457.69,119.75) -- cycle ; \draw  [color={rgb, 255:red, 74; green, 144; blue, 226 }  ,draw opacity=1 ] (380.32,164.42) .. controls (372.74,151.28) and (368.41,136.02) .. (368.41,119.75) .. controls (368.41,103.48) and (372.74,88.22) .. (380.32,75.08) ;  
%Shape: Arc [id:dp5835679294171545] 
\draw  [draw opacity=0] (303.59,30.55) .. controls (296,43.69) and (284.95,55.07) .. (270.86,63.21) .. controls (256.77,71.34) and (241.39,75.22) .. (226.22,75.23) -- (226.22,-14.12) -- cycle ; \draw  [color={rgb, 255:red, 74; green, 144; blue, 226 }  ,draw opacity=1 ] (303.59,30.55) .. controls (296,43.69) and (284.95,55.07) .. (270.86,63.21) .. controls (256.77,71.34) and (241.39,75.22) .. (226.22,75.23) ;  
%Shape: Arc [id:dp9481135233666667] 
\draw  [draw opacity=0] (226.22,75.23) .. controls (233.79,88.37) and (238.13,103.63) .. (238.13,119.9) .. controls (238.13,136.17) and (233.79,151.43) .. (226.22,164.57) -- (148.84,119.9) -- cycle ; \draw  [color={rgb, 255:red, 74; green, 144; blue, 226 }  ,draw opacity=1 ] (226.22,75.23) .. controls (233.79,88.37) and (238.13,103.63) .. (238.13,119.9) .. controls (238.13,136.17) and (233.79,151.43) .. (226.22,164.57) ;  
%Shape: Arc [id:dp3795472731524421] 
\draw  [draw opacity=0] (226.22,164.49) .. controls (241.39,164.5) and (256.77,168.37) .. (270.86,176.51) .. controls (284.95,184.64) and (296,196.03) .. (303.59,209.16) -- (226.22,253.83) -- cycle ; \draw  [color={rgb, 255:red, 74; green, 144; blue, 226 }  ,draw opacity=1 ] (226.22,164.49) .. controls (241.39,164.5) and (256.77,168.37) .. (270.86,176.51) .. controls (284.95,184.64) and (296,196.03) .. (303.59,209.16) ;  
%Shape: Arc [id:dp27141372387002227] 
\draw  [draw opacity=0] (303.59,209.16) .. controls (311.19,196.03) and (322.23,184.64) .. (336.32,176.51) .. controls (350.41,168.37) and (365.79,164.5) .. (380.97,164.49) -- (380.97,253.83) -- cycle ; \draw  [color={rgb, 255:red, 74; green, 144; blue, 226 }  ,draw opacity=1 ] (303.59,209.16) .. controls (311.19,196.03) and (322.23,184.64) .. (336.32,176.51) .. controls (350.41,168.37) and (365.79,164.5) .. (380.97,164.49) ;  
%Shape: Arc [id:dp5999459642737679] 
\draw  [draw opacity=0] (380.97,75.23) .. controls (365.79,75.22) and (350.41,71.34) .. (336.32,63.21) .. controls (322.23,55.07) and (311.19,43.69) .. (303.59,30.55) -- (380.97,-14.12) -- cycle ; \draw  [color={rgb, 255:red, 74; green, 144; blue, 226 }  ,draw opacity=1 ] (380.97,75.23) .. controls (365.79,75.22) and (350.41,71.34) .. (336.32,63.21) .. controls (322.23,55.07) and (311.19,43.69) .. (303.59,30.55) ;  
%Shape: Triangle [id:dp09036539428567125] 
\draw  [color={rgb, 255:red, 74; green, 144; blue, 226 }  ,draw opacity=1 ][fill={rgb, 255:red, 74; green, 144; blue, 226 }  ,fill opacity=1 ] (368.21,115.82) -- (371.61,122.83) -- (364.81,122.83) -- cycle ;
%Shape: Triangle [id:dp9810490382422115] 
\draw  [color={rgb, 255:red, 74; green, 144; blue, 226 }  ,draw opacity=1 ][fill={rgb, 255:red, 74; green, 144; blue, 226 }  ,fill opacity=1 ] (340.24,174.57) -- (335.87,181.02) -- (332.47,175.13) -- cycle ;
%Shape: Triangle [id:dp7521381830298988] 
\draw  [color={rgb, 255:red, 74; green, 144; blue, 226 }  ,draw opacity=1 ][fill={rgb, 255:red, 74; green, 144; blue, 226 }  ,fill opacity=1 ] (273.24,178.08) -- (265.47,177.52) -- (268.87,171.63) -- cycle ;
%Shape: Triangle [id:dp8455932433580671] 
\draw  [color={rgb, 255:red, 74; green, 144; blue, 226 }  ,draw opacity=1 ][fill={rgb, 255:red, 74; green, 144; blue, 226 }  ,fill opacity=1 ] (238.21,123.83) -- (234.81,116.82) -- (241.61,116.82) -- cycle ;
%Shape: Triangle [id:dp6627757349792656] 
\draw  [color={rgb, 255:red, 74; green, 144; blue, 226 }  ,draw opacity=1 ][fill={rgb, 255:red, 74; green, 144; blue, 226 }  ,fill opacity=1 ] (267.17,65.08) -- (271.54,58.63) -- (274.94,64.52) -- cycle ;
%Shape: Triangle [id:dp4235196865070101] 
\draw  [color={rgb, 255:red, 74; green, 144; blue, 226 }  ,draw opacity=1 ][fill={rgb, 255:red, 74; green, 144; blue, 226 }  ,fill opacity=1 ] (334.17,61.57) -- (341.94,62.13) -- (338.54,68.02) -- cycle ;
%Shape: Triangle [id:dp9261845181470876] 
\draw  [color={rgb, 255:red, 208; green, 2; blue, 27 }  ,draw opacity=1 ][fill={rgb, 255:red, 208; green, 2; blue, 27 }  ,fill opacity=1 ] (354.04,115.82) -- (357.44,122.83) -- (350.64,122.83) -- cycle ;
%Shape: Triangle [id:dp3629810683741137] 
\draw  [color={rgb, 255:red, 208; green, 2; blue, 27 }  ,draw opacity=1 ][fill={rgb, 255:red, 208; green, 2; blue, 27 }  ,fill opacity=1 ] (333.08,161.57) -- (328.7,168.02) -- (325.3,162.13) -- cycle ;
%Shape: Triangle [id:dp5830678074585163] 
\draw  [color={rgb, 255:red, 208; green, 2; blue, 27 }  ,draw opacity=1 ][fill={rgb, 255:red, 208; green, 2; blue, 27 }  ,fill opacity=1 ] (280.24,165.08) -- (272.47,164.52) -- (275.87,158.63) -- cycle ;
%Shape: Triangle [id:dp6984163798110279] 
\draw  [color={rgb, 255:red, 208; green, 2; blue, 27 }  ,draw opacity=1 ][fill={rgb, 255:red, 208; green, 2; blue, 27 }  ,fill opacity=1 ] (252.21,123.83) -- (248.81,116.82) -- (255.61,116.82) -- cycle ;
%Shape: Triangle [id:dp8152003338255356] 
\draw  [color={rgb, 255:red, 208; green, 2; blue, 27 }  ,draw opacity=1 ][fill={rgb, 255:red, 208; green, 2; blue, 27 }  ,fill opacity=1 ] (275.17,76.08) -- (279.54,69.63) -- (282.94,75.52) -- cycle ;
%Shape: Triangle [id:dp8670115449755793] 
\draw  [color={rgb, 255:red, 208; green, 2; blue, 27 }  ,draw opacity=1 ][fill={rgb, 255:red, 208; green, 2; blue, 27 }  ,fill opacity=1 ] (327.17,74.57) -- (334.94,75.13) -- (331.54,81.02) -- cycle ;

% Text Node
\draw (280.21,83.73) node [anchor=north west][inner sep=0.75pt]  [font=\small,color={rgb, 255:red, 208; green, 2; blue, 27 }  ,opacity=1 ]  {$\gamma _{1}$};
% Text Node
\draw (259,108.38) node [anchor=north west][inner sep=0.75pt]  [font=\small,color={rgb, 255:red, 208; green, 2; blue, 27 }  ,opacity=1 ]  {$\gamma _{2}$};
% Text Node
\draw (279,141.38) node [anchor=north west][inner sep=0.75pt]  [font=\small,color={rgb, 255:red, 208; green, 2; blue, 27 }  ,opacity=1 ]  {$\gamma _{3}$};
% Text Node
\draw (314,142.38) node [anchor=north west][inner sep=0.75pt]  [font=\small,color={rgb, 255:red, 208; green, 2; blue, 27 }  ,opacity=1 ]  {$\gamma _{4}$};
% Text Node
\draw (331,109.38) node [anchor=north west][inner sep=0.75pt]  [font=\small,color={rgb, 255:red, 208; green, 2; blue, 27 }  ,opacity=1 ]  {$\gamma _{5}$};
% Text Node
\draw (315,83.38) node [anchor=north west][inner sep=0.75pt]  [font=\small,color={rgb, 255:red, 208; green, 2; blue, 27 }  ,opacity=1 ]  {$\gamma _{6}$};
% Text Node
\draw (299,108.38) node [anchor=north west][inner sep=0.75pt]  [font=\large]  {$\Gamma $};
% Text Node
\draw (260.27,45.62) node [anchor=north west][inner sep=0.75pt]  [font=\scriptsize]  {$\Gamma _{1}$};
% Text Node
\draw (220.97,113.51) node [anchor=north west][inner sep=0.75pt]  [font=\scriptsize]  {$\Gamma _{2}$};
% Text Node
\draw (259,182.38) node [anchor=north west][inner sep=0.75pt]  [font=\scriptsize]  {$\Gamma _{3}$};
% Text Node
\draw (339.21,180.73) node [anchor=north west][inner sep=0.75pt]  [font=\scriptsize]  {$\Gamma _{4}$};
% Text Node
\draw (376,115.38) node [anchor=north west][inner sep=0.75pt]  [font=\scriptsize]  {$\Gamma _{5}$};
% Text Node
\draw (337,46.38) node [anchor=north west][inner sep=0.75pt]  [font=\scriptsize]  {$\Gamma _{6}$};

\end{tikzpicture}
    }
    \caption{Representation of the map $B$, where $\Gamma$ is an ideal hexagon bounded by the geodesics in red given by $\gamma_{1}, \gamma_{2}, \gamma_{3}, \gamma_{4}, \gamma_{5}$ and $\gamma_{6}$. The blue lines represent the translations given by the map $B$ to the regions $\Gamma_{1}, \Gamma_{2}, \Gamma_{3}, \Gamma_{4}, \Gamma_{5}$ and $\Gamma_{6}$.}
    \label{fig:enter-label}
\end{figure}

%--------------END TEST

Actually, in \cite{CP} they construct this asking $o \in \overline{\mathbb{L}}$, satisfying $[n^{2}] < o \leqslant \sup \mathbb{P}$, since the case $o = [n^{2}]$ is not necessary for the paper. However, the same construction holds for $o = [n^{2}]$.

Let $\overline{\Gamma}$ and $\overline{\Gamma_{i}}$ be the closures of $\Gamma$ and $\Gamma_{i}$ in $\hat{\mathbb{H}}^{2}$, respectively. We have that $\overline{\Gamma}$ and $\overline{\Gamma_{i}}$ are invariant by $\hat{B}$ and $o(\hat{B}|_{\overline{\Gamma}}) = o$, since $\hat{\mathbb{H}}^{2} = \overline{\Gamma} \cup \bigcup_{i = 1}^{L} \overline{\Gamma_{i}}$, for each $i \in \{1,\dots,L\}$, $o(\hat{B}|_{\overline{\Gamma}_{i}}) = [n]$ (because $\hat{B}|_{\overline{\Gamma}_{i}}$ is the extension of a translation to the one-point compactification of $\Gamma_{i}$) and $o(\hat{B}) = o$.

Gluing some polygons, we get a small modification for this construction that will be useful to us:

\begin{lema}Let $L \geqslant 2$ and $o$ an element in $\overline{\mathbb{L}}$, satisfying $[n^{2}] \leqslant  o \leqslant \sup \mathbb{P}$, there exists a homeomorphism $B_{L,o}: \mathbb{H}^{2} \rightarrow \mathbb{H}^{2}$ and an ideal closed polygon $\Gamma$ with $L$ sides, satisfying:

\begin{enumerate}
    \item If $\hat{B}_{L,o}: \hat{\mathbb{H}}^{2} \rightarrow \hat{\mathbb{H}}^{2}$ is the extension of $B_{L,o}$ to the one-point compactification of $\mathbb{H}^{2}$, then $o(\hat{B}_{L,o}) = o(\hat{B}_{L,o}, \overline{\Gamma}) = o$.
    \item $\Omega(\hat{B}_{L,o}) = \{\infty\}$, where $\{\infty\} =\hat{\mathbb{H}}^{2} - \mathbb{H}^{2}$.
    \item $\Gamma$ and each $\Gamma_{i}$ are invariant sets, where $\Gamma_{1},\dots, \Gamma_{L}$ are the closure of the connected components of $\mathbb{H}^{2} - \Gamma$ in $\mathbb{H}^{2}$.
    \item For each $i \in \{1,\dots,L\}$, the map $B_{L,o}|_{\Gamma_{i}}$ is a translation, and all directions of translations are oriented counter-clockwise.
    \item There exists an invariant closed set $C \subseteq \Gamma$ by the map  $B_{L,o}$, with closure $\overline{C}$ in $\hat{\mathbb{H}}^{2}$ such that $\overline{C}$ does not intersect the sides of $\Gamma$ and $o(\hat{B}_{L,o}|_{\overline{C}}) = o$.
\end{enumerate}
    
\end{lema}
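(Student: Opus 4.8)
The plan is to keep the homeomorphism essentially equal to the one produced by the construction recalled above --- which already supplies properties $2$, $3$, $4$ together with $\Omega(\hat B)=\{\infty\}$ --- and to obtain the genuinely new property $5$ by a gluing that forces the entropy-carrying orbits to run through the \emph{interior} of $\Gamma$ rather than across its sides. First I would invoke the construction of \cite{CP} as a building block: a homeomorphism $B_{0}$ of $\mathbb{H}^{2}$ together with an ideal polygon $\Delta_{0}$ such that $o(\hat B_{0})=o(\hat B_{0},\overline{\Delta_{0}})=o$, $\Omega(\hat B_{0})=\{\infty\}$, with invariant semiplanes on which $B_{0}$ acts by counter-clockwise translations. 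Here $\overline{\Delta_{0}}$ is a compact invariant set carrying the full order of growth $o$.

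Next I would realize the gluing. Fix an ideal $L$-gon $\Gamma$ with sides $\gamma_{1},\dots,\gamma_{L}$ and semiplanes $\Gamma_{1},\dots,\Gamma_{L}$, and nest inside its interior a second ideal polygon $\Delta$, chosen so that the closed region $\Delta$ is disjoint from every side $\gamma_{i}$ (so that in $\hat{\mathbb{H}}^{2}$ one has $\overline{\Delta}\cap\gamma_{i}=\emptyset$, since $\gamma_{i}\subseteq\mathbb{H}^{2}$ and $\infty\notin\gamma_{i}$). On $\Delta$ and its relative semiplanes I install, through a hyperbolic homeomorphism conjugating $\Delta$ to $\Delta_{0}$, a copy of $B_{0}$; on each outer semiplane $\Gamma_{i}$ I let $B_{L,o}$ act by a counter-clockwise translation; and on the collar regions lying between the configuration of $\Delta$ and the sides $\gamma_{i}$ I interpolate so that $B_{L,o}$ is a homeomorphism of $\mathbb{H}^{2}$ draining every collar point toward $\infty$ in both time directions. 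This yields properties $3$ and $4$ directly, and property $2$ provided the interpolation introduces no recurrence. I then take $C$ to be the copy of $\Delta$ itself, which is invariant and closed, with $\overline{C}=\Delta\cup\{\infty\}$ in $\hat{\mathbb{H}}^{2}$.

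It remains to verify the entropy statements. Since $C=\Delta$ is strictly nested in $\Gamma$ and disjoint from each $\gamma_{i}$, the closure $\overline{C}$ does not meet the sides of $\Gamma$, which is the geometric half of property $5$. For the value, the upper bound $o(\hat B_{L,o}|_{\overline{C}})=o(\hat B_{L,o},\overline{C})\leqslant o(\hat B_{L,o})=o$ is immediate from \textbf{Proposition \ref{entropyandsubspace}} and the definition of $o(\hat B_{L,o})$ as a supremum over compacta. For the reverse inequality I would transport the order of growth from the building block: on the core, $\hat B_{L,o}|_{\overline{C}}$ is conjugate, via the gluing homeomorphism, to $\hat B_{0}|_{\overline{\Delta_{0}}}$, so invariance of generalized entropy under conjugacy gives $o(\hat B_{L,o}|_{\overline{C}})\geqslant o$; equivalently, the coding families of wandering compacta certifying $o$ for $B_{0}$ may be placed inside $C$, whence $o(\hat B_{L,o}|_{\overline{C}})\geqslant o$ by \textbf{Lemma \ref{lema223}}. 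Thus $o(\hat B_{L,o}|_{\overline{C}})=o$, which is property $5$; and because $C\subseteq\Gamma$ this also forces $o(\hat B_{L,o},\overline{\Gamma})\geqslant o$, while $\hat{\mathbb{H}}^{2}=\overline{\Gamma}\cup\bigcup_{i}\overline{\Gamma_{i}}$ with $o(\hat B_{L,o}|_{\overline{\Gamma_{i}}})=[n]\leqslant o$ gives, by \textbf{Proposition \ref{subspaces}}, $o(\hat B_{L,o})=o(\hat B_{L,o},\overline{\Gamma})=o$, which is property $1$.

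The main obstacle is the gluing itself: one must produce a single homeomorphism of $\mathbb{H}^{2}$ that simultaneously (i) restricts to a conjugate copy of the building block on the nested core $\Delta$, (ii) restricts to counter-clockwise translations on the outer semiplanes $\Gamma_{i}$, and (iii) matches continuously along all the intermediate geodesics while sending every non-core point to $\infty$ in both directions, so that no spurious non-wandering point is created at the interfaces and $\Omega(\hat B_{L,o})$ remains $\{\infty\}$. The delicate point is the behavior at $\infty$: the building block's own compactification point must be identified with the ambient $\infty$, so that $\overline{C}$ is genuinely compact and disjoint from the $\gamma_{i}$, and the collar dynamics must carry orbits out to $\infty$ without trapping them. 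Once this homeomorphism is exhibited, the entropy bookkeeping above is routine.
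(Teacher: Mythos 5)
Your route is genuinely different from the paper's, and it has a real gap at its center. You nest a copy of the building block $\Delta$ strictly inside $\Gamma$ and then propose to ``interpolate'' on the collar between the sides of $\Delta$'s configuration and the sides $\gamma_{i}$ of $\Gamma$; but that interpolation is the entire content of the construction, not a routine afterthought, and you never exhibit it. One must produce a homeomorphism of each collar region matching two different prescribed translations on its two boundary geodesics, with no fixed points, no non-wandering points, and --- crucially for property 1 --- with controlled generalized entropy. This last point is where your bookkeeping actually breaks: from $C\subseteq\Gamma$ you only get $o(\hat{B}_{L,o},\overline{\Gamma})\geqslant o$, and \textbf{Proposition \ref{subspaces}} applied to $\hat{\mathbb{H}}^{2}=\overline{\Gamma}\cup\bigcup_{i}\overline{\Gamma_{i}}$ then gives $o(\hat{B}_{L,o})=o(\hat{B}_{L,o},\overline{\Gamma})$, but \emph{not} that this value equals $o$. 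The missing upper bound $o(\hat{B}_{L,o},\overline{\Gamma})\leqslant o$ requires decomposing $\overline{\Gamma}$ into $\overline{\Delta}$ plus the collar and bounding the collar's entropy by $o$; since the collar dynamics is unspecified, this cannot be checked. (A strip with height-dependent translation speeds typically already carries entropy $[n^{2}]$, as in Brouwer's example, so the bound is not vacuous --- it happens to be compatible with $o\geqslant[n^{2}]$, but that needs to be argued for whatever interpolation you choose.)

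The paper avoids both problems by never interpolating. It takes three complete block homeomorphisms $B_{1},B_{2},B_{3}$ with polygons $\Gamma^{1}$ (two sides), $\Gamma^{2}$, $\Gamma^{3}$ ($L$ sides), arranges the middle block's sides to be translated \emph{clockwise} while the outer two are counter-clockwise, and glues whole blocks along shared boundary geodesics; since each $B_{i}$ is already a translation on those geodesics and the orientations are opposite on the two sides of each identified geodesic, the gluing is automatically equivariant and the resulting map is a homeomorphism of $\mathbb{H}^{2}$ with no interface region left over. The polygon $\Gamma$ is then the union of $\Gamma^{1},\Gamma^{2},\Gamma^{3}$ and the interior semiplanes of $\Gamma^{2}$, so $\overline{\Gamma}$ decomposes into finitely many invariant pieces each of known entropy ($\leqslant o$, $=o$, $\leqslant o$, or $[n]$), and \textbf{Proposition \ref{subspaces}} yields $o(\hat{B}_{L,o},\overline{\Gamma})=o$ exactly; the set $C=\Gamma^{2}$ is buried in the interior for free. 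If you want to salvage your version, you must either make the collar an explicit product of translations and prove its entropy is at most $[n^{2}]\leqslant o$, or restructure the gluing so that, as in the paper, the identified geodesics carry matching translations and no collar exists.
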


\begin{proof}Consider, as described above, three homeomorphisms $B_{i}: \mathbb{H}^{2} \rightarrow \mathbb{H}^{2}$, $i\in \{1,2,3\}$ and finite polygons $\Gamma^{i}$ such that $\Gamma^{i}$ is invariant by $B_{i}$, $o(\hat{B}_{1})\leqslant o$, $o(\hat{B}_{2}) = o$, $o(\hat{B}_{3}) \leqslant o$, $\Omega(\hat{B}_{1}) = \Omega(\hat{B}_{2}) = \Omega((\hat{B_{3}})) = \{\infty\}$, $\Gamma^{1}$ has $2$ sides, $\Gamma^{3}$ has $L$ sides, $B_{i}$ is a translation when restricted to the closure of the connected components of $\mathbb{H}^{2} - \Gamma^{i}$, the sides of $\Gamma^{2}$ are oriented clockwise by $B_{2}$ and the sides of $\Gamma^{1}$ and $\Gamma^{3}$ are oriented counter-clockwise by $B_{1}$ and $B_{3}$, respectively. Let $\Gamma^{i}_{j}$ be the closure of the $j$-th connected components of $\mathbb{H}^{2} - \Gamma^{i}$, counted counter-clockwise

Then we get a new space gluing the second side of $\Gamma^{1}$ to the first side of $\Gamma^{2}$ and the last side of $\Gamma^{2}$ to the first side of $\Gamma^{3}$. Also, if there is a side of some of these polygons that is not glued to another side, then we glue to it its respective $\Gamma^{i}_{j}$ (these are the sets $\Gamma^{1}_{1}, \Gamma^{2}_{2}, \dots,\Gamma^{2}_{L'-1}, \Gamma^{3}_{2}, \dots,\Gamma^{3}_{L}$, where $L'$ is the number of sides of $\Gamma^{2}$). The result must be a space homeomorphic to $\mathbb{H}^{2}$ (so we identify it with $\mathbb{H}^{2}$). Let $\Gamma$ be the polygon given by union of the classes of $\Gamma^{1}, \Gamma^{2}, \Gamma^{3}, \Gamma^{2}_{2}, \dots,\Gamma^{2}_{L'-1}$. It is illustrated by the figure below:

%TESTE2---------------------------
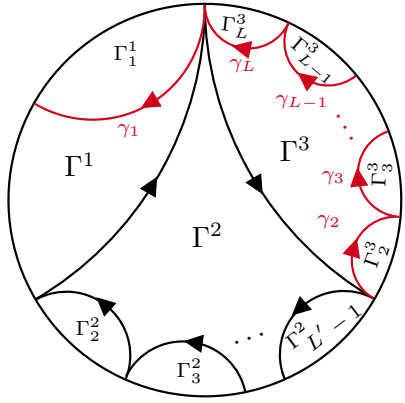
\begin{figure}[H]
    \centering
    \resizebox{9.45cm}{!}{

\tikzset{every picture/.style={line width=0.75pt}} %set default line width to 0.75pt        

\begin{tikzpicture}[x=0.75pt,y=0.75pt,yscale=-1,xscale=1]
%uncomment if require: \path (0,221); %set diagram left start at 0, and has height of 221

%Shape: Circle [id:dp6428053646162815] 
\draw   (253.04,156.91) .. controls (228.12,114.29) and (242.47,59.55) .. (285.09,34.64) .. controls (327.71,9.73) and (382.45,24.08) .. (407.36,66.69) .. controls (432.27,109.31) and (417.92,164.05) .. (375.31,188.96) .. controls (332.69,213.88) and (277.95,199.52) .. (253.04,156.91) -- cycle ;
%Shape: Arc [id:dp9359108682779205] 
\draw  [draw opacity=0] (407.49,156.91) .. controls (384.85,143.68) and (365.22,124.52) .. (351.14,100.14) .. controls (337.06,75.76) and (330.29,49.17) .. (330.15,22.95) -- (484.83,22.95) -- cycle ; \draw   (407.49,156.91) .. controls (384.85,143.68) and (365.22,124.52) .. (351.14,100.14) .. controls (337.06,75.76) and (330.29,49.17) .. (330.15,22.95) ;  
%Shape: Arc [id:dp9757014153847934] 
\draw  [draw opacity=0] (330.26,23.14) .. controls (330.12,49.37) and (323.35,75.95) .. (309.28,100.33) .. controls (295.2,124.72) and (275.56,143.87) .. (252.92,157.1) -- (175.58,23.14) -- cycle ; \draw   (330.26,23.14) .. controls (330.12,49.37) and (323.35,75.95) .. (309.28,100.33) .. controls (295.2,124.72) and (275.56,143.87) .. (252.92,157.1) ;  
%Shape: Arc [id:dp07687890808681663] 
\draw  [draw opacity=0] (330.15,23.34) .. controls (330.31,41.32) and (321.15,58.8) .. (304.6,68.35) .. controls (288.06,77.9) and (268.34,77.1) .. (252.85,67.97) -- (278.62,23.34) -- cycle ; \draw  [color={rgb, 255:red, 208; green, 2; blue, 27 }  ,draw opacity=1 ] (330.15,23.34) .. controls (330.31,41.32) and (321.15,58.8) .. (304.6,68.35) .. controls (288.06,77.9) and (268.34,77.1) .. (252.85,67.97) ;  
%Shape: Arc [id:dp029803505997744506] 
\draw  [draw opacity=0] (253.09,157.25) .. controls (263.54,151.31) and (277.11,152.46) .. (286.63,161.02) .. controls (296.14,169.59) and (298.7,182.97) .. (293.88,193.98) -- (267.52,182.24) -- cycle ; \draw   (253.09,157.25) .. controls (263.54,151.31) and (277.11,152.46) .. (286.63,161.02) .. controls (296.14,169.59) and (298.7,182.97) .. (293.88,193.98) ;  
%Shape: Arc [id:dp6234728747886376] 
\draw  [draw opacity=0] (294.34,192.1) .. controls (299.68,181.34) and (311.57,174.69) .. (324.25,176.48) .. controls (336.92,178.26) and (346.53,187.92) .. (348.69,199.74) -- (320.27,204.75) -- cycle ; \draw   (294.34,192.1) .. controls (299.68,181.34) and (311.57,174.69) .. (324.25,176.48) .. controls (336.92,178.26) and (346.53,187.92) .. (348.69,199.74) ;  
%Shape: Arc [id:dp1979914505882734] 
\draw  [draw opacity=0] (366.7,193.64) .. controls (361.89,182.63) and (364.45,169.24) .. (373.96,160.68) .. controls (383.47,152.11) and (397.05,150.97) .. (407.49,156.91) -- (393.07,181.9) -- cycle ; \draw   (366.7,193.64) .. controls (361.89,182.63) and (364.45,169.24) .. (373.96,160.68) .. controls (383.47,152.11) and (397.05,150.97) .. (407.49,156.91) ;  
%Shape: Arc [id:dp9569536865724785] 
\draw  [draw opacity=0] (407.19,156.74) .. controls (399.22,152.19) and (395.16,142.62) .. (397.94,133.51) .. controls (400.72,124.4) and (409.44,118.74) .. (418.59,119.43) -- (417.15,139.38) -- cycle ; \draw  [color={rgb, 255:red, 208; green, 2; blue, 27 }  ,draw opacity=1 ] (407.19,156.74) .. controls (399.22,152.19) and (395.16,142.62) .. (397.94,133.51) .. controls (400.72,124.4) and (409.44,118.74) .. (418.59,119.43) ;  
%Shape: Arc [id:dp348132288442526] 
\draw  [draw opacity=0] (417.59,119.33) .. controls (408.48,118.26) and (401,111.04) .. (400,101.57) .. controls (399.01,92.1) and (404.82,83.47) .. (413.51,80.53) -- (419.98,99.47) -- cycle ; \draw  [color={rgb, 255:red, 208; green, 2; blue, 27 }  ,draw opacity=1 ] (417.59,119.33) .. controls (408.48,118.26) and (401,111.04) .. (400,101.57) .. controls (399.01,92.1) and (404.82,83.47) .. (413.51,80.53) ;  
%Shape: Arc [id:dp42867650566951654] 
\draw  [draw opacity=0] (368.31,31.06) .. controls (364.48,39.4) and (355.3,44.28) .. (345.98,42.3) .. controls (336.67,40.32) and (330.26,32.12) .. (330.15,22.95) -- (350.16,22.65) -- cycle ; \draw  [color={rgb, 255:red, 208; green, 2; blue, 27 }  ,draw opacity=1 ] (368.31,31.06) .. controls (364.48,39.4) and (355.3,44.28) .. (345.98,42.3) .. controls (336.67,40.32) and (330.26,32.12) .. (330.15,22.95) ;  
%Shape: Arc [id:dp5912234354636734] 
\draw  [draw opacity=0] (399.06,55.08) .. controls (391.96,60.89) and (381.56,61.25) .. (374.06,55.39) .. controls (366.56,49.53) and (364.39,39.36) .. (368.31,31.06) -- (386.43,39.56) -- cycle ; \draw  [color={rgb, 255:red, 208; green, 2; blue, 27 }  ,draw opacity=1 ] (399.06,55.08) .. controls (391.96,60.89) and (381.56,61.25) .. (374.06,55.39) .. controls (366.56,49.53) and (364.39,39.36) .. (368.31,31.06) ;  
%Shape: Triangle [id:dp10567544489847702] 
\draw  [color={rgb, 255:red, 208; green, 2; blue, 27 }  ,draw opacity=1 ][fill={rgb, 255:red, 208; green, 2; blue, 27 }  ,fill opacity=1 ] (399.66,98.1) -- (403.77,104.72) -- (397.01,105.43) -- cycle ;
%Shape: Triangle [id:dp380819550327526] 
\draw  [color={rgb, 255:red, 208; green, 2; blue, 27 }  ,draw opacity=1 ][fill={rgb, 255:red, 208; green, 2; blue, 27 }  ,fill opacity=1 ] (373.26,54.43) -- (380.88,56.07) -- (376.7,61.43) -- cycle ;
%Shape: Triangle [id:dp5902110251641153] 
\draw  [color={rgb, 255:red, 208; green, 2; blue, 27 }  ,draw opacity=1 ][fill={rgb, 255:red, 208; green, 2; blue, 27 }  ,fill opacity=1 ] (343.6,41.86) -- (351.16,39.99) -- (349.75,46.64) -- cycle ;
%Shape: Triangle [id:dp31628509173792696] 
\draw  [color={rgb, 255:red, 208; green, 2; blue, 27 }  ,draw opacity=1 ][fill={rgb, 255:red, 208; green, 2; blue, 27 }  ,fill opacity=1 ] (399.05,130.24) -- (400.25,137.94) -- (393.75,135.95) -- cycle ;
%Shape: Triangle [id:dp6696340351143111] 
\draw  [color={rgb, 255:red, 208; green, 2; blue, 27 }  ,draw opacity=1 ][fill={rgb, 255:red, 208; green, 2; blue, 27 }  ,fill opacity=1 ] (301.99,69.34) -- (306.36,62.89) -- (309.76,68.78) -- cycle ;
%Shape: Triangle [id:dp6184645173032147] 
\draw  [fill={rgb, 255:red, 0; green, 0; blue, 0 }  ,fill opacity=1 ] (321.73,176.2) -- (329.14,173.81) -- (328.2,180.55) -- cycle ;
%Shape: Triangle [id:dp6827799808018228] 
\draw  [fill={rgb, 255:red, 0; green, 0; blue, 0 }  ,fill opacity=1 ] (370.59,163.61) -- (373.53,156.39) -- (378.08,161.44) -- cycle ;
%Shape: Triangle [id:dp7095977688171655] 
\draw  [fill={rgb, 255:red, 0; green, 0; blue, 0 }  ,fill opacity=1 ] (281.59,156.8) -- (289.08,158.97) -- (284.53,164.02) -- cycle ;
%Shape: Triangle [id:dp28129819938065037] 
\draw  [fill={rgb, 255:red, 0; green, 0; blue, 0 }  ,fill opacity=1 ] (355.95,108.19) -- (349.5,103.81) -- (355.39,100.41) -- cycle ;
%Shape: Triangle [id:dp4181646810169629] 
\draw  [fill={rgb, 255:red, 0; green, 0; blue, 0 }  ,fill opacity=1 ] (307.95,102.11) -- (307.39,109.89) -- (301.5,106.49) -- cycle ;

% Text Node
\draw (396.69,87.65) node [anchor=north west][inner sep=0.75pt]  [color={rgb, 255:red, 208; green, 2; blue, 27 }  ,opacity=1 ,rotate=-240.15]  {$\dots $};
% Text Node
\draw (340.93,175.48) node [anchor=north west][inner sep=0.75pt]  [rotate=-342.22]  {$\dots $};
% Text Node
\draw (264.82,88.64) node [anchor=north west][inner sep=0.75pt]  [font=\normalsize]  {$\Gamma ^{1}$};
% Text Node
\draw (322.82,120.64) node [anchor=north west][inner sep=0.75pt]  [font=\normalsize]  {$\Gamma ^{2}$};
% Text Node
\draw (362.82,81.64) node [anchor=north west][inner sep=0.75pt]  [font=\normalsize]  {$\Gamma ^{3}$};
% Text Node
\draw (288.79,75.83) node [anchor=north west][inner sep=0.75pt]  [font=\scriptsize,color={rgb, 255:red, 208; green, 2; blue, 27 }  ,opacity=1 ]  {$\gamma _{1}$};
% Text Node
\draw (379.79,117.33) node [anchor=north west][inner sep=0.75pt]  [font=\scriptsize,color={rgb, 255:red, 208; green, 2; blue, 27 }  ,opacity=1 ]  {$\gamma _{2}$};
% Text Node
\draw (381.79,97.33) node [anchor=north west][inner sep=0.75pt]  [font=\scriptsize,color={rgb, 255:red, 208; green, 2; blue, 27 }  ,opacity=1 ]  {$\gamma _{3}$};
% Text Node
\draw (360.79,62.33) node [anchor=north west][inner sep=0.75pt]  [font=\scriptsize,color={rgb, 255:red, 208; green, 2; blue, 27 }  ,opacity=1 ]  {$\gamma _{L-1}$};
% Text Node
\draw (339.79,46.33) node [anchor=north west][inner sep=0.75pt]  [font=\scriptsize,color={rgb, 255:red, 208; green, 2; blue, 27 }  ,opacity=1 ]  {$\gamma _{L}$};
% Text Node
\draw (287.09,38.04) node [anchor=north west][inner sep=0.75pt]  [font=\scriptsize]  {$\Gamma _{1}^{1}$};
% Text Node
\draw (269.82,163.64) node [anchor=north west][inner sep=0.75pt]  [font=\scriptsize]  {$\Gamma _{2}^{2}$};
% Text Node
\draw (315.82,182.64) node [anchor=north west][inner sep=0.75pt]  [font=\scriptsize]  {$\Gamma _{3}^{2}$};
% Text Node
\draw (362.88,173.11) node [anchor=north west][inner sep=0.75pt]  [font=\scriptsize,rotate=-322.62]  {$\Gamma _{{\displaystyle L'-1}}^{2}$};
% Text Node
\draw (398.62,141.17) node [anchor=north west][inner sep=0.75pt]  [font=\scriptsize,rotate=-289.14]  {$\Gamma _{2}^{3}$};
% Text Node
\draw (402.53,106.93) node [anchor=north west][inner sep=0.75pt]  [font=\scriptsize,rotate=-272.35]  {$\Gamma _{3}^{3}$};
% Text Node
\draw (374.14,32.99) node [anchor=north west][inner sep=0.75pt]  [font=\scriptsize,rotate=-40.41]  {$\Gamma _{L-1}^{3}$};
% Text Node
\draw (336.96,24.35) node [anchor=north west][inner sep=0.75pt]  [font=\scriptsize,rotate=-6.68]  {$\Gamma _{L}^{3}$};

\end{tikzpicture}
    
    }
    \caption{Gluing of $\Gamma^{1}, \Gamma^{2}, \Gamma^{3},  \Gamma^{1}_{1}, \Gamma^{2}_{2}, \dots,\Gamma^{2}_{L'-1}, \Gamma^{3}_{2}, \dots,\Gamma^{3}_{L}$. In red we mark the sides of the polygon $\Gamma$.}
    \label{fig:enter-label2}
\end{figure}

%--------------------------------

We do all of these glueings, agreeing with the maps $B_{1}$, $B_{2}$, and $B_{3}$ (it is possible since these maps are translations on the sides of the polygons). So these maps induce a homeomorphism $B_{L,o}: \mathbb{H}^{2} \rightarrow \mathbb{H}^{2}$. So $\Gamma$ is an invariant set for the map $B_{L,o}$ and, by \textbf{Proposition \ref{subspaces}}, we get

$$\begin{array}{rcl}
    o(\hat{B}_{L,o}|_{\overline{\Gamma}}) & =& sup \{o(\hat{B}_{L,o}, \overline{\Gamma}^{i}), o(\hat{B}_{L,o},  \overline{\Gamma}^{2}_{j}): i \in \{1,2,3\}, j \in \{2,\dots,L'-1\}\}\\
     &= & \sup \{o(\hat{B}_{i}, \overline{\Gamma}^{i}), o(\hat{B}_{2},  \overline{\Gamma}^{2}_{j}): i \in \{1,2,3\}, j \in \{2,\dots,L'-1\}\}\\
     & = & \sup \{o(\hat{B}_{1} \overline{\Gamma}^{1}),o, o(\hat{B}_{3}, \overline{\Gamma}^{3}), [n],\dots,[n]\}\\
     & = & o,
\end{array}$$
since $o(\hat{B}_{1}, \overline{\Gamma}^{1}) \leqslant o$, $o(\hat{B}_{3}, \overline{\Gamma}^{3}) \leqslant o$ and $[n] < [n^{2}] \leqslant o$. Analogously, we have that $o(\hat{B}_{L,o}) = o$. Take $C = \Gamma^{2}$, and we have that $\Gamma^{2}$ does not intersect the sides of $\Gamma$, it is invariant by $\hat{B}_{L,o}$ and $o(\hat{B}_{L,o}|_{\overline{C}}) = o$. 
\end{proof}

So, we can establish the following:

\begin{prop}\label{singlenonwanderingonnonorientablesurface}Let $g \geqslant 1$, $P_{g}$ the non-orientable surface of genus $g$ and $o \in \overline{\mathbb{L}}$, with $[n^{2}] \leqslant o \leqslant \sup \mathbb{P}$. Then there exists a homeomorphism $f: P_{g} \rightarrow P_{g}$ such that $\#\Omega(f) = 1$ and $o(f) = o$. 
\end{prop}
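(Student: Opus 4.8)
The plan is to realize $P_g$ as a quotient of the closed polygon $\overline{\Gamma}$ produced by the previous lemma, gluing its sides in pairs according to the non-orientable edge word $a_1a_1a_2a_2\cdots a_ga_g$, and to let the polygon dynamics descend to the quotient. Concretely, I would first apply the previous lemma with $L=2g$ (admissible since $g\geqslant 1$) to obtain a homeomorphism $B:=B_{2g,o}:\mathbb{H}^2\to\mathbb{H}^2$ together with an ideal $2g$-gon $\Gamma$ with sides $\gamma_1,\dots,\gamma_{2g}$, such that $\Gamma$ is invariant, $B$ restricts to a counter-clockwise translation on each side $\gamma_i$, the non-wandering set of $\hat{B}|_{\overline{\Gamma}}$ is $\{\infty\}$, $o(\hat{B}|_{\overline{\Gamma}})=o$, and there is an invariant core $\overline{C}\subseteq\Gamma$ with $o(\hat{B}|_{\overline{C}})=o$ whose closure is disjoint from the sides of $\Gamma$. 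Only $\overline{\Gamma}$ and its boundary translations will be used; the flaps served only to guarantee that $B$ restricts to a translation on each $\gamma_i$.

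Next I would pair the sides following the word, say $\gamma_{2k-1}$ with $\gamma_{2k}$ for $1\leqslant k\leqslant g$, via homeomorphisms $\phi_k:\gamma_{2k-1}\to\gamma_{2k}$ chosen to intertwine the two boundary translations, $\phi_k\circ B=B\circ\phi_k$. Reading each $\gamma_i\cong\mathbb{R}$ in the counter-clockwise direction of $\partial\Gamma$, both translations become $x\mapsto x+1$, and matching these directions gives exactly the identification that is orientation-reversing as a surface gluing, hence contributes a cross-cap, while being the translation-intertwining one. This is precisely where property $4$ of the previous lemma (all translations counter-clockwise) is used: it makes the equivariant side-pairing and the cross-cap pattern simultaneously realizable. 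Each $\phi_k$ fixes the single compactification point $\infty$ (the common ideal endpoint of all sides), so $\Sigma=\overline{\Gamma}/\!\!\sim$ is compact Hausdorff and all ideal vertices collapse to $\infty$. By the classification of surfaces, the word $a_1a_1\cdots a_ga_g$ on a $2g$-gon yields the non-orientable surface of genus $g$, so $\Sigma\cong P_g$ with $\infty$ the single vertex class (an Euler-characteristic count gives $\chi(\Sigma)=1-g+1=2-g$). Since the identifications are $B$-equivariant, $\hat{B}|_{\overline{\Gamma}}$ descends to a homeomorphism $f:\Sigma\to\Sigma$.

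It then remains to verify $\#\Omega(f)=1$ and $o(f)=o$. As $\overline{\Gamma}$ is invariant and $\Omega(\hat{B}_{2g,o})=\{\infty\}$, every point of $\overline{\Gamma}$ other than $\infty$ is wandering with orbit escaping to $\infty$; since the quotient map $q:\overline{\Gamma}\to\Sigma$ is a local homeomorphism away from $\infty$ and only identifies points on the glued sides, orbits in $\Sigma$ still escape to $\infty$ and no new recurrence appears, giving $\Omega(f)=\{\infty\}$. For the entropy, $q$ is a continuous surjection between Hausdorff compact spaces, hence a uniformly continuous compact-covering semi-conjugacy from $\hat{B}|_{\overline{\Gamma}}$ to $f$, so \textbf{Proposition \ref{semiconjporrecobrimentocompacto}} yields $o(f)\leqslant o(\hat{B}|_{\overline{\Gamma}})=o$. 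For the reverse inequality, $\overline{C}$ avoids the glued sides, so $q|_{\overline{C}}$ is injective and thus a conjugacy of $\hat{B}|_{\overline{C}}$ onto its $f$-invariant image $q(\overline{C})$; by invariance of generalized entropy under conjugacy together with \textbf{Proposition \ref{entropyandsubspace}}, $o(f)\geqslant o(f,q(\overline{C}))=o(\hat{B}|_{\overline{C}})=o$. Hence $o(f)=o$.

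The main obstacle I anticipate is the second step: arranging the side-pairings to be at once $B$-equivariant, orientation-reversing in the precise pattern producing $P_g$ (rather than an orientable surface or a wrong-genus one), and compatible at the single ideal vertex $\infty$, together with checking that the quotient is a genuine closed surface and that the gluing does not resurrect recurrence off $\infty$. The counter-clockwise normalization of all boundary translations is the structural feature that reconciles these requirements, and the disjointness of the entropy-carrying core $\overline{C}$ from the sides is exactly what allows the surgery to leave the generalized entropy unchanged.
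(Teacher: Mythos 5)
Your proposal is correct and follows essentially the same route as the paper: apply the lemma with $L=2g$, glue $\overline{\gamma}_{2k-1}$ to $\overline{\gamma}_{2k}$ equivariantly (using the counter-clockwise normalization) to obtain $P_g$, get $o(f)\leqslant o$ from the semi-conjugacy via \textbf{Proposition \ref{semiconjporrecobrimentocompacto}}, and get $o(f)\geqslant o$ from the invariant core $\overline{C}$ disjoint from the sides. Your added justifications (the edge word $a_1a_1\cdots a_ga_g$ and the Euler-characteristic check) make explicit what the paper only asserts.
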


\begin{proof}Consider the homeomorphism  $B_{L,o}: \mathbb{H}^{2} \rightarrow \mathbb{H}^{2}$ and an ideal closed polygon $\Gamma$ with $2g$ sides, satisfying the conditions of the last lemma. Consider $\gamma_{1},\dots ,\gamma_{2g}$ the sides of $\Gamma$, with $\overline{\gamma}_{i}$ the closure of $\gamma_{i}$ in $\hat{\mathbb{H}}^{2}$. Then, let us identify $\overline{\gamma}_{1}$ with $\overline{\gamma}_{2}$, $\overline{\gamma}_{3}$ with $\overline{\gamma}_{4}$, $\dots$, and $\overline{\gamma}_{2g-1}$ with $\overline{\gamma}_{2g}$ in $\overline{\Gamma}$, agreeing with the dynamics defined by $\hat{B}_{L,o}$. So, the space defined by $\overline{\Gamma}$ module this identification is the $g$-projective space $P_{g}$. Let $\pi: \overline{\Gamma} \rightarrow P_{g}$ be the quotient map. We have that the map $\hat{B}_{L,o}|_{\overline{\Gamma}}$ induces a homeomorphism $\beta_{g,o}: P_{g} \rightarrow P_{g}$ such that $\pi$ is a semi-conjugation between $\hat{B}_{L,o}|_{\overline{\Gamma}}$ and $\beta_{g,o}$. So, $\Omega(\beta_{g,o}) = \{\pi(\infty)\}$. Also, by \textbf{Proposition \ref{semiconjporrecobrimentocompacto}}, $o(\beta_{g,o}) \leqslant o(\hat{B}_{L,o}|_{\overline{\Gamma}}) = o$. 

On the other hand, $\overline{C}$ is a closed set in $\overline{\Gamma}$ that is invariant by $\hat{B}_{L,o}|_{\overline{\Gamma}}$ and $o(\hat{B}_{L,o}|_{\overline{C}}) = o$. Since $C$ does not intersect the sides of $\Gamma$, $\pi|_{\overline{C}}$ is a homeomorphism over its image, which implies that $o(\beta_{g,o}|_{\pi(\overline{C})}) = o$. So,
$o(\beta_{g,o}) \geqslant o(\beta_{g,o}|_{\pi(\overline{C})}) = o$ and, then, $o(\beta_{g,o}) = o$.

%TESTE-----------------
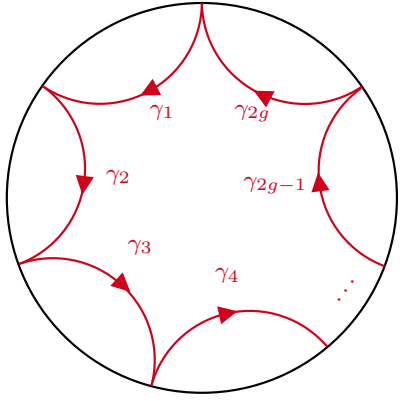
\begin{figure}[H]
    \centering
    \resizebox{6.2cm}{!}{

\tikzset{every picture/.style={line width=0.75pt}} %set default line width to 0.75pt        

\begin{tikzpicture}[x=0.75pt,y=0.75pt,yscale=-1,xscale=1]
%uncomment if require: \path (0,224); %set diagram left start at 0, and has height of 224

%Shape: Circle [id:dp25697040109613445] 
\draw   (241,113.46) .. controls (241,64.1) and (281.02,24.08) .. (330.38,24.08) .. controls (379.74,24.08) and (419.76,64.1) .. (419.76,113.46) .. controls (419.76,162.83) and (379.74,202.84) .. (330.38,202.84) .. controls (281.02,202.84) and (241,162.83) .. (241,113.46) -- cycle ;
%Shape: Arc [id:dp4996884541720378] 
\draw  [draw opacity=0] (403.85,62.33) .. controls (390.13,71.68) and (371.69,73.31) .. (355.76,65.02) .. controls (339.83,56.72) and (330.59,40.68) .. (330.38,24.08) -- (377.07,24.08) -- cycle ; \draw  [color={rgb, 255:red, 208; green, 2; blue, 27 }  ,draw opacity=1 ] (403.85,62.33) .. controls (390.13,71.68) and (371.69,73.31) .. (355.76,65.02) .. controls (339.83,56.72) and (330.59,40.68) .. (330.38,24.08) ;  
%Shape: Arc [id:dp8190148992174507] 
\draw  [draw opacity=0] (414.18,144.81) .. controls (398.6,139.02) and (386.64,124.93) .. (384.3,107.2) .. controls (381.97,89.47) and (389.88,72.78) .. (403.42,63.14) -- (430.06,101.18) -- cycle ; \draw  [color={rgb, 255:red, 208; green, 2; blue, 27 }  ,draw opacity=1 ] (414.18,144.81) .. controls (398.6,139.02) and (386.64,124.93) .. (384.3,107.2) .. controls (381.97,89.47) and (389.88,72.78) .. (403.42,63.14) ;  
%Shape: Arc [id:dp9283129292977991] 
\draw  [draw opacity=0] (307.29,199.44) .. controls (311.71,183.41) and (324.7,170.27) .. (342.16,166.4) .. controls (359.62,162.53) and (376.94,168.95) .. (387.72,181.61) -- (352.15,211.46) -- cycle ; \draw  [color={rgb, 255:red, 208; green, 2; blue, 27 }  ,draw opacity=1 ] (307.29,199.44) .. controls (311.71,183.41) and (324.7,170.27) .. (342.16,166.4) .. controls (359.62,162.53) and (376.94,168.95) .. (387.72,181.61) ;  
%Shape: Arc [id:dp0972829036919135] 
\draw  [draw opacity=0] (330.38,24.08) .. controls (330.26,40.7) and (321.12,56.76) .. (305.26,65.02) .. controls (289.39,73.27) and (271,71.56) .. (257.31,62.12) -- (283.95,24.08) -- cycle ; \draw  [color={rgb, 255:red, 208; green, 2; blue, 27 }  ,draw opacity=1 ] (330.38,24.08) .. controls (330.26,40.7) and (321.12,56.76) .. (305.26,65.02) .. controls (289.39,73.27) and (271,71.56) .. (257.31,62.12) ;  
%Shape: Arc [id:dp2516998896271545] 
\draw  [draw opacity=0] (257.31,62.12) .. controls (270.86,71.75) and (278.77,88.45) .. (276.43,106.18) .. controls (274.1,123.91) and (262.14,137.99) .. (246.56,143.79) -- (230.68,100.16) -- cycle ; \draw  [color={rgb, 255:red, 208; green, 2; blue, 27 }  ,draw opacity=1 ] (257.31,62.12) .. controls (270.86,71.75) and (278.77,88.45) .. (276.43,106.18) .. controls (274.1,123.91) and (262.14,137.99) .. (246.56,143.79) ;  
%Shape: Arc [id:dp48703791394341944] 
\draw  [draw opacity=0] (246.56,143.79) .. controls (262.22,138.21) and (280.44,141.32) .. (293.62,153.4) .. controls (306.8,165.48) and (311.48,183.35) .. (307.29,199.44) -- (262.44,187.42) -- cycle ; \draw  [color={rgb, 255:red, 208; green, 2; blue, 27 }  ,draw opacity=1 ] (246.56,143.79) .. controls (262.22,138.21) and (280.44,141.32) .. (293.62,153.4) .. controls (306.8,165.48) and (311.48,183.35) .. (307.29,199.44) ;  
%Shape: Triangle [id:dp44015518410014753] 
\draw  [color={rgb, 255:red, 208; green, 2; blue, 27 }  ,draw opacity=1 ][fill={rgb, 255:red, 208; green, 2; blue, 27 }  ,fill opacity=1 ] (303.1,66.04) -- (307.75,59.79) -- (310.89,65.82) -- cycle ;
%Shape: Triangle [id:dp6316382501746288] 
\draw  [color={rgb, 255:red, 208; green, 2; blue, 27 }  ,draw opacity=1 ][fill={rgb, 255:red, 208; green, 2; blue, 27 }  ,fill opacity=1 ] (275.75,110.9) -- (273.29,103.51) -- (280.04,104.39) -- cycle ;
%Shape: Triangle [id:dp327330411802083] 
\draw  [color={rgb, 255:red, 208; green, 2; blue, 27 }  ,draw opacity=1 ][fill={rgb, 255:red, 208; green, 2; blue, 27 }  ,fill opacity=1 ] (296.79,155.79) -- (289.33,153.56) -- (293.92,148.55) -- cycle ;
%Shape: Triangle [id:dp6732669315018127] 
\draw  [color={rgb, 255:red, 208; green, 2; blue, 27 }  ,draw opacity=1 ][fill={rgb, 255:red, 208; green, 2; blue, 27 }  ,fill opacity=1 ] (345.63,165.67) -- (339.52,170.5) -- (338.05,163.86) -- cycle ;
%Shape: Triangle [id:dp848792094204058] 
\draw  [color={rgb, 255:red, 208; green, 2; blue, 27 }  ,draw opacity=1 ][fill={rgb, 255:red, 208; green, 2; blue, 27 }  ,fill opacity=1 ] (383.75,102.95) -- (388.04,109.46) -- (381.29,110.35) -- cycle ;
%Shape: Triangle [id:dp359261653811473] 
\draw  [color={rgb, 255:red, 208; green, 2; blue, 27 }  ,draw opacity=1 ][fill={rgb, 255:red, 208; green, 2; blue, 27 }  ,fill opacity=1 ] (355.1,64.81) -- (362.89,65.03) -- (359.75,71.06) -- cycle ;

% Text Node
\draw (305.1,69.44) node [anchor=north west][inner sep=0.75pt]  [font=\footnotesize,color={rgb, 255:red, 208; green, 2; blue, 27 }  ,opacity=1 ]  {$\gamma _{1}$};
% Text Node
\draw (285,99.25) node [anchor=north west][inner sep=0.75pt]  [font=\footnotesize,color={rgb, 255:red, 208; green, 2; blue, 27 }  ,opacity=1 ]  {$\gamma _{2}$};
% Text Node
\draw (295,131.25) node [anchor=north west][inner sep=0.75pt]  [font=\footnotesize,color={rgb, 255:red, 208; green, 2; blue, 27 }  ,opacity=1 ]  {$\gamma _{3}$};
% Text Node
\draw (348,102.25) node [anchor=north west][inner sep=0.75pt]  [font=\footnotesize,color={rgb, 255:red, 208; green, 2; blue, 27 }  ,opacity=1 ]  {$\gamma _{2g-1}$};
% Text Node
\draw (344,69.25) node [anchor=north west][inner sep=0.75pt]  [font=\footnotesize,color={rgb, 255:red, 208; green, 2; blue, 27 }  ,opacity=1 ]  {$\gamma _{2g}$};
% Text Node
\draw (402.94,150.73) node [anchor=north west][inner sep=0.75pt]  [font=\footnotesize,rotate=-127.42]  {$\textcolor[rgb]{0.82,0.01,0.11}{\dots }$};
% Text Node
\draw (335,144.31) node [anchor=north west][inner sep=0.75pt]  [font=\footnotesize,color={rgb, 255:red, 208; green, 2; blue, 27 }  ,opacity=1 ]  {$\gamma _{4}$};

\end{tikzpicture}

    }
    \caption{Representation of the polygon $\gamma$ with $2g$ sides.}
    \label{fig:enter-label3}
\end{figure}

%---------------------------------
\end{proof}

\begin{prop}\label{singlenonwanderingonorientablesurface} Let $g \geqslant 1$, $S_{g}$ an orientable compact surface of genus $g$ and $o \in \overline{\mathbb{L}}$, with $[n^{2}] \leqslant o \leqslant \sup \mathbb{P}$. Then, there exists a homeomorphism $f: S_{g} \rightarrow S_{g}$ such that $\#\Omega(f) = 1$ and $o(f) = o$.  
\end{prop}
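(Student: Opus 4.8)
The plan is to follow the strategy of \textbf{Proposition \ref{singlenonwanderingonnonorientablesurface}} almost verbatim, replacing the $2g$-gon carrying the non-orientable identification by a $4g$-gon carrying the standard orientable identification word $a_1 b_1 a_1^{-1} b_1^{-1} \cdots a_g b_g a_g^{-1} b_g^{-1}$. Concretely, I would invoke the lemma preceding \textbf{Proposition \ref{singlenonwanderingonnonorientablesurface}} with $L = 4g$ to obtain a homeomorphism $B = B_{4g,o}: \mathbb{H}^2 \to \mathbb{H}^2$, an ideal closed $4g$-gon $\Gamma$ with sides $\gamma_1,\dots,\gamma_{4g}$, an interior invariant compact set $\overline{C} \subseteq \overline{\Gamma}$ disjoint from the sides with $o(\hat{B}|_{\overline{C}}) = o$, together with $o(\hat{B}|_{\overline{\Gamma}}) = o$ and $\Omega(\hat{B}) = \{\infty\}$. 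I would then glue the sides of $\overline{\Gamma}$ in pairs according to the word above, identifying the ideal vertices (all equal to $\infty$) to a single point, so that the quotient $\overline{\Gamma}/\!\sim$ is homeomorphic to $S_g$, with quotient map $\pi: \overline{\Gamma} \to S_g$.

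The one genuine difference from the non-orientable case lies in making the side identifications compatible with the dynamics. Orienting $\partial\overline{\Gamma}$ counter-clockwise, each gluing map $h:\gamma_i \to \gamma_j$ must satisfy $h \circ (B|_{\gamma_i}) = (B|_{\gamma_j}) \circ h$, which forces $h$ to be an orientation-preserving conjugacy between the two boundary translations. For the quotient to be the orientable surface $S_g$, however, each pair of identified edges must be glued so as to reverse the induced boundary orientation. These two requirements are simultaneously satisfiable precisely when the translations on a glued pair point in opposite boundary directions, that is, one clockwise and one counter-clockwise. Thus, unlike in \textbf{Proposition \ref{singlenonwanderingonnonorientablesurface}}, I cannot take all translations counter-clockwise; I instead need the construction to realize, on $\gamma_1,\dots,\gamma_{4g}$, the pattern in which, for each generator, one paired edge translates counter-clockwise and its partner translates clockwise.

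Supplying this refined building block is the main step, and I expect it to be the principal obstacle. I would establish a variant of the lemma in which the direction (clockwise or counter-clockwise) of the translation on each side of $\Gamma$ may be prescribed independently, while retaining $o(\hat{B}) = o(\hat{B}|_{\overline{\Gamma}}) = o$, the interior set $\overline{C}$ with $o(\hat{B}|_{\overline{C}}) = o$, and $\Omega(\hat{B}) = \{\infty\}$. Since the construction is assembled from half-plane blocks, each of which translates its half-plane in either direction, assigning directions is a free local choice. Reversing some directions does not affect the entropy bookkeeping, because by \textbf{Proposition \ref{subspaces}} the outer half-planes still contribute only $[n]$ while the interior block contributes $o$; and it does not affect $\Omega(\hat{B}) = \{\infty\}$, since every orbit in $\mathbb{H}^2$ still escapes to the ideal boundary, hence to $\infty$, in both time directions regardless of the translation signs.

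With this block in hand, the remainder is parallel to \textbf{Proposition \ref{singlenonwanderingonnonorientablesurface}}. The map $\hat{B}|_{\overline{\Gamma}}$ descends to a homeomorphism $\beta_{g,o}: S_g \to S_g$ for which $\pi$ is a semi-conjugacy, so $\Omega(\beta_{g,o}) = \{\pi(\infty)\}$ is a single point. By \textbf{Proposition \ref{semiconjporrecobrimentocompacto}}, $o(\beta_{g,o}) \leqslant o(\hat{B}|_{\overline{\Gamma}}) = o$. For the reverse inequality, since $\overline{C}$ is disjoint from the glued sides, $\pi|_{\overline{C}}$ is a homeomorphism onto its image, whence $o(\beta_{g,o}) \geqslant o(\beta_{g,o}|_{\pi(\overline{C})}) = o(\hat{B}|_{\overline{C}}) = o$. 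Combining the two bounds yields $o(\beta_{g,o}) = o$ with $\#\Omega(\beta_{g,o}) = 1$, as required.
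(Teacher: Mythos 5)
Your strategy matches the paper's: both reduce the orientable case to producing a $4g$-gon on which the edge translations alternate direction in the pattern forced by the word $a_1 b_1 a_1^{-1} b_1^{-1}\cdots$, and both then conclude exactly as in \textbf{Proposition \ref{singlenonwanderingonnonorientablesurface}} via the semi-conjugacy bound of \textbf{Proposition \ref{semiconjporrecobrimentocompacto}} and the interior set $\overline{C}$. You correctly identify the orientation-compatibility issue as the one genuine difference, and the needed pattern (paired edges translated in opposite boundary directions) is right. The only thin spot is your justification of the ``variant of the lemma'': the claim that ``assigning directions is a free local choice'' on the half-plane blocks is not accurate as stated, since the direction of translation on a complementary half-plane $\Gamma_i$ is forced by the restriction of the global homeomorphism to the bounding geodesic $\gamma_i$, which is in turn determined by the interior dynamics; you cannot flip it after the fact without breaking continuity. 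The paper realizes the prescribed pattern differently: it chains together $2g$ sub-polygons (two triangles at the ends and $2g-2$ quadrilaterals in between), with the boundary translations of consecutive blocks oriented alternately counter-clockwise and clockwise, glues them equivariantly along shared edges (which is consistent precisely because the induced boundary orientations from the two sides of a shared edge are opposite), and obtains a $4g$-gon whose edge directions switch every two vertices. If you replace your ``free local choice'' remark with this chain-gluing construction, your argument coincides with the paper's.
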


\begin{proof}Consider $g \geqslant 1$. Take $\Gamma^{1},\dots, \Gamma^{2g}$ polygons, where $\Gamma^{1}$ and $\Gamma^{2g}$ have three sides and the other ones have $4$ sides each, and homeomorphisms $B_{i}: \Gamma^{i} \rightarrow \Gamma^{i}$ such that $B_{i}$ is as described in the lemma above. We want that the edges of the polygons to be oriented by $B_{i}$ counter-clockwise if $i$ is odd and clockwise if $i$ is even and $o(\hat{B}_{i}) = o$ for every $i \in \{1,\dots2g\}$. So, we can glue the polygons identifying, equivariantly, the last edge of $\Gamma^{i}$ to the first edge of $\Gamma^{i+1}$, for every $i \in \{1,\dots,2g\}$ (the first edge of the $\Gamma^{1}$ and the last edge of $\Gamma^{2g}$ are not identified here). Then, we get a $4g$-agon $\Gamma$ and an induced homeomorphism $B: \Gamma \rightarrow \Gamma$ with $o(\hat{B}) = o$, $\Omega(\hat{B}) = \{\infty\}$ and the orientation of the edges change from clockwise to counter-clockwise and vice-versa every two vertices. So we can label the edges in sequence counter-clockwise as $a_{1},b_{1},a^{-1}_{2},b^{-1}_{1},\dots,a_{g},b_{g},a^{-1}_{g},b^{-1}_{g}$ and glue each pair $(a_{i},a^{-1}_{i})$ and $(b_{i},b^{-1}_{i})$, respecting the maps. So we get a $g$-torus $S_{g}$ and a map $\alpha_{g,o}$ which has $\#\Omega(\alpha_{g,o}) = 1$ and $o(\alpha_{g,o}) = o$, by an argument that is analogous to that given in the last proposition.

%TEST2-----------------------------

\begin{figure}[H]
    \centering
    \resizebox{8cm}{!}{

\tikzset{every picture/.style={line width=0.75pt}} %set default line width to 0.75pt        

\begin{tikzpicture}[x=0.75pt,y=0.75pt,yscale=-1,xscale=1]
%uncomment if require: \path (0,232); %set diagram left start at 0, and has height of 232
\useasboundingbox (200,13) rectangle (460,213); % 
%Shape: Circle [id:dp5593686594288072] 
\draw   (253.13,157.78) .. controls (228.22,115.16) and (242.57,60.42) .. (285.18,35.51) .. controls (327.8,10.6) and (382.54,24.95) .. (407.45,67.56) .. controls (432.37,110.18) and (418.02,164.92) .. (375.4,189.83) .. controls (332.78,214.75) and (278.04,200.39) .. (253.13,157.78) -- cycle ;
%Shape: Arc [id:dp5646014328092229] 
\draw  [draw opacity=0] (330.48,23.15) .. controls (330.53,33.52) and (323.62,42.98) .. (313.19,45.66) .. controls (302.76,48.34) and (292.15,43.38) .. (287.19,34.27) -- (307.41,23.15) -- cycle ; \draw  [color={rgb, 255:red, 208; green, 2; blue, 27 }  ,draw opacity=1 ] (330.48,23.15) .. controls (330.53,33.52) and (323.62,42.98) .. (313.19,45.66) .. controls (302.76,48.34) and (292.15,43.38) .. (287.19,34.27) ;  
%Shape: Arc [id:dp04579795338592296] 
\draw  [draw opacity=0] (287.33,34.13) .. controls (292.37,43.2) and (290.87,54.82) .. (283.02,62.19) .. controls (275.17,69.56) and (263.48,70.33) .. (254.75,64.73) -- (267.11,45.25) -- cycle ; \draw  [color={rgb, 255:red, 208; green, 2; blue, 27 }  ,draw opacity=1 ] (287.33,34.13) .. controls (292.37,43.2) and (290.87,54.82) .. (283.02,62.19) .. controls (275.17,69.56) and (263.48,70.33) .. (254.75,64.73) ;  
%Shape: Arc [id:dp8192497838950109] 
\draw  [draw opacity=0] (254.75,64.73) .. controls (263.53,70.25) and (267.82,81.15) .. (264.49,91.39) .. controls (261.16,101.64) and (251.29,107.94) .. (240.94,107.24) -- (242.38,84.21) -- cycle ; \draw  [color={rgb, 255:red, 208; green, 2; blue, 27 }  ,draw opacity=1 ] (254.75,64.73) .. controls (263.53,70.25) and (267.82,81.15) .. (264.49,91.39) .. controls (261.16,101.64) and (251.29,107.94) .. (240.94,107.24) ;  
%Shape: Arc [id:dp7471297258757179] 
\draw  [draw opacity=0] (240.94,107.24) .. controls (251.29,107.84) and (260.3,115.33) .. (262.32,125.91) .. controls (264.34,136.49) and (258.72,146.77) .. (249.31,151.15) -- (239.49,130.27) -- cycle ; \draw  [color={rgb, 255:red, 208; green, 2; blue, 27 }  ,draw opacity=1 ] (240.94,107.24) .. controls (251.29,107.84) and (260.3,115.33) .. (262.32,125.91) .. controls (264.34,136.49) and (258.72,146.77) .. (249.31,151.15) ;  
%Shape: Arc [id:dp11879032131494627] 
\draw  [draw opacity=0] (249.31,151.15) .. controls (258.68,146.69) and (270.18,148.91) .. (277.04,157.21) .. controls (283.91,165.51) and (283.94,177.22) .. (277.8,185.59) -- (259.14,172.02) -- cycle ; \draw  [color={rgb, 255:red, 208; green, 2; blue, 27 }  ,draw opacity=1 ] (249.31,151.15) .. controls (258.68,146.69) and (270.18,148.91) .. (277.04,157.21) .. controls (283.91,165.51) and (283.94,177.22) .. (277.8,185.59) ;  
%Shape: Arc [id:dp1596648143412669] 
\draw  [draw opacity=0] (373.78,34.27) .. controls (368.82,43.38) and (358.21,48.34) .. (347.78,45.66) .. controls (337.34,42.98) and (330.44,33.52) .. (330.48,23.15) -- (353.56,23.15) -- cycle ; \draw  [color={rgb, 255:red, 208; green, 2; blue, 27 }  ,draw opacity=1 ] (373.78,34.27) .. controls (368.82,43.38) and (358.21,48.34) .. (347.78,45.66) .. controls (337.34,42.98) and (330.44,33.52) .. (330.48,23.15) ;  
%Shape: Arc [id:dp5312231412552233] 
\draw  [draw opacity=0] (277.8,185.59) .. controls (283.86,177.16) and (295.01,173.57) .. (305.03,177.54) .. controls (315.04,181.5) and (320.71,191.75) .. (319.36,202.04) -- (296.47,199.15) -- cycle ; \draw  [color={rgb, 255:red, 208; green, 2; blue, 27 }  ,draw opacity=1 ] (277.8,185.59) .. controls (283.86,177.16) and (295.01,173.57) .. (305.03,177.54) .. controls (315.04,181.5) and (320.71,191.75) .. (319.36,202.04) ;  
%Shape: Arc [id:dp6500895066672414] 
\draw  [draw opacity=0] (319.36,202.04) .. controls (320.62,191.74) and (328.66,183.22) .. (339.34,181.87) .. controls (350.03,180.52) and (359.93,186.77) .. (363.71,196.44) -- (342.25,204.93) -- cycle ; \draw  [color={rgb, 255:red, 208; green, 2; blue, 27 }  ,draw opacity=1 ] (319.36,202.04) .. controls (320.62,191.74) and (328.66,183.22) .. (339.34,181.87) .. controls (350.03,180.52) and (359.93,186.77) .. (363.71,196.44) ;  
%Shape: Arc [id:dp709793517437779] 
\draw  [draw opacity=0] (363.6,194.71) .. controls (359.73,185.08) and (362.68,173.74) .. (371.39,167.41) .. controls (380.1,161.08) and (391.79,161.78) .. (399.76,168.43) -- (385.05,186.21) -- cycle ; \draw  [color={rgb, 255:red, 208; green, 2; blue, 27 }  ,draw opacity=1 ] (363.6,194.71) .. controls (359.73,185.08) and (362.68,173.74) .. (371.39,167.41) .. controls (380.1,161.08) and (391.79,161.78) .. (399.76,168.43) ;  
%Shape: Arc [id:dp7905902684384876] 
\draw  [draw opacity=0] (411.25,150.77) .. controls (401.84,146.39) and (396.22,136.11) .. (398.24,125.53) .. controls (400.26,114.95) and (409.27,107.46) .. (419.62,106.86) -- (421.07,129.89) -- cycle ; \draw  [color={rgb, 255:red, 208; green, 2; blue, 27 }  ,draw opacity=1 ] (411.25,150.77) .. controls (401.84,146.39) and (396.22,136.11) .. (398.24,125.53) .. controls (400.26,114.95) and (409.27,107.46) .. (419.62,106.86) ;  
%Shape: Arc [id:dp9839715381179437] 
\draw  [draw opacity=0] (419.62,106.86) .. controls (409.27,107.56) and (399.4,101.26) .. (396.07,91.01) .. controls (392.74,80.77) and (397.03,69.87) .. (405.81,64.35) -- (418.18,83.83) -- cycle ; \draw  [color={rgb, 255:red, 208; green, 2; blue, 27 }  ,draw opacity=1 ] (419.62,106.86) .. controls (409.27,107.56) and (399.4,101.26) .. (396.07,91.01) .. controls (392.74,80.77) and (397.03,69.87) .. (405.81,64.35) ;  
%Shape: Arc [id:dp7768415172442762] 
\draw  [draw opacity=0] (405.81,64.35) .. controls (397.08,69.95) and (385.39,69.18) .. (377.54,61.81) .. controls (369.69,54.44) and (368.19,42.82) .. (373.23,33.75) -- (393.45,44.87) -- cycle ; \draw  [color={rgb, 255:red, 208; green, 2; blue, 27 }  ,draw opacity=1 ] (405.81,64.35) .. controls (397.08,69.95) and (385.39,69.18) .. (377.54,61.81) .. controls (369.69,54.44) and (368.19,42.82) .. (373.23,33.75) ;  
%Shape: Arc [id:dp9514580735272334] 
\draw  [draw opacity=0] (330.29,23.2) .. controls (330.32,40.72) and (321.07,57.7) .. (304.67,66.72) .. controls (288.54,75.58) and (269.61,74.49) .. (254.93,65.49) -- (280.75,23.2) -- cycle ; \draw   (330.29,23.2) .. controls (330.32,40.72) and (321.07,57.7) .. (304.67,66.72) .. controls (288.54,75.58) and (269.61,74.49) .. (254.93,65.49) ;  
%Shape: Arc [id:dp35864572907662906] 
\draw  [draw opacity=0] (406.57,64.12) .. controls (391.79,73.54) and (372.5,74.82) .. (356.1,65.8) .. controls (339.97,56.94) and (330.75,40.37) .. (330.48,23.15) -- (380.02,22.28) -- cycle ; \draw   (406.57,64.12) .. controls (391.79,73.54) and (372.5,74.82) .. (356.1,65.8) .. controls (339.97,56.94) and (330.75,40.37) .. (330.48,23.15) ;  
%Shape: Triangle [id:dp33736455353053807] 
\draw  [color={rgb, 255:red, 208; green, 2; blue, 27 }  ,draw opacity=1 ][fill={rgb, 255:red, 208; green, 2; blue, 27 }  ,fill opacity=1 ] (309.72,46.33) -- (315.67,41.29) -- (317.36,47.88) -- cycle ;
%Shape: Triangle [id:dp7416442261634484] 
\draw  [color={rgb, 255:red, 208; green, 2; blue, 27 }  ,draw opacity=1 ][fill={rgb, 255:red, 208; green, 2; blue, 27 }  ,fill opacity=1 ] (281.56,62.86) -- (284.35,55.58) -- (289,60.54) -- cycle ;
%Shape: Triangle [id:dp08817660419131002] 
\draw  [color={rgb, 255:red, 208; green, 2; blue, 27 }  ,draw opacity=1 ][fill={rgb, 255:red, 208; green, 2; blue, 27 }  ,fill opacity=1 ] (264.77,90.36) -- (264.48,98.15) -- (258.47,94.96) -- cycle ;
%Shape: Triangle [id:dp9781525100880233] 
\draw  [color={rgb, 255:red, 208; green, 2; blue, 27 }  ,draw opacity=1 ][fill={rgb, 255:red, 208; green, 2; blue, 27 }  ,fill opacity=1 ] (261.46,122.01) -- (266.12,128.27) -- (259.44,129.54) -- cycle ;
%Shape: Triangle [id:dp15836259929446206] 
\draw  [color={rgb, 255:red, 208; green, 2; blue, 27 }  ,draw opacity=1 ][fill={rgb, 255:red, 208; green, 2; blue, 27 }  ,fill opacity=1 ] (279.36,160.16) -- (272.27,156.92) -- (277.5,152.59) -- cycle ;
%Shape: Triangle [id:dp8348372016731463] 
\draw  [color={rgb, 255:red, 208; green, 2; blue, 27 }  ,draw opacity=1 ][fill={rgb, 255:red, 208; green, 2; blue, 27 }  ,fill opacity=1 ] (307.38,178.75) -- (299.61,179.33) -- (302.11,173.01) -- cycle ;
%Shape: Triangle [id:dp9916396145162109] 
\draw  [color={rgb, 255:red, 208; green, 2; blue, 27 }  ,draw opacity=1 ][fill={rgb, 255:red, 208; green, 2; blue, 27 }  ,fill opacity=1 ] (335.64,182.9) -- (342.17,178.65) -- (343.02,185.39) -- cycle ;
%Shape: Triangle [id:dp031030081027881407] 
\draw  [color={rgb, 255:red, 208; green, 2; blue, 27 }  ,draw opacity=1 ][fill={rgb, 255:red, 208; green, 2; blue, 27 }  ,fill opacity=1 ] (367.28,171.52) -- (370.96,164.65) -- (374.96,170.15) -- cycle ;
%Shape: Triangle [id:dp7426126320900703] 
\draw  [color={rgb, 255:red, 208; green, 2; blue, 27 }  ,draw opacity=1 ][fill={rgb, 255:red, 208; green, 2; blue, 27 }  ,fill opacity=1 ] (398.78,124.01) -- (400.8,131.54) -- (394.12,130.27) -- cycle ;
%Shape: Triangle [id:dp0021476416853366542] 
\draw  [color={rgb, 255:red, 208; green, 2; blue, 27 }  ,draw opacity=1 ][fill={rgb, 255:red, 208; green, 2; blue, 27 }  ,fill opacity=1 ] (396.04,90.12) -- (401.44,95.74) -- (394.97,97.84) -- cycle ;
%Shape: Triangle [id:dp42959223102477806] 
\draw  [color={rgb, 255:red, 208; green, 2; blue, 27 }  ,draw opacity=1 ][fill={rgb, 255:red, 208; green, 2; blue, 27 }  ,fill opacity=1 ] (380.68,63.86) -- (373.24,61.54) -- (377.89,56.58) -- cycle ;
%Shape: Triangle [id:dp7531352852096317] 
\draw  [color={rgb, 255:red, 208; green, 2; blue, 27 }  ,draw opacity=1 ][fill={rgb, 255:red, 208; green, 2; blue, 27 }  ,fill opacity=1 ] (351.52,46.33) -- (343.88,47.88) -- (345.57,41.29) -- cycle ;
%Shape: Triangle [id:dp9490828960041917] 
\draw  [fill={rgb, 255:red, 0; green, 0; blue, 0 }  ,fill opacity=1 ] (310.36,63.33) -- (305.86,69.69) -- (302.58,63.73) -- cycle ;
%Shape: Triangle [id:dp5892055802620856] 
\draw  [fill={rgb, 255:red, 0; green, 0; blue, 0 }  ,fill opacity=1 ] (355.22,65.33) -- (363,65.73) -- (359.73,71.69) -- cycle ;
%Shape: Arc [id:dp555874933686639] 
\draw  [draw opacity=0] (330.29,23.2) .. controls (330.19,49.11) and (323.01,75.35) .. (308.07,98.88) .. controls (293.13,122.42) and (272.46,140.09) .. (249.05,151.22) -- (188.81,23.2) -- cycle ; \draw   (330.29,23.2) .. controls (330.19,49.11) and (323.01,75.35) .. (308.07,98.88) .. controls (293.13,122.42) and (272.46,140.09) .. (249.05,151.22) ;  
%Shape: Arc [id:dp8620179613185817] 
\draw  [draw opacity=0] (411.73,151.17) .. controls (388.32,140.04) and (367.64,122.37) .. (352.7,98.84) .. controls (337.77,75.3) and (330.58,49.07) .. (330.48,23.15) -- (471.97,23.15) -- cycle ; \draw   (411.73,151.17) .. controls (388.32,140.04) and (367.64,122.37) .. (352.7,98.84) .. controls (337.77,75.3) and (330.58,49.07) .. (330.48,23.15) ;  
%Shape: Arc [id:dp3085593116074046] 
\draw  [draw opacity=0] (330.48,23.15) .. controls (330.51,52.8) and (329.61,82.63) .. (327.78,112.63) .. controls (325.94,142.63) and (323.19,172.35) .. (319.56,201.77) -- (-1094.56,25.64) -- cycle ; \draw   (330.48,23.15) .. controls (330.51,52.8) and (329.61,82.63) .. (327.78,112.63) .. controls (325.94,142.63) and (323.19,172.35) .. (319.56,201.77) ;  
%Shape: Arc [id:dp41158115633450254] 
\draw  [draw opacity=0] (399.76,168.43) .. controls (378.85,151.68) and (361.22,130.15) .. (348.7,104.48) .. controls (335.97,78.38) and (329.87,50.78) .. (329.72,23.56) -- (517.3,22.25) -- cycle ; \draw   (399.76,168.43) .. controls (378.85,151.68) and (361.22,130.15) .. (348.7,104.48) .. controls (335.97,78.38) and (329.87,50.78) .. (329.72,23.56) ;  
%Shape: Triangle [id:dp5937338696644289] 
\draw  [fill={rgb, 255:red, 0; green, 0; blue, 0 }  ,fill opacity=1 ] (306.41,100.98) -- (307.3,93.24) -- (313.04,96.88) -- cycle ;
%Shape: Triangle [id:dp2521795919987583] 
\draw  [fill={rgb, 255:red, 0; green, 0; blue, 0 }  ,fill opacity=1 ] (326.9,118.46) -- (329.86,125.67) -- (323.08,125.25) -- cycle ;
%Shape: Triangle [id:dp10748670108377334] 
\draw  [fill={rgb, 255:red, 0; green, 0; blue, 0 }  ,fill opacity=1 ] (352.83,113.17) -- (346.7,108.36) -- (352.81,105.38) -- cycle ;
%Shape: Triangle [id:dp5560084382820264] 
\draw  [fill={rgb, 255:red, 0; green, 0; blue, 0 }  ,fill opacity=1 ] (359.44,109.48) -- (352.81,105.38) -- (358.55,101.73) -- cycle ;

% Text Node
\draw (408.19,151.16) node [anchor=north west][inner sep=0.75pt]  [font=\footnotesize,rotate=-122.4]  {$\textcolor[rgb]{0.82,0.01,0.11}{\dots }$};
% Text Node
\draw (302.08,30.73) node [anchor=north west][inner sep=0.75pt]  [font=\scriptsize,color={rgb, 255:red, 208; green, 2; blue, 27 }  ,opacity=1 ,rotate=-345.83]  {$\gamma _{1}$};
% Text Node
\draw (267.52,52.83) node [anchor=north west][inner sep=0.75pt]  [font=\scriptsize,color={rgb, 255:red, 208; green, 2; blue, 27 }  ,opacity=1 ,rotate=-316.71]  {$\gamma _{2}$};
% Text Node
\draw (246.12,81.86) node [anchor=north west][inner sep=0.75pt]  [font=\scriptsize,color={rgb, 255:red, 208; green, 2; blue, 27 }  ,opacity=1 ]  {$\gamma _{3}$};
% Text Node
\draw (245.12,119.86) node [anchor=north west][inner sep=0.75pt]  [font=\scriptsize,color={rgb, 255:red, 208; green, 2; blue, 27 }  ,opacity=1 ]  {$\gamma _{4}$};
% Text Node
\draw (261.12,155.86) node [anchor=north west][inner sep=0.75pt]  [font=\scriptsize,color={rgb, 255:red, 208; green, 2; blue, 27 }  ,opacity=1 ]  {$\gamma _{5}$};
% Text Node
\draw (295.12,179.86) node [anchor=north west][inner sep=0.75pt]  [font=\scriptsize,color={rgb, 255:red, 208; green, 2; blue, 27 }  ,opacity=1 ]  {$\gamma _{6}$};
% Text Node
\draw (332.12,184.86) node [anchor=north west][inner sep=0.75pt]  [font=\scriptsize,color={rgb, 255:red, 208; green, 2; blue, 27 }  ,opacity=1 ]  {$\gamma _{7}$};
% Text Node
\draw (372.63,172.5) node [anchor=north west][inner sep=0.75pt]  [font=\scriptsize,color={rgb, 255:red, 208; green, 2; blue, 27 }  ,opacity=1 ,rotate=-350.96]  {$\gamma _{8}$};
% Text Node
\draw (402.98,138.2) node [anchor=north west][inner sep=0.75pt]  [font=\scriptsize,color={rgb, 255:red, 208; green, 2; blue, 27 }  ,opacity=1 ,rotate=-281.26]  {$\gamma _{4g-3}$};
% Text Node
\draw (406.32,99.33) node [anchor=north west][inner sep=0.75pt]  [font=\scriptsize,color={rgb, 255:red, 208; green, 2; blue, 27 }  ,opacity=1 ,rotate=-254.49]  {$\gamma _{4g-2}$};
% Text Node
\draw (381.18,38.43) node [anchor=north west][inner sep=0.75pt]  [font=\scriptsize,color={rgb, 255:red, 208; green, 2; blue, 27 }  ,opacity=1 ,rotate=-44.65]  {$\gamma _{4g-1}$};
% Text Node
\draw (342.12,26.86) node [anchor=north west][inner sep=0.75pt]  [font=\scriptsize,color={rgb, 255:red, 208; green, 2; blue, 27 }  ,opacity=1 ]  {$\gamma _{4g}$};
% Text Node
\draw (297,50.32) node [anchor=north west][inner sep=0.75pt]  [font=\scriptsize]  {$\Gamma ^{1}$};
% Text Node
\draw (279,90.32) node [anchor=north west][inner sep=0.75pt]  [font=\scriptsize]  {$\Gamma ^{2}$};
% Text Node
\draw (303,131.32) node [anchor=north west][inner sep=0.75pt]  [font=\scriptsize]  {$\Gamma ^{3}$};
% Text Node
\draw (343,134.32) node [anchor=north west][inner sep=0.75pt]  [font=\scriptsize]  {$\Gamma ^{4}$};
% Text Node
\draw (363,91.32) node [anchor=north west][inner sep=0.75pt]  [font=\scriptsize]  {$\Gamma ^{2g-1}$};
% Text Node
\draw (353.52,49.73) node [anchor=north west][inner sep=0.75pt]  [font=\scriptsize]  {$\Gamma ^{2g}$};
%\draw (current bounding box.north east) -- (current bounding box.north west) -- (current bounding box.south west) -- (current bounding box.south east) -- cycle;  

\end{tikzpicture} }
    \caption{Representation of the polygon $\Gamma$ with $4g$ sides (in red). It is the gluing of the polygons $\Gamma^{1},\dots, \Gamma^{2g}$, also represented in the figure.}
    \label{fig:enter-label4}
\end{figure}
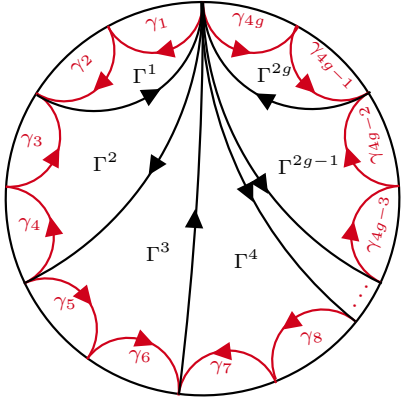
\end{proof}

Summarizing \textbf{Propositions \ref{singlenonwanderingonnonorientablesurface} and \ref{singlenonwanderingonorientablesurface}}, we establish:

\begin{teo}\label{singlenonwanderingonallsurfaces}Let $S$ be a compact surface without boundary and let $o \in \overline{\mathbb{L}}$, with $[n^{2}] \leqslant o \leqslant \sup \mathbb{P}$. Then there exists a homeomorphism $f: S \rightarrow S$ such that $\#\Omega(f) = 1$ and $o(f) = o$.
\end{teo}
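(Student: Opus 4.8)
The plan is to reduce the statement to the classification of closed surfaces and then dispatch each topological type to a construction already in hand. First I would invoke the classification theorem: every compact surface without boundary is homeomorphic to exactly one of the sphere $S^2$, the orientable surface $S_g$ of genus $g \geqslant 1$, or the non-orientable surface $P_g$ of genus $g \geqslant 1$. Since the cardinality of the non-wandering set is preserved by any conjugacy, and since generalized entropy is invariant under uniform conjugacy — which on a Hausdorff compact space amounts to topological conjugacy, because such a space carries a unique uniform structure inducing its topology — it suffices to exhibit, for each homeomorphism type, a single homeomorphism with a one-point non-wandering set and generalized entropy $o$, and then transport it along a fixed homeomorphism $h\colon S \to S^2$ (respectively $S_g$, $P_g$) by setting $f = h^{-1}\circ(\cdot)\circ h$.

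Next I would treat the three cases separately. For $S \cong S^2$ the required map is precisely the content of Theorem~1.2 of \cite{CP}, recalled at the start of this section: for every $o \in \overline{\mathbb{L}}$ with $[n^2]\leqslant o \leqslant \sup\mathbb{P}$ there is a homeomorphism of the sphere with singleton non-wandering set and generalized entropy $o$ (the endpoint $o=[n^2]$ being covered by the same construction, as noted after the recollection). For $S \cong S_g$ with $g \geqslant 1$, the homeomorphism $\alpha_{g,o}$ built in \textbf{Proposition \ref{singlenonwanderingonorientablesurface}} satisfies $\#\Omega(\alpha_{g,o})=1$ and $o(\alpha_{g,o})=o$. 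For $S \cong P_g$ with $g \geqslant 1$, the homeomorphism $\beta_{g,o}$ built in \textbf{Proposition \ref{singlenonwanderingonnonorientablesurface}} satisfies $\#\Omega(\beta_{g,o})=1$ and $o(\beta_{g,o})=o$.

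Finally, conjugacy invariance transfers both properties to $f$, giving $\#\Omega(f)=1$ and $o(f)=o$ and completing the proof. The genuine difficulty of this result lies entirely in the two preceding propositions, where the gluing of the hyperbolic polygons $\Gamma^i$ and the edge identifications realizing $S_g$ and $P_g$ must be made equivariant with respect to the translations $B_i$, and where the lower bound $o(\cdot)\geqslant o$ is extracted from the invariant subset $\overline{C}$ on which the quotient map is injective (so that \textbf{Proposition \ref{semiconjporrecobrimentocompacto}} yields both inequalities). For the statement itself the only point requiring care is the interplay between topological and uniform conjugacy, which the compactness and Hausdorffness hypotheses resolve automatically.
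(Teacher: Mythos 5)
Your proposal is correct and follows essentially the same route as the paper: the theorem is obtained by combining the sphere case from Theorem~1.2 of \cite{CP} with \textbf{Propositions \ref{singlenonwanderingonorientablesurface}} and \textbf{\ref{singlenonwanderingonnonorientablesurface}} for the orientable and non-orientable surfaces of genus $g \geqslant 1$, the real work having been done in those propositions. Your explicit remarks on the surface classification and on the equivalence of topological and uniform conjugacy for compact Hausdorff spaces are accurate and only make explicit what the paper leaves implicit.
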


Note that maybe we can mimic this process of gluing polygons and identifying edges to obtain different identifications of polygons. This would lead to other dynamics on compact surfaces with finite non-wandering set. The cardinality of this set would correspond to the number of vertex cycles in the polygon identification, which is $1$ in the examples shown above.

\section{Some other examples of parabolic dynamics}\label{6}

To illustrate the breadth of parabolic and north-south dynamics, we conclude with a collection of examples across various topological settings. These include constructions in non-metrizable spaces such as the double arrow space and suspensions and more geometric settings like convergence actions. These examples highlight the natural occurrence of parabolic behavior beyond the classical metric context.

\subsection{Double arrow space}

Consider the space $[0,1]\times \{0,1\}$ with the topology defined by the lexicographical order. Then the set $\{((x,y)\times \{0\})\cup([x,y)\times \{1\}): y > x\}$ is a system of neighborhoods of the point $(x,1)$ and the set $\{((y,x]\times \{0\})\cup((y,x)\times \{1\}): y < x\}$ is a system of neighborhoods of the point $(x,0)$. We have that the points $(0,0)$ and $(1,1)$ are isolated points. So, we can remove these points and get a topological space $X$. This is the Double arrow space. It is Hausdorff, compact, and non-metrizable since it is not second countable (all of these properties are given in Counterexample $\#95$ of \cite{StS}, with the name of weak parallel line topology). Let $g: [0,1] \rightarrow [0,1]$ be a homeomorphism with north-south dynamics with repulsor point $0$ and attractor point $1$. Then we define $f: X \rightarrow X$ as $f(x,y) = (g(x),y)$. The map $f$ is an increasing map, which implies that it is a homeomorphism. It is easy to see that $f$ has north-south dynamics, with repulsor point $(0,1)$ and attractor point $(1,0)$. By \textbf{Corollary \ref{northsouthhaslinearentropy}}, we get that $o(f) = [n]$.

\subsection{Suspensions}

Let $X$ be a Hausdorff compact space and let $f: [0,1] \rightarrow [0,1]$ a north-south dynamic with repulsor point $0$ and attractor point $1$. Then, $id_{X} \times f: X \times [0,1] \rightarrow X \times [0,1]$ has as its fixed points the set $(X \times \{0\}) \cup (X \times \{1\})$. Collapsing each of the sets $X \times \{0\}$ and $X \times \{1\}$, we get the suspension $\Sigma X$. It is easy to see that the map $id_{X} \times f$ induces a homeomorphism $f': \Sigma X \rightarrow \Sigma X$, which has north-south dynamics. Then, by \textbf{Corollary \ref{northsouthhaslinearentropy}}, $o(f') = [n]$. Note that $X$ is metrizable if and only if $\Sigma X$ is metrizable. So we get a family of examples of north-south dynamics on non-metrizable spaces. 

\subsection{One-point compactifications}

Let $X$ be a Hausdorff locally compact space, $f: X \rightarrow X$ a homeomorphism, $\hat{X}$ the one-point compactification of $X$ and $\hat{f}: \hat{X} \rightarrow \hat{X}$ the extension that fixes the point at infinite. If $f$ is properly discontinuous, then the induced map $\hat{f}$ is parabolic, by \textbf{Proposition \ref{properlydiscontinuousisparabolic}}. So \textbf{Theorem \ref{parabolicshavelinearentropy}} says that $o(\hat{f}) = [n]$.

As a special case, let $\{X_{\alpha}\}_{\alpha \in \Gamma}$ be a family of Hausdorff locally compact spaces, and take $X$ as the disjoint union $\dot{\bigcup}_{\alpha \in \Gamma}X_{\alpha}$ with the disjoint union topology. If $\Gamma = \N$ and each $X_{\alpha}$ is homeomorphic to $\mathbb{R}$, then we get that $\hat{X}$ is the Hawaiian Earring and if each $X_{\alpha}$ is homeomorphic to $\mathbb{R}$ but $\Gamma$ is arbitrary, then we get that $\hat{X}$ is a big Hawaiian Earring (see \cite{CC}). Note that $X$ is metrizable if and only if $\Gamma$ is countable and for every $\alpha \in \Gamma$, $X_{\alpha}$ is countable. For $\alpha \in \Gamma$, take $f_{\alpha}: X_{\alpha} \rightarrow X_{\alpha}$ a properly discontinuous homeomorphism. This family of maps induces a properly discontinuous homeomorphism $f: X \rightarrow X$, and then we get a homeomorphism $\hat{f}$ with parabolic dynamics (which has linear generalized entropy, by \textbf{Theorem \ref{parabolicshavelinearentropy}}).

\subsection{Convergence actions}\label{convergence}

Another source of examples comes from Geometric Group Theory.

\begin{defi}Let $G$ be a discrete group and $X$ a Hausdorff compact space. An action by homeomorphisms of $G \curvearrowright X$ has the convergence property if for every infinite set $S \subseteq G$, there exists an infinite subset $S' \subseteq S$ and there exists $x_{0},x_{1} \in X$ such that for every compact $K \subseteq X-\{x_{0}\}$ and every compact $K' \subseteq X-\{x_{1}\}$, the set $\{g \in S': g K \cap K' \neq \emptyset\}$ is finite. 
\end{defi}

Examples of such actions are the actions of a finitely generated group on its space of ends, a hyperbolic group on its Gromov boundary, and a relatively hyperbolic group on its Bowditch boundaries  (for more information about this, see \cite{Bo2}).

If there is a convergence action $G \curvearrowright X$, with $\# X > 2$, and $g \in G$, then we say that:

\begin{enumerate}
    \item $g$ is elliptic if it has finite order.
    \item $g$ is parabolic if it has infinite order and it has only one fixed point in $X$.
    \item $g$ is loxodromic if it has infinite order and it has only two fixed points in $X$.
\end{enumerate}

Lemma 3.1 of \cite{Bo2} says that these are all the possibilities for an element in $G$. If $g$ has infinite order, then $\langle g \rangle$ acts on $X-Fix(g)$ properly discontinuously (page 34 of \cite{Bo2}). So, if $g$ is a parabolic element, then  it has parabolic dynamics and if $g$ is a loxodromic element, then it has north-south dynamics. So we get:

\begin{prop}Let $G$ be a finitely generated group, $G \curvearrowright X$ a convergence action and $g \in G$. Then

\begin{enumerate}
    \item If $g$ is elliptic, then $o(g) = 0$.
    \item If $g$ is parabolic or loxodromic, then $o(g) = [n]$.
\end{enumerate}
\end{prop}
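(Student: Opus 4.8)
The plan is to reduce both cases to results already established in the paper, since the dynamical trichotomy for elements of a convergence group supplies exactly the hypotheses those results require. Throughout I keep in mind the standing assumption $\# X > 2$, under which the classification elliptic/parabolic/loxodromic is exhaustive (Lemma 3.1 of \cite{Bo2}).

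For the elliptic case, I would observe that $g$ having finite order means $\langle g \rangle$ is a finite cyclic group, so $g: X \to X$ is a periodic homeomorphism. Since $X$ is a Hausdorff compact space, it carries a unique uniform structure, and \textbf{Proposition \ref{periodichaszeroentropy}} applies directly to give $o(g) = 0$. This case needs no further work.

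For the parabolic and loxodromic cases I would argue uniformly. When $g$ has infinite order, $\mathrm{Fix}(g)$ is finite — a single point $p$ if $g$ is parabolic, a pair $\{p,q\}$ if $g$ is loxodromic — and, as recalled above (page 34 of \cite{Bo2}), $\langle g \rangle$ acts properly discontinuously on $X - \mathrm{Fix}(g)$. Set $F = \mathrm{Fix}(g)$. Then $F$ is closed (because $X$ is Hausdorff) and invariant (because $g$ fixes each of its points), and proper discontinuity forces every point of $X - F$ to be wandering, whence $\Omega(g) \subseteq F$. Thus the hypotheses of \textbf{Proposition \ref{properlydiscontinuousisparabolic}} are met, and that proposition shows $g$ has generalized parabolic dynamics with the finite parabolic set $F$. \textbf{Theorem \ref{parabolicshavelinearentropy}} then yields $o(g) = [n]$ at once, covering both the parabolic and the loxodromic case without having to separate them.

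Alternatively, one may invoke the sharper statements recalled in the text — that a parabolic element has parabolic dynamics and a loxodromic element has north–south dynamics (its attracting and repelling fixed points playing the roles of $q$ and $p$) — and conclude $o(g) = [n]$ through \textbf{Corollary \ref{northsouthhaslinearentropy}}. I expect no genuine obstacle: essentially all of the difficulty has already been absorbed into \textbf{Theorem \ref{parabolicshavelinearentropy}} and the structure theory of convergence actions in \cite{Bo2}, so the remaining task is only the bookkeeping of checking that $F$ is closed and invariant and that $\Omega(g)\subseteq F$.
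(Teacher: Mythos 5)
Your proposal is correct and follows essentially the same route as the paper: the elliptic case is handled by \textbf{Proposition \ref{periodichaszeroentropy}}, and the infinite-order case is reduced, via proper discontinuity of $\langle g\rangle$ on $X-\mathrm{Fix}(g)$ and \textbf{Proposition \ref{properlydiscontinuousisparabolic}}, to \textbf{Theorem \ref{parabolicshavelinearentropy}}. The only cosmetic difference is that the paper phrases the second case through parabolic and north--south dynamics separately (citing \textbf{Corollary \ref{northsouthhaslinearentropy}} as well), whereas you treat both uniformly as generalized parabolic dynamics with finite parabolic set, which is equally valid.
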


\begin{proof}If $g$ is an elliptic element, then it has finite order. So $o(g) = 0$, by \textbf{Proposition \ref{periodichaszeroentropy}}. If $g$ is parabolic or loxodromic, then it has, respectively, a parabolic or north-south dynamics, which implies, by \textbf{Theorem \ref{parabolicshavelinearentropy}} and \textbf{Corollary \ref{northsouthhaslinearentropy}}, that $o(g) =  [n]$.
\end{proof}

\end{document}